\renewcommand{\vec}[1]{\textbf{#1}}
\newcommand{\defeq}{\vcentcolon=}
\newcommand{\expect}{\mathbb{E}}
\newcommand{\prob}{\mathbb{P}}
\newtheorem{theorem}{Theorem}[section]
\newtheorem{lemma}[theorem]{Lemma}
\newtheorem{defn}[theorem]{Definition}
\theoremstyle{remark}
\newtheorem{remark}[theorem]{Remark}
\newtheoremstyle{draftstyle}%
{3pt}% Space above
{3pt}% Space below 
{\itshape\color{red}}% Body font
{}% Indent amount
{\bfseries\color{red}}% Theorem head font
{.}% Punctuation after theorem head
{.5em}% Space after theorem head 
{}% Theorem head spec (can be left empty, meaning ‘normal’)
\newcommand{\jstat}[1]{\textcolor{black}{#1}}
\theoremstyle{draftstyle}
\newtheorem{draftnote}[theorem]{Draft Note}
\numberwithin{equation}{section}
\newcommand{\datax}{\mathcal{X}}
\newcommand{\CNk}{C_{N,k}}
\newcommand{\CN}{C_{N}}
\newcommand{\grad}{\nabla}
\newcommand{\ind}[1]{i_{\leq {#1}}}
\newcommand{\indic}{\mathbbm{1}}
\newcommand{\expectGOE}{\expect_{GOE}^{N}}
\newcommand{\Tr}{\text{Tr}}
\title{The Loss Surfaces of Neural Networks with General Activation Functions}
\tikzset{
  branch cut/.style={
    decorate,decoration=snake,
    to path={
      (\tikztostart) -- (\tikztotarget) \tikztonodes
    },
  }
}
\begin{document}

\author[1]{\large Nicholas P. Baskerville}
\author[2]{\large Jonathan P. Keating}
\author[1]{\large Francesco Mezzadri}
\author[1]{\large Joseph Najnudel}

\affil[1]{\small\textit{School of Mathematics, University of Bristol, Fry Building, Bristol, BS8 1UG, UK}}
\affil[2]{\small\textit{Mathematical Institute, University of Oxford, Oxford, OX2 6GG, UK}}
\affil[ ]{\texttt{\{n.p.baskerville, F.Mezzadri, joseph.najnudel\}@bristol.ac.uk}, \texttt{Jon.Keating@maths.ox.ac.uk}}

\maketitle

% \begin{abstract}
% We present results extending the foundational work of \cite{choromanska2015loss} on the complexity of the loss surfaces of multi-layer neural networks. We remove the strict reliance on specifically \texttt{ReLU} activation functions and obtain broadly the same results for general activation functions. This is achieved with piece-wise linear approximations to general activation functions, Kac-Rice calculations akin to those of \cite{auffinger2013random, fyodorov2007replica, fyodorov2004complexity} and asymptotic analysis made possible by supersymmetric methods. Our results strengthen the case for the conclusions of \cite{choromanska2015loss} and the calculations contain various novel details required to deal with certain perturbations to the classical spin glass calculations.
% \end{abstract}

\begin{abstract}
\jstat{    The loss surfaces of deep neural networks have been the subject of several studies, theoretical and experimental, over the last few years. One strand of work considers the complexity, in the sense of local optima, of high dimensional random functions with the aim of informing how local optimisation methods may perform in such complicated settings. Prior work of Choromanska et al (2015) established a direct link between the training loss surfaces of deep multi-layer perceptron networks and spherical multi-spin glass models under some very strong assumptions on the network and its data. In this work, we test the validity of this approach by removing the undesirable restriction to \texttt{ReLU} activation functions. In doing so, we chart a new path through the spin glass complexity calculations using supersymmetric methods in Random Matrix Theory which may prove useful in other contexts. Our results shed new light on both the strengths and the weaknesses of spin glass models in this context.}
\end{abstract}

\tableofcontents

\section{Introduction}
\jstat{Neural networks continue to have substantial success when applied to an increasingly long list of machine learning problems: computer vision, speech processing, natural language processing, reinforcement learning, media generation etc. We refer the interested reader to the excellent website \cite{paperswithcode} where they will find links to published literature detailing the success of neural networks in all fields of machine learning. Despite this success, and the rapid pace of  progress in the development and application of neural network models, the theoretical study and understanding of them is still rather underdeveloped. Deep neural networks appear to subvert many of the classical ideas in machine learning theory. Networks are trained using stochastic gradient-based optimisation methods on very high-dimensional, strongly non-convex surfaces for which no formal convergence or performance guarantees exist, and yet excellent practical performance is routinely obtained with little concern for whether the optimisation problem has been solved. Extremely over-parametrised models can be trained with  large numbers of passes through the data without overfitting. Models with equivalent training performance can have radically different generalisation performance depending on complicated interactions between design choices such as learning rate size (and scheduling) and weight-decay \cite{loshchilov2018decoupled}.}

\jstat{One strand of theoretical work focuses on studying properties of the loss surfaces of large neural networks and the behaviour of gradient descent algorithms on those surfaces. \cite{sagun2014explorations} presented experimental results pointing to a similarity between the loss surfaces of multi-layer networks and spherical multi-spin glasses \cite{mezard1987spin}. \cite{choromanska2015loss} built on this work by presenting modeling assumptions under which the training loss of multi-layer perceptron neural networks with \texttt{ReLU} activations can be shown to be equivalent to a spherical multi-spin glass (with network weights corresponding to spin states). The authors then applied spin glass results of \cite{auffinger2013random} to obtain precise asymptotic results about the complexity\footnote{The notion of complexity will be precised in subsequent sections.} of the training loss surfaces. Crudely, the implication of this work is that the unreasonable efficacy of gradient descent on the high-dimensional and strongly non-convex loss surfaces of neural network models can in part be explained by favourable properties of their geometry that emerge in high dimensions. Relationships between simpler neural networks and spin glasses have been known since \cite{kanter1987associative, gardner1988space, engel2001statistical} and, more generally, connections between spin glass theory and computer science were studied in \cite{nishimori2001statistical} in the context of signal processing (image reconstruction, error correcting codes).}

\jstat{More recent work has dispensed with deriving explicit links between neural networks and spin glasses, instead taking spin glass like objects as a tractable playground for gradient descent in complex high-dimensional environments. In particular, \cite{baity2019comparing} compare empirically the dynamics of state-of-the-art deep neural networks and glassy systems, while \cite{mannelli2019afraid, ros2019complex, arous2019landscape, mannelli2019passed} 
study random tensor models containing some `spike' to represent other features of machine learning problems (some `true signal' to be recovered) and perform explicit complexity calculations as well as gradient descent dynamical calculations revealing phase transitions and landscape trivialisation. \cite{maillard2019landscape} simplify the model in favour of explicitly retaining the activation function non-linearity and performing complexity calculations \`{a} la \cite{auffinger2013random, fyodorov2007replica,  fyodorov2004complexity} for a single neuron. \cite{pennington2017geometry} study the loss surface of random single hidden layer neural networks by applying the generalised Gauss-Newton matrix decomposition to their Hessians and modelling the two components as freely-additive random matrices from certain ensembles. \cite{pennington2017nonlinear, benigni2019eigenvalue} consider the loss surfaces of single layer networks by computing the spectrum of the Gram matrix of network outputs. These works demonstrate the value of studying simplified, randomised neural networks for understanding networks used in practice. }

\jstat{The situation at present is far from clear. The spin glass correspondence and consequent implications for gradient descent based learning from \cite{choromanska2015loss, sagun2014explorations} are tantalising, however there are significant challenges. Even if the mean asymptotic properties of deep neural network loss surfaces were very well described by corresponding multi-spin glass models, the question would still remain whether these properties are in fact relevant to gradient-based algorithms running for sub-exponential time, with some evidence that the answer is negative \cite{baity2019comparing, mannelli2019passed, folena2019rethinking}. Another challenge comes from recent experimental studies  of deep neural network Hessians \cite{papyan2018full, ghorbani2019investigation, granziol2020beyond, granziol2019towards} which reveal spectra with several large outliers and considerable rank degeneracy, deviating significantly from the Gaussian Orthogonal Ensemble semi-circle law implied by a spin glass model. Bearing all this in mind, there is a long and illustrious history in the physics community of fruitfully studying quite unrealistic simplified models of complicated physical systems and still obtaining valuable insights into aspects of the true systems.}

\jstat{Several of the assumptions used in \cite{choromanska2015loss} to obtain a precise spherical multi-spin glass expression are undesirable, as outlined clearly in \cite{choromanska2015open}. Assuming i.i.d. Gaussian data and random labels is clearly a going to greatly simplify the problem, however it is also the case that many of the properties of deep neural networks during training are not specific to any particular dataset, and there may well be phases of training to which such assumptions are more applicable than one might first expect. Gaussian and independence assumptions are commonplace when one is seeking to analyse theoretically very complicated systems, so while they are strong, they are not unusual and it is not unreasonable to expect some important characteristics of real networks to persist. By contrast, the restriction of the arguments in \cite{choromanska2015loss} to exclusively \texttt{ReLU} activations seems innocuous, but we argue quite the opposite is true. There are deep mathematical reasons why Gaussian and independence assumptions are required to make progress in the derivation in \cite{choromanska2015loss}, while the restriction to \texttt{ReLU} activations appears to be an obscure peculiarity of the calculations. The \texttt{ReLU} is certainly a very common choice in practice, but it is by no means the only valid choice, nor always the best; see e.g. leaky \texttt{ReLU} in state-of-the-art image generation \cite{karras2019style} and GELU in state-of-the-art language models \cite{devlin2018bert}. It would not be at all surprising if a spin glass correspondence along the lines of \cite{choromanska2015loss} were impossible without Gaussian and/or independence assumption on the data, however it would be extremely concerning if such a correspondence specifically required \texttt{ReLU} activations. If the conclusions drawn in \cite{choromanska2015loss} about deep neural networks from this correspondence are at all relevant in practice, then they must apply equally to all activation functions used in practice. On the other hand, if the conclusions were \emph{precisely} the same for all reasonable activation functions, it would reveal a limitation of the multi-spin glass correspondence, since activation function choice can have significant implications for training neural networks in practice.}

\jstat{In this work, we return to the modeling assumptions and methodology of \cite{choromanska2015loss} and extend their results to multi-layer perceptron models with any activation function. We demonstrate that the general activation function has the effect of modifying the exact multi-spin glass by the addition of a certain extra deterministic term. We then extend the results of \cite{auffinger2013random} to this new high-dimensional random function. At the level of the logarithmic asymptotic complexity of the loss surface, we obtain precisely the same results as \cite{choromanska2015loss}, however the presence of a general activation function is felt in the sharp asymptotic complexity. On the one hand, our results strengthen the case for \cite{choromanska2015loss} by showing that their derivation is not just an accident in the case of \texttt{ReLU} networks. On the other hand, we have shown that this line of reasoning about neural networks is insensitive to an important design feature of real networks that can have significant impacts on training in practice.}

\jstat{The main calculation in this paper uses a Kac-Rice formula to compute landscape complexity of the modified multi-spin glass model we encounter. Kac-Rice formulae have a long history in the Physics literature \cite{bray1980metastable, bray1981metastable} and more specifically to perform complexity calculations \cite{fyodorov2004complexity, fyodorov2005counting, auffinger2013random}. Complexity calculations in spiked matrix and tensor models in \cite{ros2019complex, arous2019landscape} have addressed spin glass objects with specific rank-1  deterministic additive terms, however those calculations do not extend to the case encountered here since those deterministic terms create a single distinguished direction --- parallel to the gradient of that term everywhere on the sphere --- which is critical to their analysis; our extra deterministic term creates no such single distinguished direction. We chart a different course using supersymmetric methods in Random Matrix Theory. Supersymmetric methods have been used before in spin glass models and complexity calculations \cite{cavagna1999quenched,annibale2003supersymmetric,crisanti2003complexity,fyodorov2004complexity}, often using the replica trick. We show how the full logarithmic complexity results of \cite{auffinger2013random} can be obtained using a supersymmetric approach quite different to the approach used in that and similar works. By moving to this approach, we can make progress despite the presence of the extra deterministic term in the multi-spin glass. Our approach to the supersymmetric calculations most closely follows \cite{fyodorov2015random, nock}, but several steps require approximations due to the extra term. Some of our intermediate results in the supersymmetric and RMT calculations are stronger than required here, but may well be useful in future calculations, e.g. spiked spherical multi-spin glass models with any fixed number of spikes. Finally, our approach computes the total complexity summed over critical points of any index and then uses large deviations principles to obtain the complexity with specified index. This is the reverse order of the approach taken in \cite{auffinger2013random} and may be more widely useful when working with perturbations of matrices with known large deviations principles.}

\subsection{Multi-layer perceptron neural networks}
Let $f:\mathbb{R}\rightarrow\mathbb{R}$ be a suitably well-behaved (e.g. differentiable almost everywhere and with bounded gradient) non-linear \emph{activation function} which is taken to applied entry-wise to vectors and matrices. 
We study multi-layer perceptron neural networks of the form \begin{equation}
    \vec{y}(\vec{x}) = f(W^{(H)}f(W^{(H-1)}f(\ldots f(W^{(1)}\vec{x})\ldots)))\label{eq:nn_def}
\end{equation}
where the input data vectors $\vec{x}$ lie in $\mathbb{R}^d$ and the \emph{weight matrices} $\{W^{(\ell)}\}_{\ell=1}^H$ have any shapes compatible with $\vec{x}\in\mathbb{R}^d$ and $\vec{y}(\vec{x})\in\mathbb{R}^c$. Note that, as in \cite{choromanska2015loss}, we do not consider biases in the network.

\subsection{Outline of results and methods}\label{subsec:outline_results}
Following \cite{choromanska2015loss}, we view $\vec{y}$ as a random function over a high-dimensional weight-space and explore its critical points, i.e. vanishing points of its gradient. The randomness will come from taking the input data to be random. We define the following key quantities\footnote{Recall that the \emph{index} of a critical points is the number of negative eigenvalues of the Hessian at that point.}:

\begin{align}
        C_{k,H}(u) =  &\textrm{expected number of critical points of } \vec{y}  \textrm{ of index } k \textrm{ taking values at most } u, \label{eq:c_kh_rough}\\
        C_{H}(u) =  &\textrm{expected number of critical points of } \vec{y} \textrm{ taking values at most } u.\label{eq:c_h_rough}
\end{align}

In Section \ref{sec:nns_random_funcs} we make precise our heuristic definitions in (\ref{eq:c_kh_rough})-(\ref{eq:c_h_rough}). Following \cite{auffinger2013random} we obtain precise expressions for $C_{k,H}$ and $C_H$ as expectations under the Gaussian Orthogonal Ensemble (GOE) and use them to study the asymptotics in the large-network limit. Our results reveal almost the same `banded structure' of critical points as first found in \cite{choromanska2015loss}. In particular we establish the existence of the same critical values $E_0 > E_1 >  ... >E_{\infty}$ such that, with overwhelming probability, critical points taking (scaled) values in $(-E_k, -E_{k+1})$ have index at-most $k+2$, and that there are exponentially many such critical points. We further obtain the exact leading order terms in the expansion of $C_H(u)$, this being the only point at which the generalised form of the activation function $f$ affects the results. In passing, we also show that the network can be generalised to having any number of output neurons without much affecting the calculations of \cite{choromanska2015loss} who only consider single-output networks.\\

In Section \ref{sec:nns_random_funcs} we extend the derivation of \cite{choromanska2015loss} to general activation functions by leveraging piece-wise linear approximations, and we extend to multiple outputs and new loss functions with a simple extension of the corresponding arguments in \cite{choromanska2015loss}. In Section \ref{sec:goe_expressions} we obtain expressions for the complexities $C_{k,H}, C_H$ using a Kac-Rice formula as in \cite{auffinger2013random, fyodorov2007replica, fyodorov2004complexity} but are forced to deal with a perturbed GOE matrix, preventing the replication of the remaining calculations in that work. Instead, in Section \ref{sec:asymptotic_evaluation} we use the supersymmetric method following closely the work of \cite{nock, fyodorov2015random} and thereby reach the asymptotic results of \cite{auffinger2013random} by entirely different means.

\section{Neural networks as random functions}\label{sec:nns_random_funcs}
In this section we show that, under certain assumptions, optimising the loss function of a neural network is approximately equivalent to minimising the value of a random function on a high dimensional hypersphere, closely related to the spin glass. Our approach is much the same as \cite{choromanska2015loss} but is extended to a general class of activation functions and also to networks with multiple output neurons.

\subsection{Modelling assumptions}\label{subsec:modelling_assumptions}
We make the following assumptions, all of which are required for the specific analytic framework of this paper, and are taken either exactly from, or by close analogy with \cite{choromanska2015loss}. We defer a discussion of their plausibility  and necessity to Section \ref{subsec:discussion_assumptions}.

\begin{enumerate}
    \item Components of data vectors are i.i.d. standard Gaussians.\label{item: assumption_gaussian}
    \item \label{item: assumption_sparse}The neural network can be well approximated as a much sparser\footnote{As in \cite{choromanska2015loss}, a network with $N$ weights is sparse if it has $s$ unique weight values and $s\ll N$.} network that achieves very similar accuracy.
    \item \label{item: assumption_uniform_weights}The unique weights of the sparse network are approximately uniformly distributed over the graph of weight connections.
    \item \label{item: assumption_diff}The activation function is twice-differentiable almost everywhere in $\mathbb{R}$ and can be well approximated as a piece-wise linear function with finitely many linear pieces.
        \item \label{item: assumption_bernoulli}The action of the piece-wise linear approximation to the activation function on the network graph can be modelled as i.i.d. discrete random variables, independent of the data at each node, indicating which linear piece is active.
    \item \label{item: assumption_sphere}The unique weights of a the sparse neural network lie on a hyper-sphere of some radius.
\end{enumerate}
\begin{remark}
An alternative to assumption \ref{item: assumption_bernoulli} would be to take the activation function to be \emph{random} (and so too its piece-wise linear approximation). In this paradigm, we consider the ensuing analysis of this paper to be a study of the \emph{mean properties} of the induced ensemble of neural networks. Resorting to studying mean properties of complicated stochastic systems is a standard means of simplifying the analysis. We do not develop this remark further, but claim that the following calculations are not much affected by switching to this interpretation.
\end{remark}

\subsection{Linearising loss functions}\label{subsec:linear_loss}
In \cite{choromanska2015loss} the authors consider networks with a single output neuron with either $L_1$ or hinge loss and show that both losses are, in effect, just linear in the network output and with positive coefficient, so that minimising the loss can be replaced with minimising the network output. Our ensuing analysis can just as well be applied to precisely these situations, but here we present arguments to extend the applicability to multiple output neurons for $L_1$ regression loss and the widely-used cross-entropy loss \cite{DBLP:journals/corr/JanochaC17} for classification.\\

\textbf{$\textrm{L}_1$ loss.}
The $L_1$ loss is given by \begin{equation}
    \mathcal{L}_{L_1}(\vec{y}(\vec{X}), \vec{Y}) \defeq \sum_{i=1}^c| y_i(\vec{X}) - Y_i|\label{eq:l1_def}
\end{equation}
where $\vec{X}$ is a single random data vector and $\vec{Y}$ a single target output.
Following \cite{choromanska2015loss}, we assume that the absolute values in (\ref{eq:l1_def}) can be modelled as Bernoulli random variables, $M_i$ say,  taking values in $\{-1, 1\}$. We do not expect $\vec{X}, \vec{Y}$ and the $M_i$ to be independent, however it may be reasonable to assume that $\vec{X}$ and the $M_i$ are conditionally independent conditioned on $\vec{Y}$. We then have  \begin{align}
    \expect_{M | \vec{Y}}\mathcal{L}_{L_1}(\vec{y}(\vec{X}), \vec{Y}) = \expect_{M | \vec{Y}}  \sum_{i=1}^c M_i (y_i(\vec{X}) - Y_i) &=  \sum_{i=1}^c (2\pi_i-1) y_i(\vec{X}) -   \sum_{i=1}^c \expect_{M | \vec{Y}} M_iY_i\notag\\
    &=  \sum_{i=1}^c (2\pi_i-1) y_i(\vec{X}) -   \sum_{i=1}^c (2\pi_i - 1)Y_i
    \label{eq:l1_linear_result}
\end{align}
where the $M_i$ are Bernoulli random variables with $\prob(M_i = 1) = \pi_i$. Observe that the second term in (\ref{eq:l1_linear_result}) is independent of the parameters of the network.

\begin{comment}
\textbf{Hinge loss.}
The hinge loss \cite{DBLP:journals/corr/Tang13} is given by \begin{equation}
    \mathcal{L}_{\textrm{hinge}}(\vec{y}(\vec{X}), \vec{Y}) \defeq \sum_{i=1}^c \max(0, 1 - Y_i y_i(\vec{X})).\label{eq:hing_def}
\end{equation}
We again use Bernoulli random variables, $M_i'$ say, to model the max in (\ref{eq:hing_def}) so that \begin{equation}
   \expect_{M' | \vec{Y}} \mathcal{L}_{\textrm{hinge}}(\vec{y}(\vec{X}), \vec{Y}) =  \expect_{M' | \vec{Y}} \left[\sum_{i=1}^c   M_i'(1 - Y_iy_i(\vec{X}))\right] = \sum_{i=1}^c \pi_i' (1-Y_iy_i(\vec{X})) \label{eq:hinge_linear_part}
\end{equation}
where $M'$ is a Bernoulli random variable taking values in $\{0,1\}$ with $\mathbb{P}(M'= 1) = \pi'$. In the context of hige-loss classifier, the $Y_i$ take values in $\{-1,1\}$. IN \cite{choromanska2015loss} THEY CAN NOW USE THAT $-X\sim X$ TO GET RID OF THE $Y_i$ and NEGATIVE SIGN, BUT WE CAN'T DO THAT BECAUSE OF THE NON-RANDOM TERM. IT IS POSSIBLE THAT OUR HAND-WAVING TOWRDS THE END OF THE SECTION STILL APPLIES.
\end{comment}

\textbf{Cross-entropy loss.}
The cross-entropy loss is given by \begin{equation}
    \mathcal{L}_{\text{entr}}(\vec{y}(\vec{X}), \vec{Y}) \defeq -\sum_{i=1}^c Y_i \log\left(\text{SM}[\vec{y}(\vec{X})]_i\right)\label{eq:ce_def}
\end{equation}
where $\text{SM}$ is the \emph{soft-max} function: \begin{align}
    \text{SM} : &\mathbb{R}^c \rightarrow \mathbb{R}^c,\notag\\
    &\vec{z} \mapsto \frac{\exp(\vec{z})}{\sum_{i=1}^m \exp(z_i)} \label{eq:softmax_def}
\end{align}
and $\exp(\cdot)$ is understood to be applied entry-wise. Note that we are applying the standard procedure of mapping network outputs onto the simplex $\Delta^{c-1}$ to allow us to calculate a mutual entropy. Restricting to $c$-class classification problems and using one-hot label vectors \cite{one-hot}, we obtain \begin{align}
   \mathcal{L}_{\text{entr}}(\vec{y}(\vec{X}), \vec{Y}) &= -\sum_{i=1}^c Y_i\left\{y_i(\vec{X}) - \log\left(\sum_{j=1}^c \exp(y_j(\vec{X}))\right)\right\}\label{eq:ce_linear_part}
\end{align}
We note that classification networks typically produce very `spiked' soft-max outputs \cite{DBLP:journals/corr/GuoPSW17}, therefore we make the approximation \begin{equation}
    \sum_{i=1}^c \exp(y_i(\vec{X})) \approx \max_{i=1,\ldots, c} \{\exp(y_i(\vec{X}))\}\label{eq:ce_approx}
\end{equation}
and so we obtain from (\ref{eq:ce_linear_part}) and (\ref{eq:ce_approx}) \begin{align}
  \mathcal{L}_{\text{entr}}(\vec{y}(\vec{X}), \vec{Y}) &\approx -\sum_{i=1}^c\left\{ Y_i y_i(\vec{X}) - Y_i\max_{j=1,\ldots,c}\{y_j(\vec{X})\}\right\}\label{eq:ce_linear_approx_pre_categorical}
\end{align}
We now model the max operation in (\ref{eq:ce_linear_approx_pre_categorical}) with a categorical variable, $M''$ say, over the indices $i=1,\ldots, c$ and take expectations (again assuming conditional independence of $\vec{X}$ and $M''$) to obtain \begin{equation}
    \expect_{M'' | \vec{Y} }\mathcal{L}_{\text{entr}}(\vec{y}(\vec{X}), \vec{Y}) = -\sum_{i=1}^c  Y_i \left(y_i(\vec{x}) - \sum_{j=1}^c \pi_j'' y_j(\vec{X})\right)\label{eq:ce_linear_final}
\end{equation}
Now $\vec{Y}$ is a one-hot vector and so (\ref{eq:ce_linear_final}) in fact reduces to \begin{equation}\label{eq:ce_linear_result}
      \expect_{M'' | \vec{Y} }\mathcal{L}_{\text{entr}}(\vec{y}(\vec{X}), \vec{Y}) = \sum_{j=1}^c \pi_j'' y_j(\vec{x}) - y_i(\vec{x})
\end{equation} 
for some $i$.\\

\begin{remark}
The arguments in this section are not intended to be anything more than heuristic, so as to justify our study of $\vec{a}^T\vec{y}$ for some constant $\vec{a}$ instead of the actual loss function of a neural network. The modelling assumptions required are no stronger than those used in \cite{choromanska2015loss}.
\end{remark}

\subsection{Network outputs as spin glass-like objects}
We assume that the activation function, $f$, can be well approximated by a piece-wise linear function with finitely many linear pieces. To be precise, given any $\epsilon > 0$ there exists some positive integer $L$ and real numbers $\{\alpha_i, \beta_i\}_{i=1}^L$ and real $a_1 < a_2 < \ldots < a_{L-1}$ such that \begin{align}
    |f(x) - (\alpha_{i+1} x + \beta_{i+1})| &< \epsilon ~~~ \forall x\in(a_i, a_{i+1}], ~ 1 \leq i \leq L-2,\notag\\
    |f(x) - (\alpha_1 x + \beta_1)| &< \epsilon ~~~ \forall x\in(-\infty, a_1],\label{eq:piecewise_lin_def}\\
    |f(x) - (\alpha_L x + \beta_L)| &< \epsilon ~~~ \forall x\in(a_{L-1}, \infty).\notag
\end{align}
Note that the $\{\alpha_i, \beta_i\}_{i=1}^L$ and $\{a_i\}_{i=1}^{L-1}$ are constrained by $L-1$ equations to enforce continuity, viz. \begin{align}\label{eq:pwise_cont_constraint}
    \alpha_{i+1}a_i + \beta_{i+1} = \alpha_{i}a_i + \beta_i, ~~~~ 1\leq i \leq L-1
\end{align}
\begin{defn}
A continuous piece-wise linear function with $L$ pieces $\hat{f}\left(x; \left\{\alpha_i, \beta_i\right\}_{i=1}^L , \left\{a_i\right\}_{i=1}^{L-1}\right)$ is an $(L,\epsilon)$-\emph{approximation} to to a function $f$ if $\left|f(x) - \hat{f}\left(x; \left\{\alpha_i, \beta_i\right\}_{i=1}^L , \left\{a_i\right\}_{i=1}^{L-1}\right)\right| < \epsilon$ for all $x\in\mathbb{R}$.
\end{defn}

Given the above definition, we can establish the following.

\begin{lemma}\label{lemma:linear_approx}
Let $\hat{f}\left(\cdot ; \left\{\alpha_i, \beta_i\right\}_{i=1}^L , \left\{a_i\right\}_{i=1}^{L-1}\right)$ be a $(L, \epsilon)$-approximation to $f$. Assume that all the $W^{(i)}$ are bounded in Frobenius norm\footnote{Recall assumption \ref{item: assumption_sphere}, which is translated here to imply bounded Frobenius norm.}. Then there exists some constant $K>0$, independent of all $W^{(i)}$, such that\begin{equation}
    \left\Vert f(W^{(H)}f(W^{(H-1)}f(\ldots f(W^{(1)}\vec{x})\ldots))) -  \hat{f}(W^{(H)}\hat{f}(W^{(H-1)}\hat{f}(\ldots \hat{f}(W^{(1)}\vec{x})\ldots)))\right\Vert_2 < K\epsilon\label{eq:pwise_lemma}
\end{equation}
for all $\vec{x}\in\mathbb{R}^d.$
\end{lemma}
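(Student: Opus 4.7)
My plan is to proceed by induction on the layer index, propagating the approximation error through the network. Write $\vec{z}^{(0)} = \vec{x}$, $\vec{z}^{(\ell)} = f(W^{(\ell)}\vec{z}^{(\ell-1)})$ for the true activations, and $\hat{\vec{z}}^{(\ell)} = \hat{f}(W^{(\ell)}\hat{\vec{z}}^{(\ell-1)})$ (with $\hat{\vec{z}}^{(0)} = \vec{x}$) for the piece-wise linear replacement. The quantity to control is $e_\ell \defeq \|\vec{z}^{(\ell)} - \hat{\vec{z}}^{(\ell)}\|_2$, and the statement (\ref{eq:pwise_lemma}) is $e_H \leq K\epsilon$.

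The inductive step follows from inserting an intermediate term and using the triangle inequality:
\begin{align}
e_\ell &\leq \bigl\|f(W^{(\ell)}\vec{z}^{(\ell-1)}) - \hat{f}(W^{(\ell)}\vec{z}^{(\ell-1)})\bigr\|_2 + \bigl\|\hat{f}(W^{(\ell)}\vec{z}^{(\ell-1)}) - \hat{f}(W^{(\ell)}\hat{\vec{z}}^{(\ell-1)})\bigr\|_2. \notag
\end{align}
The first term is at most $\sqrt{n_\ell}\,\epsilon$ where $n_\ell$ is the width of layer $\ell$, by the definition of an $(L,\epsilon)$-approximation applied entry-wise. For the second term I use that $\hat{f}$, being continuous and piece-wise linear with finitely many pieces of slopes $\{\alpha_i\}_{i=1}^L$, is globally Lipschitz with constant $L_{\hat f} \defeq \max_i|\alpha_i|$; combined with the elementary bound $\|W^{(\ell)}\vec{v}\|_2 \leq \|W^{(\ell)}\|_F\|\vec{v}\|_2$ and assumption \ref{item: assumption_sphere} (which yields a uniform bound $\|W^{(\ell)}\|_F \leq R$ independent of the parameters), this term is bounded by $L_{\hat f} R \,e_{\ell-1}$. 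Hence $e_\ell \leq \sqrt{n_\ell}\,\epsilon + L_{\hat f} R\, e_{\ell-1}$, with base case $e_0 = 0$.

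Unrolling this recursion over the $H$ layers gives
\begin{equation}
e_H \leq \epsilon \sum_{\ell=1}^{H} \sqrt{n_\ell}\, (L_{\hat f} R)^{H-\ell}, \notag
\end{equation}
and the right-hand side is of the form $K\epsilon$ with $K$ depending only on $H$, the layer widths, $R$, and $L_{\hat f}$ — crucially not on the particular matrices $W^{(i)}$ nor on $\vec{x}$. The uniformity in $\vec{x}$ is built into the argument because the approximation bound $|f(t)-\hat f(t)|<\epsilon$ holds at every real $t$, so no hypothesis on the input is required.

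The only real subtlety is ensuring that the constant $K$ is genuinely independent of $\vec{x}$ and of the specific weight matrices. The argument above handles this by invoking the \emph{uniform} nature of the $(L,\epsilon)$-approximation (rather than, say, a pointwise Taylor bound that would degrade for large pre-activations) and by replacing the operator norm of each $W^{(\ell)}$ with its Frobenius norm, which assumption \ref{item: assumption_sphere} constrains. Everything else is a routine Lipschitz propagation, so I do not expect any deeper obstacle.
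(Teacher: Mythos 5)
Your proof is correct and follows essentially the same route as the paper's: an inductive propagation of the error through the layers using the Lipschitz constant $\max_i|\alpha_i|$ of $\hat f$, the Frobenius-norm bound on the weights, and the uniform $(L,\epsilon)$-approximation bound. If anything, your version is slightly more explicit than the paper's, since you spell out the triangle-inequality term $\|f(W^{(\ell)}\vec{z}^{(\ell-1)})-\hat f(W^{(\ell)}\vec{z}^{(\ell-1)})\|_2 \leq \sqrt{n_\ell}\,\epsilon$ that the paper's induction step leaves implicit.
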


\begin{proof}
Suppose that (\ref{eq:pwise_lemma}) holds with $H-1$ in place of $H$.
Because $\hat{f}$ is piece-wise linear and continuous then we clearly have \begin{equation}
   |\hat{f}(x) - \hat{f}(y)| \leq \max_{i=1,\ldots, L}\{|\alpha_i|\} |x-y|\equiv K'|x-y|\label{eq:fhat_lipschitz}
\end{equation} 
which can be seen by writing \begin{equation}
    \hat{f}(x) - \hat{f}(y)  = (\hat{f}(x) - \hat{f}(a_i)) + (\hat{f}(a_i) - \hat{f}(a_{i-1})) + \ldots + (\hat{f}(a_{j+1}) - \hat{f}(a_j)) + (\hat{f}(a_j) - \hat{f}(y))
\end{equation}
for all intermediate points $a_j, \ldots, a_i \in (y, x)$. Using (\ref{eq:fhat_lipschitz}) and our induction assumption we obtain \begin{align}
    &\left\Vert \hat{f}(W^{(H)}f(W^{(H-1)}f(W^{(H-2)}f(\ldots f(W^{(1)}\vec{x})\ldots))) - \hat{f}(W^{(H)}\hat{f}(W^{(H-1)}\hat{f}(W^{(H-2)}\hat{f}(\ldots \hat{f}(W^{(1)}\vec{x})\ldots)))\right\Vert_2\notag \\
    \leq &cK'\left\Vert W^{(H)}\left[f(W^{(H-1)}f(W^{(H-2)}f(\ldots f(W^{(1)}\vec{x})\ldots))) - \hat{f}(W^{(H-1)}\hat{f}(W^{(H-2)}\hat{f}(\ldots \hat{f}(W^{(1)}\vec{x})\ldots)))\right]\right\Vert_2\notag\\
    \leq &cKK'\left\Vert W^{(H)}\right\Vert_F \epsilon \notag \\
    \leq &K''\epsilon,\notag
\end{align}
for some $K''$, where on the last line we have used the assumption that the network weights are bounded to bound $\Vert W^{(H)}\Vert_F$. The result for $H=1$ follows immediately from (\ref{eq:fhat_lipschitz}).\\
\end{proof}

\begin{remark}
One could be more explicit in the construction of the piece-wise linear approximation $\hat{f}$ from $f$ given the error tolerance $\epsilon$ by following e.g. \cite{berjon2015optimal}. We do not develop this further here as we do not believe it to be important to the practical implications of our results.
\end{remark}

In much the same vein as \cite{choromanska2015loss} (c.f. Lemma 8.1 therein), we now use the following general result for classifiers to further justify our study of approximations to a neural network in the rest of the paper.

\begin{theorem}\label{thm:correlation}
Let $Z_1$ and $Z_2$ be the outputs of two arbitrary $c$-class classifiers on a dataset $\mathcal{X}$. That is, $Z_1(x),Z_2(x)$ take values in $\{1,2,\ldots, c\}$ for $x\in \mathcal{X}$. If $Z_1$ and $Z_2$ differ on no more than $\epsilon|\mathcal{X}|$ points in $\mathcal{X}$, then \begin{equation}
    \text{corr}(Z_1, Z_2) = 1 - \mathcal{O}(\epsilon)
\end{equation}
where, recall, the correlation of two random variables is given by \begin{equation}
   \frac{ \mathbb{E}(Z_1Z_2) - \mathbb{E}Z_1\mathbb{E}Z_2}{std(Z_1)std(Z_2)}.
\end{equation}
\end{theorem}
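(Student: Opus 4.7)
My plan is a short and direct computation using the fact that $Z_1$ and $Z_2$ are bounded integer-valued random variables which agree with probability at least $1-\epsilon$ when $x$ is drawn uniformly from $\mathcal{X}$. Take $X \sim \mathrm{Unif}(\mathcal{X})$ so that $Z_1(X), Z_2(X) \in \{1,\ldots,c\}$, and set $D \defeq Z_1 - Z_2$. The hypothesis says $\prob(D \neq 0) \leq \epsilon$, and the range bound gives $|D| \leq c-1$ pointwise. Therefore
\begin{equation*}
\expect[D^2] \;\leq\; (c-1)^2 \, \prob(D \neq 0) \;\leq\; (c-1)^2 \epsilon,
\end{equation*}
so in particular $\mathrm{Var}(D) \leq \expect[D^2] = \mathcal{O}(\epsilon)$.

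Next I would invoke the standard identity $\mathrm{Var}(Z_1 - Z_2) = \mathrm{Var}(Z_1) + \mathrm{Var}(Z_2) - 2\,\mathrm{Cov}(Z_1, Z_2)$, rewritten as
\begin{equation*}
\mathrm{Cov}(Z_1, Z_2) \;=\; \tfrac{1}{2}\bigl(\mathrm{Var}(Z_1) + \mathrm{Var}(Z_2)\bigr) \;-\; \tfrac{1}{2}\mathrm{Var}(D).
\end{equation*}
Dividing both sides by $\mathrm{std}(Z_1)\,\mathrm{std}(Z_2)$ and applying AM-GM in the form $\mathrm{Var}(Z_1) + \mathrm{Var}(Z_2) \geq 2\,\mathrm{std}(Z_1)\,\mathrm{std}(Z_2)$ yields
\begin{equation*}
\mathrm{corr}(Z_1,Z_2) \;\geq\; 1 \;-\; \frac{\mathrm{Var}(D)}{2\,\mathrm{std}(Z_1)\,\mathrm{std}(Z_2)} \;=\; 1 - \mathcal{O}(\epsilon),
\end{equation*}
where the implicit constant depends on $c$ and on a fixed lower bound for $\mathrm{std}(Z_1)\,\mathrm{std}(Z_2)$. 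The matching upper bound $\mathrm{corr}(Z_1,Z_2) \leq 1$ is just Cauchy--Schwarz, which together with the above gives the claimed $1 - \mathcal{O}(\epsilon)$.

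The only genuine subtlety, rather than an obstacle, is degeneracy: the statement tacitly requires that both classifiers have nonzero variance, as the correlation is otherwise undefined. In the intended application $Z_1, Z_2$ are nontrivial $c$-class classifiers with standard deviations bounded away from zero uniformly in the approximation parameter $\epsilon$, so the constant in the $\mathcal{O}(\epsilon)$ term is legitimately $\epsilon$-independent. I would note this explicitly at the end of the proof rather than burden the statement with extra hypotheses, mirroring the level of rigour used in the analogous Lemma 8.1 of \cite{choromanska2015loss}.
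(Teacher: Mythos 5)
Your argument is correct, but it is a genuinely different route from the one in the paper. The paper proceeds by an explicit moment computation: it partitions $\mathcal{X}$ into the sets $\mathcal{X}_i$ (where $Z_1=i$) and $\mathcal{X}_{i,j}$ (where $Z_1=i$, $Z_2=j$), introduces the proportions $p_i$, $\epsilon_i^{\pm}$, writes out $\mathbb{E}Z_1$, $\mathbb{E}Z_2$, $\mathbb{E}[Z_1Z_2]$ and the two standard deviations in terms of these, and then observes that the covariance equals $\sum_i i^2 p_i - \sum_{i,j} ij\,p_ip_j + \mathcal{O}(\epsilon)$, so that the ratio is $(1+\mathcal{O}(\epsilon))/(1+\mathcal{O}(\epsilon))^{1/2} = 1+\mathcal{O}(\epsilon)$. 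You instead work with the difference $D = Z_1 - Z_2$, bound $\mathrm{Var}(D) \leq \mathbb{E}[D^2] \leq (c-1)^2\epsilon$ using boundedness of the range, and convert this into a correlation bound via $\mathrm{Cov}(Z_1,Z_2) = \tfrac{1}{2}(\mathrm{Var}(Z_1)+\mathrm{Var}(Z_2)-\mathrm{Var}(D))$ together with AM--GM and Cauchy--Schwarz. Your version is shorter, makes the dependence on $c$ explicit in the constant $(c-1)^2$, and avoids the bookkeeping with $\epsilon_i^{\pm}$; the paper's version buys explicit expressions for all the moments in terms of the confusion proportions, which is mildly more informative but proves nothing stronger. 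The non-degeneracy caveat you flag (correlation undefined, and the implied constant blowing up, when $\mathrm{std}(Z_1)\,\mathrm{std}(Z_2)$ is not bounded away from zero) is equally implicit in the paper's final step, where dividing by the standard deviations and writing $(1+\mathcal{O}(\epsilon))/(1+\mathcal{O}(\epsilon))^{1/2}$ tacitly treats the variance of $Z_1$ as a fixed positive quantity; so your explicit remark is an honest accounting of an assumption the paper also makes, not a gap in your argument.
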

\begin{proof}
Let $\datax_i\subset\datax$ be the set of data points for which $Z_1=i$ for $i=1,2, \ldots, c$. Let $\datax_{i,j}\subset\datax_i$ be those points for which $Z_1 = i$ but $Z_2 = j$ where $j\neq i$. Define the following: \begin{equation}
    p_i = \frac{|\datax_i|}{|\datax|}, ~~~ \epsilon_i^+ = \sum_{j\neq i}\frac{|\datax_{i,j}|}{|\datax|}, ~~~ \epsilon_i^- = \sum_{j\neq i}\frac{|\datax_{j,i}|}{|\datax|}.
\end{equation}
We then have \begin{align}
    \expect Z_1 &= \sum_{i=1}^c i p_i,\label{eq:corr_Ez1}\\
    \expect Z_2 &= \sum_{i=1}^c i (p_i - \epsilon^+_i + \epsilon^-_i)\label{eq:corr_Ez2}\\
    \expect Z_1Z_2 &= \sum_{i=1}^c i^2 (p_i - \epsilon_i^+) + \sum_{1\leq i < j \leq c} ij\frac{|\datax_{i,j}| + |\datax_{j,i}|}{|\datax|}\label{eq:corr_Ez1z2} \\
    std(Z_1) &= \left[\sum_{i=1}^c i^2p_i - \sum_{i,j}ijp_ip_j\right]^{1/2}\label{eq:corr_stdZ1}\\
     std(Z_2) &= \left[\sum_{i=1}^c i^2(p_i -\epsilon_i^+ + \epsilon_i^-) - \sum_{i,j}ij(p_i - \epsilon_i^+ + \epsilon_i^-)(p_j - \epsilon_j^+ + \epsilon_j^-)\right]^{1/2}.\label{eq:corr_stdZ2}
\end{align}

Now, by assumption $\sum_i \epsilon_i^{\pm} \leq \mathcal{O}(\epsilon)$ and so $\epsilon_i^{\pm} \leq \mathcal{O}(\epsilon)$ for all $i$. Similarly, $|\datax_{i,j}|/|\datax| \leq \mathcal{O}(\epsilon)$ and so we quickly obtain from (\ref{eq:corr_Ez1})-(\ref{eq:corr_Ez1z2}) \begin{equation}
    cov(Z_1, Z_2) = \sum_{i=1}^c i^2p_i - \sum_{i,j}ijp_ip_j + \mathcal{O}(\epsilon).\label{eq:corr_cov}
\end{equation}
Finally, combining (\ref{eq:corr_stdZ1}) - (\ref{eq:corr_cov}) we obtain \begin{equation}
    corr(Z_1, Z_2) = \frac{1 + \mathcal{O}(\epsilon)}{(1  + \mathcal{O}(\epsilon))^{1/2}} = 1 + \mathcal{O}(\epsilon).
\end{equation}
\end{proof}

The final intermediate result we require gives an explicit expression for the output of a neural network with a piece-wise linear activation function.

\begin{lemma}\label{lemma:pwise_network}
Consider the following neural network \begin{equation}
    \hat{\vec{y}}(\vec{x}) = \hat{f}(W^{(H)}\hat{f}(\ldots \hat{f}(W^{(1)}\vec{x})\ldots))
\end{equation}
where $\hat{f}\left(\cdot; \left\{\alpha_i, \beta_i\right\}_{i=1}^L , \left\{a_i\right\}_{i=1}^{L-1}\right)$ is a piece-wise linear function with $L$ pieces. Then there exist $A_{i,j}$ taking values in \begin{equation}
   \mathcal{A} \defeq \left\{\prod_{i=1}^H \alpha_{j_i}\ ~:~ j_1,\ldots, j_H \in \{1,\ldots, L\}\right\}
\end{equation} and $ A^{(\ell)}_{i,j}$ taking values in \begin{equation}
    \mathcal{A}^{(\ell)} \defeq\left\{\beta_k\prod_{r=1}^{H-\ell} \alpha_{j_r} ~:~ j_1,\ldots, j_{H-\ell}, k \in \{1,\ldots, L\}\right\}\end{equation}
    such that \begin{equation}
        \hat{y_i}(\vec{x}) = \sum_{j=1}^d \sum_{k\in\Gamma_i}x_{j,k} A_{j,k} \prod_{l=1}^H w_{j,k}^{(l)} + \sum_{\ell=1}^H\sum_{j=1}^{n_{\ell}} \sum_{k\in\Gamma_i^{(\ell)}} A_{j,k}^{(\ell)} \prod_{r=\ell +1}^H w_{j,k}^{(r)}
    \end{equation}
    where $\Gamma_i$ is an indexing of all paths through the network to the $i$-th output neuron, $\Gamma_i^{(\ell)}$ is an indexing of all the paths through the network from the $\ell$-th layer to the $i$-th output neuron, $w_{j,k}^{(l)}$ is the weight applied to the $j$-th input on the $k$-th path in the $l$-th layer, $x_{j,k} = x_j$, and $n_{\ell}$ is the number of neurons in layer $\ell$.
\end{lemma}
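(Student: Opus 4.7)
The plan is to proceed by induction on the depth $H$. The key observation is that, for a fixed input $\vec{x}$ and fixed weights $\{W^{(\ell)}\}$, once one records which linear piece of $\hat f$ is active at each neuron, the entire network reduces to a nested composition of affine maps. Expanding that composition layer by layer yields exactly the path-sum in the statement, with the claimed coefficient sets $\mathcal{A}$ and $\mathcal{A}^{(\ell)}$. So the proof is constructive and the ``coefficients'' $A_{j,k}$, $A^{(\ell)}_{j,k}$ are read off from the piece indices selected at each neuron.

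Base case $H=1$: for each output coordinate $i$ there is a piece index $k_i \in \{1,\dots,L\}$, depending on $\vec x$ and $W^{(1)}$, such that $\hat f\bigl(\sum_j W^{(1)}_{ij} x_j\bigr) = \alpha_{k_i}\sum_j W^{(1)}_{ij} x_j + \beta_{k_i}$. Reading off $A_{j,k} = \alpha_{k_i}\in \mathcal{A}$ and $A^{(1)}_{j,k} = \beta_{k_i}\in\mathcal{A}^{(1)}$ gives the claim, with the trivial path indexing (a single path per input coordinate).

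Inductive step: suppose the lemma holds for depth $H-1$. Write $\hat y_i(\vec x) = \hat f\bigl(\sum_m W^{(H)}_{im}\,\hat h_m(\vec x)\bigr)$ where $\hat{\vec h}$ is the output of the depth-$(H-1)$ subnetwork formed by $W^{(1)},\dots,W^{(H-1)}$. By piecewise linearity of $\hat f$, there exists a piece index $k_i$ (again depending on $\vec x$ and the weights) such that
\[
\hat y_i = \alpha_{k_i}\sum_m W^{(H)}_{im}\,\hat h_m + \beta_{k_i}.
\]
Apply the inductive hypothesis to each $\hat h_m$: it is a sum of input-to-$m$ path contributions with coefficients in $\mathcal{A}' \defeq \{\prod_{r=1}^{H-1}\alpha_{j_r}\}$, plus intermediate contributions from layers $\ell < H$ with coefficients in $(\mathcal{A}^{(\ell)})' \defeq \{\beta_k \prod_{r=1}^{H-1-\ell}\alpha_{j_r}\}$. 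Substituting and distributing the outer factor $\alpha_{k_i} W^{(H)}_{im}$ across these sums produces three groups of terms: (i) input-to-$m$ paths extended by the edge $W^{(H)}_{im}$ become input-to-$i$ paths of length $H$ whose coefficient gains one extra $\alpha_{k_i}$, giving a product of $H$ $\alpha$'s, i.e.\ an element of $\mathcal{A}$; (ii) layer-$\ell$ intermediate paths (for $\ell < H$) extended similarly acquire one extra $\alpha_{k_i}$ and one extra weight $W^{(H)}_{im}$, so they carry $H-\ell$ weights, one $\beta$, and $H-\ell$ $\alpha$'s, landing in $\mathcal{A}^{(\ell)}$; (iii) the standalone $\beta_{k_i}$ is a new layer-$H$ intermediate contribution carrying no weights and no $\alpha$'s, which is exactly an element of $\mathcal{A}^{(H)} = \{\beta_k\}$. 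Relabelling the concatenated paths as the path index sets $\Gamma_i$ and $\Gamma_i^{(\ell)}$ of the depth-$H$ network completes the step.

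The only real obstacle is notational: one must check that the natural concatenation of depth-$(H-1)$ paths with final-layer edges realises the path index sets $\Gamma_i$ and $\Gamma_i^{(\ell)}$ for the depth-$H$ network, and that the extra $\beta_{k_i}$ contribution fits the $\ell = H$ slot that is absent at depth $H-1$. Once the path indexing is set up consistently, no further analytic content is required beyond piecewise linearity of $\hat f$.
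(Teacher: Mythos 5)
Your proof is correct and takes essentially the same approach as the paper's: an induction on depth using piecewise linearity of $\hat f$ at each neuron, followed by a re-indexing of concatenated paths (the paper unrolls the first two layers from the input side and then invokes "re-indexing and induction", whereas you peel off the output layer and apply the inductive hypothesis to the depth-$(H-1)$ subnetwork, but the content is identical). Your explicit handling of the $\ell=H$ term as the standalone $\beta_{k_i}$ with empty weight and $\alpha$ products is a welcome clarification of a detail the paper leaves implicit.
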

\begin{proof}
Firstly, for some $j=1,\ldots, L$ \begin{equation}
    \hat{f}(W^{(1)}\vec{x})_i = \alpha_j (W^{(1)}\vec{x})_i + \beta_j
\end{equation}
and so there exist $j_1, j_2,\ldots \in\{1,\ldots, L\}$ such that  \begin{equation}
    [W^{(2)} \hat{f}(W^{(1)}\vec{x})]_i = \sum_k W^{(2)}_{ik}( \alpha_{j_k}(W^{(1)}\vec{x})_k  + \beta_{j_k}) = \sum_k \alpha_{j_k}W^{(2)}_{ik} \sum_l W^{(1)}_{kl}x_l + \sum_k  W^{(2)}_{ik}\beta_{j_k}.\label{eq:pwise_network}
\end{equation}
Continuing in the vein of (\ref{eq:pwise_network}), there exist $k_1,k_2, \ldots\in\{1, \ldots, L\}$ such that \begin{equation}
\hat{f}(W^{(2)} \hat{f}(W^{(1)}\vec{x}))_i = \alpha_{k_i}\sum_r \alpha_{j_r}W_{ir}^{(2)}\sum_l W_{kl}^{(1)}x_l + \alpha_{k_i}\sum_{r} W^{(2)}_{ir} \beta_{j_r} + \beta_{k_i}
\end{equation}
from which we can see that the result follows by re-indexing and induction.
\end{proof}

We now return to the neural network $\vec{y}(\cdot)$. Fix some small $\epsilon>0$, let $\hat{f}\left(\cdot; \{\alpha_i, \beta_i\}_{i=1}^L, \{x_i\}_{i=}^{L-1}\right)$ be a $(L,\epsilon)$-approximation to $f$ and let $\hat{\vec{y}}$ be the same network as $\vec{y}$ but with $f$ replaced by $\hat{f}$. By Lemma \ref{lemma:linear_approx}, we have\footnote{Here we use the standard notation that, for a function $p$ on $\mathcal{B}$, $p\lesssim \epsilon$ if there exists a constant $K$ such that $p(x) \leq K\epsilon$ for all $x\in\mathcal{B}$.}  \begin{equation}\label{eq:y_yhat_epsilon}
\Vert\vec{y}(\vec{x}) - \hat{\vec{y}}(\vec{x})\Vert_2 \lesssim \epsilon\end{equation} for all $\vec{x}\in\mathbb{R}^d$, and so we can adjust the weights of $\hat{\vec{y}}$ to obtain a network with accuracy within $\mathcal{O}(\epsilon)$ of $\vec{y}$. We then apply Lemma \ref{lemma:pwise_network} to $\hat{\vec{y}}$ and assume\footnote{This assumption is the natural analogue of the assumption used in \cite{choromanska2015loss}.} that the $A_{i,j}$ and $A_{i,j}^{(\ell)}$ can be modelled as i.i.d. discrete random variables with \begin{align}\label{eq:rho_def}
    \expect A_{i,j} = \rho, ~~~ \expect A_{i,j}^{(\ell)} = \rho_{\ell}
\end{align}
and then \begin{equation}\label{eq:spin_glass_pre_sparse}
\expect \hat{y}_i(\vec{X}) = \rho \expect_{\vec{x}} \sum_{j=1}^d \sum_{k\in\Gamma_i}X_{j,k} \prod_{l=1}^H w_{j,k}^{(l)} + \sum_{\ell=1}^H \rho_{\ell}\sum_{j=1}^{n_{\ell}} \sum_{k\in\Gamma_i^{(\ell)}} \prod_{r=\ell +1}^H w_{j,k}^{(r)}.
\end{equation}
Our reasoning is now identical to that in Section 3.3 of \cite{choromanska2015loss}. We use the assumptions of sparsity and uniformity (Section \ref{subsec:modelling_assumptions}, assumptions \ref{item: assumption_sparse}, \ref{item: assumption_uniform_weights}) and some further re-indexing to replace (\ref{eq:spin_glass_pre_sparse}) by \begin{equation}\label{eq:y_tilde_results}
    \expect \tilde{y}_i(\vec{X}) = \rho \expect_{\vec{X}} \sum_{i_1, \ldots, i_H = 1}^{\Lambda} X_{i_1, \ldots, i_H} \prod_{k=1}^H w_{i_k} + \sum_{\ell=1}^H\rho_{\ell} \sum_{i_{\ell + 1}, \ldots, i_H=1}^{\Lambda} \prod_{k=\ell +1}^H w_{i_k}
\end{equation}
where $\Lambda$ is the number of unique weights of the network and, in particular, the sparsity and uniformity assumptions are chosen to give \begin{align}\label{eq:sparse_uniform_epsilon}
    \expect_{\vec{X}}\left\Vert \tilde{\vec{y}}(\vec{X}) -  \hat{\vec{y}}(\vec{X})\right\Vert_2 \lesssim \epsilon.
\end{align}
(\ref{eq:y_yhat_epsilon}) and (\ref{eq:sparse_uniform_epsilon}) now give \begin{equation}\label{eq:y_ytwid_eps}
       \expect_{\vec{X}} \left\Vert   \tilde{\vec{y}}(\vec{X}) -  \vec{y}(\vec{X})\right\Vert_2 \lesssim \epsilon
\end{equation}
and in the case of classifiers, (\ref{eq:y_ytwid_eps}) ensures that the conditions for  Theorem \ref{thm:correlation} are met, so establishing that \begin{equation}\label{eq:spin_glass_corr}
corr(\tilde{\vec{y}}(\vec{X}) ,\vec{y}(\vec{X})) = 1 - \mathcal{O}(\epsilon).
\end{equation}
As in \cite{choromanska2015loss}, we use these heuristics to justify studying $\tilde{\vec{y}}$ hereafter in place of $\vec{y}$.

Recalling the results of Section \ref{subsec:linear_loss}, in particular (\ref{eq:l1_linear_result}) and (\ref{eq:ce_linear_result}) we conclude that to study the loss surface of $\tilde{\vec{y}}$ under some loss function it is sufficient to study quantities of the form $\sum_{i=1}^c \eta_i \tilde{y}_i$ and, in particular, we study the critical points. The $X$ are centred Gaussian random variables and so any finite weighted sum of some $X$ is a centred Gaussian variable with some variance. We can re-scale variances and absorb constants into the $\rho_{\ell}$ and thereby replace $\sum_i \eta_i \tilde{y}_i(\vec{X})$ with $\tilde{y}_i$(\vec{X}).

%We will address the validity of this new assumption in Section \ref{subsec:discussion_assumptions}. 

Note that we assumed an $L_2$ constraint on the network weights (Section \ref{subsec:modelling_assumptions}, point 6)  and that now carries forward as \begin{equation}\label{eq:weight_norm}
    \frac{1}{\Lambda}\sum_{i=1}^{\Lambda} w_i^2 = \mathcal{C}
\end{equation} 
for some constant $\mathcal{C}$. For ease of notation in the rest of the paper, we define \begin{equation}\label{eq:g_def}
    g(\vec{w}) = \sum_{i_1, \ldots, i_H = 1}^{\Lambda} X_{i_1, \ldots, i_H} \prod_{k=1}^H w_{i_k} + \sum_{\ell=1}^H\rho_{\ell}' \sum_{i_{\ell + 1}, \ldots, i_H=1}^{\Lambda} \prod_{k=\ell +1}^H w_{i_k}
\end{equation}
where $\rho_{\ell}' \defeq  \rho_{\ell}/\rho$. Finally, recall that we assumed the data entries  $X_i$ are i.i.d standard Gaussians. To allow further analytic progress to be made, we follow \cite{choromanska2015loss} and now extend this assumption to $X_{i_1, \ldots, i_H}\overset{\text{i.i.d}}{\sim} \mathcal{N}(0,1)$. The random function $g$ is now our central object of study and, without loss of generality, we take $\mathcal{C}=1$ in (\ref{eq:weight_norm}) so that $g$ is a random function on the ($\Lambda$-1)-sphere of radius $\sqrtsign{\Lambda}$.

Observe that the first term in (\ref{eq:g_def}) is precisely the form of an $H$-spin glass as found in \cite{choromanska2015loss} and the second term is deterministic and contains (rather obliquely) all the dependence on the activation function. Having demonstrated the link between our results and those in \cite{choromanska2015loss}, we now set $\Lambda = N$ for convenience and to make plain the similarities between what follows and \cite{auffinger2013random}. We also drop the primes on $\rho_{\ell}'$.

\subsection{Validity of the modelling assumptions.}\label{subsec:discussion_assumptions}
The authors of \cite{choromanska2015loss} discuss the modelling assumptions in \cite{choromanska2015open}. We add to their comments that the hyper-sphere assumption \ref{item: assumption_sphere} seems easily justifiable as merely $L_2$ weight regularisation.\\

Assumption \ref{item: assumption_bernoulli} from Section \ref{subsec:modelling_assumptions} is perhaps the least palatable, as the section of a piece-wise linear activation function in which a pre-activation value lies is a deterministic function of that pre-activation value and so certainly not i.i.d. across the network and the data items. It is not clear how to directly test the assumption experimentally, but we can certainly perform some experiments to probe its plausibility.

%Added after JSTAT review
\jstat{For the sake of clarity, consider initially a  \texttt{ReLU} activation function. Let $\mathscr{N}$ be the set of all nodes (neurons) in a neural network, and let $\mathscr{D}$ be a dataset of inputs for this network. Assumption \ref{item: assumption_bernoulli} says that we can model the action of the activation function at any neuron  $\mathfrak{n}\in\mathscr{N}$ and any data point $\vec{x}\in\mathscr{D}$ as i.i.d. Bernoulli random variables. In particular, this is why the the expectations over the activation function indicators and the data distribution can be taken independently in (\ref{eq:spin_glass_pre_sparse}). If one fixes some neuron $\mathfrak{n}\in\mathscr{N}$, and observes its pre-activations over all data points in $\mathscr{D}$, one will observe some proportion $\rho^{\mathfrak{n}}$ of positive values. Assumption \ref{item: assumption_bernoulli} implies that this proportion should be approximately the same for each $\mathfrak{n}\in\mathscr{N}$, namely $p$, where $p$ is the success probability of the Bernoulli. Taking all of the $\rho^{\mathfrak{n}}$ together, their empirical distribution should have low variance and be centred on $p$. More precisely, for large $|\mathscr{D}|$ each $\rho^{\mathfrak{n}}$ should be close in distribution to i.i.d. Gaussian with mean $p$ and variance of order $|\mathscr{D}|^{-1}$, a fact that can be derived simply from the central limit theorem applied to i.i.d. Bernoulli random variables. Similarly, assumption \ref{item: assumption_bernoulli} implies that one can exchange data points and neurons in the previous discussion and so observe proportions $\bar{\rho}^{\vec{x}}$ for each $\vec{x}\in\mathscr{D}$, which again should have an empirical distribution centred on $p$ and with low variance. The value of $p$ is not prescribed by any of our assumptions and nor is it important, all that matters is that the distributions of $\{\rho^{\mathfrak{n}}\}_{\mathfrak{n}\in\mathscr{N}}$  and $\{\bar{\rho}^{\vec{x}}\}_{\vec{x}\in\mathscr{D}}$ are strongly peaked around some common mean.}

We will now generalise the previous discussion to the case of any number of linear pieces of the activation function. Suppose that the activation function is piece-wise linear in $L$ pieces and denote by $I_1, \ldots, I_L$ the disjoint intervals on which the activation function is linear; $\{I_i\}_{i=1}^L$ partition $\mathbb{R}$. Let $\iota(\vec{x}, \mathfrak{n})$ be defined so that the pre-activation to neuron $\mathfrak{n}\in\mathscr{N}$ when evaluating at $\vec{x}\in\mathscr{D}$ lies in $I_{\iota(\vec{x}, \mathfrak{n})}.$ We consider two scenarios, \emph{data averaging} and \emph{neuron averaging}. Under data averaging, we fix a neuron and observe the pre-activations observed over all $\mathscr{D}$, i.e. define for $j=1,\ldots, L$ the counts \begin{align}
    \chi_j^{\mathfrak{n}} = |\{\vec{x}\in\mathscr{D} ~:~ \iota(\vec{x}, \mathfrak{n}) = j\}|
\end{align}
and thence the $L-1$ independent ratios \begin{align}
    \rho_j^{\mathfrak{n}} = \frac{ \chi_j^{\mathfrak{n}} }{\sum_{i=1}^L \chi_1^{\mathfrak{n}}}
\end{align}
for $j=2,\ldots, L$. Similarly, in neuron averaging we define \begin{align}
     \bar{\chi}_j^{\vec{x}} &= |\{\mathfrak{n}\in\mathscr{N} ~:~ \iota(\vec{x}, \mathfrak{n}) = j\}|,\\
       \bar{\rho}_j^{\vec{x}} &= \frac{ \bar{\chi}_j^{\vec{x}} }{ \sum_{i=1}^L\bar{\chi}_1^{\vec{x}} }.
\end{align}

We thus have the sets of observed real quantities \begin{align}
   R_j &= \{ \rho_j^{\mathfrak{n}} ~:~ \mathfrak{n}\in\mathscr{N}\},\\
   \bar{R}_j &= \{ \bar{\rho}_j^{\vec{x}} ~:~ \vec{x}\in\mathscr{D}\}.\\
\end{align}
Under assumption \ref{item: assumption_bernoulli}, the empirical variance of the values in $R_j$ and $\bar{R}_j$ should be small. We run experiments to interrogate this hypothesis under a variety of conditions. In particular: \begin{enumerate}
    \item  Standard Gaussian i.i.d. data vs. `real' data (MNIST digits \cite{lecun-mnisthandwrittendigit-2010}).
    \item Multi-layer perceptron (MLP) vs. convolutional (CNN) architecture.
    \item Trained vs. randomly initialised weights.
    \item Various piece-wise linear activation functions.
\end{enumerate}

In particular:

\begin{enumerate}
    \item We generate 10000 i.i.d. Gaussian data vectors of length 784 (to match the size of MNIST digits).
    \item We fix a MLP architecture of 5 layers and a CNN architecture with 3 convolutional layers and 2 fully-connected. The exact architecture details are given in the Appendix.
    \item We train all networks to test accuracy of at least $97\%$ and use dropout with rate $0.1$ during training.
    \item We test \texttt{ReLU} (2 pieces), \texttt{HardTanh} (3 pieces) and a custom 5 piece function. Full details are given in Appendix \ref{ap:experiments}.
\end{enumerate}

To examine the $R_j$ and $\bar{R}_j$, we produce histograms of $R_2$ for $L=2$ (i.e. \texttt{ReLU}), joint density plots of $(R_2, R_3)$ for $L=3$ (i.e. \texttt{HardTanh}) and pair-plots of $(R_2, R_3, R_4, R_5)$ for $L=5$.  We are presently only interested in the size of the variance shown, but these full distribution plots are included in-case any further interesting observations can be made in the future. Figures \ref{fig:probe_agg_data_random_weights}-\ref{fig:probe_agg_neuron_trained_weights} show the results for \texttt{ReLU} activations and Figures \ref{fig:probe_agg_data_random_weights_tanh}-\ref{fig:probe_agg_neuron_trained_weights_tanh} show the results for \texttt{HardTanh}. The qualitative trends are much the same for all three activation functions, but the plots for the 5-piece function are very large and so are relegated to the supplementary material\footnote{\url{https://github.com/npbaskerville/loss-surfaces-general-activation-functions/blob/master/Loss_surfaces_of_neural_networks_with_general_activation_functions___supplimentary.pdf}}. We make the following observations:
  \begin{figure*}[p]
        \centering
        \begin{subfigure}[b]{0.236\textwidth}
            \centering
            \includegraphics[width=\textwidth]{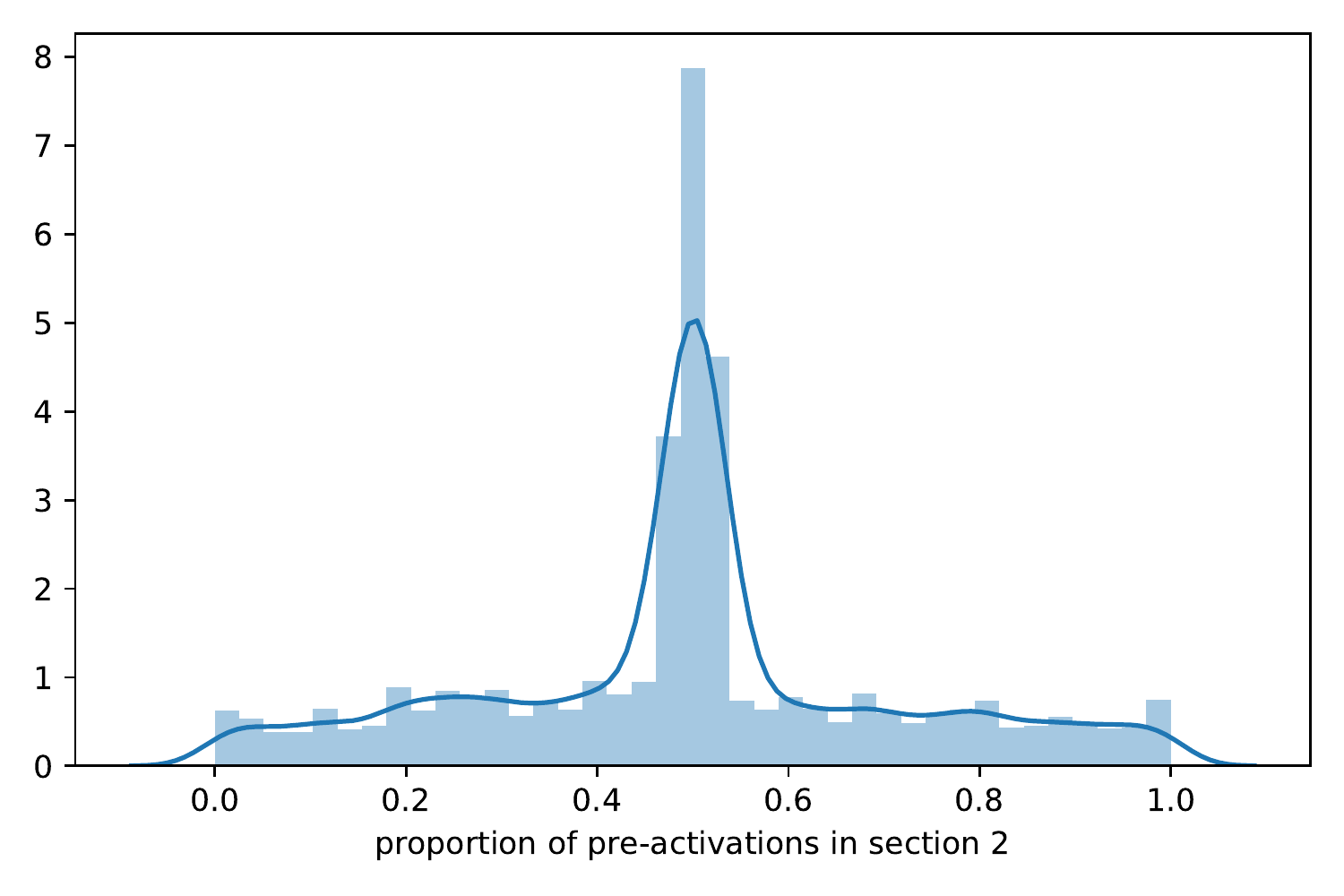}
            \caption{MLP, i.i.d. normal data.} 
            \label{fig:probe_agg_data_random_weights_mlp_iid}
        \end{subfigure}
        %\hfill
        \begin{subfigure}[b]{0.236\textwidth}
            \centering
            \includegraphics[width=\textwidth]{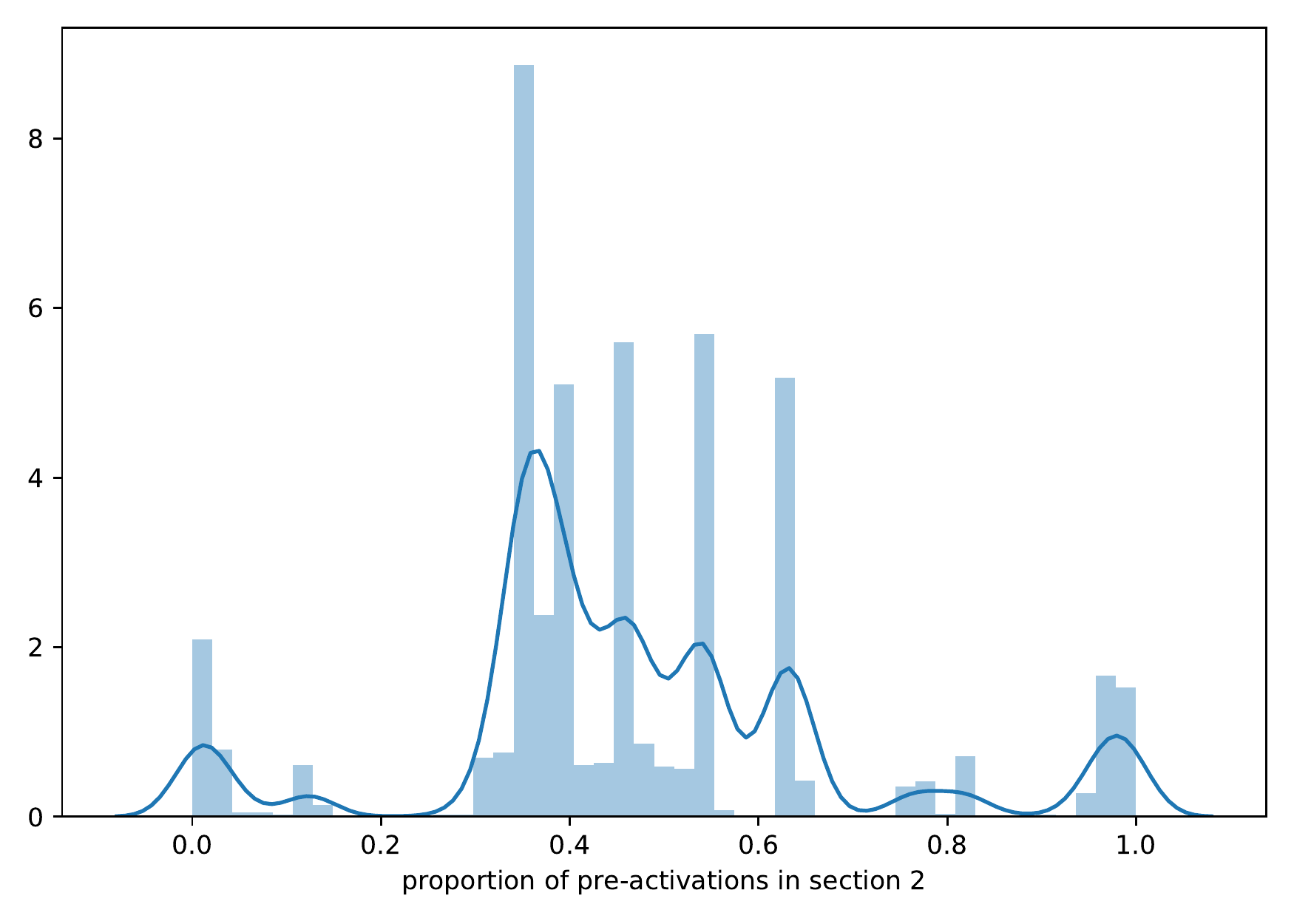}
            \caption{LeNet, i.i.d. normal data.} 
            \label{fig:probe_agg_data_random_weights_lenet_iid}
        \end{subfigure}
       % \vskip\baselineskip
        \begin{subfigure}[b]{0.236\textwidth}
            \centering
            \includegraphics[width=\textwidth]{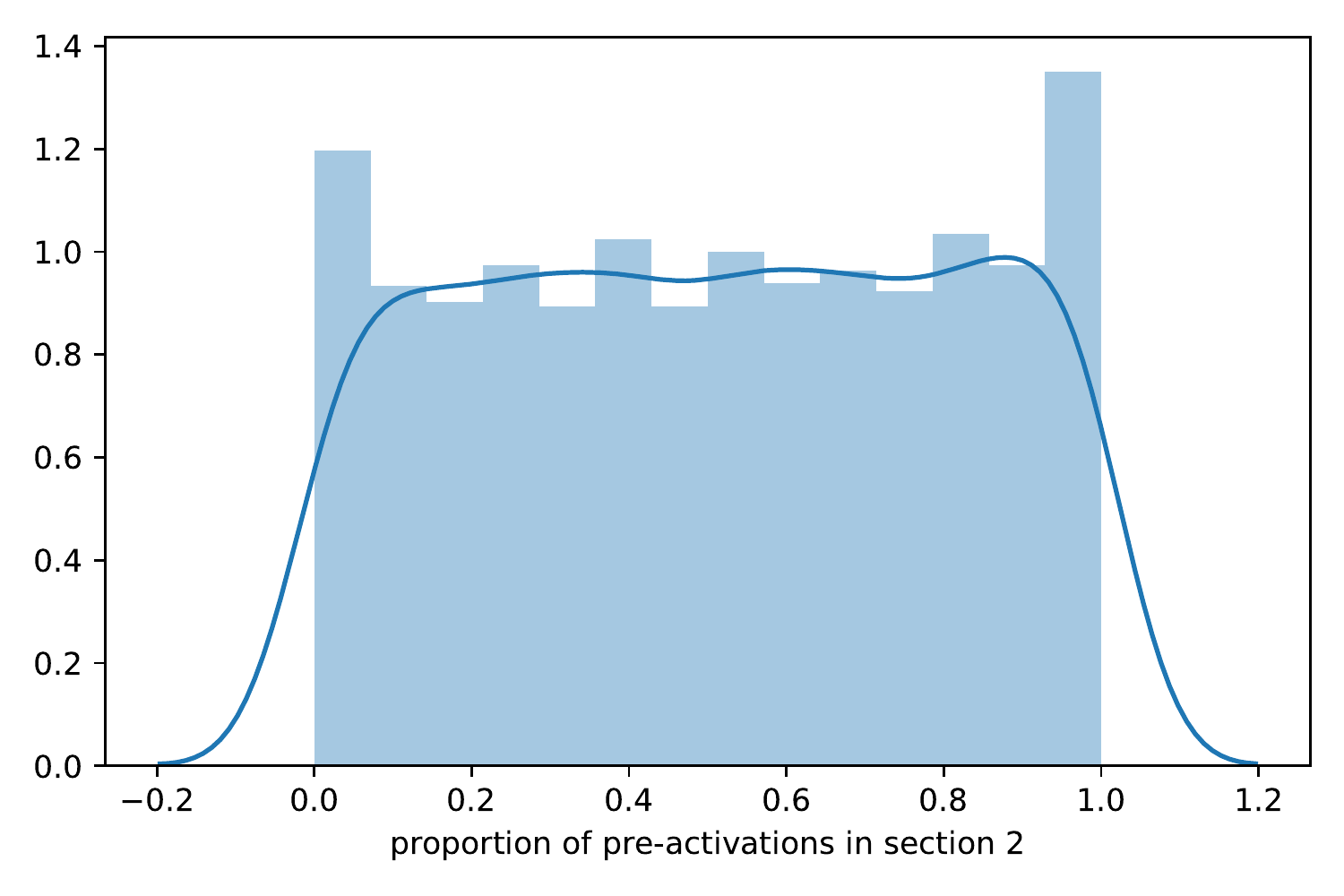}
            \caption{MLP, MNIST data.} 
            \label{fig:probe_agg_data_random_weights_mlp_mnist}
        \end{subfigure}
       % \quad
        \begin{subfigure}[b]{0.236\textwidth}
            \centering
            \includegraphics[width=\textwidth]{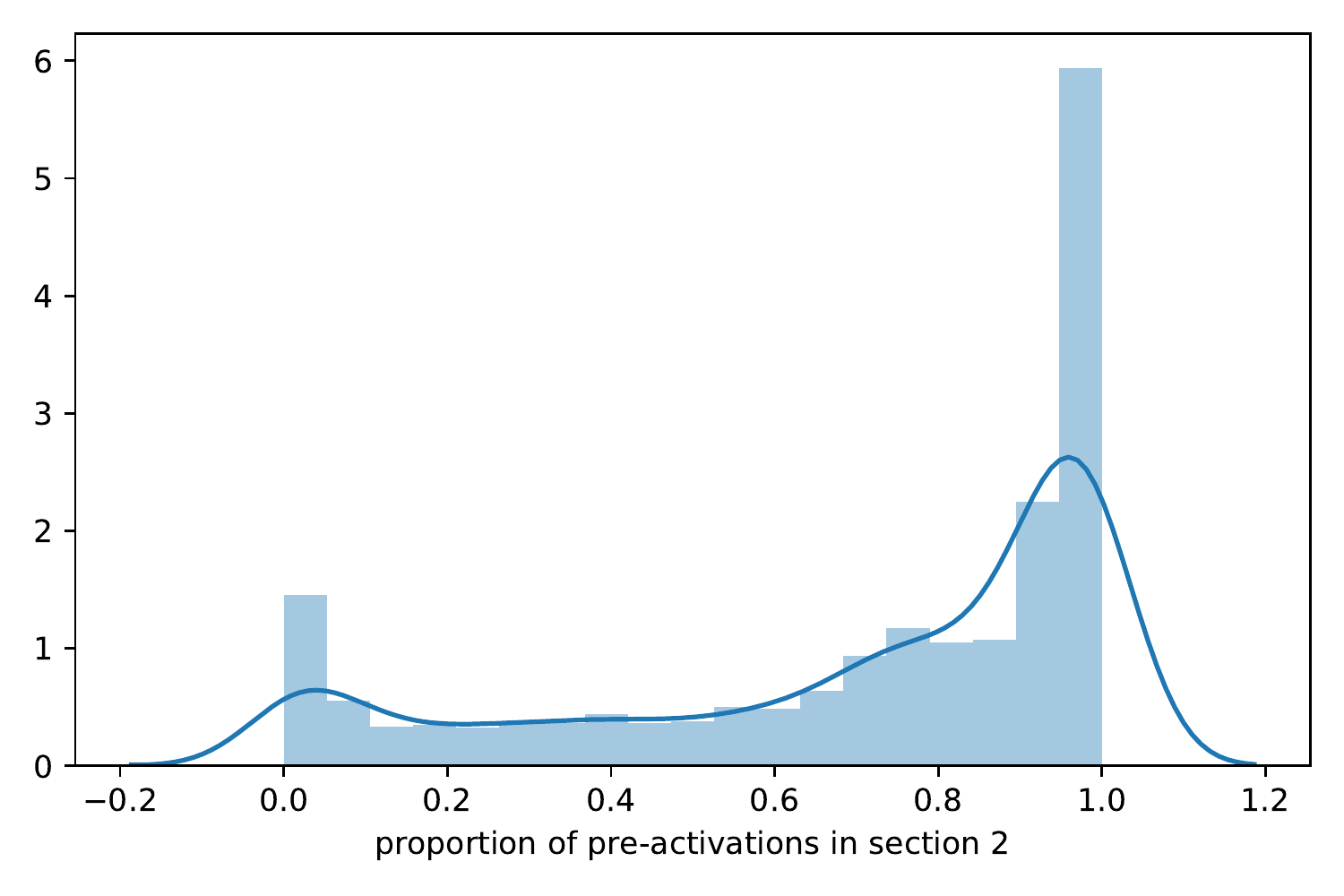}
            \caption{LeNet, MNIST data.} 
            \label{fig:probe_agg_data_random_weights_lenet_mnist}
        \end{subfigure}
        \caption{Experimental distribution of $R_2$ (data averaging; each sample is a single neuron) for random MLP and LeNet \texttt{ReLU} networks, and i.i.d. normal and MNIST data. The blue line is a kernel density estimation fit.} 
        \label{fig:probe_agg_data_random_weights}
    \end{figure*}
  \begin{figure*}[p]
        \centering
        \begin{subfigure}[b]{0.236\textwidth}
            \centering
            \includegraphics[width=\textwidth]{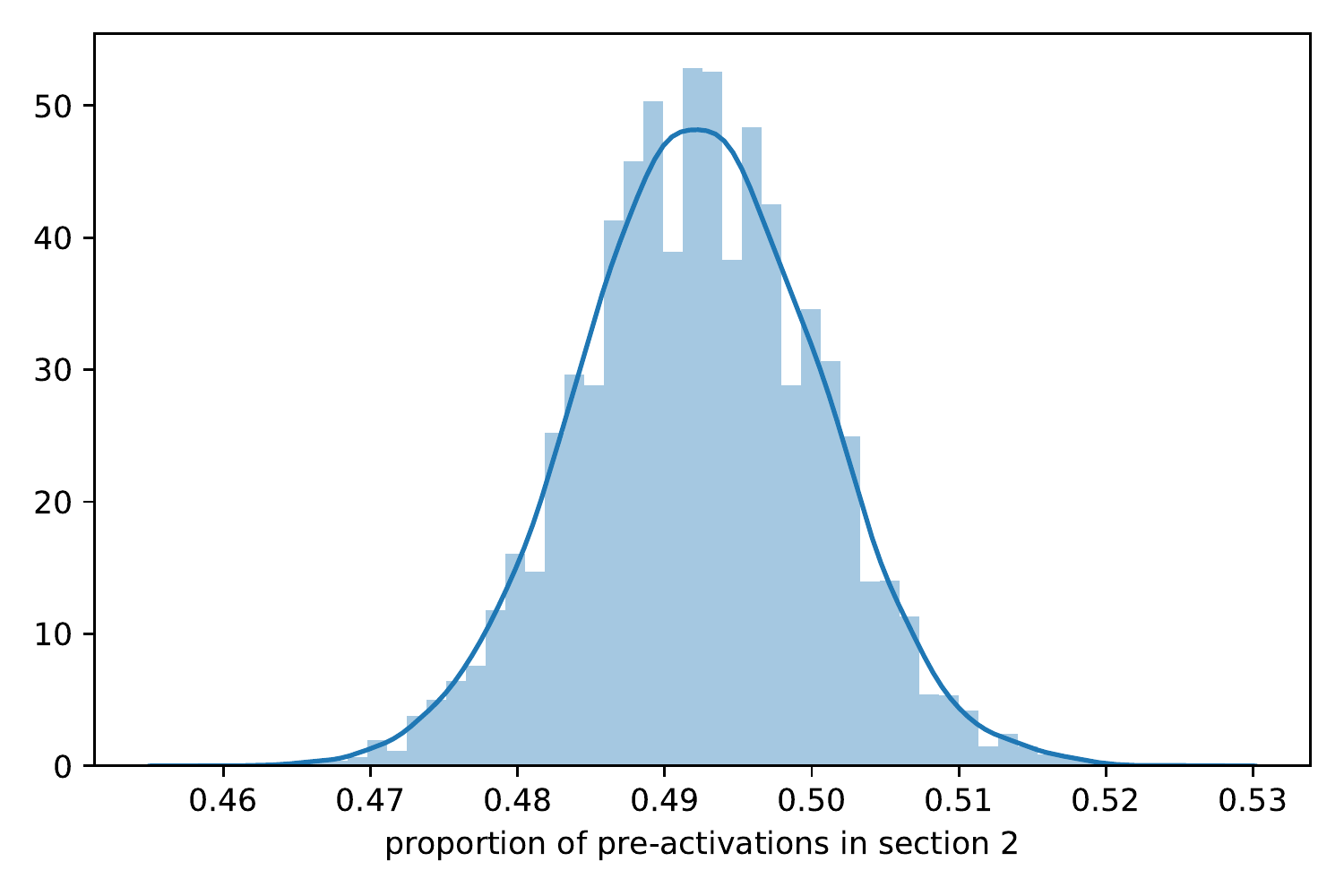}
            \caption{MLP, i.i.d. normal data.} 
            \label{fig:probe_agg_neuron_random_weights_mlp_iid}
        \end{subfigure}
        \begin{subfigure}[b]{0.236\textwidth}
            \centering
            \includegraphics[width=\textwidth]{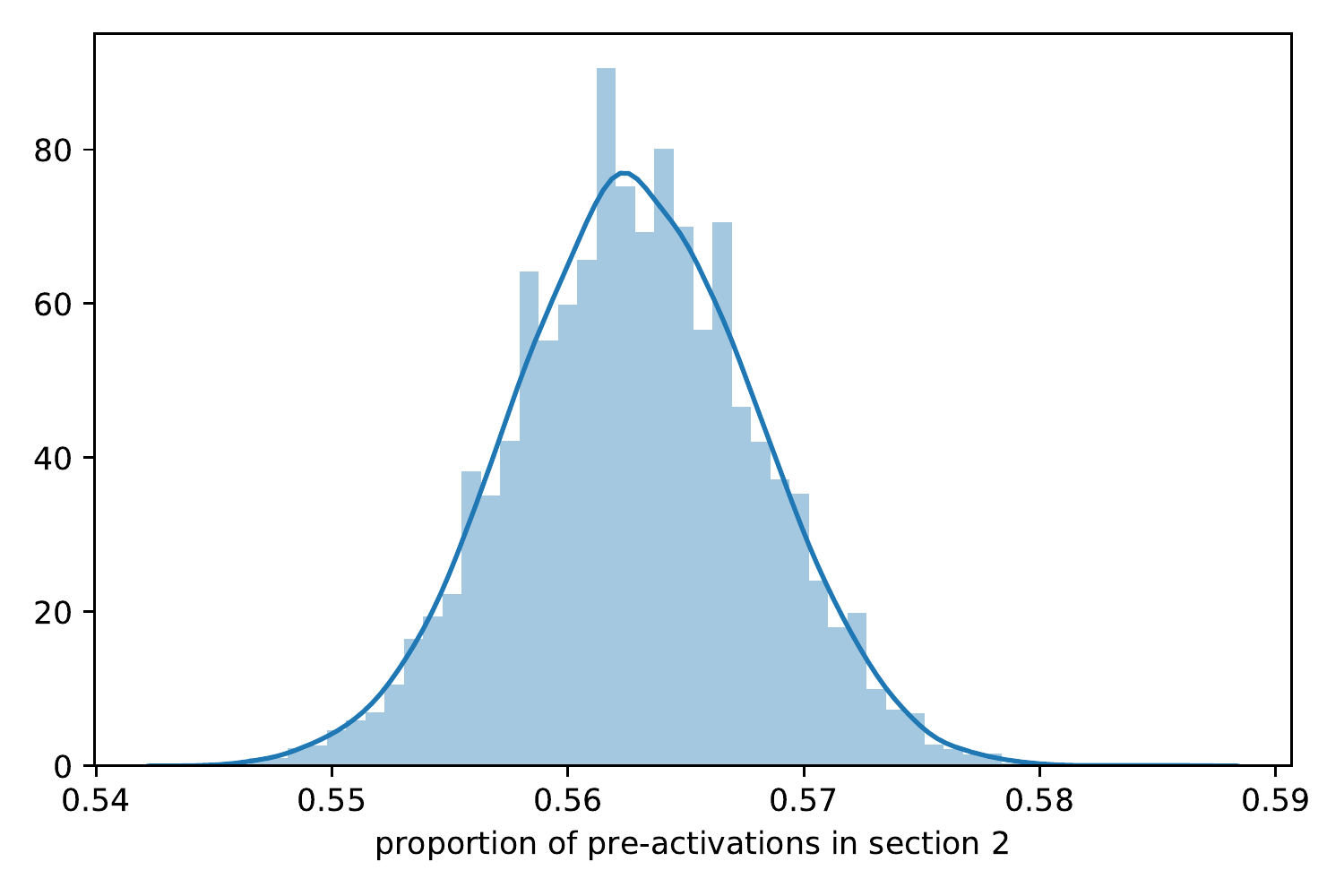}
            \caption{LeNet, i.i.d. normal data.} 
            \label{fig:probe_agg_neuron_random_weights_lenet_iid}
        \end{subfigure}
        \begin{subfigure}[b]{0.236\textwidth}
            \centering
            \includegraphics[width=\textwidth]{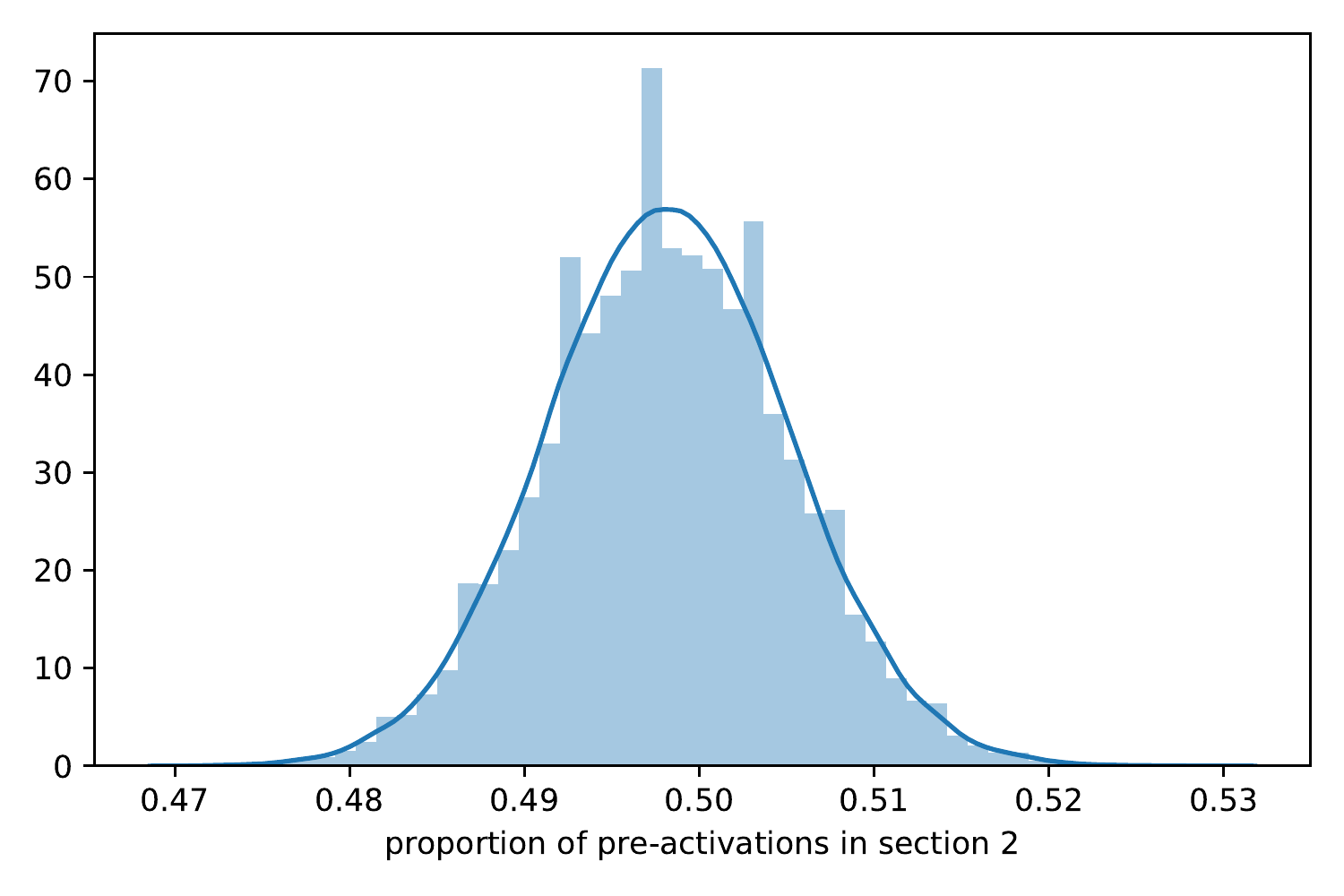}
            \caption{MLP, MNIST data.} 
            \label{fig:probe_agg_neuron_random_weights_mlp_mnist}
        \end{subfigure}
        \begin{subfigure}[b]{0.236\textwidth}
            \centering
            \includegraphics[width=\textwidth]{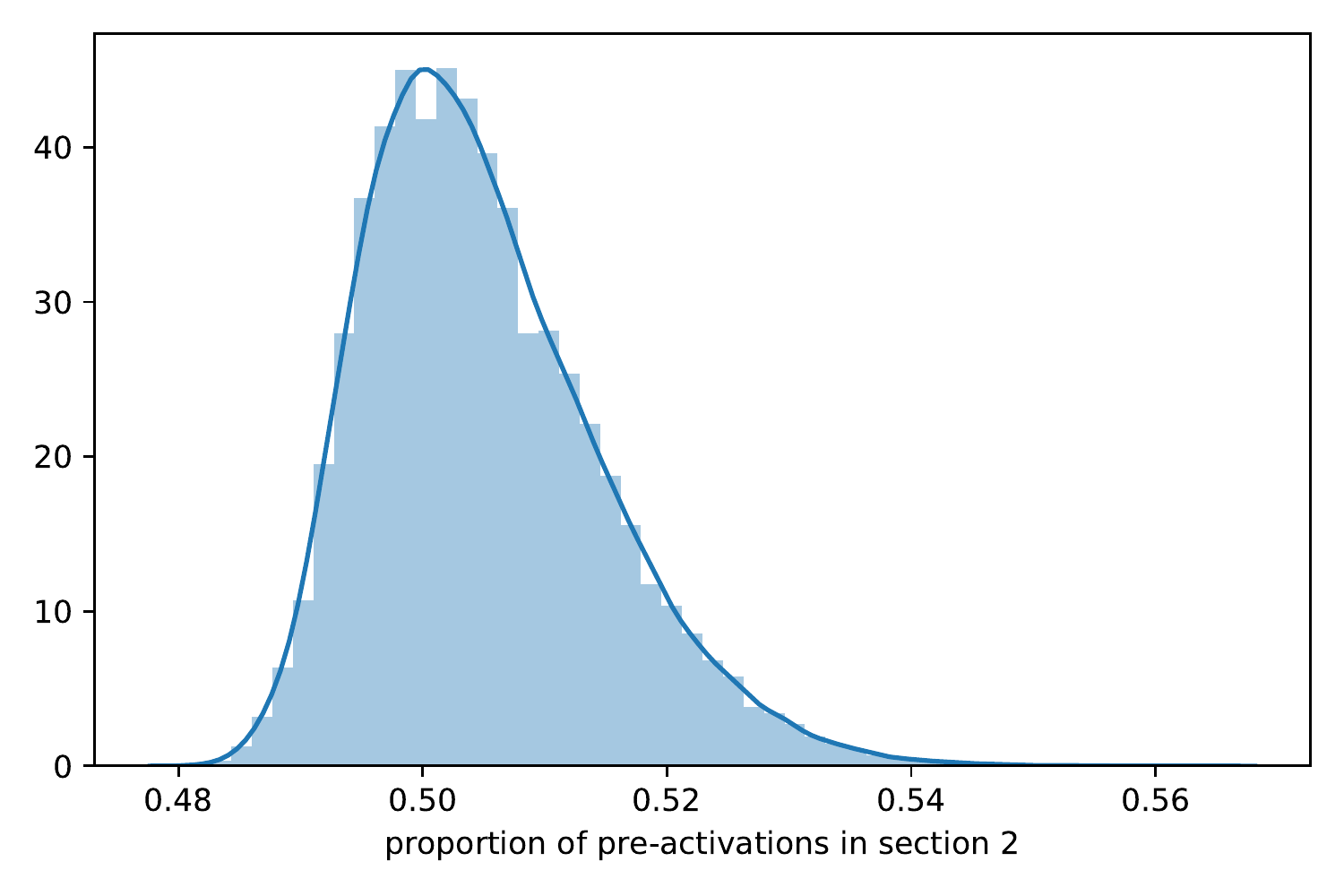}
            \caption{LeNet, MNIST data.} 
            \label{fig:probe_agg_neuron_random_weights_lenet_mnist}
        \end{subfigure}
        \caption{Experimental distribution of $\bar{R}_2$ (neuron averaging; each sample is a single datum) for random MLP and LeNet \texttt{ReLU} networks, and i.i.d. normal and MNIST data. The blue line is a kernel density estimation fit.} 
        \label{fig:probe_agg_neuron_random_weights}
    \end{figure*}
  \begin{figure*}[p]
        \centering
        \begin{subfigure}[b]{0.236\textwidth}
            \centering
            \includegraphics[width=\textwidth]{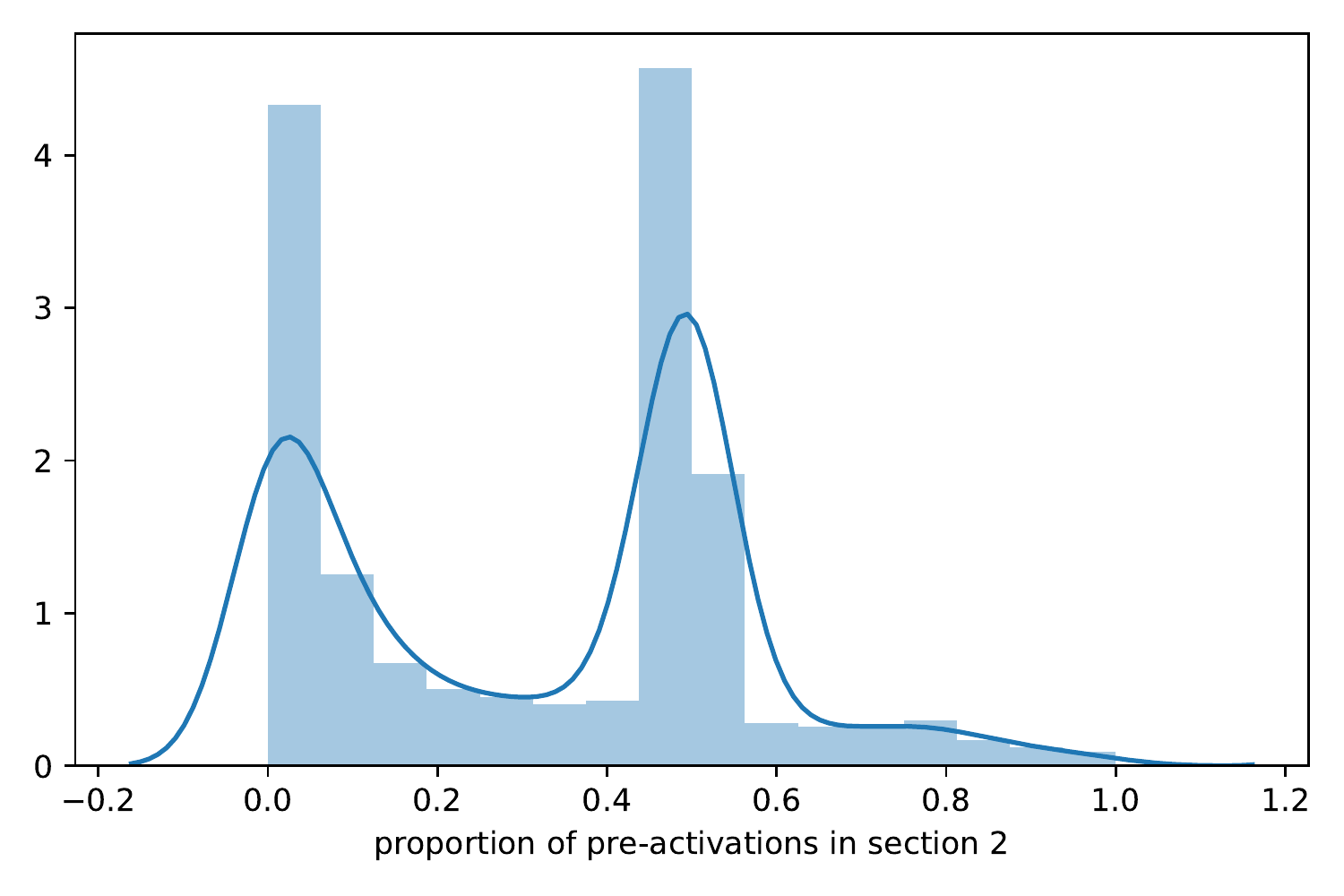}
            \caption{MLP, i.i.d. normal data.} 
            \label{fig:probe_agg_data_trained_weights_mlp_iid}
        \end{subfigure}
        \begin{subfigure}[b]{0.236\textwidth}
            \centering
            \includegraphics[width=\textwidth]{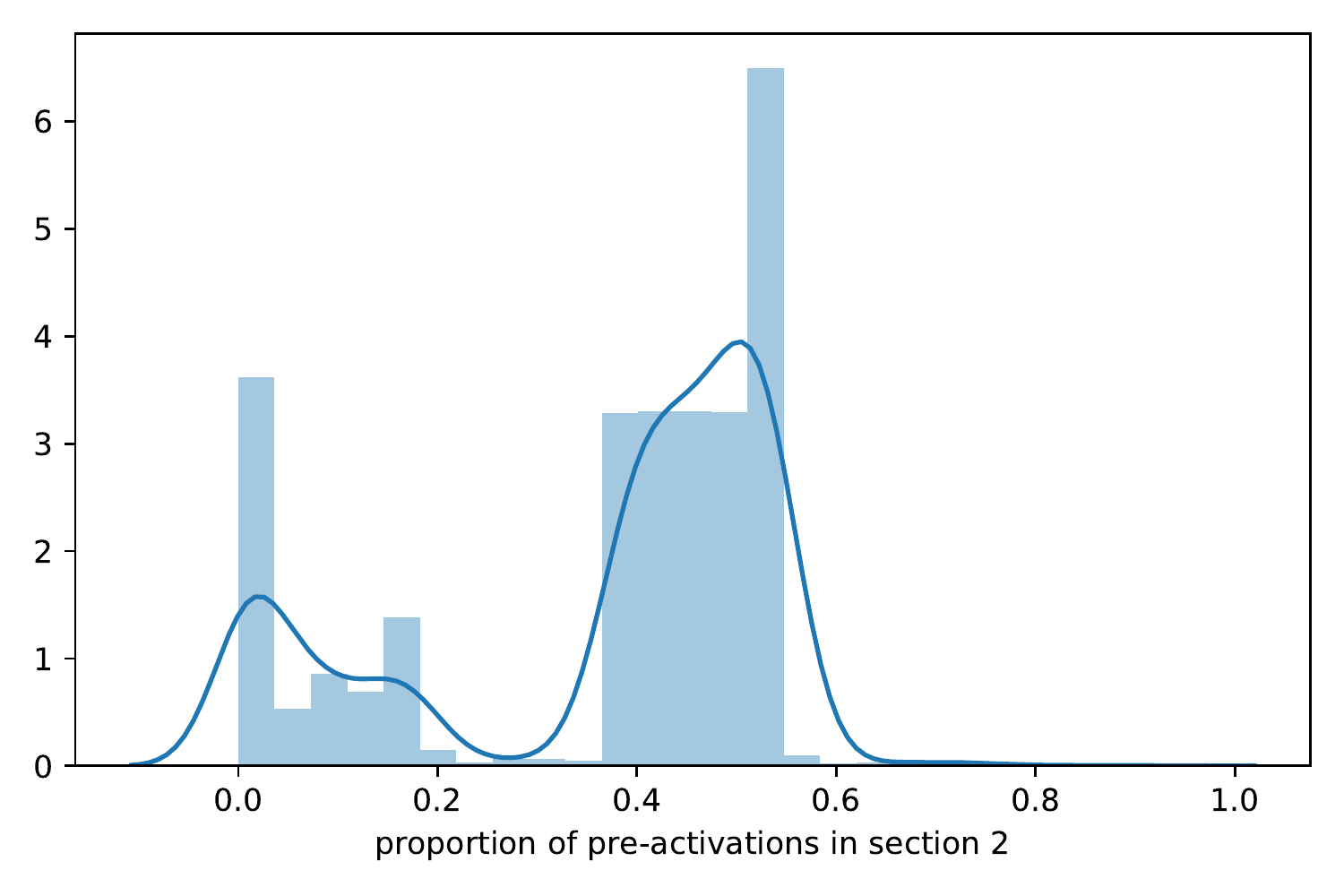}
            \caption{LeNet, i.i.d. normal data.} 
            \label{fig:probe_agg_data_trained_weights_lenet_iid}
        \end{subfigure}
        \begin{subfigure}[b]{0.236\textwidth}
            \centering
            \includegraphics[width=\textwidth]{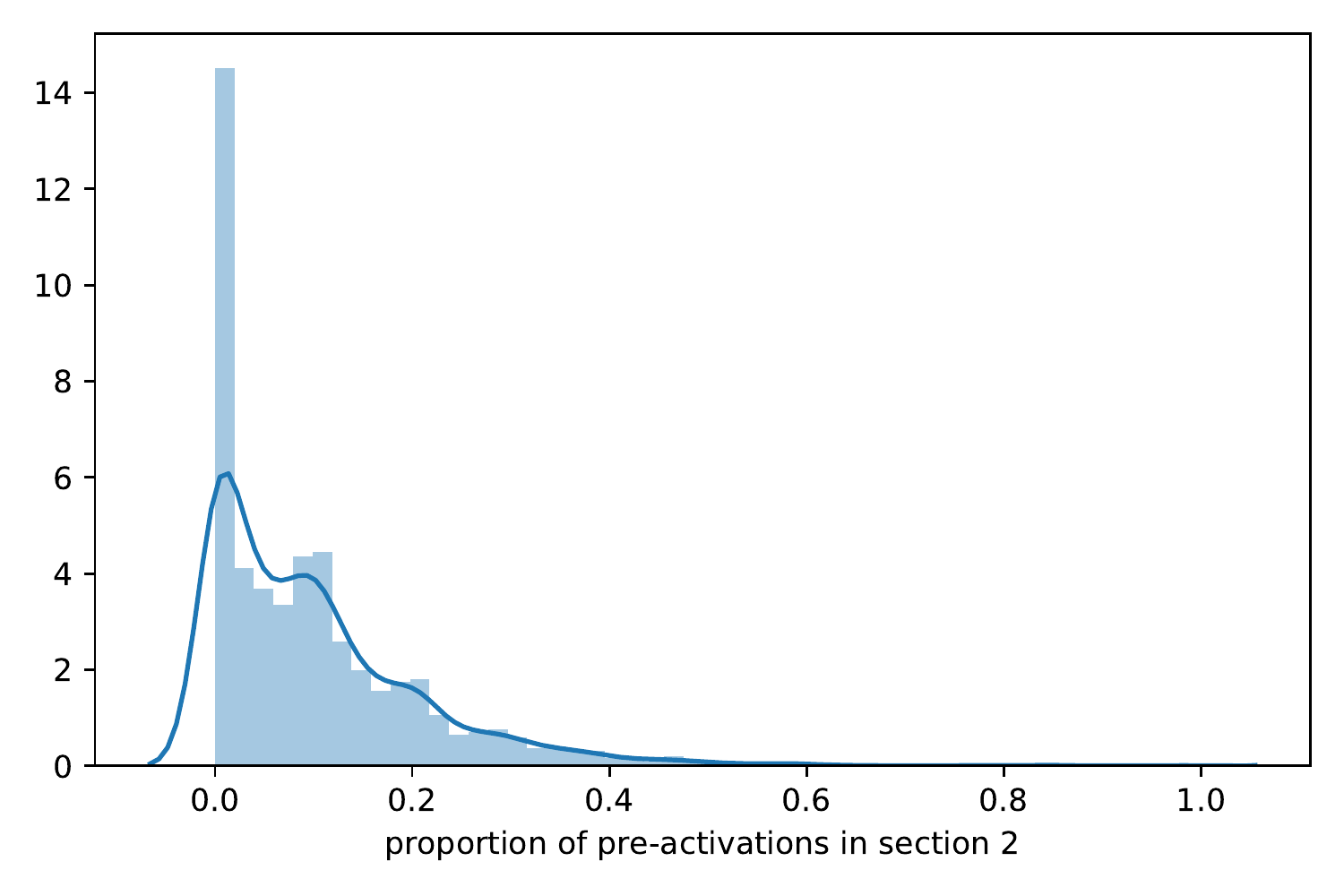}
            \caption{MLP, MNIST data.} 
            \label{fig:probe_agg_data_trained_weights_mlp_mnist}
        \end{subfigure}
        \begin{subfigure}[b]{0.236\textwidth}
            \centering
            \includegraphics[width=\textwidth]{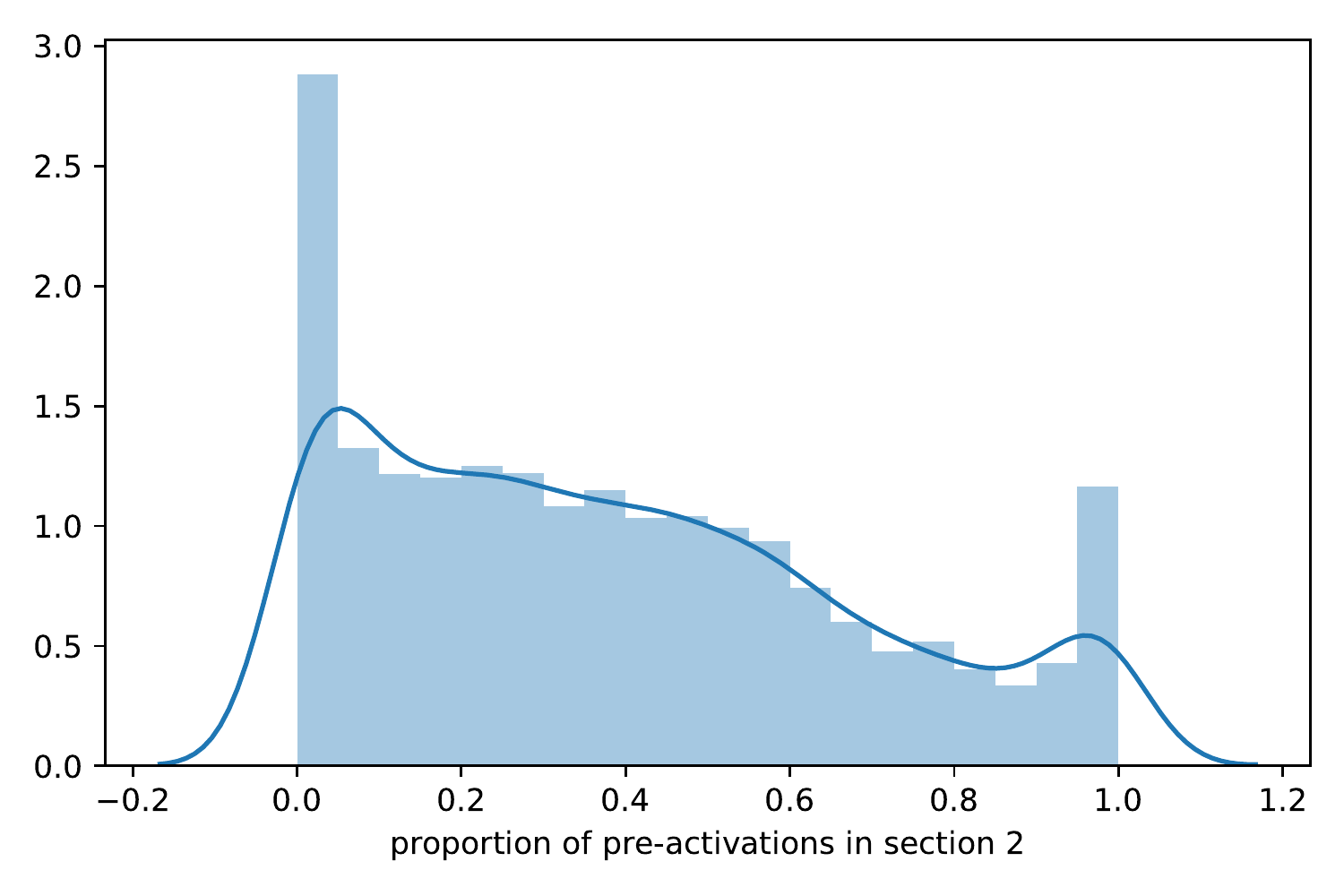}
            \caption{LeNet, MNIST data.} 
            \label{fig:probe_agg_data_trained_weights_lenet_mnist}
        \end{subfigure}
        \caption{Experimental distribution of $R_2$ (data averaging; each sample is a single neuron) for MLP and LeNet \texttt{ReLU} networks trained to high validation accuracy on MNIST, and evaluated on i.i.d. normal and MNIST data. The blue line is a kernel density estimation fit.} 
        \label{fig:probe_agg_data_trained_weights}
    \end{figure*}
  \begin{figure*}[p]
        \centering
        \begin{subfigure}[b]{0.236\textwidth}
            \centering
            \includegraphics[width=\textwidth]{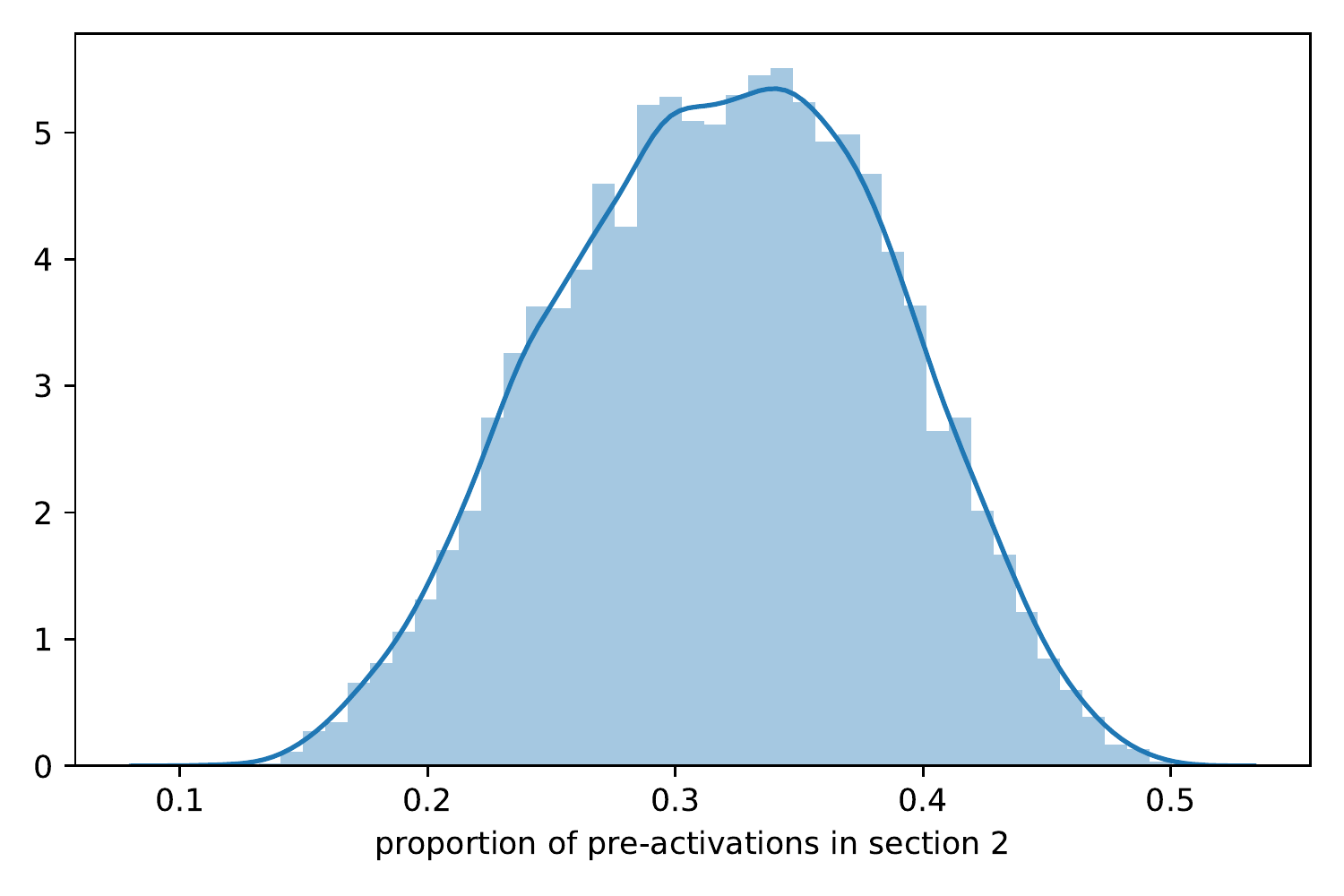}
            \caption{MLP, i.i.d. normal data.} 
            \label{fig:probe_agg_neuron_trained_weights_mlp_iid}
        \end{subfigure}
        \begin{subfigure}[b]{0.236\textwidth}
            \centering
            \includegraphics[width=\textwidth]{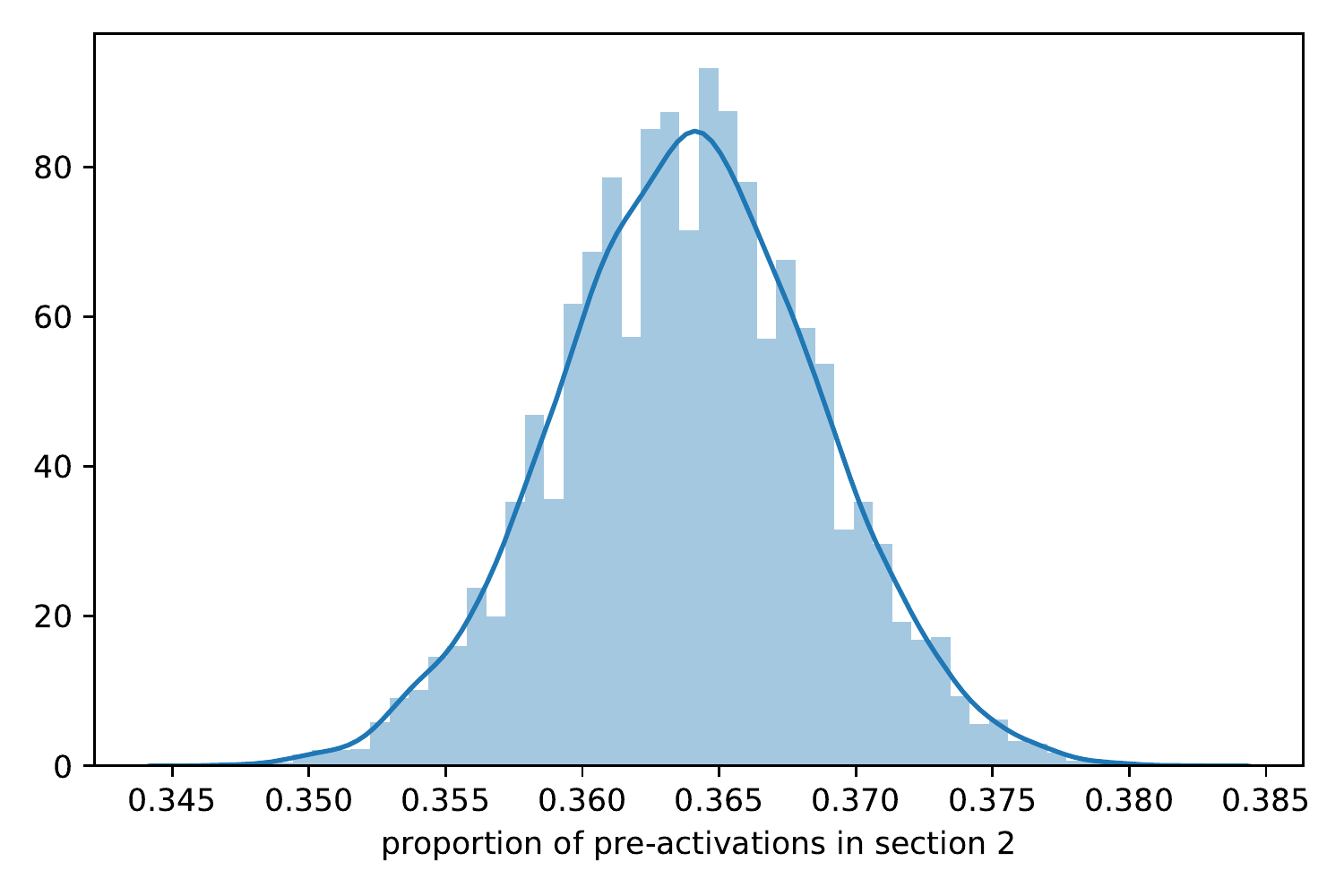}
            \caption{LeNet, i.i.d. normal data.} 
            \label{fig:probe_agg_neuron_trained_weights_lenet_iid}
        \end{subfigure}
        \begin{subfigure}[b]{0.236\textwidth}
            \centering
            \includegraphics[width=\textwidth]{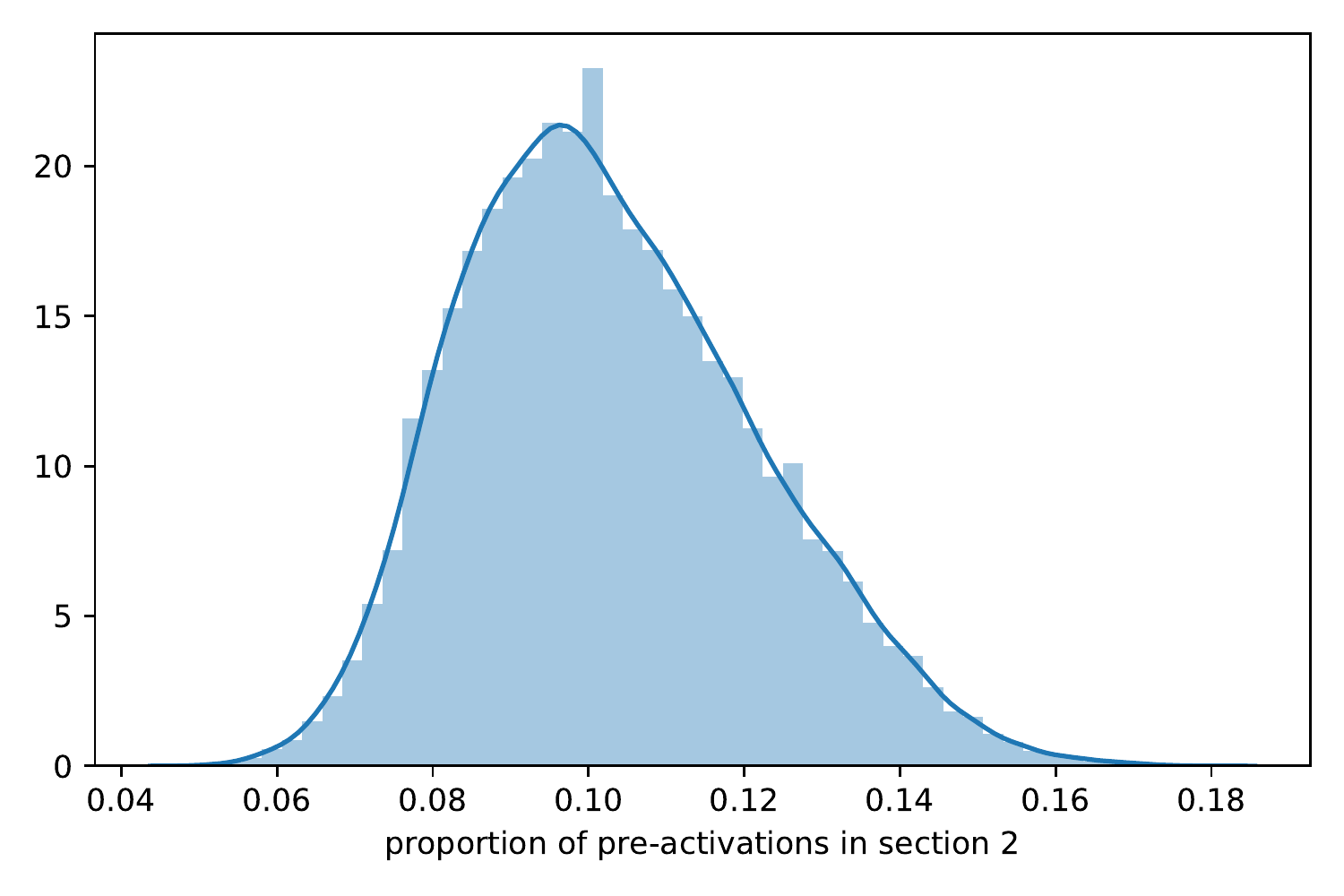}
            \caption{MLP, MNIST data.} 
            \label{fig:probe_agg_neuron_trained_weights_mlp_mnist}
        \end{subfigure}
        \begin{subfigure}[b]{0.236\textwidth}
            \centering
            \includegraphics[width=\textwidth]{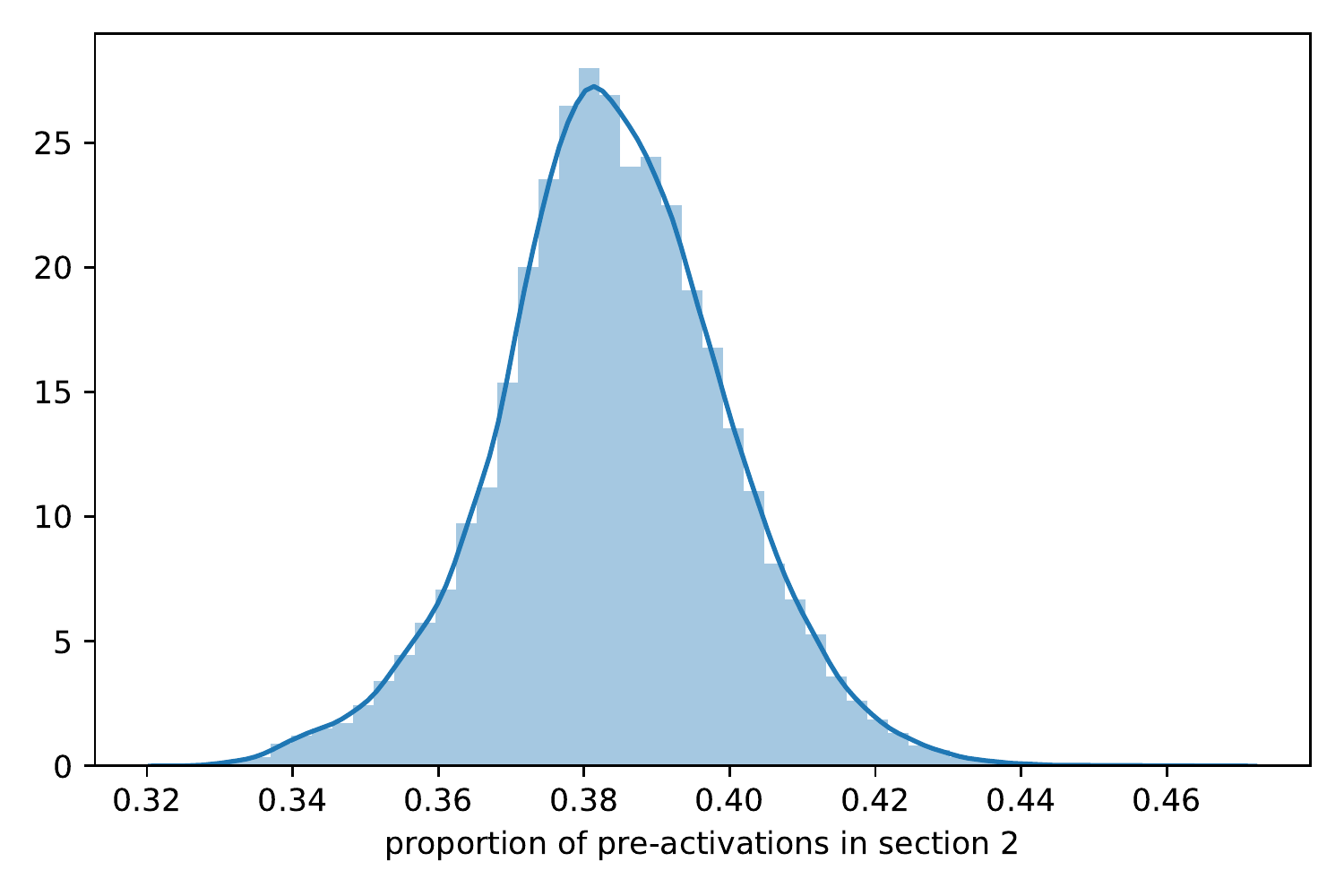}
            \caption{LeNet, MNIST data.} 
            \label{fig:probe_agg_neuron_trained_weights_lenet_mnist}
        \end{subfigure}
        \caption{Experimental distribution of $\bar{R}_2$ (neuron averaging; each sample is a single datum) for MLP and LeNet \texttt{ReLU} networks trained to high validation accuracy on MNIST, and evaluated on i.i.d. normal and MNIST data. The  blue line is a kernel density estimation fit.} 
        \label{fig:probe_agg_neuron_trained_weights}
    \end{figure*}

  \begin{figure*}
        \centering
        \begin{subfigure}[b]{0.236\textwidth}
            \centering
            \includegraphics[width=\textwidth]{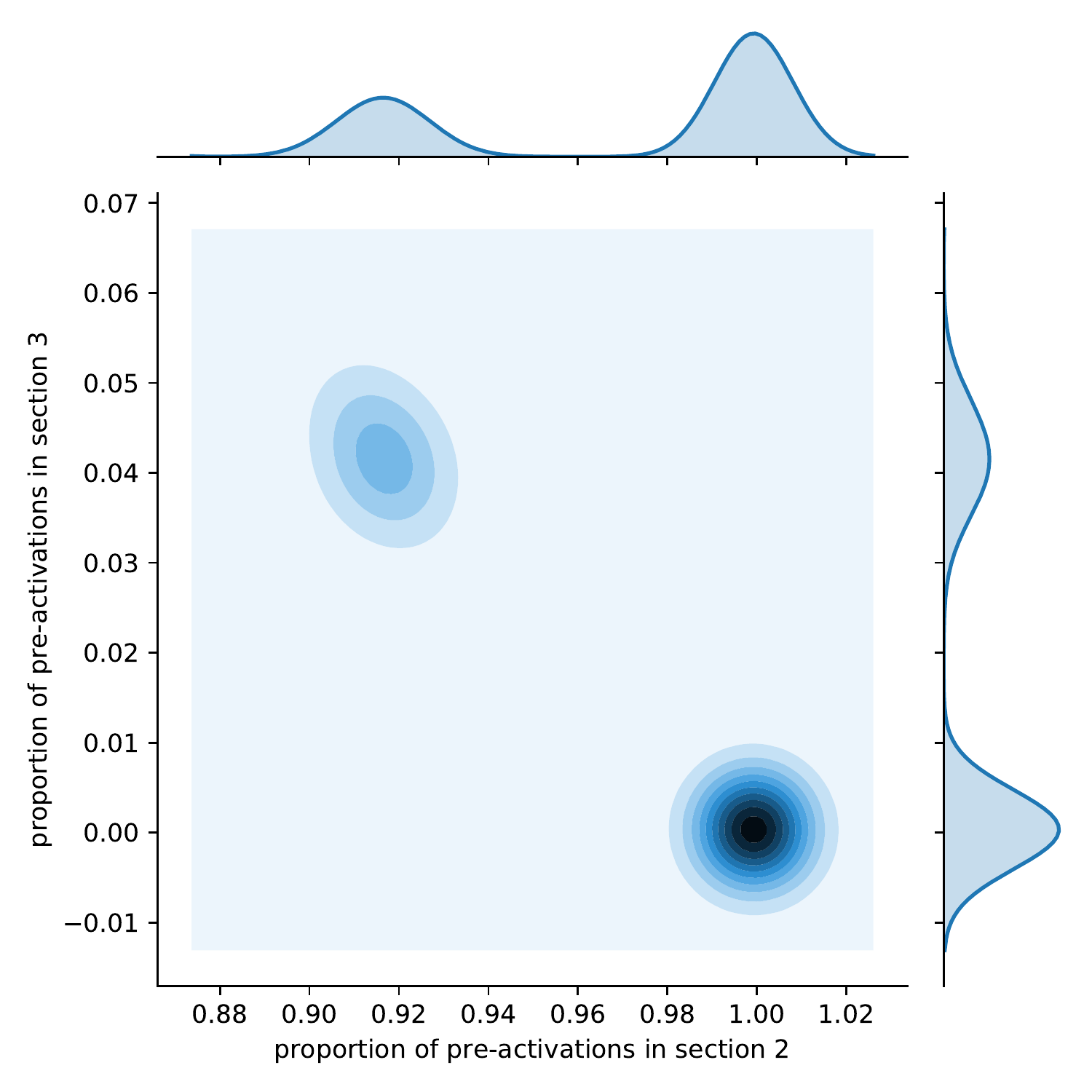}
            \caption{MLP, i.i.d. normal data.} 
        \end{subfigure}
        \begin{subfigure}[b]{0.236\textwidth}
            \centering
            \includegraphics[width=\textwidth]{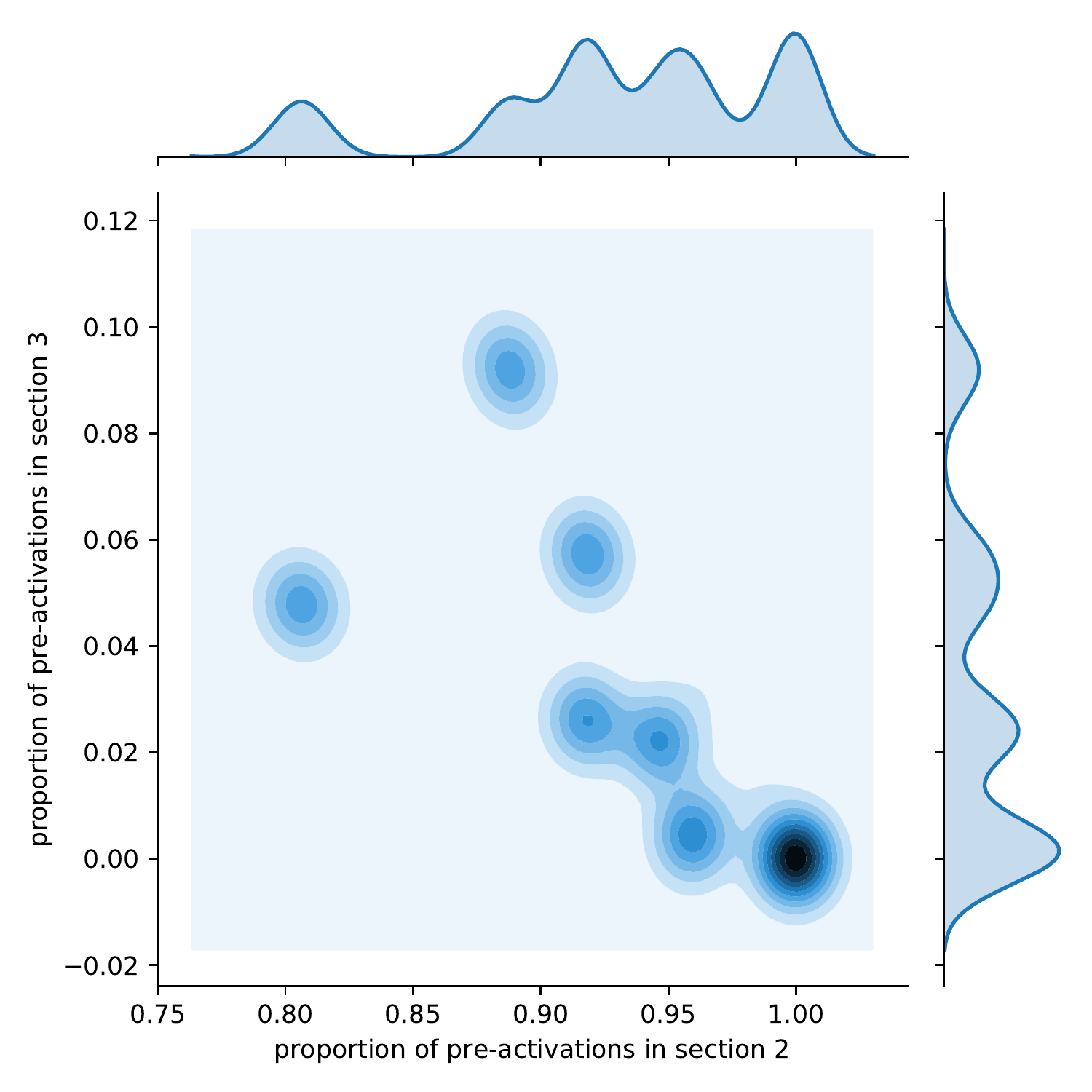}
            \caption{LeNet, i.i.d. normal data.} 
        \end{subfigure}
        \begin{subfigure}[b]{0.236\textwidth}
            \centering
            \includegraphics[width=\textwidth]{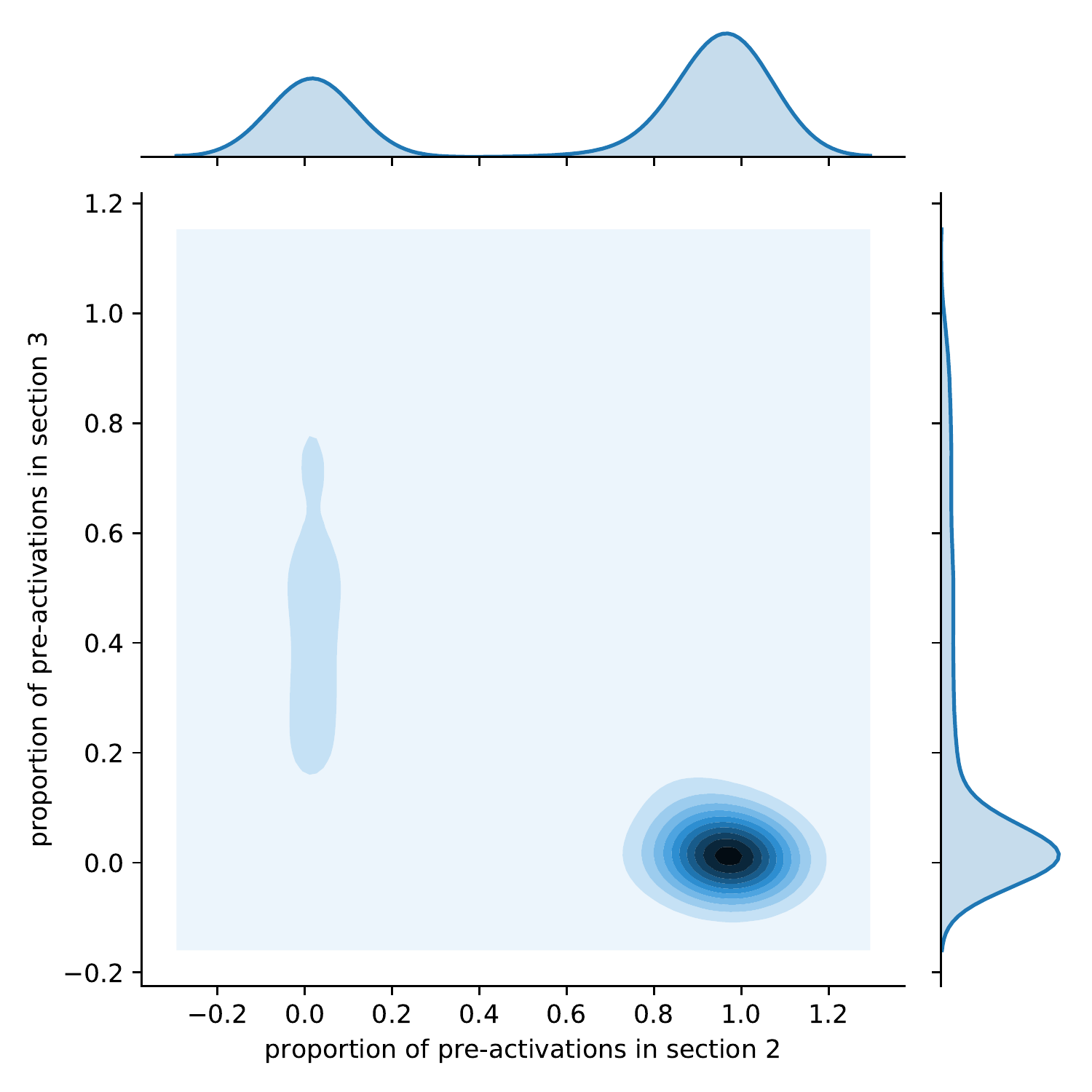}
            \caption{MLP, MNIST data.} 
        \end{subfigure}
        \begin{subfigure}[b]{0.236\textwidth}
            \centering
            \includegraphics[width=\textwidth]{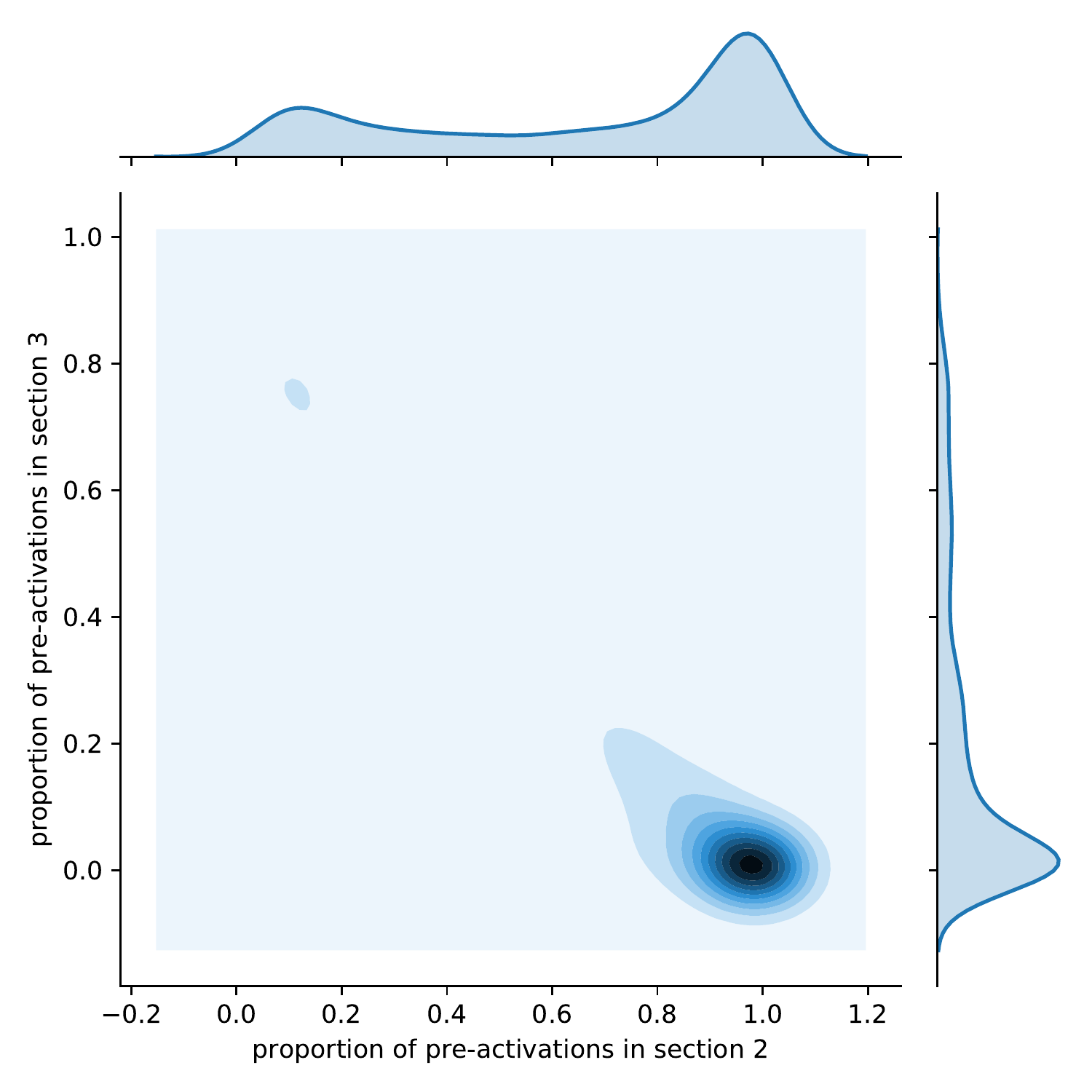}
            \caption{LeNet, MNIST data.} 
        \end{subfigure}
        \caption{Experimental distribution of $(R_2,R_3)$ (data averaging; each sample is a single neuron) for random MLP and LeNet \texttt{HardTanh} networks, and i.i.d. normal and MNIST data. The plots show 2d kernel density estimation fits of the joint and 1d fits of the marginals.} 
                \label{fig:probe_agg_data_random_weights_tanh}
    \end{figure*}
  \begin{figure*}
        \centering
        \begin{subfigure}[b]{0.236\textwidth}
            \centering
            \includegraphics[width=\textwidth]{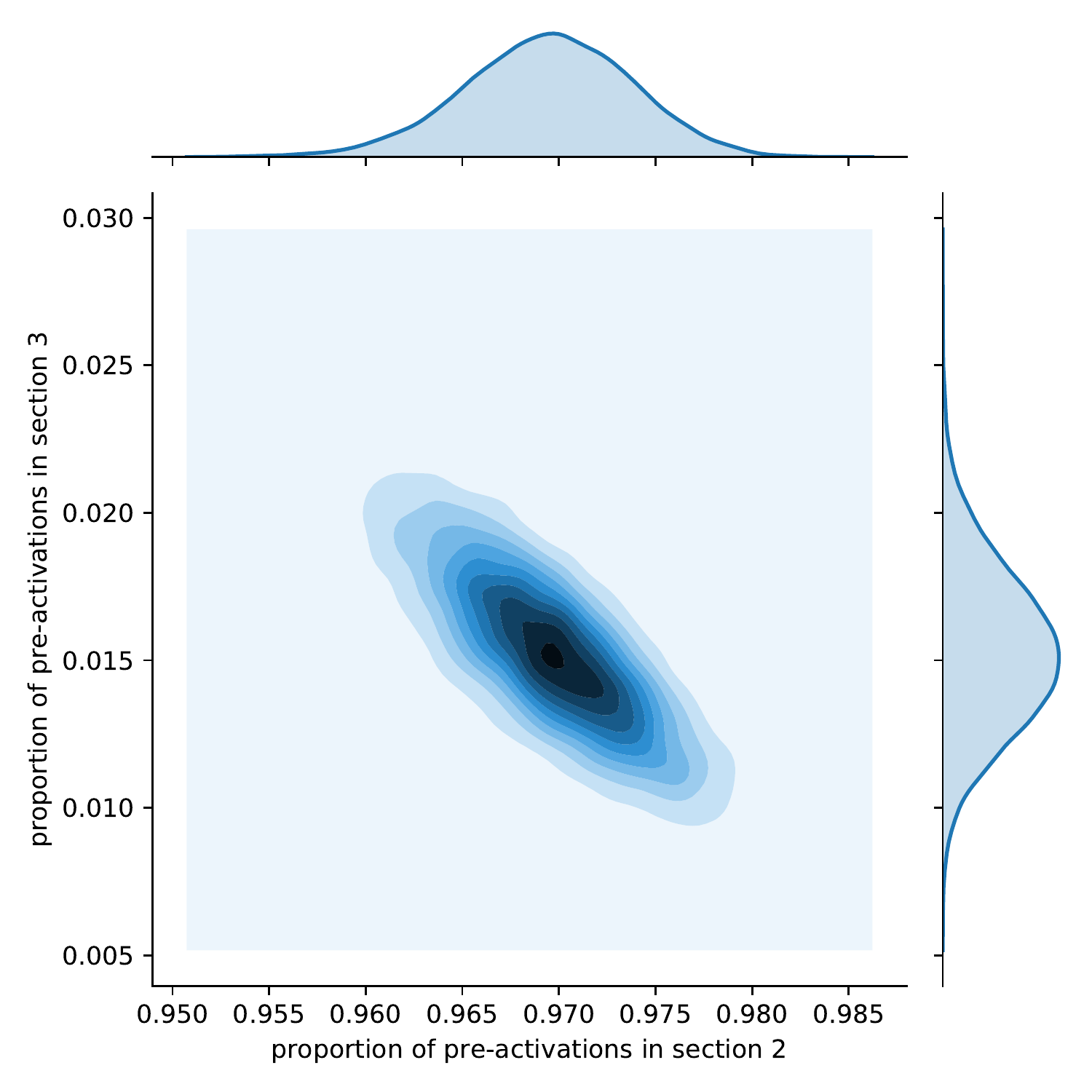}
            \caption{MLP, i.i.d. normal data.} 
                    \label{fig:probe_agg_neuron_random_weights_tanh_mlp_iid}
        \end{subfigure}
        \begin{subfigure}[b]{0.236\textwidth}
            \centering
            \includegraphics[width=\textwidth]{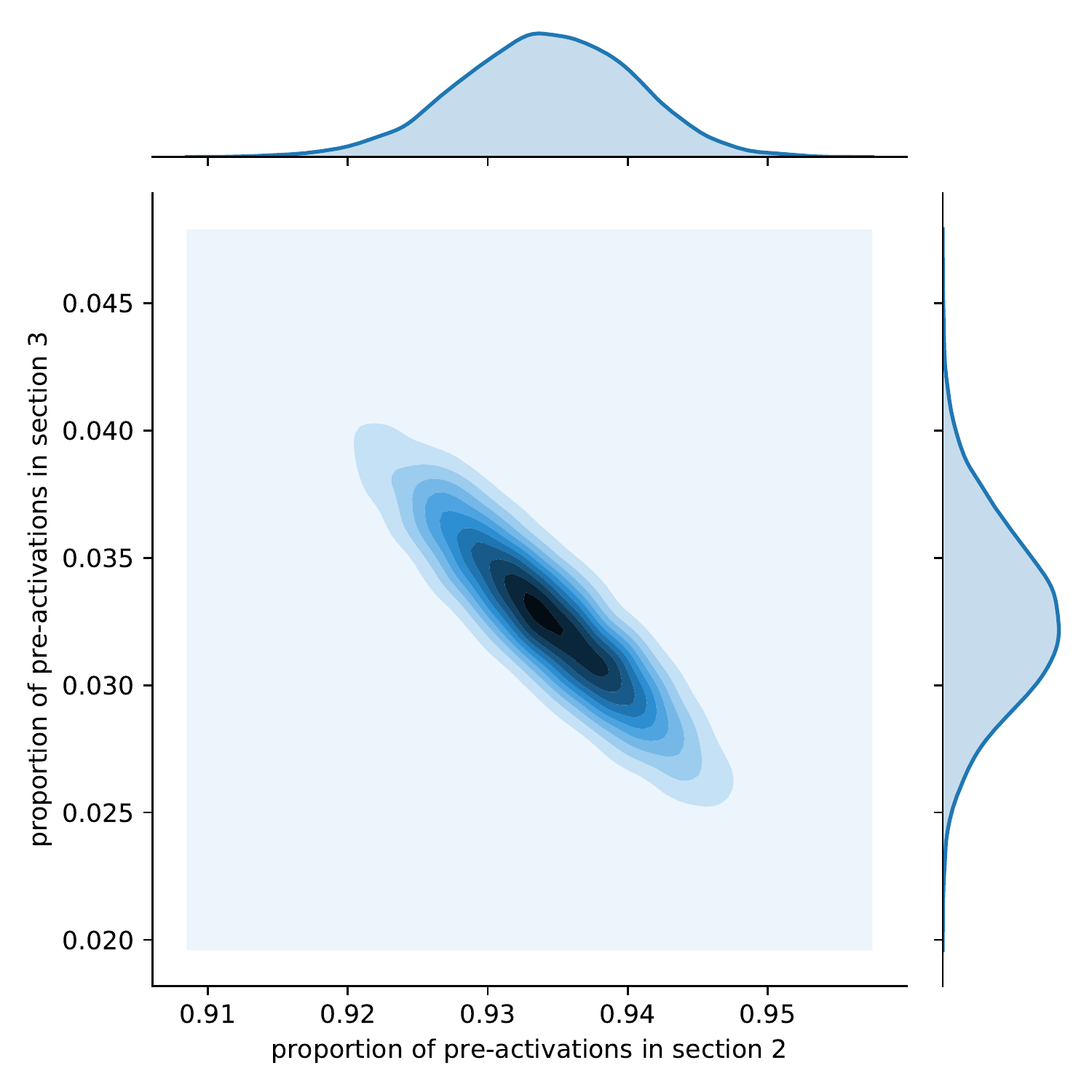}
            \caption{LeNet, i.i.d. normal data.} 
        \label{fig:probe_agg_neuron_random_weights_tanh_lenet_iid}
        \end{subfigure}
        \begin{subfigure}[b]{0.236\textwidth}
            \centering
            \includegraphics[width=\textwidth]{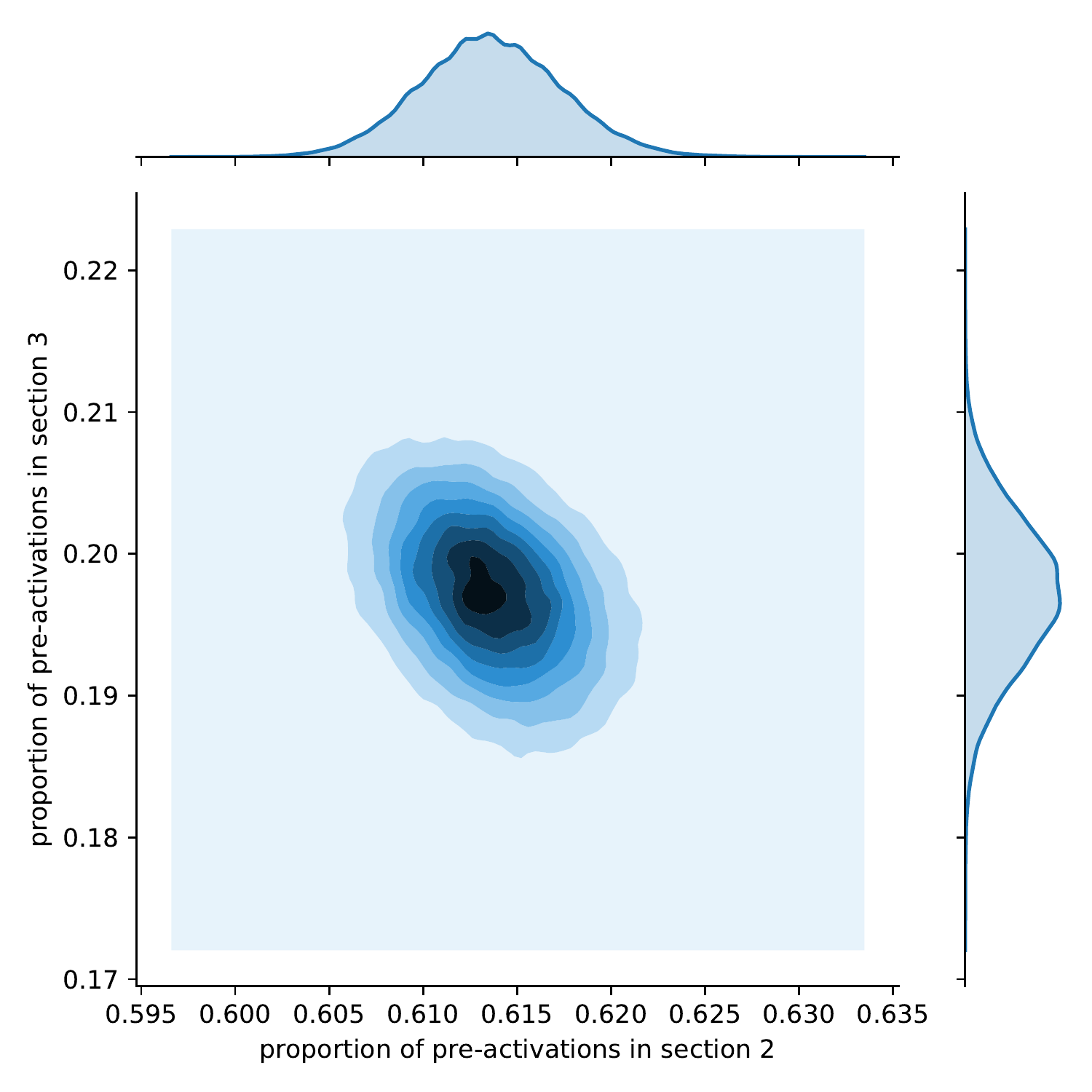}
            \caption{MLP, MNIST data.} 
        \label{fig:probe_agg_neuron_random_weights_tanh_nlp_mnist}
        \end{subfigure}
        \begin{subfigure}[b]{0.236\textwidth}
            \centering
            \includegraphics[width=\textwidth]{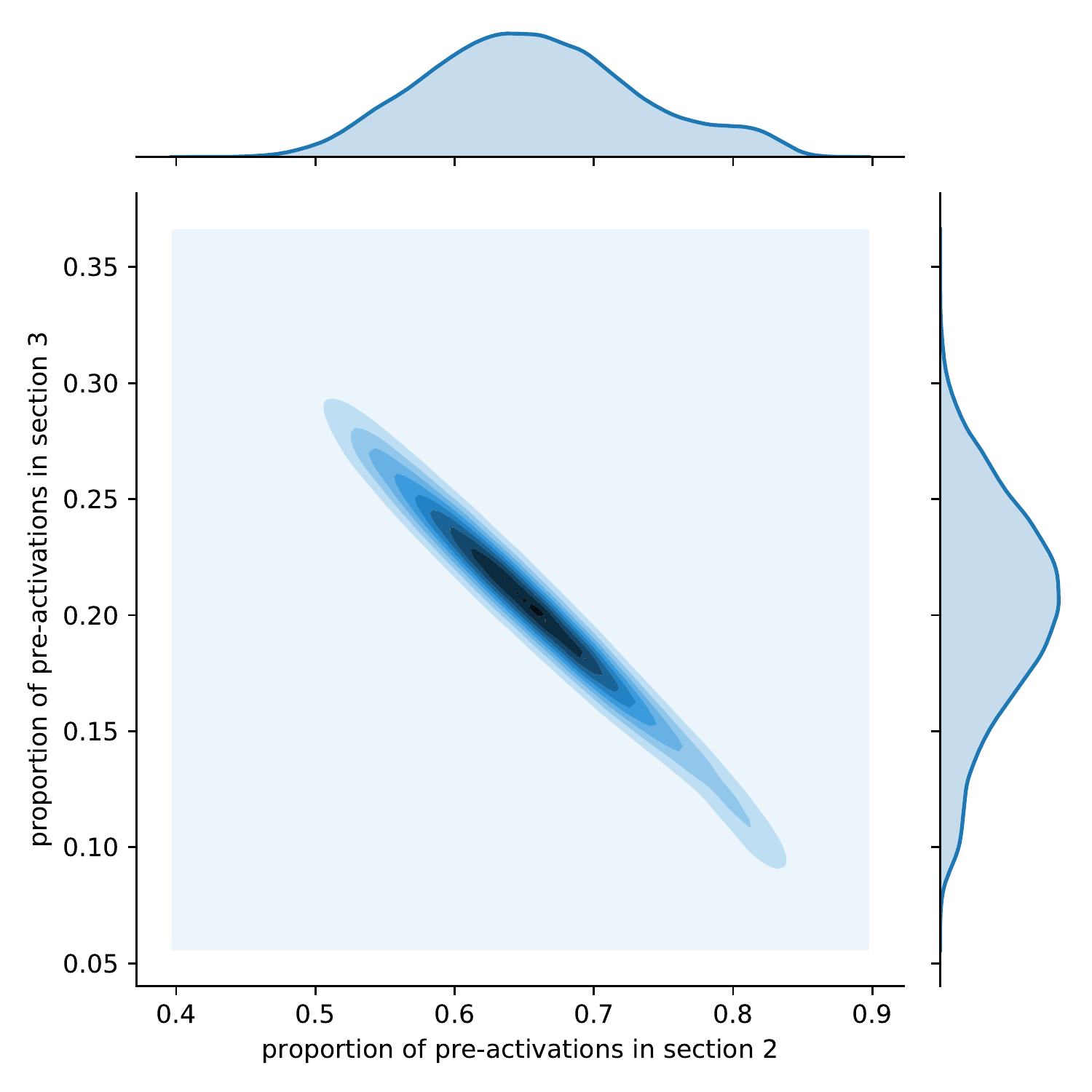}
            \caption{LeNet, MNIST data.} 
        \label{fig:probe_agg_neuron_random_weights_tanh_lenet_mnist}
        \end{subfigure}
        \caption{Experimental distribution of $(\bar{R}_2, \bar{R}_3)$ (neuron averaging; each sample is a single datum) for random \texttt{HardTanh} MLP and LeNet networks, and i.i.d. normal and MNIST data. The plots show 2d kernel density estimation fits of the joint and 1d fits of the marginals.} 
        \label{fig:probe_agg_neuron_random_weights_tanh}
    \end{figure*}
  \begin{figure*}
        \centering
        \begin{subfigure}[b]{0.236\textwidth}
            \centering
            \includegraphics[width=\textwidth]{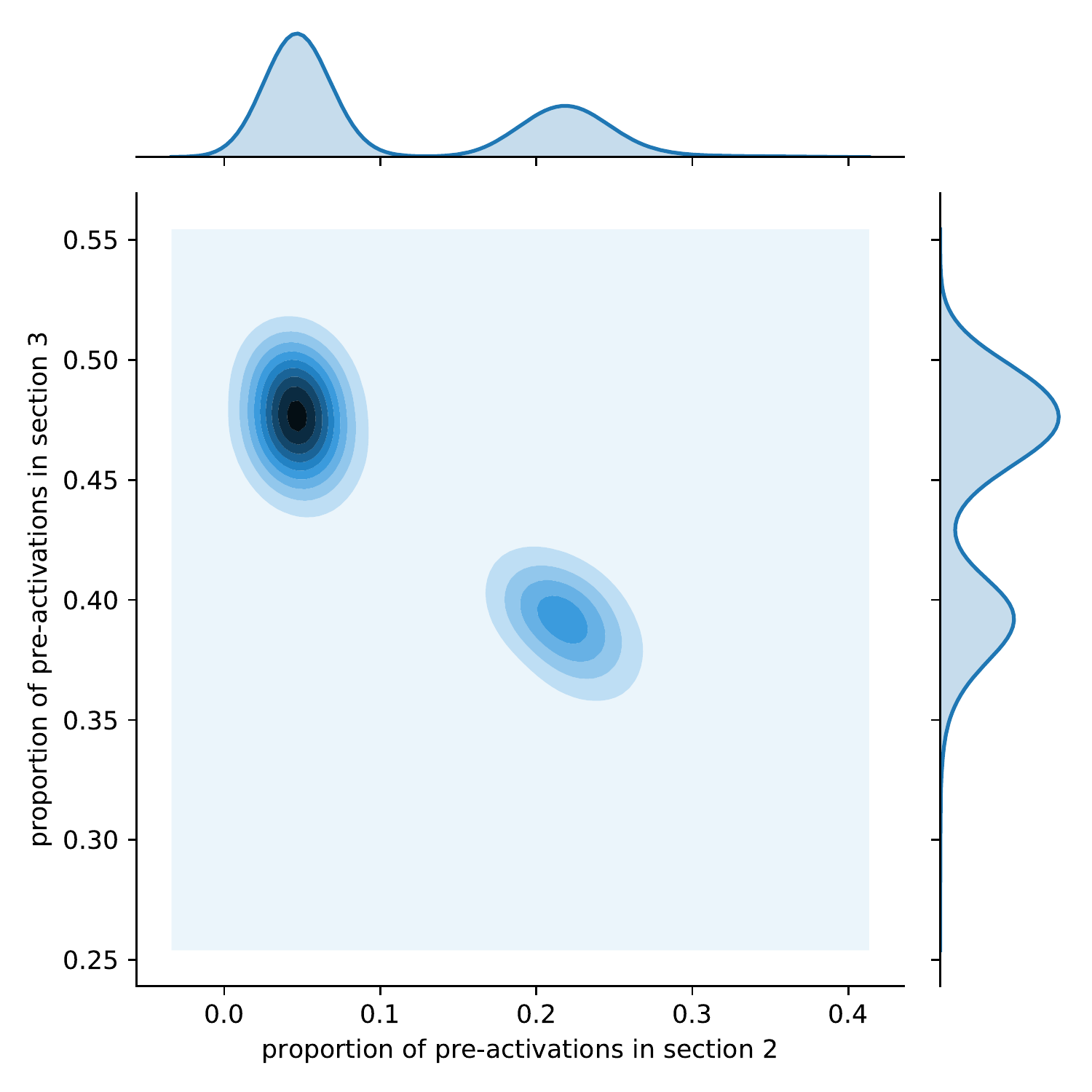}
            \caption{MLP, i.i.d. normal data.} 
        \end{subfigure}
        \begin{subfigure}[b]{0.236\textwidth}
            \centering
            \includegraphics[width=\textwidth]{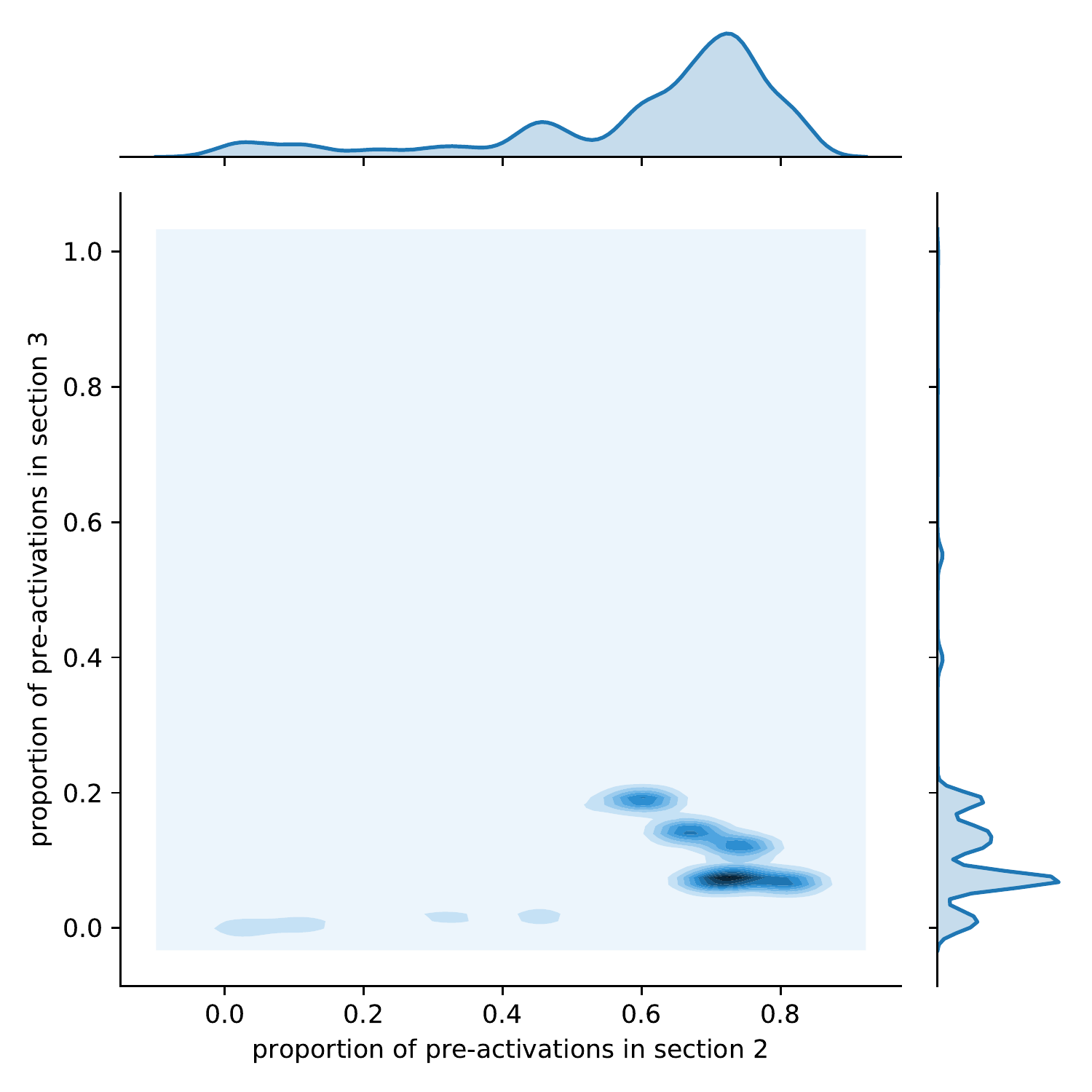}
            \caption{LeNet, i.i.d. normal data.} 
        \end{subfigure}
        \begin{subfigure}[b]{0.236\textwidth}
            \centering
            \includegraphics[width=\textwidth]{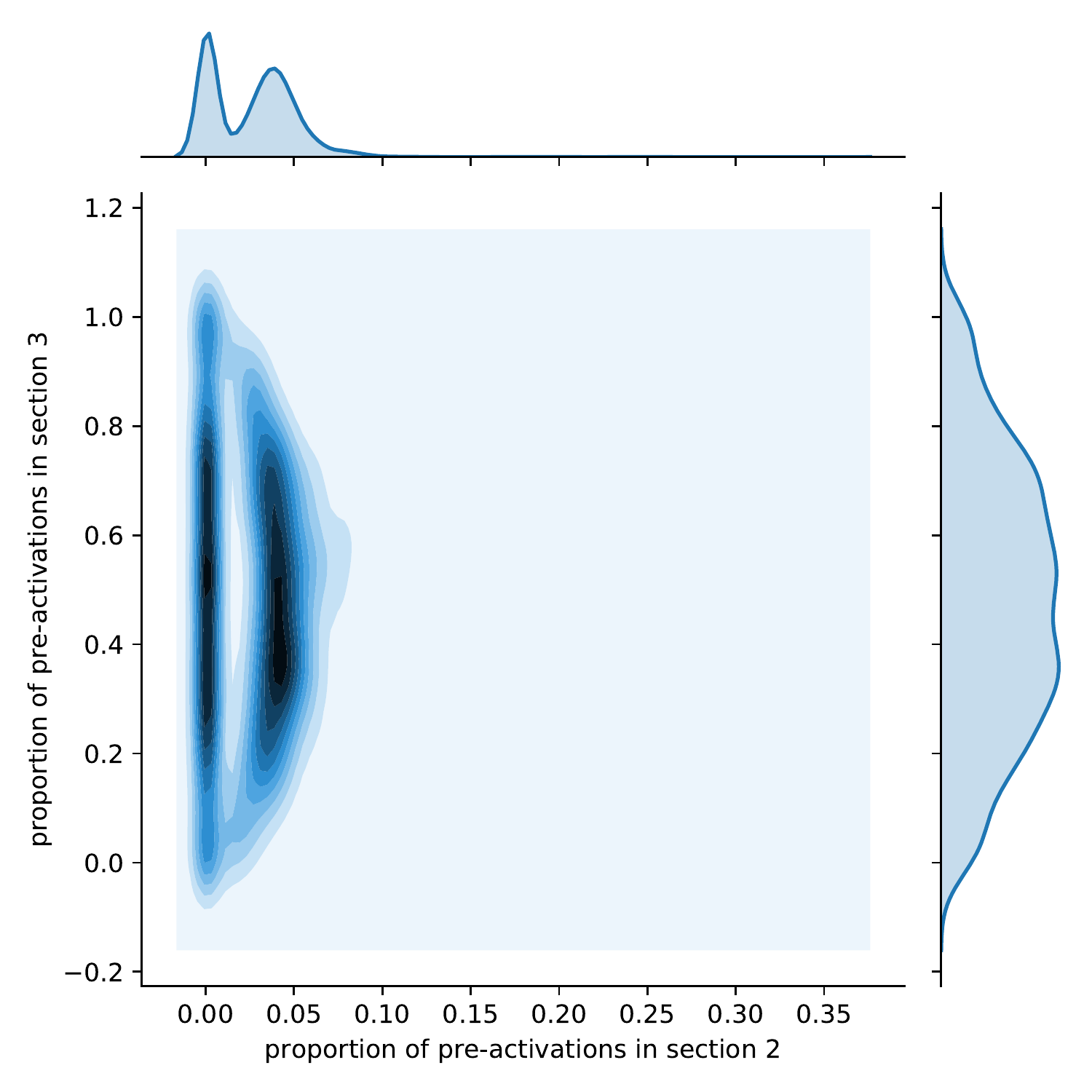}
            \caption{MLP, MNIST data.} 
        \end{subfigure}
        \begin{subfigure}[b]{0.236\textwidth}
            \centering
            \includegraphics[width=\textwidth]{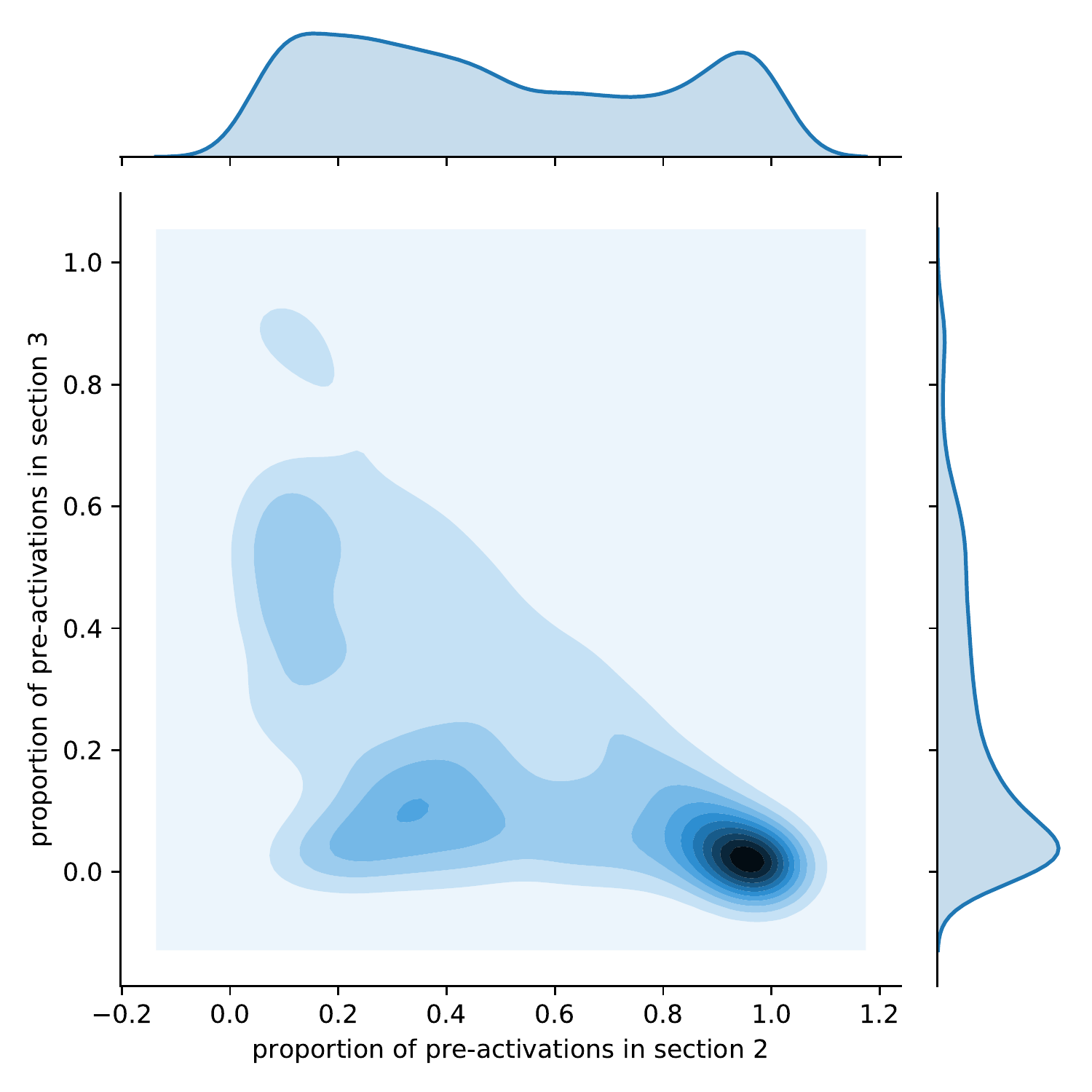}
            \caption{LeNet, MNIST data.} 
        \end{subfigure}
        \caption{Experimental distribution of $(R_2, R_3)$ (data averaging; each sample is a single neuron) for MLP and LeNet \texttt{HardTanh} networks trained to high validation accuracy on MNIST, and evaluated on i.i.d. normal and MNIST data. The plots show 2d kernel density estimation fits of the joint and 1d fits of the marginals.} 
                \label{fig:probe_agg_data_trained_weights_tanh}
    \end{figure*}
  \begin{figure*}
        \centering
        \begin{subfigure}[b]{0.236\textwidth}
            \centering
            \includegraphics[width=\textwidth]{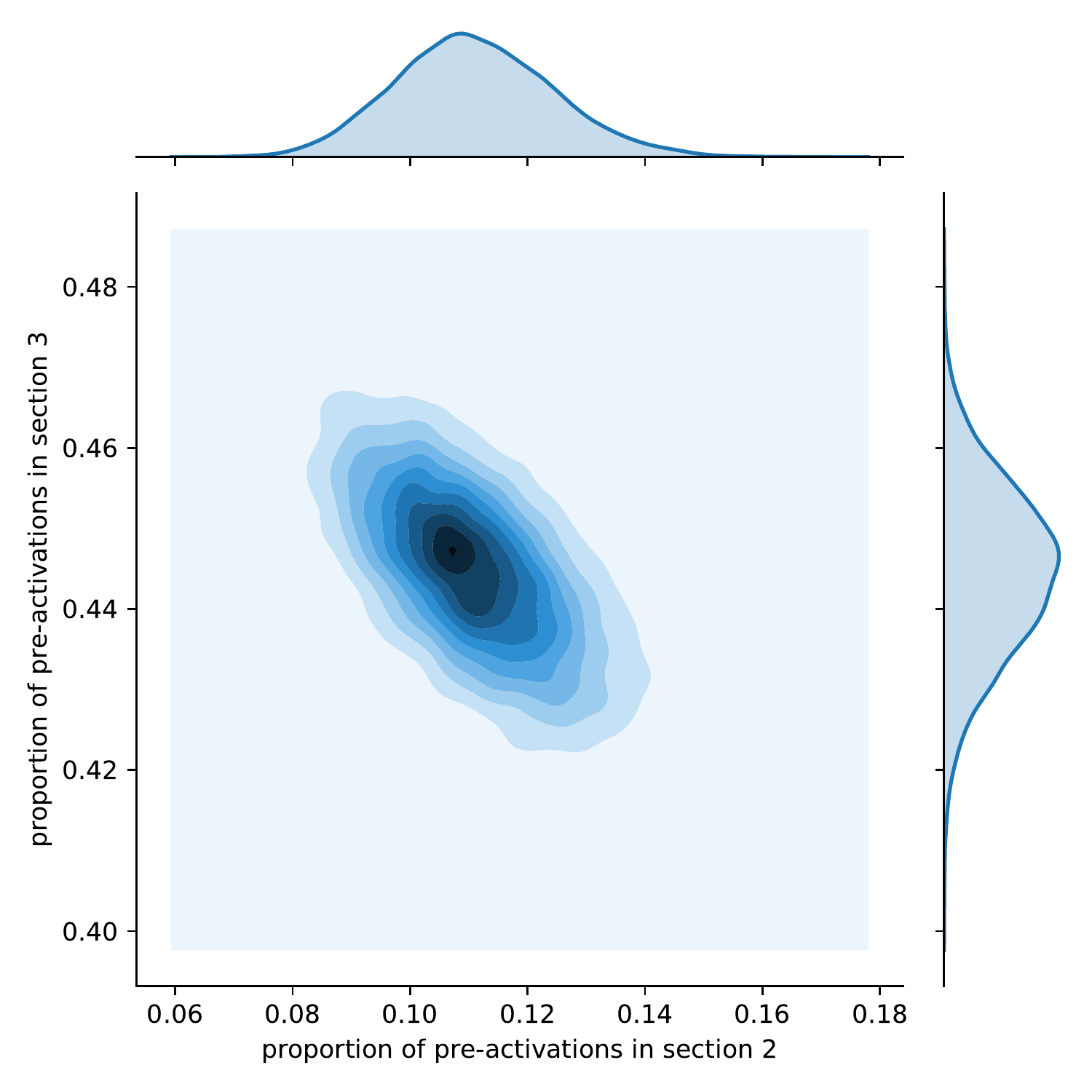}
            \caption{MLP, i.i.d. normal data.} 
        \end{subfigure}
        \begin{subfigure}[b]{0.236\textwidth}
            \centering
            \includegraphics[width=\textwidth]{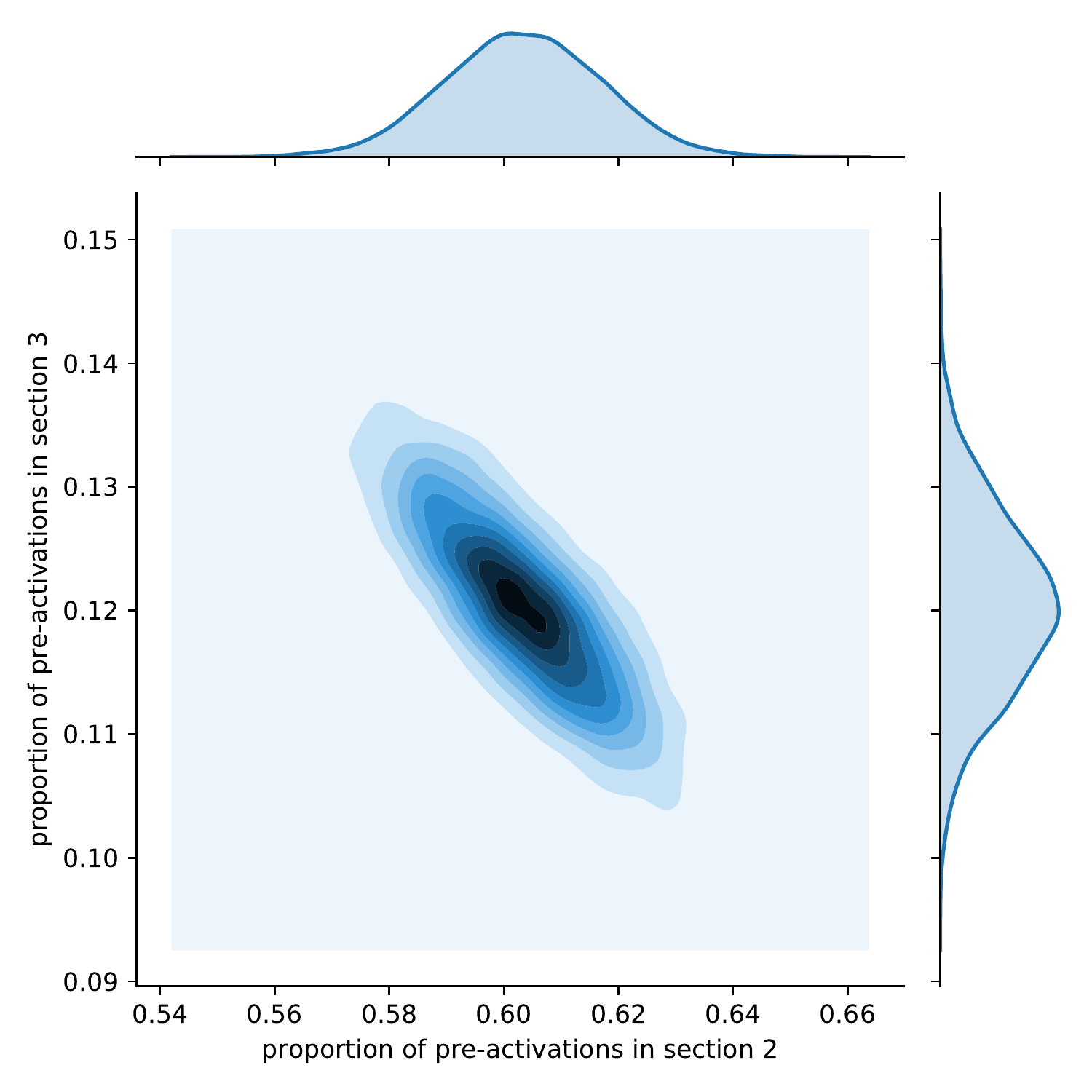}
            \caption{LeNet, i.i.d. normal data.} 
        \end{subfigure}
        \begin{subfigure}[b]{0.236\textwidth}
            \centering
            \includegraphics[width=\textwidth]{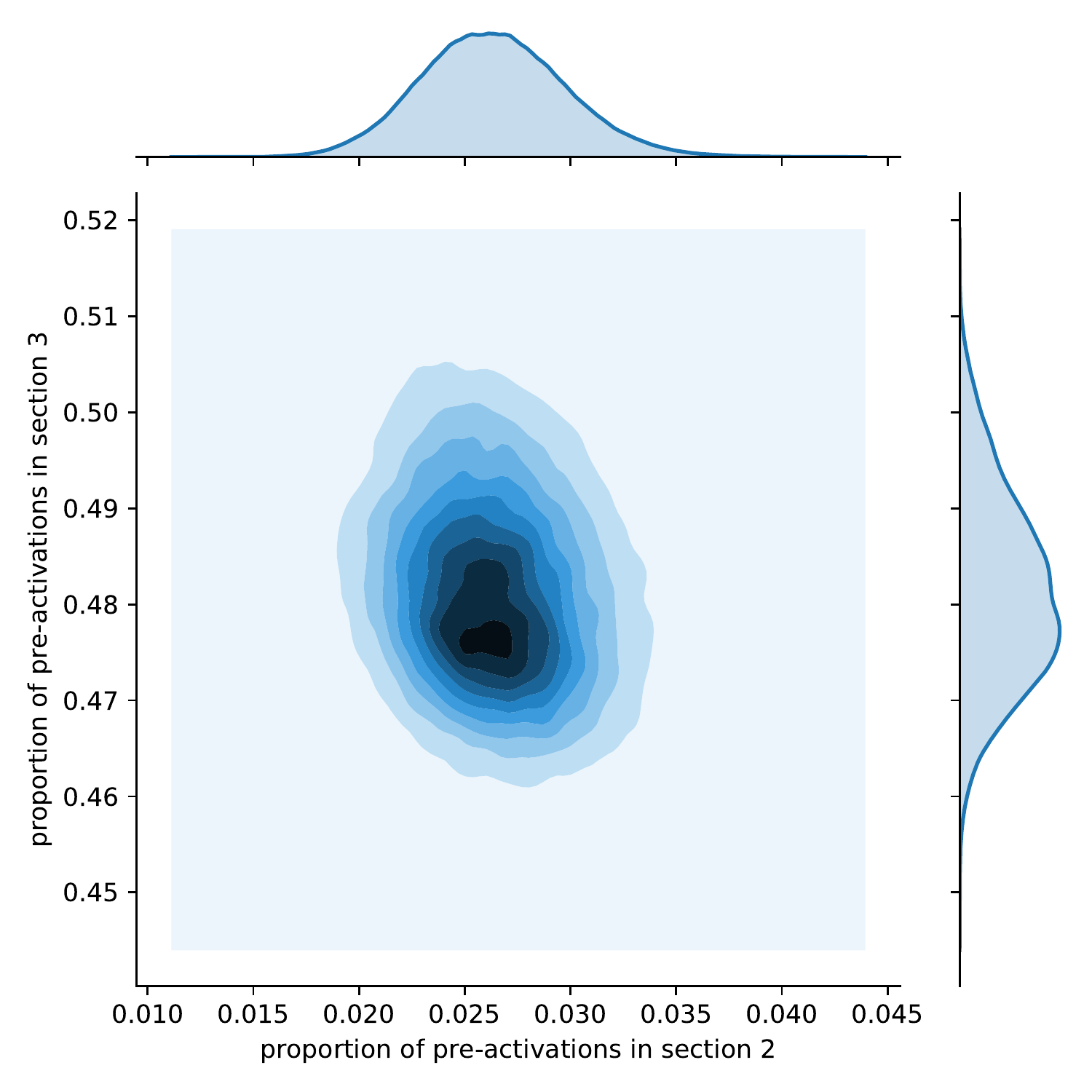}
            \caption{MLP, MNIST data.} 
        \end{subfigure}
        \begin{subfigure}[b]{0.236\textwidth}
            \centering
            \includegraphics[width=\textwidth]{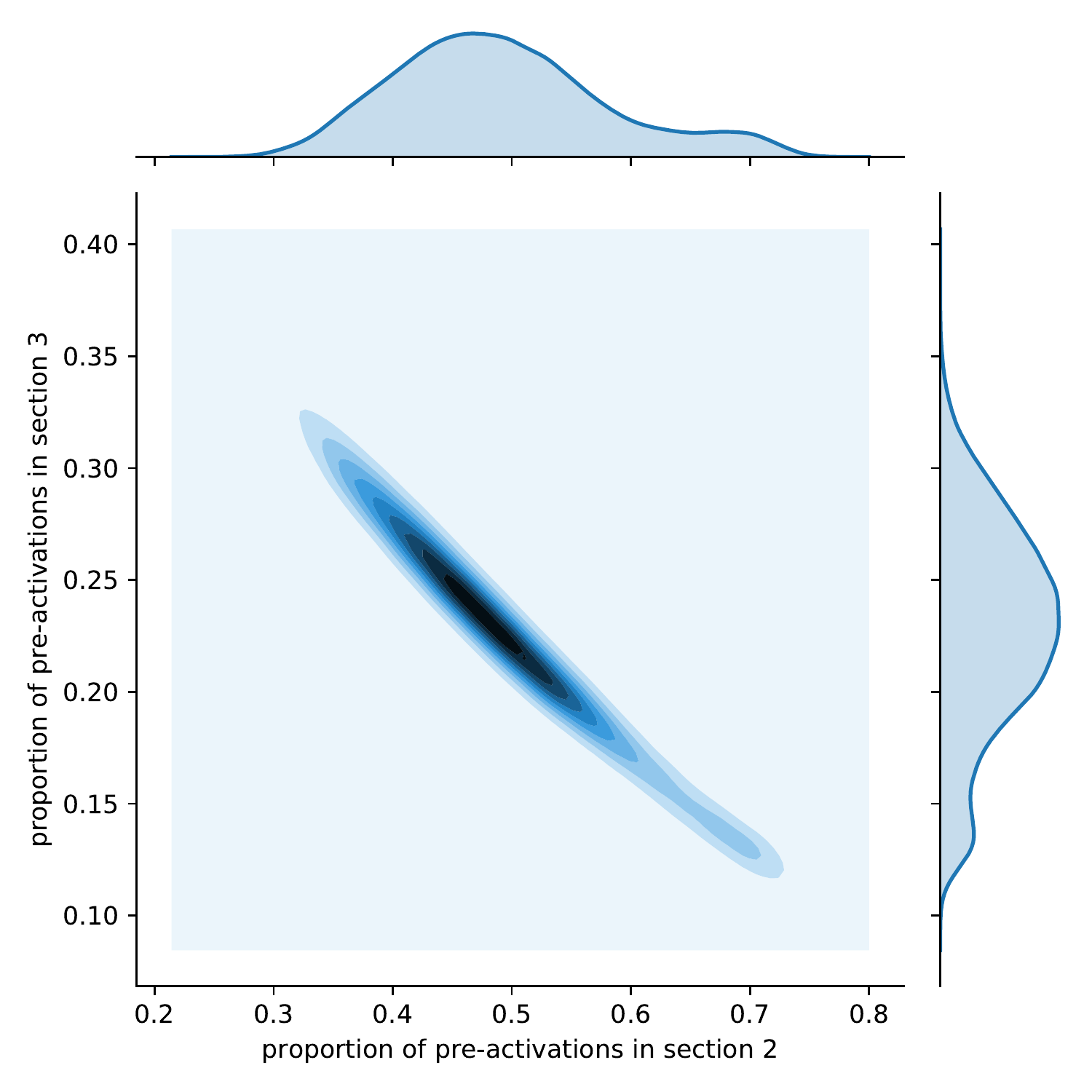}
            \caption{LeNet, MNIST data.} 
        \end{subfigure}
        \caption{Experimental distribution of  $(\bar{R}_2, \bar{R}_3)$ (neuron averaging; each sample is a single datum) for MLP and LeNet \texttt{HardTanh} networks trained to high validation accuracy on MNIST, and evaluated on i.i.d. normal and MNIST data. The plots show 2d kernel density estimation fits of the joint and 1d fits of the marginals.} 
        \label{fig:probe_agg_neuron_trained_weights_tanh}

    \end{figure*}

\begin{enumerate}
    \item The variance of $\bar{R}_2$ is `small' in all cases for \texttt{ReLU} networks except when evaluating MNIST-trained MLP networks on i.i.d. random normal data. This is the least relevant case practically.
    \item For $R_2$, the results are much less convincing, though we do note that, with random weights and i.i.d. data, the MLP network does have quite a strongly peaked distribution. In other cases the variance is undeniably large.
    \item The variance of $\bar{R}_{2,3}$ is `small' in all cases for \texttt{HardTanh} except when evaluating LeNet architectures on MNIST data.
    \item For $R_{3}$ in \texttt{HardTanh} networks, the variance seems to be low when the weights are random, but not when trained.
\end{enumerate}

Overall, we see that in some circumstances, particularly with un-trained weights, the assumption \ref{item: assumption_bernoulli} is not as unreasonable as it first sounds. More importantly for the present work, comparing the three examined activation functions supports the hypothesis that, insofar as modeling the action of the \texttt{ReLU} activation function by independent Bernoulli random variables was valid in \cite{choromanska2015loss}, our analogous modelling of the action of general piece-wise linear functions by independent discrete random variables is also valid. \jstat{Put another way, it does not appear that the assumptions we make here are any stronger than those made in \cite{choromanska2015loss}. We finally note an interesting comparison between, for example, Figures \ref{fig:probe_agg_neuron_random_weights_mlp_iid} and \ref{fig:probe_agg_neuron_random_weights_mlp_mnist}, or equally Figures \ref{fig:probe_agg_neuron_random_weights_tanh_mlp_iid} and \ref{fig:probe_agg_neuron_random_weights_tanh_nlp_mnist}. In both cases, the variance is low for both distributions, and the only difference between the two experiments is the evaluation data, being i.i.d. Gaussian in the one case, and MNIST in the other. These results seem to demonstrate that the assumption of i.i.d. Gaussian data distribution is not trivialising the problem as one might expect a priori.}

\jstat{Taking all of the results of this section together, we see that the case for our extension of \cite{choromanska2015loss} is quite strong, but there are clearly realistic cases where the modelling assumptions applied to activation functions in \cite{choromanska2015loss} are convincingly violated.}

\section{Statement of results}\label{sec:statement_results}
 We shall use \emph{complexity} to refer to any of the following defined quantities which we define precisely as they appear in \cite{auffinger2013random}.

\begin{defn}\label{def:cnk}
For a Borel set $B\subset\mathbb{R}$ and non-negative integer $k$, let \begin{equation}
    \CNk^g(B) = \left|\left\{\vec{w} \in \sqrtsign{N}S^{N-1} ~:~ \grad g(\vec{w})=0, g(\vec{w})\in B, ~ i(\grad^2g)=k\right\}\right|
\end{equation}
where $i(M)$ for a square matrix $M$ is the \emph{index} of $M$, i.e. the number of negative eigenvalues of $M$. We also define the useful generalisation $\ind{x}(M)$ to be the number of eigenvalues of $M$ less than $x$, so $\ind{0}(M) = i(M).$
\end{defn}
\begin{defn}
For a Borel set $B\subset\mathbb{R}$, let \begin{equation}
    \CN^g(B) = \left|\left\{\vec{w} \in \sqrtsign{N}S^{N-1} ~:~ \grad g(\vec{w})=0, g(\vec{w})\in B\right\}\right|.
\end{equation}
\end{defn}

We now state our main identities, which we find simpler to prove by scaling $\vec{w}$ to lie on the hyper-sphere of unit radius: $h(\vec{w}) \defeq N^{-H/2}g(\sqrtsign{N}\vec{w})$.  For convenience, we define \begin{equation}\label{eq:rho_N_redef}
\rho_{\ell}^{(N)} =  \rho_{\ell} N^{-\ell/2}
\end{equation} so that, recalling the form of $g$ in (\ref{eq:g_def}), we obtain \begin{equation}\label{eq:h_def}
    h(\vec{w}) = \sum_{i_1, \ldots, i_H = 1}^{\Lambda} X_{i_1, \ldots, i_H} \prod_{k=1}^H w_{i_k} + \sum_{\ell=1}^H\rho_{\ell}^{(N)} \sum_{i_{\ell + 1}, \ldots, i_H=1}^{\Lambda} \prod_{k=\ell +1}^H w_{i_k}.
\end{equation}Though the complexities have been defined using general Borel sets, as in \cite{auffinger2013random}, we focus on half-infinite intervals $(-\infty, u)$, acknowledging that everything that follows could be repeated instead with general Borel sets \emph{mutatis mutandis}.  We will henceforth be studying the following central quantities (note the minor abuse of notation):

\begin{equation}\label{eq:cnkh_def}
     \CNk^h(\sqrtsign{N}u) = \left|\left\{\vec{w} \in S^{N-1} ~:~ \grad h(\vec{w})=0, h(\vec{w})\in \sqrtsign{N}u, ~ i(\grad^2h)=k\right\}\right|,
\end{equation}
\begin{equation}\label{eq:cnh_def}
     \CN^h(\sqrtsign{N}u) = \left|\left\{\vec{w} \in S^{N-1} ~:~ \grad h(\vec{w})=0, h(\vec{w})\in \sqrtsign{N}u\right\}\right|
\end{equation}
and it will be useful to define a relaxed version of (\ref{eq:cnkh_def}) for $\mathcal{K}\subset \{0,1,\ldots, N\}$:
\begin{equation}\label{eq:cnkh_set_def}
      C_{N, \mathcal{K}}^h(\sqrtsign{N}u) = \left|\left\{\vec{w} \in S^{N-1} ~:~ \grad h(\vec{w})=0, h(\vec{w})\in \sqrtsign{N}u, ~ i(\grad^2h)\in\mathcal{K}\right\}\right|.
\end{equation}

Our main results take the form of two theorems that extend Theorems 2.5 and 2.8 from \cite{auffinger2013random} to our more general spin glass like object $g$, and a third theorem with partially extends Theorem 2.17 of \cite{auffinger2013random}. In the case of Theorem 2.8, we are able to obtain exactly the same result in this generalised setting. For Theorem 2.5, we have been unable to avoid slackening the result slightly, hence the introduction of the quantity $C^h_{N, \mathcal{K}}$ above. In the case of Theorem 2.17, we are only able to perform the calculations of the exact leading order term in one case and obtain a term very similar to that in \cite{auffinger2013random} but with an extra factor dependent on the piece-wise linear approximation to the generalised activation function. This exact term correctly falls-back to the term found in \cite{auffinger2013random} when we take $f=\texttt{ReLU}$.\\

\begin{restatable}{theorem}{auffindk}
\label{thm:auff2.8}%
Recall the definition of $\CN^h$ in (\ref{eq:cnh_def}) and let $\Theta_H$ be defined as in \cite{auffinger2013random}: \begin{align}
    \Theta_H(u) = \begin{cases} \frac{1}{2}\log(H-1) - \frac{H-2}{4(H-1)}u^2 - I_1(u; E_{\infty}) ~~ &\text{if } u\leq -E_{\infty},\\
    \frac{1}{2}\log(H-1) - \frac{H-2}{4(H-1)}u^2 &\text{if } -E_{\infty} \leq u \leq 0,\\
    \frac{1}{2}\log(H-1) &\text{if } 0\geq u,\end{cases}
    \end{align}
    where $E_{\infty} = 2\sqrtsign{\frac{H-1}{H}}$, and $I_1(\cdot; E)$ is defined on $(-\infty, -E]$ as in \cite{auffinger2013random} by \begin{equation}
        I_1(u; E) = \frac{2}{E^2}\int_{u}^{-E} (z^2 - E^2)^{1/2} dz = -\frac{u}{E^2}\sqrtsign{u^2 - E^2} - \log\left(-u + \sqrtsign{u^2 - E^2}\right) + \log E,\label{eq:auffinger_I1_def}
    \end{equation} then \begin{equation}
        \lim_{N\rightarrow\infty} \frac{1}{N}\log\expect C_{N}^{h}(\sqrtsign{N}u) = \Theta_H(u).
    \end{equation}
\end{restatable}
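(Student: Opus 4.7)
The plan is to adapt the Kac--Rice/GOE strategy of \cite{auffinger2013random} to the perturbed spin glass $h = h_0 + D$ where $h_0(\vec w) = \sum_{i_1,\ldots,i_H} X_{i_1,\ldots,i_H}\prod_k w_{i_k}$ is the pure $H$-spin glass and $D(\vec w) = \sum_{\ell=1}^H \rho_\ell^{(N)} (\sum_i w_i)^{H-\ell}$ is the deterministic perturbation from (\ref{eq:h_def}). The starting point is the Kac--Rice expression (derived in Section \ref{sec:goe_expressions})
\begin{equation}
\expect C_N^h(\sqrtsign{N}u) = \int_{S^{N-1}} \phi_{\grad h(\vec w)}(0)\, \expect\!\left[|\det \grad^2 h(\vec w)|\, \mathbbm{1}\{h(\vec w)\leq \sqrtsign{N}u\}\,\big|\, \grad h(\vec w)=0\right]\! dS(\vec w).
\end{equation}
Since $D$ is deterministic, $\grad h = \grad h_0 + \grad D$ and $\grad^2 h = \grad^2 h_0 + \grad^2 D$, so conditioning on $\grad h = 0$ is a simple shift of the Gaussian conditioning on $\grad h_0$, and the density prefactor $\phi_{\grad h(\vec w)}(0)$ is just $\phi_{\grad h_0(\vec w)}(-\grad D(\vec w))$.

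Next I reduce the conditional determinant expectation, following the covariance-structure computation familiar from \cite{auffinger2013random}, to an expectation of the form $\expect |\det(M_N + A_N(\vec w))|$ where $M_N$ is (up to a scalar normalisation) a GOE matrix shifted by a multiple of the identity coming from conditioning on $h_0(\vec w)$, and $A_N(\vec w)$ is a deterministic perturbation built from $\grad^2 D$, the value $h(\vec w)$ and the projection of $\vec w$ onto the all-ones direction. The decisive feature is that $A_N(\vec w)$ is effectively low-rank (its nontrivial action is confined to the direction induced by $D$), so it is not strong enough to alter the limiting empirical spectral measure of $M_N + A_N$. I then invoke the supersymmetric machinery developed in Section \ref{sec:asymptotic_evaluation} (in the spirit of \cite{fyodorov2015random, nock}) to evaluate $\expect |\det(M_N + A_N(\vec w))|$ asymptotically and to show that the contribution of $A_N$ to $\frac{1}{N}\log \expect |\det \cdots|$ vanishes as $N \to \infty$. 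This replaces the classical GOE large-deviation computation in \cite{auffinger2013random}, which is unavailable here because the shifted matrix is no longer invariant.

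Having established that the integrand of the Kac--Rice formula has the same $N \to \infty$ exponential rate as in the pure spin glass case, uniformly in $\vec w \in S^{N-1}$, integration over the sphere contributes only a subexponential factor on the scale $e^{N\Theta_H(u)}$, so one obtains $\frac{1}{N}\log \expect C_N^h(\sqrtsign{N}u) \to \Theta_H(u)$. The three-regime piecewise form of $\Theta_H$ then emerges exactly as in \cite{auffinger2013random}: for $u \leq -E_\infty$ the spectrum of $M_N$ has a nontrivial large-deviation cost captured by $I_1(u;E_\infty)$; for $-E_\infty \leq u \leq 0$ the bulk semicircle suffices; and for $u \geq 0$ the value constraint is inactive.

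The main obstacle is the asymptotic evaluation of $\expect|\det(M_N + A_N(\vec w))|$ in a way that (i) holds uniformly enough over $\vec w$ to permit integration over the sphere and (ii) rigorously identifies the $A_N$-contribution as subleading; this is precisely what the supersymmetric route of Section \ref{sec:asymptotic_evaluation} is built to accomplish, and it is the point at which our generalised activation function is first absorbed into a manageable perturbation. Everything else — the Kac--Rice representation, the covariance bookkeeping and the identification of the three regimes of $\Theta_H$ — proceeds by direct analogy with \cite{auffinger2013random}.
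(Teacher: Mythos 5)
Your proposal follows essentially the same route as the paper: the Kac--Rice reduction with the deterministic term entering through a shifted gradient density and a low-rank perturbation $S$ of the conditioned GOE Hessian, the supersymmetric evaluation of $\expect|\det(M-xI+S)|$ showing that $S$ only affects subexponential factors, and the recovery of the three regimes of $\Theta_H$ from the resulting asymptotics. The outline is sound; the only caveat is bookkeeping-level (the sphere-volume and variance-scaling prefactors are exponentially significant and must be tracked to produce the $\tfrac{1}{2}\log(H-1)$ terms, as the paper does via $c_{N,H}$), not a gap in the approach.
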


\begin{restatable}{theorem}{auffdepk}
\label{thm:auff2.5}%
Recall the definition of $C_{N, \mathcal{K}}^h$ in (\ref{eq:cnkh_set_def}) and let $\Theta_{H,k}$ be defined as in \cite{auffinger2013random}: \begin{align}
    \Theta_{H,k}(u) = \begin{cases} \frac{1}{2}\log(H-1) - \frac{H-2}{4(H-1)}u^2 - (k+1)I_1(u; E_{\infty}) ~~ &\text{if } u\leq -E_{\infty},\\
     \frac{1}{2}\log(H-1) - \frac{H-2}{H}
     &\text{if } u > -E_{\infty}, \end{cases}
    \end{align}
    then, with $\mathcal{K} = \{k-1, k, k+1\}$ for $k>0$, \begin{equation}
       \Theta_{H,k+1}(u) \leq \lim_{N\rightarrow\infty} \frac{1}{N}\log\expect C_{N,\mathcal{K}}^{h}(\sqrtsign{N}u) \leq \Theta_{H,k-1}(u)
    \end{equation}
        and similarly with $\mathcal{K} = \{0, 1\}$
        \begin{equation}
       \Theta_{H,1}(u) \leq \lim_{N\rightarrow\infty} \frac{1}{N}\log\expect C_{N,\mathcal{K}}^{h}(\sqrtsign{N}u) \leq \Theta_{H,0}(u).
    \end{equation}
    \end{restatable}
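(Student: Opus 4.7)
The plan is to reduce Theorem \ref{thm:auff2.5} to the pure-spin-glass Theorem 2.5 of \cite{auffinger2013random} using the Kac--Rice representation already developed for Theorem \ref{thm:auff2.8}, combined with the rank-one algebraic structure of the activation-function correction term and Weyl's interlacing theorem.

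First I would apply the Kac--Rice formula exactly as in Section \ref{sec:goe_expressions} to $\expect C_{N,\mathcal{K}}^{h}(\sqrtsign{N}u)$, retaining inside the conditional expectation of $|\det\nabla^2 h|$ the additional factor $\mathbbm{1}\{i(\nabla^2 h(\vec w))\in\mathcal{K}\}$. The key structural observation coming from (\ref{eq:h_def}) is that the deterministic part of $h$ is a polynomial in the single linear statistic $\vec 1^T\vec w$, and therefore its Euclidean Hessian is of rank one, equal to $\lambda(\vec w)\,\vec 1\vec 1^T$ for the scalar $\lambda(\vec w)=\sum_{\ell=1}^{H-2}\rho_\ell^{(N)}(H-\ell)(H-\ell-1)(\vec 1^T\vec w)^{H-\ell-2}$. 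Conditional on $\nabla h(\vec w)=0$, the random part of $\nabla^2 h$ is the shifted GOE matrix $\tilde M_N$ analysed in \cite{auffinger2013random}, so that on the sphere $\nabla^2 h=\tilde M_N+\lambda(\vec w)\,P_{\vec w}\vec 1\vec 1^T P_{\vec w}$ plus a scalar shift of the tangent-space identity that can be absorbed into the level parameter.

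Because the perturbation is rank one, Weyl's interlacing gives $|i(\nabla^2 h)-i(\tilde M_N)|\leq 1$, and the direction of the shift is controlled by $\mathrm{sgn}\,\lambda(\vec w)$. Combining these inclusions with the exact pure-case asymptotic $N^{-1}\log\expect\bigl[|\det\tilde M_N|\,\mathbbm 1\{i(\tilde M_N)=j\}\bigr]\to \Theta_{H,j}(u)$ from \cite[Theorem 2.5]{auffinger2013random}, and using strict monotonicity of $j\mapsto\Theta_{H,j}(u)$, one extracts the upper bound $\Theta_{H,k-1}(u)$ from the three-term inclusion
\[
\{i(\nabla^2 h)\in\mathcal{K}\}\subseteq\{i(\tilde M_N)\in\{k-1,k,k+1\}\}
\]
valid when $\lambda(\vec w)$ has favourable sign, and the lower bound $\Theta_{H,k+1}(u)$ from the reverse inclusion $\{i(\tilde M_N)=k+1\}\subseteq\{i(\nabla^2 h)\in\mathcal{K}\}$ on the corresponding subset. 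The boundary case $k=0$ follows identically, with the one-sided window $\{0,1\}$ matching the one-sided nature of the constraint $i\geq 0$.

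The main obstacle is precisely this sign issue: $\lambda(\vec w)$ is a polynomial in $\vec 1^T\vec w$ whose coefficients are the activation-derived constants $\rho_\ell^{(N)}$, and it is not of definite sign globally on the sphere. To pick the correct direction of the Weyl shift one must verify that the dominant saddle of the Kac--Rice integral concentrates near $\vec 1^T\vec w\approx 0$, where $\lambda(\vec w)$ reduces to $2\rho_{H-2}^{(N)}$ and has fixed sign; this concentration is plausible since $\vec 1^T\vec w/\sqrtsign{N}$ is of order $N^{-1/2}$ under the uniform measure on $S^{N-1}$, and I would establish it by a steepest-descent analysis of the supersymmetric representation developed in Section \ref{sec:asymptotic_evaluation}. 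The inability to further eliminate the $\pm 1$ uncertainty introduced by this rank-one perturbation is ultimately what forces the slack in $\mathcal{K}$ and prevents sharpening the statement to a single-index complexity as in the pure case.
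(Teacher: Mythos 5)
Your high-level plan (Kac--Rice with the index indicator, then eigenvalue interlacing to transfer the index constraint to the unperturbed matrix, then reuse of the pure spin-glass asymptotics) captures part of the paper's strategy --- interlacing is indeed how the window $\mathcal{K}=\{k-1,k,k+1\}$ arises --- but there is a genuine gap at the step you treat as a citation. After Kac--Rice, the quantity to control is a \emph{mixed} GOE average: the determinant is that of the perturbed matrix $M-xI+S$ while your interlacing moves the indicator onto the index of the unperturbed matrix, and you never control the two together; moreover the statement you invoke, $N^{-1}\log\expect\left[|\det \tilde M_N|\,\indic\{i(\tilde M_N)=j\}\right]\to\Theta_{H,j}(u)$, is not Theorem 2.5 of Auffinger--Ben Arous--\v{C}ern\'y (that is a statement about the complexity $\expect C_{N,k}(u)$ after the $x$-integration and all Kac--Rice prefactors, not a pointwise GOE determinant-with-index asymptotic, and no such pointwise limit holds in the stated form). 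The paper's proof instead re-derives the constrained average from scratch: within the supersymmetric representation the matrix enters only through $e^{-i\Tr MA}$, and Lemma \ref{lemma:nock_deformed_cond_k} establishes two-sided bounds of the form $K_{L}e^{-(k+1)NI_1(x;\sqrt{2})}e^{-\Tr A^2/2N}\leq \Re\expect_M\left[e^{-i\Tr MA}\indic\{\ind{x}(M+S)\in\mathcal{K}\}\right]\leq K_{U}e^{-(k-1)NI_1(x;\sqrt{2})}e^{-\Tr A^2/2N}$ by splitting off the $k$ low-lying eigenvalues, a Coulomb-gas argument forcing the bulk onto the semicircle, the Gaussian approximation of Haar-distributed vectors from \cite{guionnet2005fourier}, and interlacing applied one eigendirection of $S$ at a time; the steepest-descent analysis of Theorem \ref{thm:auff2.8} is then re-run on these bounds. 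Note also that in the paper's frame (Lemma \ref{lemma:conditional_dist}) the perturbation $S$ is rank \emph{two}, with eigenvalues $s_1=\mathcal{O}(1)$ and $N^{-1/2}s_2$, not the rank-one $P_{\vec{w}}\vec{1}\vec{1}^TP_{\vec{w}}$ you assume, and the window $\{k-1,k,k+1\}$ is sized for exactly that; the signs of $s_1,s_2$ turn out to be immaterial, so your proposed (and unproven) concentration-near-the-equator argument to pin down the sign of the shift is neither carried out nor needed in the paper's scheme.

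A second omission is the plateau branch $u>-E_{\infty}$ of $\Theta_{H,k}$, which your proposal does not address and which cannot follow from any pointwise determinant asymptotic: for $x>-\sqrt{2}$ the event $\ind{x}(M+S)=\mathcal{O}(1)$ has probability $e^{-cN^2}$ by the GOE large deviations principle, so the constrained average there is suppressed at speed $N^2$. The paper handles this by showing the $x$-integral is negligible beyond $-\sqrt{2}$ at quadratic speed, and then using monotonicity of $\Theta_{H,k}$ on $(-\infty,-E_{\infty})$ together with Laplace's principle to conclude that the limit for $u>-E_{\infty}$ equals its value at $u=-E_{\infty}$; your argument needs this additional step to produce the second branch of the theorem.
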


\begin{remark}
Note that Theorem \ref{thm:auff2.5} holds for \texttt{ReLU} networks (equivalently, pure multi-spin glass models), as indeed it must. It can be seen as an immediate (weaker) consequence of the Theorem 2.5 in \cite{auffinger2013random} of which it is an analogue in our more general setting.
\end{remark}

\begin{restatable}{theorem}{auffexact}
\label{thm:exact_term}%
Let $u<-E_{\infty}$ and define $v = -\frac{\sqrtsign{2}u}{E_{\infty}}$. Define the function $h$ by (c.f. (7.10) in \cite{auffinger2013random}) \begin{equation}
    h(v) = \left(\frac{|v - \sqrtsign{2}|}{|v + \sqrtsign{2}|}\right)^{1/4} + \left(\frac{|v + \sqrtsign{2}|}{|v - \sqrtsign{2}|}\right)^{1/4},
\end{equation}
\jstat{and the functions
\begin{align}
    q(\theta') =   \frac{1}{2} \sin^2 2\theta' + \frac{1}{4}\left(3+4\cos 4\theta'\right),
\end{align}
 \begin{align}
 j(x, s_1, \theta') = 1 + \frac{1}{4}s_1\sqrtsign{x^2 - 2}h(x)^2 - \frac{1}{4}s_1^2 q(\theta')|x^2 - 2|h(x)^2,
\end{align}
 \begin{align}
    T(v, s_1) = \frac{2}{\pi}\int_{0}^{\pi/2}j(-v, s_1, \theta')d\theta'.
\end{align}}
The $N-1 \times N-1$ deterministic matrix $S$ is defined subsequently around (\ref{eq:S_def}). $S$ has fixed rank $r=2$ and non-zero eigenvalues \jstat{$\{s_1, N^{-1/2}s_2\}$} where $s_j = \mathcal{O}(1)$. The specific form of $S$ is rather cumbersome and uninformative and so is relegated to Appendix \ref{ap:S_specific}, \jstat{and the vector $\vec{v}$ is defined in Lemma \ref{lemma:conditional_dist}}. Then we have 
\jstat{\begin{align}
     \expect C_{N}^{h}(\sqrtsign{N}u) &\sim \frac{N^{-\frac{\jstat{1}}{2}}}{\sqrtsign{2\pi H}} \jstat{e^{-\frac{\vec{v}^2}{2H}}}\jstat{T(v, s_1)} h(v) e^{N\Theta_H(u)} \frac{e^{I_1(u; E_{\infty}) - \frac{1}{2}u I_1'(u; E_{\infty})}}{\frac{H-2}{2(H-1)}u + I_1'(u; E_{\infty})}.
\end{align}}
\end{restatable}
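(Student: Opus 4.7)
The plan is to extend Theorem 2.17 of \cite{auffinger2013random} to account for the rank-$2$ deterministic perturbation $S$ of the conditional Hessian, using the supersymmetric machinery of \cite{fyodorov2015random, nock}. Starting from the Kac-Rice expression derived in Section \ref{sec:goe_expressions}, the first step is to localise the sphere integration at a single distinguished point using the partial rotational symmetry preserved by the deterministic part of $h$. By Lemma \ref{lemma:conditional_dist}, conditioning on $\grad h = 0$ and $h = t$ replaces the gradient by a deterministic contribution parameterised by a Gaussian vector $\vec{v}$, producing the density factor $e^{-\vec{v}^2/(2H)}/\sqrtsign{2\pi H}$. Simultaneously, the conditional distribution of $\grad^2 h$ becomes that of a GOE matrix $M$ shifted by the rank-$2$ deterministic matrix $S$ with eigenvalues $s_1 = \mathcal{O}(1)$ and $N^{-1/2} s_2$; the dependence on $u$ enters through a trace constraint coming from the function-value conditioning.

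Next, the expected absolute determinant $\expect |\det(M + S)|$ with the function-value constraint is evaluated via a supersymmetric integral representation: the absolute determinant is written as an integral over bosonic and fermionic variables, the GOE expectation is carried out exactly, and a Hubbard-Stratonovich step reduces the problem to a finite-dimensional integral amenable to the saddle-point method for large $N$. The exponential saddle reproduces $e^{N\Theta_H(u)}$, recovering Theorem \ref{thm:auff2.8}, while Gaussian fluctuations around the saddle together with the outer integration over $t \in (-\infty, \sqrtsign{N}u]$ (evaluated by Laplace's method with a boundary contribution, which is legitimate because $u < -E_{\infty}$) produce the factor $h(v)$ and the fraction $\frac{e^{I_1(u; E_\infty) - \tfrac{1}{2}u I_1'(u; E_\infty)}}{\tfrac{H-2}{2(H-1)}u + I_1'(u; E_\infty)}$ exactly as in \cite{auffinger2013random}. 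The overall $N^{-1/2}$ prefactor comes from the Gaussian integration of the remaining transverse direction.

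The new ingredient is the contribution of $S$. Since $s_2 = \mathcal{O}(N^{-1/2})$ its contribution is absorbed into the error, so only $s_1$ affects the pre-exponential factor at leading order. Expanding the supersymmetric integrand in $s_1$, a rank-$1$ shift contributes at most quadratically once the bosonic and fermionic sectors are combined. Integrating over the orientation of the $s_1$-eigenvector relative to the saddle-point directions, parameterised by the angle $\theta' \in (0, \pi/2)$, produces exactly the integrand $j(-v, s_1, \theta')$: the linear-in-$s_1$ term is proportional to $\sqrtsign{x^2 - 2}\, h(x)^2$ which is the derivative of the edge semicircle density, while the quadratic term picks up the angular weight $q(\theta')$ from the orientation average. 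Averaging over $\theta'$ yields $T(v, s_1)$, and combining with the factors from the preceding paragraphs gives the stated asymptotic.

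The principal obstacle is the simultaneous treatment of the rank-$2$ perturbation and the sub-exponential saddle-point analysis. Unlike Theorems \ref{thm:auff2.8} and \ref{thm:auff2.5}, where only the log-scale leading order is needed, here every pre-exponential factor must be tracked and each Laplace and saddle-point step requires uniform estimates. The explicit form of $S$ (deferred to Appendix \ref{ap:S_specific}) and the precise scaling of $s_1$ and $s_2$ are critical to closing these estimates; once the supersymmetric framework is set up in the style of \cite{fyodorov2015random, nock}, the remaining work is careful bookkeeping of the leading pre-exponential contributions from the bosonic Gaussian fluctuations, the fermionic determinants, and the angular integration producing $T(v, s_1)$.
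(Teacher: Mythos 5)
Your proposal follows essentially the same route as the paper's proof: the Kac--Rice expression with the conditional Hessian law of Lemma \ref{lemma:conditional_dist} (giving the gradient-density factor $e^{-\vec{v}^2/(2H)}$), the supersymmetric bosonic/fermionic representation and saddle-point plus endpoint-Laplace analysis of Lemma \ref{lemma:nock_deformed} and Theorem \ref{thm:auff2.8} with every pre-exponential factor tracked, the $N^{-1/2}s_2$ eigenvalue discarded, and the $s_1$-dependence surviving only through the angular average $T(v,s_1)$. The only imprecisions are in attribution rather than method: in the paper $\theta'$ is the diagonalisation angle of the $2\times 2$ bosonic overlap matrix $Q_B$, the orientation average against the $S$-eigenbasis is performed exactly via the generalised Fyodorov formula (Lemma \ref{lem:fyod_general}) rather than by an expansion in $s_1$, and $h(v)$ emerges from the $J_1J_2$ Jacobian prefactor at the saddle (with the fluctuation integrals only supplying constants), but these are details of the same calculation rather than a different approach.
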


We include in Figures \ref{fig:theta_h} and \ref{fig:theta_hk} plots of the functions $\Theta_H$ and $\Theta_{H,k}$ for completeness, though these figures are precisely the same as those appearing in \cite{choromanska2015loss, auffinger2013random}. The critical observation from these plots is that each of the $\Theta_{H,k}$ and $\Theta_H$ are monotonically increasing and that there exist unique $E_0 > E_1 > \ldots > E_{\infty}$ such that $\Theta_{H,k}(-E_k) = 0$ and so the critical values $-E_k$ are the boundaries between regions of exponentially many and `exponentially few' critical points of each respective index.

\begin{figure}
\begin{subfigure}{0.45\textwidth}
    \centering
    \includegraphics[width=\textwidth]{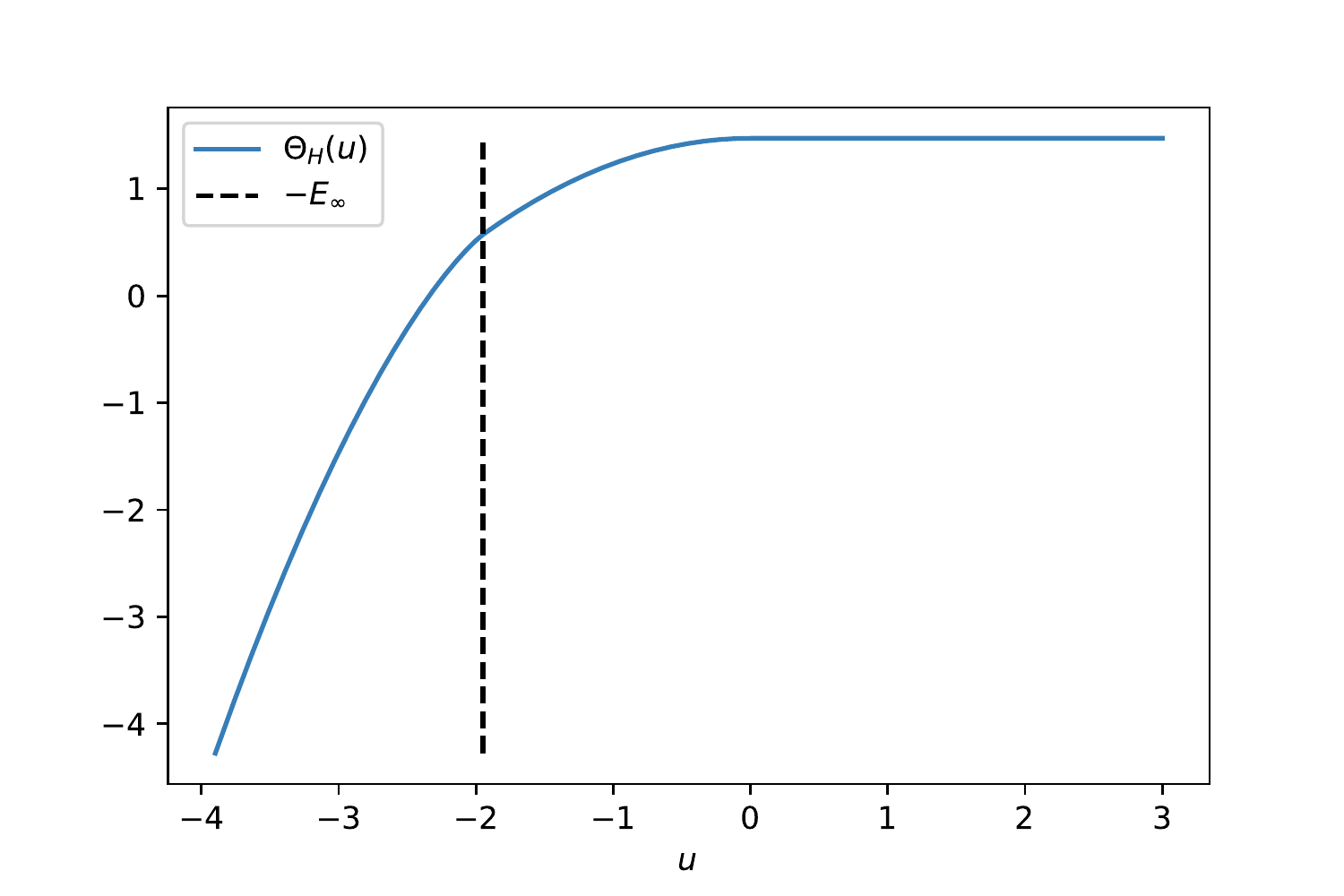}
    \caption{Plot of  $\Theta_H$.}
    \label{fig:theta_h}
\end{subfigure}
\begin{subfigure}{0.45\textwidth}
    \centering
    \includegraphics[width=\textwidth]{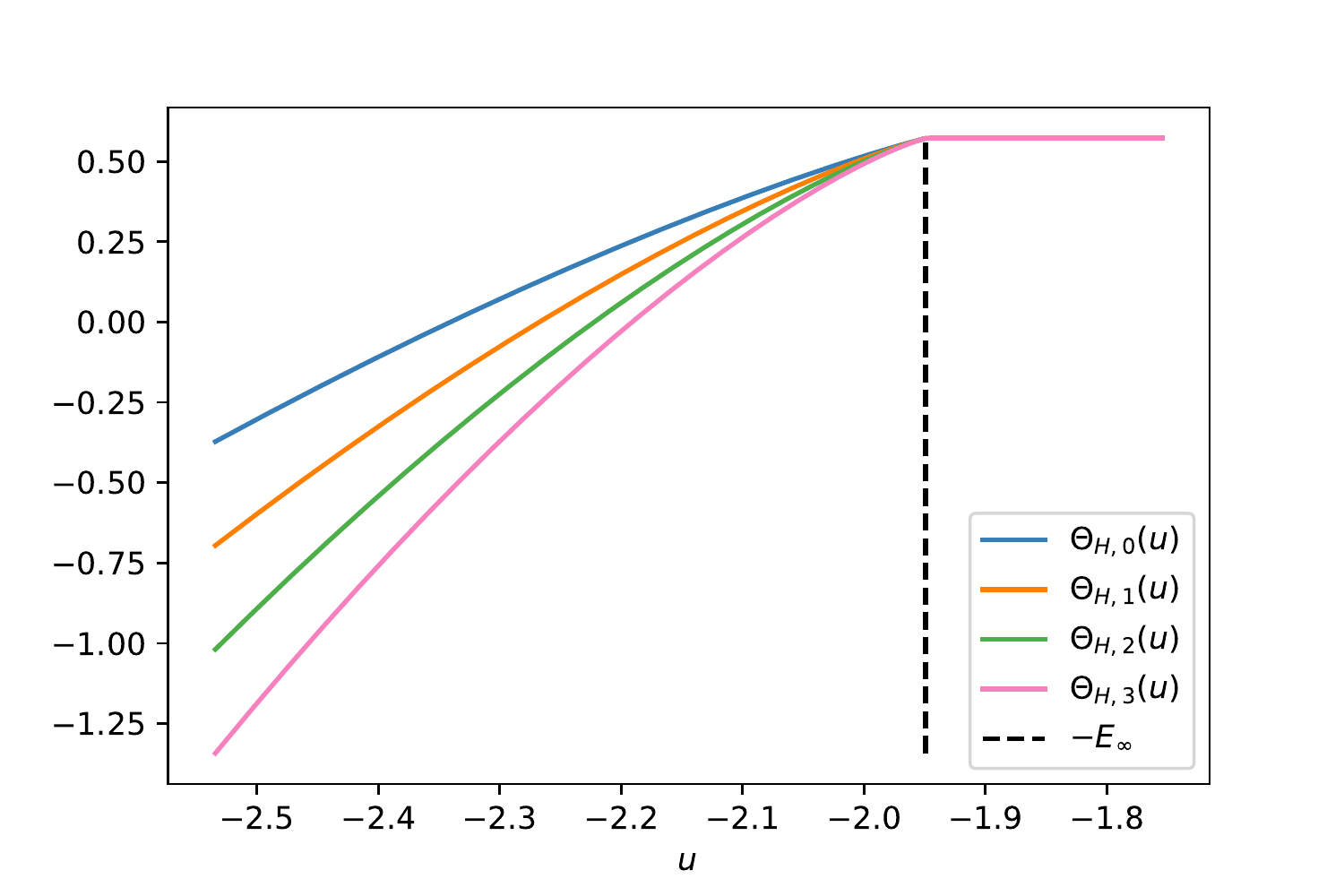}
    \caption{Plot of $\Theta_{H,k}$  $k=0,1,2,3$.}
    \label{fig:theta_hk}
\end{subfigure}
\caption{Plots of the functions $\Theta_H$ and $\Theta_{H,k}$ for $H=20$.}
\end{figure}

\jstat{\begin{remark}
It is interesting to compare the expression (\ref{eq:h_def}) to the analogous expression for the model of \cite{ros2019complex}. In that work, when scaled to the unit hypersphere and scaled so that the spin glass term is composed of $\mathcal{O}(1)$ terms, the scale of the deterministic term is $\mathcal{O}(N^{1/2})$, while the corresponding scale in (\ref{eq:h_def}) is $\mathcal{O}(N^{-1/2})$. Based on this, one might well \emph{conjecture} Theorem \ref{thm:auff2.8} and Theorem \ref{thm:auff2.5}, however one would have no means by which to conjecture Theorem \ref{thm:exact_term}, and as far we can see no means to \emph{prove} Theorem \ref{thm:auff2.8} and Theorem \ref{thm:auff2.5}. As mentioned in the introduction, the single fixed distinguished direction in \cite{ros2019complex} is quite a special feature and is not present in (\ref{eq:h_def}). \end{remark}}

\section{GOE expressions for the complexity from Kac-Rice formulae}\label{sec:goe_expressions}
In this section we conduct analysis similar to that in \cite{auffinger2013random, fyodorov2007replica, fyodorov2004complexity} to obtain expressions for the the expected number of critical points of the function $h$ as defined in (\ref{eq:h_def}). We start with an elementary lemma deriving the 2-point covariance function for $h$.

\begin{lemma}\label{lemma:covariance_dot_product}
\jstat{For $\vec{w}\in S^{N-1}$, $h$ is defined as in (\ref{eq:h_def}):  \begin{equation*}
    h(\vec{w}) = \sum_{i_1, \ldots, i_H = 1}^{\Lambda} X_{i_1, \ldots, i_H} \prod_{k=1}^H w_{i_k} + \sum_{\ell=1}^H\rho_{\ell}^{(N)} \sum_{i_{\ell + 1}, \ldots, i_H=1}^{\Lambda} \prod_{k=\ell +1}^H w_{i_k}, ~~~~ X_{i_1, \ldots,, i_H}\overset{\text{i.i.d.}}{\sim}\mathcal{N}(0,1).
\end{equation*}}
For any $\vec{w}, \vec{w}'\in S^{N-1}$ the following holds \begin{equation}
    Cov(h(\vec{w}), h(\vec{w}')) = (\vec{w}\cdot\vec{w}')^H.
\end{equation}
\end{lemma}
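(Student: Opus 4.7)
The plan is to exploit two elementary facts: (i) deterministic terms do not contribute to covariance, and (ii) the first (stochastic) term of $h$ is a linear combination of i.i.d.\ standard Gaussians indexed by tuples $(i_1,\ldots,i_H)$, so covariances reduce to a diagonal sum via $\operatorname{Cov}(X_{i_1,\ldots,i_H}, X_{j_1,\ldots,j_H}) = \prod_{k=1}^H \delta_{i_k j_k}$.

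First, I would write
\begin{equation*}
h(\vec{w}) = S(\vec{w}) + D(\vec{w}),
\end{equation*}
where $S(\vec{w}) = \sum_{i_1,\ldots,i_H=1}^{N} X_{i_1,\ldots,i_H} \prod_{k=1}^H w_{i_k}$ is the random part and $D(\vec{w}) = \sum_{\ell=1}^H \rho_\ell^{(N)} \sum_{i_{\ell+1},\ldots,i_H=1}^N \prod_{k=\ell+1}^H w_{i_k}$ is deterministic. Since $D$ is non-random, $\operatorname{Cov}(h(\vec{w}), h(\vec{w}')) = \operatorname{Cov}(S(\vec{w}), S(\vec{w}'))$.

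Next I would expand the covariance as a double sum and use the i.i.d.\ standard Gaussian property of the $X$-coefficients:
\begin{align*}
\operatorname{Cov}(S(\vec{w}), S(\vec{w}')) &= \sum_{i_1,\ldots,i_H}\sum_{j_1,\ldots,j_H} \operatorname{Cov}(X_{i_1,\ldots,i_H}, X_{j_1,\ldots,j_H}) \prod_{k=1}^H w_{i_k} w'_{j_k} \\
&= \sum_{i_1,\ldots,i_H} \prod_{k=1}^H w_{i_k} w'_{i_k}.
\end{align*}
Finally, the sum factorises: $\sum_{i_1,\ldots,i_H}\prod_{k=1}^H w_{i_k}w'_{i_k} = \prod_{k=1}^H \sum_{i_k=1}^N w_{i_k}w'_{i_k} = (\vec{w}\cdot\vec{w}')^H$, which gives the claim.

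There is essentially no obstacle here — it is a direct computation. The only thing worth flagging is that the result does \emph{not} depend on $\vec{w}, \vec{w}'$ lying on $S^{N-1}$; the identity holds for all $\vec{w},\vec{w}'\in\mathbb{R}^N$. The unit-sphere restriction only matters later when specialising to the diagonal $\vec{w}=\vec{w}'$, where it yields $\operatorname{Var}(h(\vec{w})) = 1$ and so recovers exactly the covariance structure of a standard pure $H$-spin spherical model, matching the setting of \cite{auffinger2013random} up to the deterministic shift $D(\vec{w})$.
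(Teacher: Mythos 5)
Your proof is correct and follows essentially the same route as the paper's: decompose $h$ into its stochastic and deterministic parts, discard the deterministic part from the covariance, and use the i.i.d.\ Gaussian structure of the coefficients to collapse the double sum and factorise it as $(\vec{w}\cdot\vec{w}')^H$. The observation that the identity holds on all of $\mathbb{R}^N$ is a fine aside but changes nothing of substance.
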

\begin{proof}
    Let us begin by writing \begin{equation}
        h(\vec{w}) =  \sum_{i_1, \ldots, i_H = 1}^{N} X_{i_1, \ldots, i_H} \prod_{k=1}^H w_{i_k} + h^{(2)}(\vec{w}) \equiv h^{(1)}(\vec{w}) + h^{(2)}(\vec{w})
    \end{equation}
    where $h^{(2)}$ is deterministic. Then we have \begin{align}
        Cov(h(\vec{w}), h(\vec{w}')) &\equiv \expect\left[h(\vec{w})h(\vec{w}')\right] - \expect h(\vec{w})\expect h(\vec{w}') \notag \\
        &= \expect\left[ h^{(1)}(\vec{w}) h^{(1)}(\vec{w}') - h^{(1)}(\vec{w})h^{(2)}(\vec{w}') - h^{(2)}(\vec{w})h^{(1)}(\vec{w}') + h^{(2)}(\vec{w})h^{(2)}(\vec{w}')\right] - h^{(2)}(\vec{w})h^{(2)}(\vec{w}')\notag \\
        &= \expect \left[ h^{(1)}(\vec{w})h^{(1)}(\vec{w}')\right]\notag \\
        &= \sum_{i_1,\ldots i_H=1}^N \prod_{k=1}^H w_{i_k}w_{i_k}'\notag \\
        &= \prod_{k=1}^H \sum_{i_k=1}^N w_{i_k}w_{i_k}'\notag \\
        &= (\vec{w}\cdot\vec{w}')^H
    \end{align}
    where we have used $\expect h^{(1)} = 0$ in going from the first to the second and the second to the third lines.
\end{proof}

The following lemma calculates the full joint and thence conditional distribution of $h$ and its first and second derivatives. The calculations follow closely those of \cite{auffinger2013random} and the results are required for later use in a Kac-Rice formula.

\begin{lemma}\label{lemma:conditional_dist}
Pick some Cartesian coordinates on $S^{N-1}$ and let $\vec{w}$ be the north-pole of the sphere $\vec{w} = (1,0,0,\ldots)$.  Let $h_i = \partial_i h(\vec{w})$ and $h_{ij} = \partial_i\partial_j h(\vec{w})$ where $\{\partial_i\}_{i=1}^{N-1}$ are the coordinate basis around $\vec{w}$ on the sphere. Then the following results hold.
\begin{enumerate}[label=(\alph*)]
    \item For all $1 \leq i,j,k < N$, $h(\vec{w}), h_i(\vec{w}), h_{jk}(\vec{w})$ are Gaussian random variables whose  distributions are given by 
    \begin{align}
                \expect [h(\vec{w})] &= \sum_{\ell=1}^H \rho_{\ell}^{(N)} \label{eq:derivs_exp_h}\\ 
                Var [h(\vec{w})] &= 1\label{eq:derivs_var_h} \\
               \expect h_i(\vec{w}) &= \jstat{\sum_{\ell=1}^{H-1} \rho_{\ell}^{(N)} \left[(H-\ell) + (H - \ell - 1)\delta_{i1}\right]\equiv v_i\label{eq:derivs_exp_hi}}\\
                \expect [h_{ij}(\vec{w})] &= 
                \jstat{\sum_{\ell=1}^{H-2} \rho_{\ell}^{(N)}\Bigg\{\left[(H-\ell)(H-\ell-1) +1\right] \delta_{i1}\delta_{j1} + (H-\ell - 2)(\delta_{i1} + \delta_{j1}) +1
                \Bigg\} \label{eq:derivs_exp_hij} } \\
                Cov(h(\vec{w}), h_i(\vec{w})) &= 0 \label{eq:derivs_cov_hhi}\\
                Cov(h_i(\vec{w}), h_{jk}(\vec{w})) &= 0 \label{eq:derivs_cov_hihjk}\\
                Cov(h_i(\vec{w}), h_j(\vec{w})) &= H\delta_{ij} \label{eq:derivs_cov_hihj}\\
                 Cov(h(\vec{w}), h_{ij}(\vec{w})) &= -H\delta_{ij} \label{eq:derivs_cov_hhij}\\
                 Cov(h_{ij}(\vec{w}), h_{kl}(\vec{w})) &= H(H-1)(\delta_{ik}\delta{jl} + \delta_{il}\delta_{kl}) + H^2 \delta_{ij}\delta_{kl}.\label{eq:derivs_cov_hijhkl}
     \end{align}
      \item Make the following definitions: \begin{align}
          \xi_0 &= \sum_{\ell=1}^H\rho_{\ell}^{(N)}\\
                    \xi_1 &=  \jstat{\sum_{\ell=1}^{H-2}\rho_{\ell}^{(N)} \left[(H-\ell)(H-\ell -1) +1 \right]}\\
          \xi_2 & =\jstat{\sum_{\ell=1}^{H-2}\rho_{\ell}^{(N)}(H-\ell - 2)} \\
          \xi_3 &= \sum_{\ell=1}^{H-2}\rho_{\ell}^{(N)}
          %\zeta &= \sum_{\ell =1}^{H-1}\rho_{\ell}^{(N)}(H-\ell).
      \end{align}
       Then, conditional on $h(\vec{w}) = x$, for $x\in\mathbb{R}$, the random variables $h_{ij}(\vec{w})$ are independent Gaussians satisfying \begin{align}
           \expect[h_{ij}(\vec{w}) ~|~ h(\vec{w})=x] &=\jstat{ \xi_3 + \xi_2(\delta_{i1} + \delta_{j1}) + \xi_1\delta_{i1}\delta_{j1}} - (x-\xi_0)\delta_{ij} \label{eq:cond_exp_hij} \\
            Var[h_{ij}(\vec{w}) ~|~ h(\vec{w})=x] &= H(H-1)(1+\delta_{ij})\label{eq:cond_var_hij}.
       \end{align}
       Or, equivalently, \begin{align}\label{eq:hij_goe}
           \left(h_{ij}(\vec{w}) ~|~ h(\vec{w})=x\right) 
           \sim  \sqrtsign{2(N-1)H(H-1)} \left( M^{N-1}- \frac{1}{ \sqrtsign{2(N-1)H(H-1)} } H\left(x- \xi_0\right)I + S\right)
       \end{align}
       where $M^{N-1} \sim GOE^{N-1}$ and the matrix $S$ is given by \begin{align}\label{eq:S_def}
         \jstat{  S_{ij} = \frac{1}{ \sqrtsign{2(N-1)H(H-1)} }\left(\xi_3 + \xi_2(\delta_{i1} + \delta_{j1}) + \xi_1\delta_{i1}\delta_{j1}\right).}
       \end{align}
      \jstat{Clearly all entries of $S$ are of order $N^{-1}$, recalling the scale of $\rho_{\ell}^{(N)}$ given in (\ref{eq:rho_N_redef}). Moreover, $S$ is of rank 2 and has eigenvalues $\{s_1, N^{-1/2}s_2\}$ for real $s_i=\mathcal{O}(1)$.}
       \end{enumerate}
       \end{lemma}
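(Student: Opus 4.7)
The plan is to decompose $h = h^{(1)} + h^{(2)}$, where
\[
h^{(1)}(\vec{w}) = \sum_{i_1,\ldots,i_H=1}^N X_{i_1,\ldots,i_H} \prod_{k=1}^H w_{i_k}, \qquad h^{(2)}(\vec{w}) = \sum_{\ell=1}^H \rho_\ell^{(N)} \left(\sum_{i=1}^N w_i\right)^{H-\ell},
\]
and exploit that $h^{(2)}$ is deterministic. All variances and covariances in part (a) therefore come from $h^{(1)}$, while all expectations come from $h^{(2)}$. Joint Gaussianity of $h, h_i, h_{jk}$ is immediate because $h^{(1)}$ is a centred Gaussian field whose derivatives along a smooth chart on the sphere are linear combinations of the $X_{i_1,\ldots,i_H}$. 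For the variance/covariance identities (\ref{eq:derivs_var_h}) and (\ref{eq:derivs_cov_hhi})--(\ref{eq:derivs_cov_hijhkl}), I would apply Lemma \ref{lemma:covariance_dot_product}: $h^{(1)}$ has covariance kernel $K(\vec{w},\vec{w}') = (\vec{w}\cdot \vec{w}')^H$, so $\text{Cov}(h_i,h_j) = \partial_i \partial_j' K|_{\vec{w}'=\vec{w}=e_1}$, $\text{Cov}(h,h_{ij}) = \partial_i \partial_j K|_{\vec{w}=e_1}$, and so on, with the derivatives taken in the sphere chart around the north pole. The evaluation is identical to the one in Auffinger--Ben Arous--Cerny, and the $H^2\delta_{ij}\delta_{kl}$ contribution to $\text{Cov}(h_{ij},h_{kl})$ emerges from the spherical curvature term (the second derivative of $w_1 = \sqrtsign{1-\sum_{j\ge 2}w_j^2}$).

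For the means (\ref{eq:derivs_exp_h})--(\ref{eq:derivs_exp_hij}), I would differentiate $h^{(2)}$ directly. Writing $T(\vec{w})=\sum_i w_i$, so that $h^{(2)} = \sum_\ell \rho_\ell^{(N)} T^{H-\ell}$, Leibniz gives
\[
\partial_i\partial_j h^{(2)} = \sum_\ell \rho_\ell^{(N)} \bigl[(H-\ell)(H-\ell-1)\, T^{H-\ell-2}(\partial_i T)(\partial_j T) + (H-\ell)\, T^{H-\ell-1} \partial_i\partial_j T\bigr].
\]
Evaluating at $\vec{w}=e_1$ in the chosen chart produces the $\delta_{i1}$, $\delta_{i1}\delta_{j1}$ and constant pieces appearing in (\ref{eq:derivs_exp_hi})--(\ref{eq:derivs_exp_hij}); the truncations in the $\ell$-sums that define $\xi_0,\xi_1,\xi_2,\xi_3$ reflect which values of $\ell$ survive being differentiated zero, one, or two times (e.g.\ the $\ell=H$ term vanishes after one derivative, the $\ell=H-1$ term vanishes after two). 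The distinguished index $i=1$ in the tangent basis encodes the coupling of the all-ones direction $\vec{1}$ (which governs $h^{(2)}$) to the chart at the north pole.

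For part (b), since (\ref{eq:derivs_cov_hhi}) and (\ref{eq:derivs_cov_hihjk}) make $h_i$ uncorrelated with both $h$ and $h_{jk}$, conditioning on $h=x$ affects only the $h_{jk}$. The standard Gaussian conditioning formulae give
\[
\expect[h_{ij}\mid h=x] = \expect h_{ij} + \frac{\text{Cov}(h_{ij},h)}{\text{Var}(h)}(x - \expect h) = \expect h_{ij} - H\delta_{ij}(x-\xi_0),
\]
\[
\text{Cov}(h_{ij},h_{kl}\mid h) = \text{Cov}(h_{ij},h_{kl}) - \frac{\text{Cov}(h_{ij},h)\text{Cov}(h_{kl},h)}{\text{Var}(h)} = H(H-1)(\delta_{ik}\delta_{jl} + \delta_{il}\delta_{jk}).
\]
Substituting (\ref{eq:derivs_exp_hij}) yields (\ref{eq:cond_exp_hij}); setting $(k,l)=(i,j)$ yields (\ref{eq:cond_var_hij}). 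The conditional covariance is exactly that of a GOE matrix of size $N-1$ scaled by $\sqrtsign{2(N-1)H(H-1)}$, which, after peeling off the deterministic $-H(x-\xi_0)\delta_{ij}$ identity shift and the residual deterministic piece $\xi_3 + \xi_2(\delta_{i1}+\delta_{j1})+\xi_1\delta_{i1}\delta_{j1}$, gives (\ref{eq:hij_goe}) with $S$ as in (\ref{eq:S_def}). The rank claim follows from the decomposition $S \propto \xi_3\,\vec{1}\vec{1}^T + \xi_2(e_1\vec{1}^T + \vec{1}e_1^T) + \xi_1\, e_1 e_1^T$, whose image lies in $\operatorname{span}\{e_1,\vec{1}\}$; the two non-zero eigenvalues are those of the $2\times 2$ restriction, and their sizes are read off from the trace ($\sim c(N-1)\xi_3$) and determinant ($\sim c^2(N-2)(\xi_1\xi_3-\xi_2^2)$) of that $2\times 2$ block together with $\rho_\ell^{(N)} = \rho_\ell N^{-\ell/2}$.

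The main obstacle is the bookkeeping in the mean calculation: the deterministic term $h^{(2)}$ aggregates $H$ different powers of $T(\vec{w})$, each of which responds differently under one and two differentiations, and the spherical chart contributes curvature corrections through $\partial_i\partial_j T|_{e_1}$ that must be tracked carefully to separate out the $\delta_{i1}$, $\delta_{i1}\delta_{j1}$, and constant contributions defining $\xi_1,\xi_2,\xi_3$. Once these are in hand, the covariance identities are a routine differentiation of $(\vec{w}\cdot\vec{w}')^H$ as in Auffinger--Ben Arous--Cerny, and the conditional distribution is entirely formal Gaussian conditioning.
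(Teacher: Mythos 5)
Your proposal is correct and takes essentially the same route as the paper: split $h$ into the centred spin-glass part plus the deterministic term $\sum_\ell\rho_\ell^{(N)}\bigl(\sum_i w_i\bigr)^{H-\ell}$, obtain the covariances by differentiating the kernel $(\vec{w}\cdot\vec{w}')^H$ exactly as in Auffinger--Ben Arous--Cerny, obtain the means by differentiating the deterministic part in the chart at the north pole, and finish with standard Gaussian conditioning, with your trace/determinant argument on $\operatorname{span}\{e_1,\vec{1}\}$ being equivalent to the paper's eigenvector ansatz $(1,u,\ldots,u)^T$ for the rank-$2$ matrix $S$. Your conditional mean, which carries the factor $H$ in $-H(x-\xi_0)\delta_{ij}$, is the one consistent with (\ref{eq:hij_goe}).
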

    
% \jstat{\begin{remark}\label{rem:size_of_s}
% With $S$ given by (\ref{eq:S_def}) and the scale of $\rho_{\ell}^{(N)} = \mathcal{O}(N^{-\ell/2})$ given in (\ref{eq:rho_N_redef}), it is clear that the eigenvalues of $S$ are certainly no bigger than $\mathcal{O}(N^{-1/2})$, which is a requirement of later results.
% \end{remark}}
       
\begin{proof}
\begin{enumerate}[label=(\alph*)]
    \item Becuase the $X_{i_1, \ldots, i_H}$ are centred Gaussians and $\vec{w} = (1,0,0,\ldots, 0)$, we immediately obtain (\ref{eq:derivs_exp_h}). (\ref{eq:derivs_exp_hi})-(\ref{eq:derivs_exp_hij}) can be seen to be true similarly, e.g. (\ref{eq:derivs_exp_hij}) by observing that the stochastic term is again zeroed-out by taking the expectation and the only terms that survive in the non-stochastic part are of the form \begin{equation}
        \frac{\partial^2}{\partial w_i \partial  w_j}w_iw_j w_1^{H-\ell-2} ~ (i,j\neq 1), ~~~  \frac{\partial^2}{\partial w_i \partial w_1 }w_i w_1^{H-\ell-1} ~ (i\neq 1), ~~~  \frac{\partial^2}{\partial w_1^2}  w_1^{H-\ell}.  
    \end{equation}
    
    The remaining results (\ref{eq:derivs_var_h}), (\ref{eq:derivs_cov_hhi})-(\ref{eq:derivs_cov_hijhkl}) all match those in Lemma 3.2 of \cite{auffinger2013random} and follow similarly from Lemma \ref{lemma:covariance_dot_product} and the following (\cite{adler2009random}):\begin{equation}
        Cov\left(\frac{\partial^k\bar{h}(x)}{\partial x_{i_1}\ldots \partial x_{i_k}}, \frac{\partial^l\bar{h}(y)}{\partial y_{j_1}\ldots \partial y_{j_l}}\right) = \frac{\partial^{k+l}Cov(\bar{h}(x),\bar{h}(y))}{\partial x_{i_1}\ldots \partial x_{i_k}\partial y_{j_1}\ldots \partial y_{j_l}}
    \end{equation}
    where $\bar{h}\defeq h\circ \Phi^{-1}$ and $\Phi$ is a coordinate chart around $\vec{w}$.
    
    \item
        (\ref{eq:cond_exp_hij}), (\ref{eq:cond_var_hij}) and the conditional independence result follow from (\ref{eq:derivs_exp_h}), (\ref{eq:derivs_var_h}), (\ref{eq:derivs_exp_hij}), (\ref{eq:derivs_cov_hijhkl}) and the standard result for the conditional distribution of one Gaussian under another (see e.g. \cite{anderson1962introduction} Section 2.5), just as in the proof of Lemma 3.2 in \cite{auffinger2013random}.
        
        To show (\ref{eq:hij_goe}), recall that a $GOE^N$ matrix is a real symmetric random matrix $M$ and whose entries are independent centred Gaussians with with \begin{equation}
            \expect M_{ij}^2 = \frac{1+\delta_{ij}}{2N}.
        \end{equation}
        \jstat{Finally we have to determine the eigenvalues of $S$. With $a = \xi_1 + 2\xi_2 + \xi_3, b=\xi_2 + \xi_3$ and $c=\xi_3$, $S$ has entries \begin{align}
            S = \frac{1}{\sqrtsign{2(N-1)H(H-1)}}\left(\begin{array}{ccccc}
            a & b & b & \ldots & b \\
            b & c & c & \ldots & c \\
            b & c & c & \ldots & c \\
            \vdots & \vdots & \vdots & \vdots & \vdots \\
            b & c & c & \ldots & c \\
            \end{array}\right),
        \end{align}
and so has non-null eigenvectors $(1, u, u, \ldots, u)^T$ with eigenvalues $\left(2(N-1)H(H-1)\right)^{-1/2}\lambda$, where (after some simple manipulation) \begin{align}
    \lambda^2 - (a - c(N-1))\lambda + ca(N-1) - b^2(N-1) = 0, ~~~~~ u = \frac{\lambda - a}{(N-1)b}.
\end{align}
Recalling the scale of $\rho_{\ell}^{(N)} = \mathcal{O}(N^{-\ell/2})$ in (\ref{eq:rho_N_redef}) and the definitions $\xi_1,\xi_2, \xi_3$, we see that $a, b, c=\mathcal{O}(N^{-1/2})$ and so one easily obtains two solutions for $\lambda$, one of order $N^{1/2}$ and another of order $N^{-1/2}$, hence $S$ has two non-zero eigenvalues of order $1$ and $N^{-1/2}$.}
\end{enumerate}
\end{proof}

Our next lemma establishes for use in this context a Kac-Rice fomula that will provide the first step in the computation of $C^h_{N}$ and $C^h_{N, \mathcal{K}}$.

\begin{lemma}\label{lemma:kac_rice}
    Let $\hat{F}$ be a real-valued centred Gaussian field on $S^{N-1}$ that is almost surely (a.s.) $C^2$, $\tilde{F}$ be some non-random, real-valued $C^2$ function on $S^{N-1}$ and let $F \defeq\hat{F} + \tilde{F}$. Let  $\mathcal{A} = \{U_{\alpha}, \Phi_{\alpha}\}_{\alpha\in I}$ be a finite atlas on $S^{N-1}$. Let $h^{\alpha} = h\circ \Phi_{\alpha}^{-1}$, and let $h^{\alpha}_i, h^{\alpha}_{ij}$ denote derivatives of $h$ in the coordinate basis of the chart $(U_{\alpha}, \Phi_{\alpha}).$ Assume that the joint distribution $(F^{\alpha}_i(\vec{x}), F^{\alpha}_{ij}(\vec{x}))$ is non-degenerate for all $\alpha$ and for all $\vec{x}\in S^{N-1}$ and that there exist constants $K_{\alpha}, \beta >0$ such that \begin{equation}\label{eq:var_log_assumption}
        \max_{i,j}\left|Var(\hat{F}_{ij}^{\alpha}(\vec{x})) + Var(\hat{F}_{ij}^{\alpha}(\vec{y})) - 2Cov(\hat{F}_{ij}^{\alpha}(\vec{x}), \hat{F}_{ij}^{\alpha}(\vec{y}))\right| \leq K_{\alpha}\left|\log|x-y|\right|^{-1-\beta}
    \end{equation}Then the following holds \begin{equation}\label{eq:kac_rice}
      \CNk^F(B) = \int_{S^{N-1}} p_{\vec{x}}(0) \mathcal{S}_{N-1}(d\vec{x}) \expect\left[|\det\grad^2 F(\vec{x})|\indic\left\{F(\vec{x})\in B,~ i(\grad^2F(\vec{x})) = k\right\} ~|~ \grad F(\vec{x})=0\right]     \end{equation}

      where $p_{\vec{x}}$ is the density of $\grad F$ at $\vec{x}$ and $\mathcal{S}_{N-1}$ is the usual surface measure on $S^{N-1}$. Similarly,
      \begin{equation}\label{eq:kac_rice_no_k}
      \CN^F(B) = \int_{S^{N-1}} p_{\vec{x}}(0) \mathcal{S}_{N-1}(d\vec{x}) \expect\left[|\det\grad^2 F(\vec{x})|\indic\left\{F(\vec{x})\in B\right\} ~|~ \grad F(\vec{x})=0\right]     \end{equation}
\end{lemma}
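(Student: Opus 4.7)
The plan is to reduce to a standard Kac--Rice formula for smooth Gaussian fields on a Riemannian manifold, such as the one in Adler--Taylor (Theorem 12.1.1) or Aza\"is--Wschebor. The key observation is that although $F = \hat{F} + \tilde{F}$ is not centred, it is still a $C^2$ Gaussian field on $S^{N-1}$ with mean function $\tilde{F}$ and covariance identical to that of $\hat{F}$. Thus $F$ falls within the scope of the classical Kac--Rice machinery provided the standard hypotheses (smoothness, non-degeneracy of the joint law of $(\nabla F, \nabla^2 F)$, and a modulus-of-continuity condition on the second derivatives) are all verified.

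The first step is to unpack the hypotheses. The non-degeneracy assumption on $(F^\alpha_i(\vec{x}), F^\alpha_{ij}(\vec{x}))$ is stated directly. For the regularity condition, observe that since $\tilde{F}$ is deterministic, $\mathrm{Var}(F^\alpha_{ij}) = \mathrm{Var}(\hat{F}^\alpha_{ij})$ and $\mathrm{Cov}(F^\alpha_{ij}(\vec{x}), F^\alpha_{ij}(\vec{y})) = \mathrm{Cov}(\hat{F}^\alpha_{ij}(\vec{x}), \hat{F}^\alpha_{ij}(\vec{y}))$, so (\ref{eq:var_log_assumption}) passes through verbatim to $F$. This Dudley-type log-H\"older bound is exactly the sufficient condition used in the standard proofs to guarantee a.s.\ absence of degenerate critical points and the applicability of the co-area / Federer-type argument underlying Kac--Rice.

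The second step is the local-to-global passage. Working in a fixed chart $(U_\alpha, \Phi_\alpha)$ and pulling back to $\Phi_\alpha(U_\alpha) \subset \mathbb{R}^{N-1}$, the standard Euclidean Kac--Rice formula gives, for any Borel $B$ and any $k$,
\begin{equation*}
\expect\left| \{\vec{x} \in U_\alpha : \nabla F(\vec{x}) = 0, F(\vec{x})\in B, i(\nabla^2 F(\vec{x})) = k\}\right|
\end{equation*}
as an integral of $p^\alpha_{\vec{x}}(0)$ against the conditional expectation of $|\det \nabla^2 F(\vec{x})|\indic\{\cdot\}$ given $\nabla F(\vec{x})=0$. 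Taking a partition of unity subordinate to the finite atlas $\mathcal{A}$ and summing, the volume factors from the coordinate changes combine with the Jacobians of $\Phi_\alpha$ to reconstruct the canonical surface measure $\mathcal{S}_{N-1}$; the integrand is intrinsic (it depends only on $F$, $\nabla F$, and $\nabla^2 F$ at $\vec{x}$, all of which are coordinate-invariant objects once we work in orthonormal frames at $\vec{x}$), so one obtains the global identity (\ref{eq:kac_rice}). The version (\ref{eq:kac_rice_no_k}) for $C^F_N$ follows by summing over $k$, or equivalently by running the same argument with only the indicator $\indic\{F(\vec{x})\in B\}$ inside.

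The step that requires the most care is not any single calculation but rather the verification, for the field $F$ that sits \emph{outside} the classical centred-field setting, that the standard proof of Kac--Rice goes through unchanged once the mean is non-random and $C^2$. Concretely, the potentially delicate point is ensuring that $\prob(\exists \vec{x}: \nabla F(\vec{x}) = 0, \det\nabla^2 F(\vec{x})=0) = 0$, which is what the log-H\"older hypothesis (\ref{eq:var_log_assumption}) is designed to guarantee; since this condition is imposed on the Gaussian part $\hat{F}$ and the deterministic part $\tilde{F}$ contributes no randomness to the Hessian distribution, the conclusion transfers directly. With this in hand, the remainder of the argument is a routine application of the Gaussian Kac--Rice formula as in \cite{auffinger2013random}, and the indicators $\indic\{F(\vec{x})\in B\}$ and $\indic\{i(\nabla^2 F(\vec{x}))=k\}$ are simply absorbed into the conditional expectation in the standard way.
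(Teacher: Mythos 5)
Your proposal is correct and follows essentially the same route as the paper: both reduce the statement to the Gaussian Kac--Rice formula of Adler--Taylor (Theorem 12.1.1), using that the deterministic $C^2$ mean $\tilde{F}$ leaves the covariance structure, non-degeneracy and the log-H\"older condition untouched, and absorbing the indicators on $F$ and the Hessian index into the auxiliary field $\psi$. The only point where the paper is more explicit is the modulus-of-continuity condition (g), which it verifies by splitting $\omega \leq \hat{\omega} + \tilde{\omega}$, using uniform continuity of $\tilde{F}$ on the compact sphere for $\tilde{\omega}$ and the Borell--TIS inequality with (\ref{eq:var_log_assumption}) for $\hat{\omega}$ --- exactly the point your remark about the deterministic part being harmless gestures at.
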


The proof of Lemma \ref{lemma:kac_rice} shall rely heavily on a central result from \cite{adler2009random} which we now state as a Theorem.
\begin{theorem}[\cite{adler2009random} Theorem 12.1.1]\label{thm:adler_kac_rice}
Let $\mathcal{M}$ be a compact , oriented, N-dimensional $C^1$ manifold with a $C^1$ Riemannian metric $g$. Let $\phi:\mathcal{M}\rightarrow\mathbb{R}^N$ and $\psi:\mathcal{M}\rightarrow \mathbb{R}^K$ be random fields on $\mathcal{M}$. For an open set $A\subset\mathbb{R}^K$ for which $\partial A$ has dimension $K-1$ and a point $\vec{u}\in\mathbb{R}^{N}$ let \begin{equation}
    N_{\vec{u}} \defeq \left|\{x\in\mathcal{M} ~|~ \phi(x) = \vec{u}, ~ \psi(x)\in A\}\right|.
\end{equation}

Assume that the following conditions are satisfied for some orthonormal frame field E:
\begin{enumerate}[label=(\alph*)]
\item
All components of $\phi$, $\grad_E \phi$, and $\psi$ are a.s. continuous and have finite variances (over $\mathcal{M}$).
\item
 For all $x\in\mathcal{M}$, the marginal densities $p_{x}$  of $\phi(x)$ (implicitly assumed to exist) are continuous at  $\vec{u}$.
 \item
 The conditional densities $p_{x}(\cdot|\grad_E\phi(x),\psi(x))$ of $\phi(x)$ given $\psi(x)$ and $\grad_E\phi(x)$ (implicitly assumed to exist) are bounded above and continuous at $\vec{u}$, uniformly in $\mathcal{M}$.
 \item
 The conditional densities $p_x (\cdot|\phi(x) = \vec{z})$ of $\det(\grad_{E_j}\phi^i  (x))$ given are continuous in a neighbourhood of $0$ for $\vec{z}$ in a neighbourhood of $\vec{u}$  uniformly in $\mathcal{M}$.
 \item
 The conditional densities $p_x (\cdot|\phi (x) = \vec{z})$ are continuous for $\vec{z}$ in a neighbourhood of $\vec{u}$ uniformly in $\mathcal{M}$.
 \item
 The following moment condition holds \begin{equation}
     \sup_{x\in\mathcal{M}}\max_{1\leq i,j\leq N}\expect\left\{\left|\grad_{E_j}f^i(x)\right|^N\right\}< \infty
 \end{equation}
 \item
 The moduli of continuity with respect to the (canonical) metric induced  by $g$ of each component of $\psi$, each component of $\phi$ and each $\grad_{E_j}f^i$ all satisfy, for any $\epsilon > 0$ \begin{equation}\label{eq:moduli_condition}
    \mathbb{P}( \omega(\eta) >\epsilon) = o(\eta^N), ~~ \text{as } \eta\downarrow 0
 \end{equation}
 where the \emph{modulus of continuity} of a real-valued function $G$ on a metric space $(T, \tau)$ is defined as (c.f. \cite{adler2009random} around (1.3.6)) \begin{equation}
     \omega(\eta) \defeq \sup_{s,t : \tau(s,t)\leq\eta}\left|G(s) - G(t)\right|
 \end{equation} 
\end{enumerate}
Then \begin{equation}\label{eq:adler_taylor_kac_rice}
    \expect N_{\vec{u}} = \int_{\mathcal{M}}\expect \left\{|\det \grad_E\phi(x)|\indic\{\psi(x)\in A\} ~| ~ \phi(x) = \vec{u}\right\}p_x(\vec{u}) \text{Vol}_g(x)
\end{equation}
where $p_x$ is the density of $\phi$ and $\text{Vol}_g$ is the volume element induced by $g$ on $\mathcal{M}$.
\end{theorem}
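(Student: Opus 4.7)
The plan is to apply Theorem \ref{thm:adler_kac_rice} directly, with $\mathcal{M}=S^{N-1}$ equipped with its canonical round metric, $\phi=\grad F$ (so that $\vec{u}=0$ locates critical points), and $\psi=(F,\grad^2 F)$ taking values in $\mathbb{R}\times\mathrm{Sym}(N-1,\mathbb{R})$. For the first identity (\ref{eq:kac_rice}) we take $A=B\times\{M\in\mathrm{Sym}(N-1,\mathbb{R}) : i(M)=k\}$; for (\ref{eq:kac_rice_no_k}) we take $A=B\times\mathrm{Sym}(N-1,\mathbb{R})$. The set $\{M : i(M)=k\}$ is open in $\mathrm{Sym}(N-1,\mathbb{R})$ with boundary contained in $\{M : \det M=0\}$, which has the required codimension one. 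An orthonormal frame field $E$ exists locally inside each chart of the finite atlas $\mathcal{A}$, and the Kac–Rice integrand is intrinsic, so one works chart-by-chart and sums contributions.

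The key observation that enables us to use Theorem \ref{thm:adler_kac_rice} despite $F$ not being centred is that $F=\hat F+\tilde F$ differs from the centred Gaussian field $\hat F$ only by a deterministic $C^2$ shift. All finite-dimensional distributions of $(F,\grad F,\grad^2 F)$ are thus joint Gaussian with the same covariance structure as $(\hat F,\grad\hat F,\grad^2\hat F)$ and with mean obtained from $\tilde F$ and its derivatives. Consequently, the existence, boundedness, and continuity of all marginal and conditional densities appearing in hypotheses (b)–(e) depend only on the non-degeneracy hypothesis on $(F^{\alpha}_i(\vec{x}),F^{\alpha}_{ij}(\vec{x}))$ (which is given) and on the continuity of the mean function (which holds since $\tilde F\in C^2$). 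Condition (a) holds because $\hat F$ is a.s.\ $C^2$ and $\tilde F$ is deterministically $C^2$; condition (f) is automatic since the entries of $\grad F$ are Gaussian with finite variance, hence have all moments.

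The only genuinely delicate hypothesis is (g), the modulus-of-continuity condition (\ref{eq:moduli_condition}) on the components of $\phi$, $\psi$, and the entries of $\grad_E\phi$. For the components of $F$ and $\grad F$ this is immediate from the $C^2$ property. For the entries of $\grad^2 F$, the deterministic part $\grad^2\tilde F$ is continuous on the compact manifold $S^{N-1}$ and so trivially satisfies (\ref{eq:moduli_condition}); for the random part $\grad^2\hat F$, the required tail bound on the modulus of continuity is produced from the logarithmic covariance bound (\ref{eq:var_log_assumption}) by the standard entropy/chaining argument (Theorem 1.3.3 of \cite{adler2009random}, c.f.\ also the argument used in Lemma 3.1 of \cite{auffinger2013random}). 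This is the principal technical step, and the only place where the hypothesis (\ref{eq:var_log_assumption}) enters the proof.

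With (a)–(g) verified, (\ref{eq:adler_taylor_kac_rice}) specialises to the claimed identities once we identify $\mathrm{Vol}_g$ with the surface measure $\mathcal{S}_{N-1}$. The indicator $\indic\{F(\vec{x})\in B,\,i(\grad^2F(\vec{x}))=k\}$ is a measurable function of $\psi(\vec{x})$, so pulling it inside the conditional expectation (given $\phi(\vec{x})=0$) is legitimate and yields (\ref{eq:kac_rice}); dropping the index constraint gives (\ref{eq:kac_rice_no_k}). The main obstacle, as noted, is the modulus-of-continuity verification for the Hessian, which is precisely why the excerpt includes the hypothesis (\ref{eq:var_log_assumption}).
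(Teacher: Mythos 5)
Your argument is correct and follows essentially the same route as the paper's own proof of Lemma \ref{lemma:kac_rice}: you apply Theorem \ref{thm:adler_kac_rice} with the identical choices $\phi=\grad F$, $\psi=(F,\grad^2F)$, $A=B\times\{i(\cdot)=k\}$, $\vec{u}=0$, dispose of conditions (a)--(f) via the Gaussianity and smoothness of $\hat F$ and the $C^2$ deterministic shift $\tilde F$, and isolate condition (g) as the only delicate point, handling it by splitting the modulus of continuity into its deterministic part (harmless on the compact sphere) and its centred-Gaussian part controlled via the logarithmic covariance hypothesis (\ref{eq:var_log_assumption}) and a Borell--TIS/entropy bound. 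This matches the paper's verification step for step, so no further comment is needed.
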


\begin{proof}[Proof of Lemma \ref{lemma:kac_rice}]
Following the proofs of Theorem 12.4.1 in \cite{adler2009random} and Lemma 3.1 in \cite{auffinger2013random}, we will apply Theorem \ref{thm:adler_kac_rice} to the choices \begin{align}
    \phi &\defeq \grad F\notag\\
    \psi &\defeq (F, \grad_{i}\grad_{j}F)\notag\\
    A &\defeq B \times A_k \equiv B \times \{H\in\text{Sym}_{N-1\times N-1} ~|~ i(H)=k\} \subset \mathbb{R}\times \text{Sym}_{N-1\times N-1},\notag\\
    \vec{u} &= 0
\end{align}
Then, if the conditions of Theorem \ref{thm:adler_kac_rice} hold for these choices, we immediately obtain the result. It remains therefore to check the conditions of Theorem \ref{thm:adler_kac_rice}. Firstly, $A$ is indeed an open subset of of $\mathbb{R}\times \text{Sym}_{N-1\times N-1}$ (in turn, isomorphic to some $\mathbb{R}^K$) as can be easily deduced from the continuity of a matrix's eigenvalues in its entries. Condition (a) follows from the assumption of $\hat{F}$ being a.s. $C^2$ and $\tilde{F}$ being $C^2$. Conditions (b)-(f) all follow immediately from the Gaussianity of $\hat{F}$. To establish condition (g), we define $\hat{\omega}(\eta)$ and $\tilde{\omega}(\eta)$ in the obvious way and note that $\tilde{\omega}$ is non-random. Then, because $\tilde{F}$ is continuous, given $\epsilon > 0$ there exists some $\eta_0 >0$ such that for all $\eta < \eta_0$, $\tilde{\omega}(\eta) \leq \epsilon$. Let $\tilde{\omega}_0 \defeq \tilde{\omega}(\eta_0)$ and choose some $\eta_1$ such that for all $\eta < \eta_1$, $\tilde{\omega}(\eta) < \tilde{\omega}_0$. We have $\omega(\eta) \leq \hat{\omega}(\eta) + \tilde{\omega}(\eta)$ and so for $\eta < \eta_1$ \begin{align}
    \mathbb{P}(\omega(\eta) > \epsilon) & \leq \mathbb{P}(\hat{\omega}(\eta) + \tilde{\omega}(\eta) > \epsilon) \notag\\
    &=\mathbb{P}(\hat{\omega}(\eta) > \epsilon - \tilde{\omega}(\eta)) \notag\\
    &\leq \mathbb{P}(\hat{\omega}(\eta) > \epsilon - \tilde{\omega}_0)\label{eq:condition_g_ineq}
\end{align}
and we note that $\epsilon - \tilde{\omega}_0 \geq 0$ by construction. $\hat{\omega}$ is the modulus of continuity for a centred Gaussian field and so the condition (g) follows from (\ref{eq:condition_g_ineq}) and the assumption (\ref{eq:var_log_assumption}) by the Borell-TIS inequality \cite{adler2009random}, just as in the proof of Corollary 11.2.2 in \cite{adler2009random}. (\ref{eq:kac_rice_no_k}) is obtained in precisely the same way but simply dropping the $i(H) = k$ condition.
\end{proof}

\section{Asymptotic evaluation of complexity}\label{sec:asymptotic_evaluation}
In this section we conduct an asymptotic analysis of the GOE expressions for the complexity found in the preceding section. We first consider the case of counting critical points without any condition of the signature of the Hessian, which turns out to be easier. We then introduce the exact signature condition on the Hessian and proceed by presenting the necessary modifications to certain parts of our arguments.

\subsection{Complexity results with no Hessian signature prescription}
\jstat{We need to establish a central lemma, which is a key step towards a generalisation of the results presented in \cite{auffinger2013random} but established by entirely different means, following the supersymmetric calculations of \cite{nock}. Before this main lemma, we state a generalisation of a result from \cite{fyodorov2002characteristic} which is proved in Appendix \ref{ap:low_rank_fyod}.
\begin{restatable}{lemma}{fyodgeneral}
\label{lem:fyod_general}
Given $m$ vectors in $\mathbb{R}^N$ $\vec{x}_1, \ldots, \vec{x}_m$, denote by $Q(\vec{x}_1, \ldots, \vec{x}_m)$ the $m\times m$ matrix whose entries are given by $Q_{ij} = \vec{x}_i^T\vec{x}_j$. Let $F$ be any function of an $m\times m$ matrix such that the integral \begin{equation}
     \int_{\mathbb{R}^N}\ldots\int_{\mathbb{R}^N}d\vec{x}_1\ldots d\vec{x}_m |F(Q)|
\end{equation}
exists, and let $S$ be a real symmetric $N\times N$ matrix of fixed rank $r$ and with non-zero eigenvalues $\{N^{\alpha}s_i\}_{i=1}^r$ for some $\alpha < 1/2$. Define the integral \begin{equation}
     \mathcal{J}_{N, m}(F; S) \defeq \int_{\mathbb{R}^N}\ldots\int_{\mathbb{R}^N}d\vec{x}_1\ldots d\vec{x}_m F(Q) e^{-iN\sum_{i=1}^N \vec{x}_i^T S\vec{x}_i}.
\end{equation} Then as $N\rightarrow\infty$ we have \begin{equation}
    \mathcal{J}_{N,m}(F; S) =\left( 1 + o(1))\right) \frac{\pi^{\frac{m}{2}\left(N - \frac{m-1}{2}\right)}}{\prod_{k=0}^{m-1}\Gamma\left(\frac{N-k}{2}\right)}\int_{\text{Sym}_{\geq 0}(m)}d\hat{Q} \left(\det \hat{Q}\right)^{\frac{N-m-1}{2}}F(\hat{Q})\prod_{i=1}^N\prod_{j=1}^r\left( 1+ iN^{\alpha}\hat{Q}_{ii}s_j\right).
\end{equation}
\end{restatable}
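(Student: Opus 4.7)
The plan is to mimic the Fyodorov change-of-variables used in the unperturbed ($S=0$) result of \cite{fyodorov2002characteristic}, which replaces integration over $(\vec{x}_1,\ldots,\vec{x}_m)\in(\mathbb{R}^N)^m$ by integration over the $m\times m$ Gram matrix $Q=X^TX$ with Jacobian $(\det Q)^{(N-m-1)/2}$, and to absorb the oscillatory weight $e^{-iN\mathrm{tr}(X^TSX)}$ by exploiting the low rank and subcritical scaling of $S$. Concretely, I would decompose $\mathbb{R}^N=\mathrm{range}(S)\oplus\mathrm{range}(S)^\perp$ and write $\vec{x}_i=\vec{y}_i+\vec{z}_i$ accordingly. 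Since $S$ annihilates the perpendicular component, $\vec{x}_i^T S\vec{x}_j=\vec{y}_i^T S\vec{y}_j$, so the weight depends only on the $r$-dimensional projections $Y_{ij}=\vec{u}_j^T\vec{x}_i$, while the Gram matrix splits as $Q=YY^T+Z^TZ$.

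For each frozen $Y$ I would apply the classical Fyodorov identity to the inner $(\vec{z}_i)\in(\mathbb{R}^{N-r})^m$ integration, producing the Jacobian $(\det\tilde Q)^{(N-r-m-1)/2}$ with $\tilde Q=Z^TZ$ and the Gamma-function prefactor $C_{N-r,m}$. Changing variables to $\hat Q=\tilde Q+YY^T$ turns the integrand into $(\det(\hat Q-YY^T))^{(N-r-m-1)/2}F(\hat Q)$, to be integrated against $Y\in\mathbb{R}^{m\times r}$ with the oscillatory weight $\exp(-iN^{1+\alpha}\sum_{i,j}s_j Y_{ij}^2)$. The subcritical scaling $\alpha<1/2$ ensures that the Fresnel factor localises $Y$ at scale $|Y|=O(N^{-(1+\alpha)/2})$, so $YY^T=O(N^{-(1+\alpha)})$ is small compared to $\hat Q$, and one may expand
\begin{equation*}
    (\det(\hat Q-YY^T))^{(N-r-m-1)/2}\sim(\det\hat Q)^{(N-r-m-1)/2}\exp\Bigl(-\tfrac{N}{2}\mathrm{tr}(Y^T\hat Q^{-1}Y)\Bigr),
\end{equation*}
the neglected log-det terms being of order $N^{-2\alpha-1}$ and vanishing.

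After this reduction the $Y$-integral factorises into $r$ independent Gaussian integrals over $\mathbb{R}^m$ with the complex quadratic forms $Y_{\cdot j}^T\bigl(iN^{1+\alpha}s_j I_m+\tfrac{N}{2}\hat Q^{-1}\bigr)Y_{\cdot j}$. Direct evaluation, together with the Gamma-function asymptotic $\Gamma(\tfrac{N-k}{2})/\Gamma(\tfrac{N-r-k}{2})\sim(N/2)^{r/2}$ which converts $C_{N-r,m}$ back into $C_{N,m}$, produces the leading prefactor and the Jacobian $(\det\hat Q)^{(N-m-1)/2}$ of the lemma and leaves a determinantal perturbation factor which reduces to the stated product $\prod_{i,j}(1+iN^\alpha s_j\hat Q_{ii})$ after standard matrix simplifications.

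The main obstacle is making the Gaussian step rigorous, since the quadratic form in $Y$ has purely imaginary coefficients and the integral converges only in the Fresnel sense. The cleanest route is to insert a vanishing damping $e^{-\epsilon|Y|^2}$, carry out the Gaussian computation for each $\epsilon>0$, control the truncation of the log-determinant expansion uniformly in $\hat Q$ over the support of $F$ using the assumed absolute integrability, and then pass to the limit $\epsilon\downarrow 0$ by dominated convergence. A secondary technical point is the behaviour near the boundary of the positive cone for $\hat Q$, which may require a cut-off or use of the Ingham-Siegel integral representation.
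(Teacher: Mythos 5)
Your route is genuinely different from the paper's. The paper proves the lemma by re-running Fyodorov's induction over the vectors, so that the perturbation survives only as a Haar integral $\int_{O(N)}d\mu_{\mathrm{Haar}}(O)\,\exp\bigl(-iN^{1+\alpha}\rho_m\sum_j s_j o_{Nj}^2\bigr)$ over the last column of an orthogonal matrix; that integral is then evaluated asymptotically by replacing the column with a normalised Gaussian vector (following \cite{guionnet2005fourier}), with concentration events making the error uniform, and this is exactly where $\alpha<1/2$ is used. You instead split $\mathbb{R}^N$ along $\mathrm{range}(S)$, apply the unperturbed identity in the orthogonal complement, and reduce the perturbation to an $m\times r$-dimensional complex Gaussian integral; the bookkeeping (the $\pi$, $N$ and $\det\hat Q$ powers and the $\Gamma$-ratios) does recombine into the unperturbed prefactor, so the skeleton is sound and arguably cleaner, since it avoids the orthogonal-group asymptotics altogether. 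One repair of the reasoning is needed even here: an oscillatory factor does not localise anything. The effective width of the $Y$-integral is set by the factor $(\det(\hat Q-YY^T))^{(N-r-m-1)/2}$, i.e.\ $\mathrm{tr}(Y^T\hat Q^{-1}Y)\lesssim N^{-1}$, and at that scale the phase is of order $N^{\alpha}$, i.e.\ \emph{not} small — which is precisely why a nontrivial correction factor survives. The truncation of the log-determinant should be justified in the scaled variables $Y=N^{-1/2}\hat Q^{1/2}W$, which also gives uniformity in $\hat Q$; the claimed scale $|Y|=O(N^{-(1+\alpha)/2})$ is not where the mass sits, and it is not where $\alpha<1/2$ actually bites.

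The more serious issue is your closing step. Evaluating your own Gaussian integral exactly gives, per eigenvalue, $\det\bigl(\tfrac N2\hat Q^{-1}+iN^{1+\alpha}s_jI_m\bigr)^{-1/2}=(2\pi/N)^{m/2}(\det\hat Q)^{1/2}\det\bigl(I_m+2iN^{\alpha}s_j\hat Q\bigr)^{-1/2}$, so your method lands on a determinantal correction factor involving the full matrix $\hat Q$. This is not the diagonal product in the statement: the off-diagonal entries of $\hat Q$ enter at the same order (already for $m=2$, $r=1$ the two expressions differ by a term $4N^{2\alpha}s_j^2\hat Q_{12}^2$, comparable to the retained $N^{2\alpha}$ terms), and there is in addition a factor $2$ and an exponent mismatch relative to the statement as printed. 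No ``standard matrix simplification'' converts one form into the other, so as written the last step is a genuine gap: you must either prove that the off-diagonal contributions may be discarded (false at integrand level for general $F$) or accept that your route yields a different form of the correction factor and reconcile it with the lemma. The paper's proof never meets this question because in its inductive step the perturbation couples only to the squared length of the vector currently being integrated out, i.e.\ to a single diagonal entry of $\hat Q$, which is why only $\hat Q_{ii}$ appears there; this is exactly the point at which your decomposition and theirs part ways, and it is the step you need to address explicitly rather than wave through.
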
}

Now we state and prove the main lemma.
\begin{lemma}\label{lemma:nock_deformed}
    Let $S$ be a rank $r$ $N\times N$ symmetric matrix with non-zero eigenvalues $\{s_j\}_{j=1}^r$, where $r=\mathcal{O}(1)$ and $s_j = \mathcal{O}(1)$, and suppose $S$ has all entries of order $\mathcal{O}(N^{-1})$ in a fixed basis. Let $x<0$ and let $M$ denote an $N\times N$ GOE matrix with respect to whose law expectations are understood to be taken. Then
    \begin{align}
                  \expectGOE |\det(M - xI + S)| = K_N\lim_{\epsilon\searrow 0} e^{2N(x^2 - \epsilon^2)}\left(1 + o(1)\right)&\iiint_0^{\pi/2} d\theta d\theta'd\hat{\theta}\iint_0^{\infty}dp_1dp_2 \iint_{\Gamma} dr_1dr_2\notag\\
                  &J_1(p_1, p_2, \theta'; S, N)J_2(r_1,r_2, p_1, p_2)\cos^22\theta \sin2\theta \sin2\hat{\theta}\notag\\
& \exp\Bigg\{-N\Bigg(2\psi^{(+)}_L(r_1; x; \epsilon\cos2\theta\cos2\hat{\theta}) +2\psi^{(+)}_U(r_2; x; \epsilon\cos2\theta\cos2\hat{\theta})\notag\\& +\psi^{(-)}_L(p_1; x; \epsilon\cos2\theta')+\psi^{(-)}_U(p_2; x; \epsilon\cos2\theta')\Bigg)\Bigg\}\end{align}
    where \begin{align}
       J_1(p_1, p_2, \theta'; \{s_j\}_{j=1}^r, N) &=\prod_{j=1}^r\left(1 + iN^{1/2}s_j(p_1 + p_2) - Ns_j^2\left[\frac{1}{4}\sin^22\theta' (p_1^2 + p_2^2) + \frac{1}{4}\left(3 + 4\cos4\theta'\right)p_1p_2 \right]\right)^{-1/2},\\
        J_2(r_1, r_2, p_1, p_2; \epsilon) &= (r_1 + p_1)(r_2 + p_1)(r_1 + p_2)(r_2 + p_2)|r_1 - r_4|^4 |p_1-p_2| (r_1r_2)^{-2} (p_1p_2)^{-3/2}
    \end{align} 
    and \begin{equation}
        K_N =   \frac{N^{N+3}(-i)^N }{\Gamma\left(\frac{N}{2}\right)\Gamma\left(\frac{N-1}{2}\right) \pi^{3/2}}
    \end{equation}
and the functions $\psi^{\pm}_L, \psi^{(\pm)}_U$ are given by \begin{align}
 \psi^{(\pm)}_L(z; x,\epsilon) &= \frac{1}{2}z^2 \pm i(x+i\epsilon)z - \frac{1}{2}\log z,\\
              \psi^{(\pm)}_U(z;x,\epsilon) &= \frac{1}{2}z^2 \pm i(x-i\epsilon)z - \frac{1}{2}\log z,
\end{align}
and $\Gamma$ is a contour bounded away from zero in $\mathbb{C}$, e.g. that shown in Figure \ref{fig:r1r2_contour}.
\end{lemma}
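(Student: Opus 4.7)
The plan is to follow the supersymmetric (SUSY) approach of \cite{nock, fyodorov2015random}, modified to accommodate the deterministic low-rank perturbation $S$. First I would regularise the absolute determinant by writing
\begin{equation}
|\det(M-xI+S)| = \lim_{\epsilon \searrow 0} \det\!\bigl((M-xI+S)^2 + \epsilon^2 I\bigr)^{1/2},
\end{equation}
and then factor the right-hand side as $[\det(M-(x-i\epsilon)I+S)\,\det(M-(x+i\epsilon)I+S)]^{1/2}$. Each of the four resulting half-determinants admits a Gaussian representation: the two square roots of determinants on the denominator side get bosonic vector integrals $\int d\vec{y}\,e^{-\vec{y}^T(\cdots)\vec{y}/2}$, while their inverses on the numerator side are rewritten as Grassmann (fermionic) integrals $\int d\bar\chi\,d\chi\, e^{-\bar\chi^T(\cdots)\chi}$. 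This produces a SUSY integrand linear in $M$ in the exponent, with two bosonic $N$-vectors $\vec{x}_1, \vec{x}_2$ and two pairs of Grassmann $N$-vectors.

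Next I would carry out the GOE average, which is a Gaussian integral in the entries of $M$. The resulting exponent is quartic in the boson/fermion variables, and is decoupled via Hubbard--Stratonovich transformations, introducing two $2\times 2$ auxiliary matrices: a bosonic $\hat{Q}_B$ (coupling to $\vec{x}_1,\vec{x}_2$) and a fermionic $\hat{Q}_F$ (coupling to the Grassmanns). The $S$-perturbation survives this step as a term of the form $\exp\!\bigl(-iN\sum_i \vec{x}_i^T S \vec{x}_i\bigr)$ in the bosonic sector, and contributes nothing in the fermionic sector because the fermionic Gaussian over the Grassmanns produces determinants independent of $S$ at leading order (the Grassmann bilinear $\bar\chi^T S \chi$ is parametrically negligible since $S$ has fixed rank $r$ and $\|S\|=O(1)$ while the Grassmann sector gives $O(N)$ contributions from the trace terms). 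At this stage the integrals over the fermionic vectors yield an exact Gaussian factor, and the bosonic vectors remain coupled to $\hat{Q}_B$ \emph{and} to $S$.

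To integrate out the bosonic $N$-vectors I would invoke Lemma \ref{lem:fyod_general} (with $\alpha=0$, since the non-zero eigenvalues of $S$ are $\{s_1, N^{-1/2}s_2\}$, so effectively $r=1$ at leading order or $r=2$ retaining the subleading eigenvalue), which precisely handles the shape $\int d\vec{x}_1 d\vec{x}_2 F(Q)e^{-iN\sum \vec{x}_i^T S\vec{x}_i}$ and produces an extra factor of $\prod_{i=1}^2 \prod_{j=1}^r (1 + iN^{1/2} s_j \hat{Q}_{B,ii})^{-1/2}$ on top of the standard result. After these steps, the representation reduces to a finite-dimensional integral over the eigenvalues $(p_1,p_2)$ of $\hat{Q}_B$ and the eigenvalues $(r_1,r_2)$ of $\hat{Q}_F$ together with the angular parametrisations $(\theta, \theta', \hat\theta)$ coming from the eigenvector rotations of the two $2\times 2$ matrices (with a further $SO(2)$ angle from diagonalising the fermionic block). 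The quartic couplings in $p_i, r_j$ assemble into the Vandermonde-type factor $J_2(r_1,r_2,p_1,p_2)$, while the effective single-variable exponents along the boson/fermion saddle manifold produce the four functions $\psi^{(\pm)}_{L/U}$ (the $\pm$ encoding the $i\epsilon$ prescription for the two factors, and $L/U$ distinguishing lower/upper contour pieces for the bosonic and fermionic determinant branches). Expanding the bilinear $\hat{Q}_{B,ii}=\tfrac{1}{2}(p_1+p_2)\pm \tfrac{1}{2}(p_1-p_2)\cos 2\theta'$ in the $\prod(1+\cdots)^{-1/2}$ factor supplied by Lemma \ref{lem:fyod_general} and collecting the quadratic terms in $s_j$ is what produces the coefficients $\tfrac{1}{4}\sin^2 2\theta'$ and $\tfrac{1}{4}(3+4\cos 4\theta')$ in $J_1$.

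The main obstacle I foresee is the careful bookkeeping of the angular parametrisation and Jacobians: there are three independent angles $\theta, \theta', \hat\theta$, and one must verify that rotating the bosonic and fermionic $2\times 2$ blocks separately yields the measure $\cos^2 2\theta \sin 2\theta \sin 2\hat\theta$ quoted in the statement, and that the $S$-dependent $\hat{Q}_{B,ii}$ enters $J_1$ with precisely the trigonometric weights above. The rest of the computation (the $\epsilon$-regularised prefactor, the constant $K_N$, and the verification that the $r=2$ contribution from the $N^{-1/2}s_2$ eigenvalue affects only subleading terms absorbed into the $1+o(1)$) follows \cite{nock, fyodorov2015random} mutatis mutandis after the perturbation $S$ has been absorbed into $J_1$.
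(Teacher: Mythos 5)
Your proposal follows essentially the same supersymmetric route as the paper: the same $\epsilon$-regularised ratio of determinants, bosonic vectors for the $\det^{-1/2}$ factors and Grassmann pairs for the full determinants, the GOE average, a Hubbard--Stratonovich decoupling in the fermionic sector, the generalised Gram-matrix formula of Lemma \ref{lem:fyod_general} to absorb the $e^{-iN\Tr SB}$ factor into what becomes $J_1$, and explicit angular parametrisations of the non-invariant $2\times 2$ diagonalisations producing the measure $\cos^2 2\theta\sin 2\theta\sin 2\hat\theta$ and the trigonometric weights $\tfrac14\sin^2 2\theta'$, $\tfrac14(3+4\cos 4\theta')$. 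One structural quibble: you propose both a bosonic Hubbard--Stratonovich field $\hat{Q}_B$ \emph{and} an application of Lemma \ref{lem:fyod_general}; these are alternative treatments of the bosonic sector (the paper uses only the latter, keeping $-\tfrac{1}{2N}\Tr Q_B^2$ in the exponent and changing variables to the Gram matrix), and using both would be redundant, though not fatal.

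The genuine gap is your treatment of the $S$-dependence in the fermionic sector. You discard the Grassmann bilinear in $S$ before integrating, arguing it is negligible because $S$ has fixed rank and $\Vert S\Vert=\mathcal{O}(1)$ against $\mathcal{O}(N)$ trace terms; that argument does not work as stated. In the paper, the Grassmann integration is performed \emph{with} $S$ retained, and $S$ re-emerges inside the determinant factors $\det\left(G_j I + N^{-1}B + iS\right)$ of (\ref{eq:det_M_expansion_pre_asymp}), entangled with the eigenvectors of $B$; this is precisely what breaks the hypothesis of Lemma \ref{lem:fyod_general} (the integrand is no longer a function of the Gram matrix $Q_B$ alone) and is the real obstacle of the lemma. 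Since $S$ has an $\mathcal{O}(1)$ eigenvalue, dropping $iS$ from these determinants naively changes them by an $\mathcal{O}(1)$ multiplicative factor; the paper's justification (\ref{eq:simple_det_expan}) relies on the entry-wise $\mathcal{O}(N^{-1})$ hypothesis on $S$ together with a deformation of the $r_1,r_2$ integration contours to the contour $\Gamma$ of Figure \ref{fig:r1r2_contour}, bounded away from the origin, so that the resulting error is \emph{uniform} in all remaining integration variables. Your proposal neither supplies such uniform control nor explains why the final $r_1,r_2$ integrals live on $\Gamma$ rather than the real line --- the appearance of $\Gamma$ in the statement is an artefact of exactly this step --- so as written this part of the argument would not go through.
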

\begin{proof}
We begin with the useful expression for real symmetric matrices $A$ \cite{fyodorov2005counting, fyodorov2004complexity} \begin{equation}
    |\det A| = \lim_{\epsilon\rightarrow 0} \frac{\det A \det A}{\sqrtsign{\det (A - i\epsilon)}\sqrtsign{\det (A +i\epsilon)}}
\end{equation} where the limit is taken over real $\epsilon$, and wlog $\epsilon > 0$. We're free to deform the matrices in the numerator for the sake of symmetry in the ensuing calculations, so \begin{align}\label{eq:det_ratio_epsilon}
     |\det A| = \lim_{\epsilon\searrow 0} \frac{\det (A - i\epsilon) \det (A + i\epsilon)}{\sqrtsign{\det (A - i\epsilon)}\sqrtsign{\det (A +i\epsilon)}}.
\end{align}
For notational convenience we put \begin{equation}
    \Delta_{\epsilon}(M; x, S) = \frac{\det (M - xI + S - i\epsilon) \det (M - xI + S + i\epsilon)}{\sqrtsign{\det (M - xI + S - i\epsilon)}\sqrtsign{\det (M - xI + S +i\epsilon)}}.
\end{equation}

Then we express the determinants and half-integer powers of determinants as Gaussian integrals over anti-commuting and commuting variables respectively as in \cite{nock} and \cite{fyodorov2015random}: \begin{align}
     \Delta_{\epsilon}(M; x, S)  = K^{(1)}_N\int d\vec{x}_1 d\vec{x}_2 d\zeta_1 d\zeta_1^{\dagger} d\zeta_2 d\zeta_2^{\dagger} &\exp\left\{-i\vec{x}_1^T(M-(x + i\epsilon)I+S)\vec{x}_1 - i\vec{x}_2^T(M-(x-i\epsilon)I + S)\vec{x}_2\right\}\notag \\
      + &\exp\left\{ i \zeta_1^{\dagger}(M-(x+i\epsilon) I+S)\zeta_1
      + i \zeta_2^{\dagger}(M-(x - i\epsilon)I+S)\zeta_2\right\}\label{eq:nock_initial}\end{align}
     where $K^{(1)}_N = (-i)^N \pi^{-N}$, which follows from standard facts about commuting Gaussian integrals and Berezin integration. 
The remainder of the calculation is very similar to that presented in \cite{nock,fyodorov2015random} but we present it in full to keep track of the slight differences. Let \begin{equation}
    A = \vec{x}_1\vec{x}_1^T + \vec{x}_2\vec{x}_2^T + \zeta_1\zeta_1^{\dagger} + \zeta_2\zeta_2^{\dagger}
\end{equation}
and note that, by the cyclicity of the trace, \begin{align}
    \vec{x}_j^T(M-(x \pm i\epsilon)I+S)\vec{x}_j &= \Tr\left((M-(x\pm i\epsilon)I+S)\vec{x}_j\vec{x}_j^T\right)\\
    \zeta_j^{\dagger}(M-(x \pm i \epsilon)I+S)\zeta_j &= -\Tr\left((M-(x\pm i \epsilon)I+S)\zeta_j\zeta_j^{\dagger}\right)
\end{align}
and so we can rewrite (\ref{eq:nock_initial}) as \begin{align}
      \Delta_{\epsilon}(M; x, S)  = K^{(1)}_N\int d\vec{x}_1 d\vec{x}_2 d\zeta_1 d\zeta_1^{\dagger} d\zeta_2 d\zeta_2^{\dagger} &\exp\left\{-i\Tr MA - i\Tr SA + i(x + i\epsilon)\vec{x}_1^T\vec{x}_1 + i(x - i\epsilon)\vec{x}_2^T\vec{x}_2  \right\} \notag\\ &\exp\left\{-i(x + i\epsilon)\zeta_1^{\dagger}\zeta_1 -i (x - i\epsilon)\zeta_2^{\dagger}\zeta_2 \right\}. \label{eq:nock2}
\end{align}
We then define the Bosonic and Fermionic matrices \begin{align}
    Q_B = \left(\begin{array}{cc} \vec{x}_1^T\vec{x}_1 & \vec{x}_1^T\vec{x}_2 \\ \vec{x}_2^T\vec{x}_1 & \vec{x}_2^T\vec{x}_2\end{array}\right), ~~  Q_F =  \left(\begin{array}{cc} \zeta_1^{\dagger}\zeta_1 & \zeta_1^{\dagger}\zeta_2 \\ \zeta_2^{\dagger}\zeta_1 & \zeta_2^{\dagger}\zeta_2\end{array}\right)
\end{align}
and also $B = \vec{x}_1\vec{x}_1^T + \vec{x}_2\vec{x}_2^T$.
Note that (\ref{eq:det_ratio_epsilon}) is true for all real symmetric matrices $A$ and so for \emph{all} real symmetric $M,S$ and real values $x$ we have \begin{equation}
    \lim_{\epsilon\searrow 0} \Delta_{\epsilon}(M; x, S) = |\det\left(M - xI + S\right)|
\end{equation}
and so with respect to the GOE law for $M$ we certainly have \begin{equation}
  \Delta_{\epsilon}(M; x, S) \overset{\text{a.s.}}{\rightarrow} |\det\left(M - xI + S\right)| ~~~ \text{as } \epsilon\searrow 0
\end{equation}
thus meaning that the $\epsilon\searrow 0$ limit can be exchanged with a GOE expectation over $M$. We therefore proceed with fixed $\epsilon>0$ to compute the GOE expectation of $\Delta_{\epsilon}.$

We have the standard Gaussian Fourier transform result for matrices: \begin{align}\label{eq:goe_fourier}
    \expectGOE e^{-i\Tr MA} = \exp\left\{-\frac{1}{8N}\Tr(A + A^T)^2\right\}
\end{align} and from \cite{nock}\footnote{Note that (4.100) in \cite{nock} contains a trivial factor of 4 error that has non-trivial consequences in our calculations.} \begin{equation}
    \Tr(A+A^T)^2 = 4\Tr Q_B^2 - 2\Tr Q_F^2 + 4\zeta_1^T\zeta_2\zeta_2^{\dagger}\zeta_1^* - 8\zeta_1^{\dagger}B\zeta_1 - 8 \zeta_2^{\dagger}B\zeta_2
\end{equation} so we can take the GOE average in (\ref{eq:nock2}) and obtain \begin{align}
    \expectGOE  \Delta_{\epsilon}(M; x, S) = K^{(1)}_N\int d\vec{x}_1 d\vec{x}_2 d\zeta_1 &d\zeta_1^{\dagger} d\zeta_2 d\zeta_2^{\dagger} 
    \exp\left\{ -\frac{1}{2N}\Tr Q_B^2 - i\Tr SB + ix\Tr Q_B + \epsilon \Tr Q_B\sigma \right\}\notag\\
    &\exp\left\{\frac{1}{4N}\Tr Q_F^2 - \frac{1}{2N}\zeta_1^T\zeta_2\zeta_2^{\dagger}\zeta_1^* + \sum_{j=1}^2 \zeta_j^{\dagger}\left(\frac{B}{N} + iS - i(x + i(-1)^{j-1} \epsilon)\right)\zeta_j\right\}.\label{eq:nock3}
\end{align}
where we have defined $$\sigma = \left(\begin{array}{cc} -1 & 0 \\ 0 & 1 \end{array}\right).$$

We can then use the transformation \begin{equation}
    \exp\left\{\frac{1}{4N}\Tr Q_F^2\right\} = \frac{N^2}{\pi Vol(U(2))}\int d\hat{Q}_F \exp\left\{-N\Tr \hat{Q}_F^2 + \Tr Q_F\hat{Q}_F\right\}
\end{equation}
to obtain \begin{align}
        \expectGOE \Delta_{\epsilon}(M; x, S) = K^{(2)}_N\int &d\vec{x}_1 d\vec{x}_2 d\zeta_1 d\zeta_1^{\dagger} d\zeta_2 d\zeta_2^{\dagger} d\hat{Q}_F
    \exp\left\{ -\frac{1}{2N}\Tr Q_B^2 - i\Tr SB + ix\Tr Q_B + \epsilon \Tr Q_B\sigma\right\}\notag\\
    &\exp\left\{-N\Tr \hat{Q}_F^2 + \Tr \hat{Q}_FQ_F -  \frac{1}{2N}\zeta_1^T\zeta_2\zeta_2^{\dagger}\zeta_1^* + \sum_{j=1}^2 \zeta_j^{\dagger}\left(\frac{B}{N} + iS - i(x + i(-1)^{j-1} \epsilon\right)\zeta_j\right\}
\label{eq:nock4}\end{align}
where $K^{(2)}_N = K^{(1)}_N \frac{N^2}{\pi Vol(U(2))}.$
The Fermionic cross-term in (\ref{eq:nock4}) can be dealt with using (see \cite{nock} (4.104)) \begin{equation}
    \exp\left(-\frac{1}{2N}\zeta_1^T\zeta_2\zeta_2^{\dagger}\zeta_1^{*}\right) = \frac{2N}{\pi} \int d^2u \exp\left(-2N\bar{u}u - i\left(u\zeta_1^{\dagger}\zeta_2^{*} + \bar{u}\zeta_2^{\dagger}\zeta_1\right)\right)
\end{equation} where $d^2u = d\Re{u} ~ d\Im{u}$, and so we obtain \begin{align}
     \expectGOE  \Delta_{\epsilon}(M; x, S) = K^{(3)}_N\int &d\vec{x}_1 d\vec{x}_2 d\zeta_1 d\zeta_1^{\dagger} d\zeta_2 d\zeta_2^{\dagger} d\hat{Q}_F d^2u
    \exp\left\{ -\frac{1}{2N}\Tr Q_B^2 - i\Tr SB + ix\Tr Q_B + \epsilon\Tr Q_B \sigma\right\}\notag\\
    &\exp\left\{-N\Tr\hat{Q}_F^2 - 2N u \bar{u}\right\}\notag\\
    &\exp\left\{ \Tr \hat{Q}_FQ_F - i(u\zeta_1^{\dagger}\zeta_2^{*} + \bar{u}\zeta_2^T\zeta_1) + \sum_{j=1}^2 \zeta_j^{\dagger}\left(\frac{B}{N} + iS - i(x + i(-1)^{j-1} \epsilon\right)\zeta_j\right\}\label{eq:nock5}
\end{align}
where $K_N^{(3)} = K_N^{(2)}\frac{2N}{\pi}$.
To simplify the Fermionic component of (\ref{eq:nock5}) and make apparent its form, we introduce $\zeta^T=(\zeta_1^{\dagger}, \zeta_1^T, \zeta_2^{\dagger}, \zeta_2^T)$ and then (\ref{eq:nock5}) reads \begin{align}
        \expectGOE  \Delta_{\epsilon}(M; x, S) = K^{(3)}_N\int &d\vec{x}_1 d\vec{x}_2 d\zeta d\hat{Q}_F d^2u
    \exp\left\{ -\frac{1}{2N}\Tr Q_B^2 - i\Tr SB + ix\Tr Q_B + \epsilon\Tr Q_B \sigma\right\}\notag\\
    &\exp\left\{-N\Tr\hat{Q}_F^2 - 2N u \bar{u}\right\}\notag\\
    &\exp\left\{\frac{1}{2}\zeta^T\mathcal{M}\zeta\right\}\notag\\
     = K^{(3)}_N\int &d\vec{x}_1 d\vec{x}_2 d\hat{Q}_F d^2u
    \exp\left\{ -\frac{1}{2N}\Tr Q_B^2 - i\Tr SB + ix\Tr Q_B + \epsilon\Tr Q_B\sigma\right\}\notag\\
    &\exp\left\{-N\Tr\hat{Q}_F^2 - 2N u \bar{u}\right\}\notag\\
    &\sqrtsign{\det\mathcal{M}}\label{eq:nock6} 
\end{align}
where the matrix $\mathcal{M}$ is given by \begin{equation}
    \mathcal{M} = \left(\begin{array}{cccc}
        0 & A_1 & - iu & q_{12}^* \\
        -A_1 & 0 & - q_{12} & i\bar{u} \\
        iu & q_{12} & 0 & A_2 \\
        -q_{12}^* & -i\bar{u} & -A_2 & 0
    \end{array}\right)\end{equation}
    and, by analogy with (4.107) in \cite{nock}, \begin{equation}
        A_j = q_{jj} - i(x + i(-1)^{j-1}\epsilon)+ \frac{1}{N}B + iS,
    \end{equation}
    where $q_{ij}$ are the entries of $\hat{Q}_F$. To evaluate $\det\mathcal{M}$, we make repeated applications of the well-known result for block $2\times 2$ matrices consisting of  $N\times N$ blocks: $$\det\left(\begin{array}{cc} A & B \\ C & D \end{array}\right) = \det(A - BD^{-1}C)\det(D).$$ This process quickly results in \begin{align}
        \sqrtsign{\det\mathcal{M}} &= \det(A_1A_2 - (u\bar{u} + q_{12}\bar{q}_{12}))\notag\\
        & =  \det\left(\left[\det(\hat{Q}_F - ix- \epsilon\sigma) - \bar{u}u\right]I + \Tr(\hat{Q}_F - ix- \epsilon\sigma)\left(\frac{1}{N}B + iS\right) + \left(\frac{1}{N}B + iS\right)^2\right)\notag\\
        & = \det\left( G_1 + N^{-1}B + iS\right)\det\left( G_2 + N^{-1}B + iS\right)\label{eq:detM_G_factor}
        \end{align}
    where we have chosen $G_1$, $G_2$ to be solutions to \begin{align}
        G_1G_2 &= \det(\hat{Q}_F - ix- \epsilon\sigma) - \bar{u}u\label{eq:G_sim1}\\
        G_1 + G_2 &= \Tr(\hat{Q}_F - ix- \epsilon\sigma)\label{eq:G_sim2}.
    \end{align}
    Recalling the $B$ has rank $2$ we let $O_{B}$ be the $N\times 2$ matrix of the non-null eigenvectors of $B$ and $\lambda^{(B)}_{1,2}$ be its non-null eigenvalues and use the determinantal identity found in equation (3) of \cite{benaych2012large} to write\footnote{Note that we here include explicitly the identity matrix symbols to make plain the dimension of the determinants.}
    \begin{align}
        \det\left(G_j I_N + N^{-1}B + iS\right) &= \det\left(G_jI_N + iS\right)\det\left(I_2 + N^{-1}O_{B}^T\left(G_jI_N + iS\right)^{-1}O_{B}\text{diag}\left(\lambda^{(B)}_1, \lambda^{(B)}_2\right)\right).\label{eq:det_M_expansion_pre_asymp}
    \end{align}

We would now like to apply the integral formula found in Appendix D of \cite{fyodorov2002characteristic} to re-write the integrals over the $N$-dimensional vectors $\vec{x}_1, \vec{x}_2$ as a single integral over a $2\times 2$ symmetric matrix $Q_B$. However, the integrand does not only depend on $\vec{x}_1, \vec{x}_2$ through $Q_B \equiv \left(\begin{array}{cc} \vec{x}_1^T\vec{x}_1 & \vec{x}_1^T\vec{x}_2 \\ \vec{x}_2^T\vec{x}_1 & \vec{x}_2^T\vec{x}_2\end{array}\right)$ thanks to the dependence on the eigenvectors of $B$ in (\ref{eq:det_M_expansion_pre_asymp}) and also in the term $\Tr SB$ in (\ref{eq:nock6}). Before addressing this problem, we will continue to manipulate the $\hat{Q}_F$ and $u$ integrals along the lines of \cite{nock}.\\

First make the change of variables $\hat{Q}_F \leftarrow \hat{Q}_F + ix + \epsilon\sigma$ and $\vec{x}_j \leftarrow \sqrtsign{N}\vec{x}_j$ in (\ref{eq:nock6}) using (\ref{eq:detM_G_factor}) to obtain \begin{align}
        \expectGOE  \Delta_{\epsilon}(M; x, S) = K^{(4)}_N\int &d\vec{x}_1 d\vec{x}_2 d\hat{Q}_F d^2u
    \exp\left\{ -\frac{N}{2}\Tr Q_B^2 - iN\Tr SB + ixN\Tr Q_B + \epsilon N\Tr Q_B\sigma\right\}\notag\\
    &\exp\left\{-N\Tr\hat{Q}_F^2 - 2N \Tr (ix + \epsilon\sigma)\hat{Q}_F - N\Tr(ix + \epsilon\sigma)^2 - 2N u \bar{u}\right\}\prod_{j=1}^2\det\left(G_j + B + iS\right)\label{eq:nock7_prime} 
\end{align}
where $K^{(4)}_N = N^{N}K_N^{(3)}$ and now the terms $G_1, G_2$ are given by the modified versions of (\ref{eq:G_sim1})-(\ref{eq:G_sim2}):\begin{align}
        G_1G_2 &= \det\hat{Q}_F  - \bar{u}u\label{eq:G_sim1_prime}\\
        G_1 + G_2 &= \Tr\hat{Q}_F \label{eq:G_sim2_prime}.
    \end{align}
    
We now diagonalise the Hermitian matrix $\hat{Q}_F = \hat{U}\text{diag}(q_1, q_2)\hat{U}^{\dagger}$ in (\ref{eq:nock7_prime}), but the term $\Tr \sigma\hat{Q}_F$ is not unitarily invariant, so we follow \cite{nock} and introduce an explicit parametrization\footnote{\cite{nock} uses an incorrect parametrization with only two angles. The calculations are are invariant in the extra angles $\alpha,\beta$ and so this detail only matters if one is tracking the multiplicative constants, as we do here.} of the unitary matrix $\hat{U}$ $$\hat{U} =e^{i\hat{\phi}/2} \left(\begin{array}{cc} e^{i\hat{\alpha}/2} & 0 \\ 0 & e^{-i\hat{\alpha}/2}\end{array}\right)\left(\begin{array}{cc} \cos\hat{\theta} & \sin\hat{\theta} \\ -\sin\hat{\theta} & \cos\hat{\theta}\end{array}\right)\left(\begin{array}{cc} e^{i\hat{\beta}/2} & 0 \\ 0 & e^{-i\hat{\beta}/2}\end{array}\right)$$
    where $\hat{\phi},\hat{\alpha}, \hat{\beta}\in [0,2\pi)$, $\hat{\theta}\in [0,\pi/2)$ and elementary calculations give the Jacobian factor $|q_1 - q_2|^2 \sin(2\hat{\theta})$. Further brief elementary calculations give
    \begin{equation}
        \Tr \hat{Q}_F\sigma  = (q_2 - q_1)\cos(2\hat{\theta}).
    \end{equation}
    and so, integrating out $\hat{\phi}, \hat{\alpha}, \hat{\beta}$,
    \begin{align}
        \expectGOE  \Delta_{\epsilon}(M; x, S) = K^{(5)}_N e^{2N(x^2 - \epsilon^2)}\int &d\vec{x}_1 d\vec{x}_2  \iint_{-\infty}^{\infty} dq_1dq_2 \int d^2u\int_{0}^{\pi/2}d\theta \sin2\hat{\theta}\notag\\
    &\exp\left\{ -\frac{N}{2}\Tr Q_B^2 - iN\Tr SB + ixN\Tr Q_B + \epsilon N\Tr Q_B\sigma\right\}\notag\\
    &\exp\left\{-N(q_1^2 + q_2^2)  - 2Nix (q_1 + q_2) - 2N\epsilon(q_2 - q_1)\cos2\hat{\theta} - 2N u \bar{u}\right\}\notag\\
    &\prod_{j=1}^2\det\left(G_j + B + iS\right)|q_1 - q_2|^2\label{eq:nock8_prime} 
\end{align}
with $K^{(5)} = (2\pi)^3 K^{(4)}_N $ and now 
\begin{align}
        G_1G_2 &= q_1q_2 - \bar{u}u\label{eq:G_sim1_prime2}\\
        G_1 + G_2 &= q_1 + q_2 \label{eq:G_sim2_prime2}.
    \end{align}
We form an Hermitian matrix \begin{equation}
        R = \left(\begin{array}{cc} q_1 & \bar{u}\\ u & q_2\end{array}\right)
    \end{equation}
    and so (\ref{eq:nock8_prime}) is rewritten as \begin{align}
              \expectGOE  \Delta_{\epsilon}(M; x, S) = K^{(6)}_Ne^{2N(x^2 - \epsilon^2)}\int &d\vec{x}_1 d\vec{x}_2  \int dR|R_{11} - R_{22}|^2\int_{0}^{\pi/2}d\theta \sin2\hat{\theta}\notag\\
    &\exp\left\{ -\frac{N}{2}\Tr Q_B^2 - iN\Tr SB + ixN\Tr Q_B + \epsilon N\Tr Q_B\sigma\right\}\notag\\
    &\exp\left\{-N\Tr R^2 -2Nix \Tr{R} -2\epsilon N(R_{22} - R_{11})\cos2\hat{\theta} \right\}\prod_{j=1}^2\det\left(G_j + B + iS\right)\label{eq:nock9_prime}   
    \end{align}
    with $K_N^{(6)} = \frac{1}{16\pi^2}K_N^{(5)}$ and 
   \begin{align}
        G_1G_2 &= \det R\label{eq:G_sim1_prime22}\\
        G_1 + G_2 &= \Tr R \label{eq:G_sim2_prime22}.
    \end{align}
    The factor of $(16\pi^2)^{-1}$ comes from the change of variables $(q_1, q_2, u, \bar{u}) \mapsto R$. Indeed, clearly $dq_1dq_2dud\bar{u}  = Z^{-1}dR$ for some constant Jacobian factor $Z$. We can most easily determine $Z$ by integrating against a test function:
    \begin{align}
   \frac{4\pi Vol(U(2))}{Z}  =     \frac{1}{Z}\int_{\text{Herm}(2)} dR e^{-\frac{1}{2}\Tr R^2} &= \iint_{-\infty}^{\infty} dq_1 dq_2 \iint_{-\infty}^{\infty} d\Re{u} ~ d\Im{u} e^{-\frac{1}{2}(q_1^2 + q_2^2 + 2u\bar{u})}= 2\pi^2\notag\\
   \implies Z &= \frac{2Vol(U(2))}{\pi} = 16\pi^2.\notag
    \end{align}
    We diagonalise $R = U\text{diag}(r_1, r_2)U^{\dagger}$, but again the integrand in (\ref{eq:nock9_prime}) is not unitarily invariant in $R$ so we repeat the previous procedure using $$U =e^{i\phi/2} \left(\begin{array}{cc} e^{i\alpha/2} & 0 \\ 0 & e^{-i\alpha/2}\end{array}\right)\left(\begin{array}{cc} \cos\theta & \sin\theta \\ -\sin\theta & \cos\theta\end{array}\right)\left(\begin{array}{cc} e^{i\beta/2} & 0 \\ 0 & e^{-i\beta/2}\end{array}\right).$$
 Overall, integrating out $\phi, \alpha, \beta$, (\ref{eq:nock9_prime}) becomes 
   \begin{align}
              \expectGOE  \Delta_{\epsilon}(M; x, S) = K^{(7)}_Ne^{2N(x^2 - \epsilon^2)}\iint_0^{\pi/2} &d\theta d\hat{\theta} \int d\vec{x}_1 d\vec{x}_2  \iint_{-\infty}^{\infty} dr_1dr_2 |r_1 - r_2|^4 \sin2\theta \cos^22\theta\sin2\hat{\theta} \notag\\
    &\exp\left\{ -\frac{N}{2}\Tr Q_B^2 - iN\Tr SB + ixN\Tr Q_B + \epsilon N\Tr Q_B\sigma\right\}\notag\\
    &\exp\left\{-N(r_1^2 + r_2^2) - 2Ni(x-i\epsilon\cos2\theta\cos2\hat{\theta})r_1 - 2Nix(x + i\epsilon\cos2\theta\cos2\hat{\theta}) \right\}\notag\\&\prod_{j=1}^2\det\left(G_j + B + iS\right)\label{eq:nock10_prime}   
    \end{align} 
    where $K^{(7)} =(2\pi)^3 K^{(6)} $  and now \begin{align}
                G_1G_2 &= r_1r_2,\label{eq:G_sim1_prime3}\\
        G_1 + G_2 &= r_1 + r_2 \label{eq:G_sim2_prime3}\\
        \iff \{G_1, G_2\} &= \{r_1, r_2\} \label{eq:G_are_r}.
    \end{align}
    We can now clearly take $r_j = G_j$ without loss of generality.
\jstat{The terms $\det(r_j + B + iS)$ and $e^{-iN\Tr SB}$ depend on the eigenvectors of $B$ and prevent an application of the integral formula of \cite{fyodorov2002characteristic} as used by \cite{nock}. In fact, it is possible the adapt this integral formula for use in the presence of the term $e^{-iN \Tr SB}$, as seen in Lemma \ref{lem:fyod_general}.}
% Now recall (\ref{eq:det_M_expansion_pre_asymp}) and note that to apply the integral formula of \cite{fyodorov2002characteristic} we would like to take the leading order term for large $N$, i.e.  \begin{align}
%       \det\left(G_j I_N + B + iS\right) &=  r_j^N\prod_{k=1}^r \left( 1 +i \frac{s_k}{r_j}\right)\det\left(I_2 + r_j^{-1}\text{diag}\left(\lambda^{(B)}_1, \lambda^{(B)}_2\right)(1 + \mathcal{O}(N^{-1/2}r^{-1}_j))\right)\notag\\
%       &= r_j^{N-2}\prod_{i=1}^2 \left(r_j + \lambda^{(B)}_i\right)\left(1 + \mathcal{O}(N^{-1/2}r^{-1}_j)\right)\label{eq:det_M_exp1}
% \end{align}
% However, for this step to be legitimate in the sense of asymptotic expansions, we must have that the error term is uniformly small in the integration variables $\vec{x}_1, \vec{x}_2, r_1, r_2, \theta, \hat{\theta}$. In (\ref{eq:det_M_exp1}) we have expanded in powers of $N^{-1/2}r_j^{-1}$ and so the error is manifestly uniform in $\vec{x}_1, \vec{x}_2, \theta, \hat{\theta}$ but clearly not uniform in $r_1, r_2$. However, note now that the integrand in (\ref{eq:nock10_prime}) is analytic in $r_1, r_2$ and so we can deform the contours of integration from $(-\infty, \infty)$ to $\Gamma$, a contour that, say, runs from $-\infty$ along the real line to $-1$ and then follows the unit semi-circle in the upper half plane to $1$ before continuing to $\infty$ along the real line. We show an example contour in Figure \ref{fig:r1r2_contour}. It is now clear that $r_1, r_2$ are bounded away from $0$ and so we have

\jstat{Since $S$ has all entries of order $N^{-1}$, we can expand the nuisance determinants: \begin{align}
    \det(r_j + B + iS) = \prod_{i=1}^2 (r_j + \lambda^{(B)}_i) (1 + o(1)).\label{eq:simple_det_expan}
\end{align}
For this step to be legitimate in the sense of asymptotic expansions, we must have that the error term is uniformly small in the integration variables $\vec{x}_1, \vec{x}_2, r_1, r_2, \theta, \hat{\theta}$. Note  that the integrand in (\ref{eq:nock10_prime}) is analytic in $r_1, r_2$ and so we can deform the contours of integration from $(-\infty, \infty)$ to $\Gamma$, a contour that, say, runs from $-\infty$ along the real line to $-1$ and then follows the unit semi-circle in the upper half plane to $1$ before continuing to $\infty$ along the real line. We show an example contour in Figure \ref{fig:r1r2_contour}. It is now clear that $r_1, r_2$ are bounded away from $0$ and so the error terms in (\ref{eq:simple_det_expan}) are uniform, so giving
   \begin{align}
              \expectGOE  \Delta_{\epsilon}(M; x, S) = K^{(7)}_Ne^{2N(x^2 - \epsilon^2)}\iint_0^{\pi/2} &d\theta d\hat{\theta} \int d\vec{x}_1 d\vec{x}_2  \iint_{-\infty}^{\infty} dr_1dr_2 |r_1 - r_2|^4 \sin2\theta \cos^22\theta\sin2\hat{\theta} \notag\\
    &\exp\left\{ -\frac{N}{2}\Tr Q_B^2 - iN\Tr SB + ixN\Tr Q_B + \epsilon N\Tr Q_B\sigma\right\}\notag\\
    &\exp\left\{-N(r_1^2 + r_2^2) - 2Ni(x-i\epsilon\cos2\theta\cos2\hat{\theta})r_1 - 2Nix(x + i\epsilon\cos2\theta\cos2\hat{\theta}) \right\}\notag\\&\prod_{i,j=1}^2\det\left(r_j + \lambda^{(B)}_i\right)(1 + o(1))\label{eq:nock10_prime_after_exp}   
    \end{align}}
% \jstat{\begin{align}
% \expectGOE  \Delta_{\epsilon}(M; x, S) = K^{(7)}_N e^{2N(x^2 - \epsilon^2)}&\left(1 + \mathcal{O}(N^{-1/2})\right)\iint_0^{\pi/2} d\theta d\hat{\theta}\int d\vec{x}_1 d\vec{x}_2  \iint_{\Gamma}dr_1dr_2 \cos^22\theta\sin2\theta\sin2\hat{\theta} |r_1 - r_2|^4\notag\\
%   &\exp\left\{ -\frac{N}{2}\Tr Q_B^2 - iN\Tr SB + ixN\Tr Q_B + \epsilon N\Tr Q_B\sigma\right\}\notag\\
%     &\exp\left\{-N(r_1^2 + r_2^2) - 2Ni(x-i\epsilon\cos2\theta\cos2\hat{\theta})r_1 - 2Nix(x + i\epsilon\cos2\theta\cos2\hat{\theta}) \right\} \notag\\
%     &
%   \prod_{i=1}^2 \int \frac{d\chi_i d\chi_i^{\dagger}}{-i} \int_{U(N)}dU_i \int dz_i e^{-i\chi_i^{\dagger}(r_j + iS)\chi_i} e^{-i\Tr BU_i\vec{e}_1\vec{e}_1^{\dagger}U_i^{\dagger}z_i}\delta\left(U_i\vec{e}_1\vec{e}_1^{\dagger}U_i^{\dagger}z_i - \chi_i\chi_i^{\dagger}\right).\label{eq:nock11_prime}   
% \end{align}
% Now observe that the only explicit dependence on $B$ (rather than $Q_B$) in (\ref{eq:nock11_prime}) is of the form $e^{-iN\Tr BS'}$ where $S' = S + N^{-1}\sum_{i=1}^2 U_i \vec{e}_i \vec{e}_i^{\dagger}U_i^{\dagger}z_i.$
% }
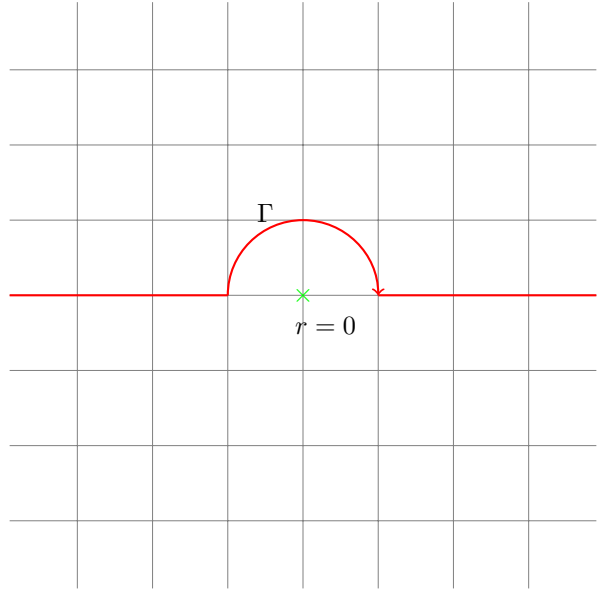
\begin{figure}
    \centering
    \begin{tikzpicture}
    \draw[step=1cm,gray,very thin] (-1.9,-1.9) grid (5.9,5.9);
    \draw[red,thick] (-1.9,2) -- (1,2);
    \draw[->,red,thick] (1,2) arc (180:0:1cm);
    \draw[red,thick] (3,2) -- (5.9,2);
    \node[green] at (2,2){$\times$};
    \node at (2.3, 1.6) {$r=0$};
    \node at (1.5,3.1){$\Gamma$};
    \end{tikzpicture}
    
    \caption{Example contour $\Gamma$ used for the $r_1, r_2$ integrals to keep away from the origin (denoted by the green cross).}
    \label{fig:r1r2_contour}
\end{figure}

\jstat{Lemma \ref{lem:fyod_general} can now be applied:}
\begin{align}
  \expectGOE  \Delta_{\epsilon}(M; x, S) = K^{(8)}_N e^{2N(x^2 - \epsilon^2)}&\left(1 + o(1)\right)\notag \\
  \iint_0^{\pi/2} &d\theta   d\hat{\theta}\int_{\text{Sym}_{\geq 0}(2)} dQ_B \iint_{\Gamma} dr_1dr_2 \cos^22\theta \sin2\theta   \sin2\hat{\theta}\notag\\
  &  \exp\left\{ -\frac{N}{2}\Tr Q_B^2+ ixN\Tr Q_B + \epsilon N\Tr Q_B\sigma\right\}\notag\\
    &\exp\left\{-N(r_1^2 + r_2^2) - 2Ni(x-i\epsilon\cos2\theta\cos2\hat{\theta})r_1 - 2Nix(x + i\epsilon\cos2\theta\cos2\hat{\theta}) \right\} \notag\\
    &\jstat{\prod_{j=1}^r\left(1 + is_j\Tr Q_B - p_{11}p_{22}s_j^2\right)^{-1/2}}\notag \\
    &\prod_{i,j=1}^{2} \left(r_j + \lambda^{(B)}_i\right)|r_1 - r_2|^4(r_1r_2)^{N-2}(\det Q_B)^{\frac{N-3}{2}}, \label{eq:nock12_prime}   
\end{align}

where $p_{ij}$ are the entries of the matrix $Q_B$ and $K^{(8)} = \frac{\pi^N \pi^{-1/2}}{\Gamma\left(\frac{N}{2}\right)\Gamma\left(\frac{N-1}{2}\right)} K^{(7)}_N$.

We now wish to diagonalise $Q_B$ and integrate out its eigenvectors, but as before (around (\ref{eq:nock9_prime})) the integrand is not invariant under the action of the orthogonal group on $Q_B$ and so we instead diagonalise $Q_B = O\text{diag}(p_1, p_2)O^T$ and parametrize $O$ as \begin{equation}
    O = \left(\begin{array}{cc} \cos\theta' &\sin\theta' \\ -\sin\theta' & \cos\theta'\end{array}\right)\label{eq:orthogonal_param}
\end{equation}
but we must be careful to choose domain of integration for $\theta$ and $(p_1, p_2)$ such that the transformation is a bijection. Consider a general positive semi-definite symmetric matrix $$Q_B = \left(\begin{array}{cc} a & c \\ c & b \end{array}\right).$$ Solving for the eigenvalues gives two choices for $(p_1, p_2)$ because of the arbitrary ordering of the eigenvalues. We want a simple product domain for the $(p_1, p_2)$ integrals and both eigenvalues are non-negative, so we choose $(p_1, p_2) \in (\mathbb{R}_{\geq 0})^2$.  One can easily find that \begin{align}
    c &= \frac{p_2 - p_1}{2}\sin2\theta\\
    a &= \frac{p_1 + p_2 + (p_1 - p_2)\cos2\theta}{2}\\
        b &= \frac{p_1 + p_2 + (p_2 - p_1)\cos2\theta}{2}
\end{align}
and so we see immediately that the domain of integration of $\theta$ must be restricted to an interval of length $\pi$ to obtain a bijection. But further, because of the chosen domain for $(p_1, p_2)$ the quantity $(p_1 - p_2)$ takes all values in $\mathbb{R}$ and thus we must in fact restrict $\theta$ to, say, $[0, \pi/2)$ to obtain a bijection.
 The Jacobian of this transformation is $|p_1 - p_2| $ and further \begin{align}
    p_{11}p_{22} & = (p_1\cos^2\theta' + p_2 \sin^2\theta')(p_2\cos^2\theta' + p_1\sin^2\theta')\notag\\
    &=(p_1^2 + p_2^2)(\cos\theta'\sin\theta')^2 + p_1p_2(\cos^4\theta' + \sin^4\theta')\notag\\
    &=  \frac{1}{4}\sin^22\theta' (p_1^2 + p_2^2) + \frac{1}{4}\left(3 + 4\cos4\theta'\right)p_1p_2 
\end{align}
and so we get 

\begin{align}
                  \expectGOE  \Delta_{\epsilon}(M; x, S) = K^{(8)}_N e^{2N(x^2 - \epsilon^2)}&\left(1 + o(1)\right)\iiint_0^{\pi/2} d\theta d\theta'd\hat{\theta}\iint_0^{\infty}dp_1dp_2 \iint_{\Gamma} dr_1dr_2\notag\\
                  &|r_1 - r_2|^4(r_1r_2)^{N-2}(p_1p_2)^{\frac{N-3}{2}}\cos^22\theta \sin2\theta \sin2\hat{\theta}\notag\\
  &  \exp\left\{ -\frac{N}{2}(p_1^2 + p_2^2)  + iN(x-i\epsilon\cos2\theta')p_1 + iN(x+i\epsilon\cos2\theta')p_2 \right\}\notag\\
    &\exp\left\{-N(r_1^2 + r_2^2) - 2Ni(x-i\epsilon\cos2\theta\cos2\hat{\theta})r_1 - 2Nix(x + i\epsilon\cos2\theta\cos2\hat{\theta}) \right\} \notag\\
    &\prod_{i,j=1}^{2} \left(r_j + p_i\right)J_1(p_1, p_2, \theta'; \{s_j\}_{j=1}^r, N)
 \label{eq:nock13_prime}   
\end{align}

where
 \jstat{
\begin{equation}
J_1(p_1, p_2, \theta'; \{s_j\}_{j=1}^r, N) =\prod_{j=1}^r\left(1 + is_j(p_1 + p_2) - s_j^2\left[\frac{1}{4}\sin^22\theta' (p_1^2 + p_2^2) + \frac{1}{4}\left(3 + 4\cos4\theta'\right)p_1p_2 \right]\right)^{-1/2}.
\end{equation}}

Now let us define the functions \begin{align}
    \psi^{(\pm)}_U(z; x; \epsilon) &= \frac{1}{2}z^2 \pm i(x - i\epsilon)z - \frac{1}{2}\log z\\
      \psi^{(\pm)}_L(z; x; \epsilon) &= \frac{1}{2}z^2 \pm i(x + i\epsilon)z - \frac{1}{2}\log z\\
\end{align}
and also \begin{equation}
    J_2(r_1, r_2, p_1, p_2) = |r_1 - r_2|^4 |p_1 - p_2| (r_1r_2)^{-2} (p_1p_2)^{-\frac{3}{2}} (r_1 + p_1)(r_1 + p_2)(r_2 + p_1)(r_2 + p_2)
\end{equation}
and then we finally rewrite (\ref{eq:nock13_prime}) as 
\begin{align}
                  \expectGOE  \Delta_{\epsilon}(M; x, S) = K^{(8)}_N e^{2N(x^2 - \epsilon^2)}\left(1 + o(1)\right)\iiint_0^{\pi/2} &d\theta d\theta'd\hat{\theta}\iint_0^{\infty}dp_1dp_2 \iint_{\Gamma} dr_1dr_2\notag\\
                  &J_1(p_1, p_2, \theta'; S, N)J_2(r_1,r_2, p_1, p_2)\cos^22\theta \sin2\theta \sin2\hat{\theta}\notag\\
& \exp\Bigg\{-N\Bigg(2\psi^{(+)}_L(r_1; x; \epsilon\cos2\theta\cos2\hat{\theta}) +2\psi^{(+)}_U(r_2; x; \epsilon\cos2\theta\cos2\hat{\theta})\notag\\& +\psi^{(-)}_L(p_1; x; \epsilon\cos2\theta')+\psi^{(-)}_U(p_2; x; \epsilon\cos2\theta')\Bigg)\Bigg\}.
 \label{eq:nock_lemma1_final}   
\end{align}
\end{proof}

We will need the asymptotic behaviour of the constant $K_N$ defined in Lemma \ref{lemma:nock_deformed}.

\begin{lemma}\label{lemma:K_N_asymp}
    As $N\rightarrow\infty$ \begin{equation}
      K_N\sim \frac{ (-i)^N N^{\frac{9}{2}}}{4\sqrtsign{2}\pi^{\frac{5}{2}}}(2e)^N.
    \end{equation}
\end{lemma}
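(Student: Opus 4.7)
The plan is to apply Stirling's approximation directly to the two Gamma functions in the denominator of $K_N$ and collect the resulting powers carefully, treating the $(-i)^N$ factor as inert.

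First I would recall that Stirling gives $\Gamma(z) \sim \sqrt{2\pi/z}\,(z/e)^z$ as $z\to\infty$, and apply this to both $\Gamma(N/2)$ and $\Gamma((N-1)/2)$. This yields
\begin{equation*}
\Gamma(N/2)\Gamma((N-1)/2) \sim \frac{4\pi}{\sqrt{N(N-1)}}\left(\frac{N}{2e}\right)^{N/2}\left(\frac{N-1}{2e}\right)^{(N-1)/2}.
\end{equation*}
The $\sqrt{N(N-1)}$ in the denominator is simply $\sim N$, so the prefactor contributes $4\pi/N$.

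Next I would simplify the product of the two $((\cdot)/(2e))$ powers. The $(2e)$ factors combine to $(2e)^{-(N-1/2)}$. For the $N$--powers I would pull out $N^{(N-1)/2}$ from $(N-1)^{(N-1)/2}$, using the elementary identity $(1-1/N)^{(N-1)/2}\to e^{-1/2}$, so that
\begin{equation*}
N^{N/2}(N-1)^{(N-1)/2}\sim N^{N-1/2}\,e^{-1/2}.
\end{equation*}
Putting these pieces together gives $\Gamma(N/2)\Gamma((N-1)/2)\sim 4\pi\,N^{N-3/2}(2e)^{-(N-1/2)}e^{-1/2}$.

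Finally I would substitute this back into the definition $K_N = N^{N+3}(-i)^N/[\Gamma(N/2)\Gamma((N-1)/2)\pi^{3/2}]$ and collect the powers of $N$, $\pi$ and $e$. The $N^{N+3}\cdot N^{-(N-3/2)} = N^{9/2}$ factor emerges, the $\pi$-factors combine to $\pi^{-5/2}$, and the exponential factor $(2e)^{N-1/2}e^{1/2}=(2e)^N/\sqrt{2}$ gives the announced $(2e)^N$ together with the extra $1/\sqrt{2}$. I expect no real obstacle here: this is a bookkeeping calculation, and the only mild care needed is in showing $(1-1/N)^{(N-1)/2}\to e^{-1/2}$ and that $\sqrt{N(N-1)}/N\to 1$, both of which are immediate. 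The resulting expression is exactly $\dfrac{(-i)^N N^{9/2}}{4\sqrt{2}\,\pi^{5/2}}(2e)^N$, completing the proof.
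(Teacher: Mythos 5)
Your proposal is correct and follows essentially the same route as the paper: a direct application of Stirling's formula to $\Gamma(N/2)\Gamma((N-1)/2)$, the limit $(1-1/N)^{(N-1)/2}\to e^{-1/2}$, and careful bookkeeping of the powers of $N$, $2$, $e$ and $\pi$. The arithmetic checks out and reproduces the stated asymptotic $K_N\sim (-i)^N N^{9/2}(2e)^N/(4\sqrt{2}\,\pi^{5/2})$.
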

\begin{proof}
Using Stirling's formula for the Gamma function gives \begin{align}
    K_N &\sim \frac{N^{N+3}(-i)^N}{\pi^{3/2}} N^{-\frac{N}{2} + \frac{1}{2}}(N-1)^{-\frac{N}{2} + 1} 2^{\frac{N}{2} - \frac{1}{2}} 2^{\frac{N}{2} - 1} e^{\frac{N}{2}} e^{\frac{N}{2} - \frac{1}{2}} 
    \left(2\pi\right)^{-1}\notag\\
  &=  \frac{N^{N+3}(-i)^N}{\pi^{3/2}} N^{-N} N^{\frac{3}{2}} 2^{N} 2^{-\frac{5}{2}} e^{N} e^{-\frac{1}{2}}\pi^{-1} \left(\frac{N-1}{N}\right)^{-\frac{N}{2} + 1}\notag\\
  &\sim \frac{ (-i)^N N^{\frac{9}{2}}}{4\sqrtsign{2}\pi^{\frac{5}{2}}}(2e)^N.
\end{align}
\end{proof}
Building on Lemma \ref{lemma:nock_deformed}, we can prove a generalisation of Theorem 2.8 from \cite{auffinger2013random}, namely Theorem \ref{thm:auff2.8}.
\auffindk*
\begin{proof}

Combining Lemmata \ref{lemma:conditional_dist} and \ref{lemma:kac_rice} and observing that the integrand in the Kac-Rice formula of Lemma \ref{lemma:kac_rice} is spherically symmetric, we obtain \begin{align}\label{eq:thm28_1}
\expect C_{N}^{h}(\sqrtsign{N}u) &= \underbrace{\left(2(N-1)(H-1)H\right)^{\frac{N-1}{2}} \omega_N \frac{\jstat{e^{-\frac{\vec{v}^2}{2H}}}}{(2\pi H)^{(N-1)/2}}}_{\defeq \Omega_N} \int_{-\infty}^{u_N} dx ~ \frac{1}{\sqrtsign{2\pi}t} e^{-\frac{x^2}{2t^2}}  \mathbb{E}^{N-1}_{GOE} |\det(M - xI + S)| 
\end{align} 
where $$ u_N = u\sqrtsign{\frac{HN}{2(N-1)(H-1)}}, $$ the variance $t^2 = \frac{H}{2(N-1)(H-1)}$,  $\omega_N = 2\pi^{N/2}/\Gamma(N/2)$ is the surface area of the $N-1$ sphere and $S$ and $\vec{v}$ are defined in Lemma \ref{lemma:conditional_dist}. Note that the first term in $\Omega_N$ comes from the expression (\ref{eq:hij_goe}) and the third term from \jstat{(\ref{eq:derivs_exp_hi}) and} (\ref{eq:derivs_cov_hihj}), i.e. this is the density of $\nabla h$ evaluated at $0$ as appears in Lemma \ref{lemma:kac_rice}. \jstat{The conditions for Lemma \ref{lemma:nock_deformed} are shown to be met in Lemma \ref{lemma:conditional_dist}}, so we obtain 
\begin{align}
    \expect C_{N}^{h}(\sqrtsign{N}u) = \Omega_N K_{N-1}\sqrtsign{\frac{2(N-1)(H-1)}{H}} &\left(1 + o(1)\right)\int_{-\infty}^{u_N}dx ~  \frac{1}{\sqrtsign{2\pi}}  \lim_{\epsilon\searrow 0}\iiint_0^{\pi/2} d\theta d\hat{\theta}d\theta' \iint_0^{\infty}dp_1dp_2 \iint_{\Gamma} dr_1dr_2\notag\\
                  &J_1(p_1, p_2, \theta'; \{s_j\}_{j=1}^r, N-1)J_2(r_1,r_2, p_1, p_2)\cos^22\theta \sin2\theta\sin2\hat{\theta}\notag\\
& \exp\Bigg\{-(N-1)\Bigg(2\psi^{(+)}_L(r_1; x; \epsilon\cos2\theta\cos2\hat{\theta}) +2\psi^{(+)}_U(r_2; x; \epsilon\cos2\theta\cos2\hat{\theta})\notag\\& +\psi^{(-)}_L(p_1; x; \epsilon\cos2\theta')+\psi^{(-)}_U(p_2; x; \epsilon\cos2\theta') - 
\frac{H+1}{H}x^2\Bigg)\Bigg\}\notag \\
 =c_{N,H}\int_{-\infty}^{u_N}dx ~   \lim_{\epsilon\searrow 0}\iiint_0^{\pi/2} d\theta d\hat{\theta}d\theta'\iint_0^{\infty}dp_1&dp_2 \iint_{\Gamma} dr_1dr_2 \notag\\
                  &J_1(p_1, p_2, \theta'; \{s_j\}_{j=1}^r, N-1)J_2(r_1,r_2, p_1, p_2)\cos^22\theta \sin2\theta\sin2\hat{\theta}\notag\\
& \exp\Bigg\{-(N-1)\Bigg(2\psi^{(+)}_L(r_1; x; \epsilon\cos2\theta\cos2\hat{\theta}) +2\psi^{(+)}_U(r_2; x; \epsilon\cos2\theta\cos2\hat{\theta})\notag\\& +\psi^{(-)}_L(p_1; x; \epsilon\cos2\theta')+\psi^{(-)}_U(p_2; x; \epsilon\cos2\theta') - \frac{H+1}{H}x^2\Bigg)\Bigg\}  \label{eq:thm28_mid}
\end{align}

where we have defined the constant \begin{equation}\label{eq:constant_final_defn}
    c_{N,H} = \frac{\Omega_N K_{N-1}\sqrtsign{(H-1)(N-1)}}{\sqrtsign{H\pi}}(1 + o(1)).
\end{equation}
We pause now to derive the asymptotic form of $c_{N,H}$. The vector $\vec{v}$ was defined in Lemma \ref{lemma:conditional_dist} and has entries of order $N^{-1/2}$, so $\vec{v}^2 = \mathcal{O}(1)$. Using Stirling's formula for the Gamma function \begin{align}
    \Omega_N &\sim 2 (N-1)^{\frac{N-1}{2}} (H-1)^{\frac{N-1}{2}}\pi^{1/2} N^{-\frac{N}{2} + \frac{1}{2}} 2^{\frac{N}{2} - \frac{1}{2}} e^{\frac{N}{2}} \left(2\pi\right)^{-1/2}\jstat{e^{-\frac{\vec{v}^2}{2H}}}\notag\\
    &=(H-1)^{\frac{N-1}{2}} (2e)^{\frac{N}{2}} \left(\frac{N-1}{N}\right)^{\frac{N-1}{2}}\jstat{e^{-\frac{\vec{v}^2}{2H}}}\notag\\
    &\sim (H-1)^{\frac{N-1}{2}} (2e)^{\frac{N}{2}} e^{-1/2}\jstat{e^{-\frac{\vec{v}^2}{2H}}}\notag\\ 
    \implies \frac{\Omega_N \sqrtsign{(H-1)(N-1)}}{\sqrtsign{H\pi}} &\sim (H-1)^{\frac{N}{2}} (2e)^{\frac{N}{2}} e^{-1/2} H^{-1/2}\pi^{-1/2} (N-1)^{1/2}\jstat{e^{-\frac{\vec{v}^2}{2H}}}
\end{align}
and so Lemma \ref{lemma:K_N_asymp} gives \begin{align}
    c_{N,H} &\sim \frac{ (-i)^{N-1} (N-1)^{\frac{9}{2}}}{4\sqrtsign{2}\pi^{\frac{5}{2}}}(2e)^{N-1}(H-1)^{\frac{N}{2}} (2e)^{\frac{N}{2}} e^{-1/2} H^{-1/2}\pi^{-1/2} (N-1)^{1/2}\jstat{e^{-\frac{\vec{v}^2}{2H}}}\notag\\
    &\sim \frac{(-i)^{N-1} N^5}{4\pi^3 H^{1/2}} (2e)^{\frac{3}{2}(N-1)} (H-1)^{\frac{N}{2}}\jstat{e^{-\frac{\vec{v}^2}{2H}}}.\label{eq:c_NH_asymp}
\end{align}
In the style of \cite{soton29213}, the multiple integral in (\ref{eq:thm28_mid}) can be written as an expansion over saddle points and saddle points of the integrand restricted to sections of the boundary. Recalling the form of $\psi^{(\pm)}_U$ and $\psi^{(\pm)}_L$, we see that the integrand vanishes on the boundary and so we focus on the interior saddle points. Let us define the exponent function \begin{equation}
        \Phi(r_1, r_2, p_1, p_2, x; S, \epsilon) = 2\psi^{(+)}_L(r_1; x, \epsilon)+ 2\psi^{(+)}_U(r_2; x, \epsilon) + \psi^{(-)}_L(p_1; x, \epsilon) + \psi^{(-)}_U(p_2; x, \epsilon) - \frac{(H+1)}{H} x^2
    \end{equation}
    It is clear that the $\cos\theta, \cos\hat{\theta}$ and $\cos\theta'$ terms in the exponent of (\ref{eq:thm28_mid}) do not affect the saddle point asymptotic analysis, since we take the limit $\epsilon\rightarrow 0$, and $\theta, \hat{\theta}, \theta'\in [0,\pi/2)$ and it is only the signs of the $\mathcal{O}(\epsilon)$ terms that are significant. Therefore, to simplify the exposition, we will suppress these terms.
    The $(r_1,r_2,p_1,p_2)$ components of $\grad\Phi$ are of the form \begin{equation}
        z\mapsto z \pm i(x\pm i\epsilon) - \frac{1}{2z}
    \end{equation}
    and so the only saddle in $\Phi$ restricted to those components is at \begin{align}
        r_1 &= \frac{-i(x+i\epsilon) + (2-(x+i\epsilon)^2)^{1/2}}{2}\defeq z^{(+)}_L\label{eq:saddle_loc_r1}\\
           r_2 &= \frac{-i(x-i\epsilon) + (2-(x-i\epsilon)^2)^{1/2}}{2}\defeq z^{(+)}_U\\
                   p_1 &= \frac{i(x+i\epsilon) + (2-(x+i\epsilon)^2)^{1/2}}{2}\defeq z^{(-)}_L\\
                           p_2 &= \frac{i(x-i\epsilon) + (2-(x-i\epsilon)^2)^{1/2}}{2}\defeq z^{(-)}_U\label{eq:saddle_loc_p2}.
    \end{align}
    To deform the $(r_1,r_2,p_1,p_2)$ contours through this saddle, we are required to choose a branch of the functions in (\ref{eq:saddle_loc_r1} - \ref{eq:saddle_loc_p2}). Each has branch points at  $\pm\sqrtsign{2} + i\epsilon$ or $\pm\sqrtsign{2} -i\epsilon$. Since the initial contour of $x$ integration lies along the real line, we take the following branch cuts in the complex $x$ plane and respective angle ranges (see Figure \ref{fig:branch_cut}) \begin{align}
    [\sqrtsign{2} + i\epsilon, \sqrtsign{2} + i\infty],~~ & [\pi/2, 5\pi/2] \label{eq:branch1}\\
    [\sqrtsign{2} - i\epsilon, \sqrtsign{2} - i\infty],~~ & [-\pi/2, 3\pi/2] \\
    [-\sqrtsign{2} + i\epsilon, -\sqrtsign{2} + i\infty],~~ & [\pi/2, 5\pi/2] \\
    [-\sqrtsign{2} - i\epsilon, -\sqrtsign{2} - i\infty],~~ & [-\pi/2, 3\pi/2]\label{eq:branch4}.    \end{align}

\begin{figure}
    \centering
    \begin{tikzpicture}
    \draw[step=1cm,gray,very thin] (-1.9,-1.9) grid (5.9,5.9);
    \def\R{0.4};
    \def\gapY{1.2};
    \def\gapX{2.83};
    \coordinate (A) at ($(2,2) + (-\gapX/2, \gapY/2)$);
    \coordinate (B) at ($(A) + (\gapX, 0)$);
    \coordinate (C) at ($(A) - (0,\gapY)$);
    \coordinate (D) at ($(B) - (0,\gapY)$);
    \node at (A){$\times$};
    \node at (B){$\times$};
    \node at (C){$\times$};
    \node at (D){$\times$};
    \draw[thick,red,branch cut] (A) to ($(A) + (0,4-\gapY/2)$);
    \draw[thick,red,branch cut] (B) to ($(B) + (0,4-\gapY/2)$);
    \draw[thick,red,branch cut] (C) to ($(C) + (0,-4+\gapY/2)$);
    \draw[thick,red,branch cut] (D) to ($(D) + (0,-4+\gapY/2)$);
    \node at ($(A) + (-1.2,0)$){$-\sqrtsign{2} + i\epsilon$};
    \node at ($(B) + (1.2,0)$){$\sqrtsign{2} + i\epsilon$};
    \node at ($(C) + (-1.2,0)$){$-\sqrtsign{2} - i\epsilon$};
    \node at ($(D) + (1.2,0)$){$\sqrtsign{2} - i\epsilon$};
    \draw[dashed, ->] ($(A) + (\R,0)$) arc (0:360:\R cm);
    \draw[dashed, ->] ($(B) + (\R,0)$) arc (0:360:\R cm);
    \draw[dashed, ->] ($(C) + (\R,0)$) arc (0:360:\R cm);
    \draw[dashed, ->] ($(D) + (\R,0)$) arc (0:360:\R cm);
    \end{tikzpicture}
    \caption{The choice of branch for the $x$ integral in the proof of Theorem \ref{thm:auff2.8}.}
    \label{fig:branch_cut}
\end{figure}
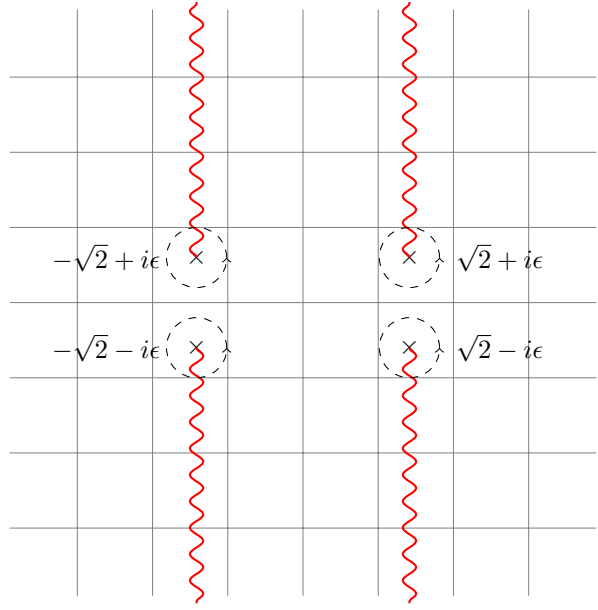

It is simple to compute $\psi_U^{(\pm)}(z^{(\pm)}_U)$ and  $\psi_L^{(\pm)}(z^{(\pm)}_L)$:
\begin{align}
    \psi_L^{(+)}(z^{(+)}_L) = \frac{1}{4}\left(1+(x+i\epsilon)^2 + \log 2\right) + \frac{1}{4}\log 2 +\frac{1}{4}i(x+i\epsilon)\left(2 - (x+i\epsilon)^2\right)^{1/2} - \frac{1}{2}\log\left[ -i(x+i\epsilon) + \left(2-(x+i\epsilon)^2\right)^{1/2}\right]\label{eq:psi_L_plus}\\
       \psi_U^{(+)}(z^{(+)}_U) = \frac{1}{4}\left(1+(x-i\epsilon)^2 + \log 2\right) + \frac{1}{4}\log 2 +\frac{1}{4}i(x-i\epsilon)\left(2 - (x-i\epsilon)^2\right)^{1/2} - \frac{1}{2}\log\left[ -i(x-i\epsilon) + \left(2-(x-i\epsilon)^2\right)^{1/2}\right]\\
           \psi_L^{(-)}(z^{(-)}_L) = \frac{1}{4}\left(1+(x+i\epsilon)^2 + \log 2\right) + \frac{1}{4}\log 2 -\frac{1}{4}i(x+i\epsilon)\left(2 - (x+i\epsilon)^2\right)^{1/2} - \frac{1}{2}\log\left[ i(x+i\epsilon) + \left(2-(x+i\epsilon)^2\right)^{1/2}\right]\\
       \psi_U^{(-)}(z^{(-)}_U) = \frac{1}{4}\left(1+(x-i\epsilon)^2 + \log 2\right) + \frac{1}{4}\log 2 -\frac{1}{4}i(x-i\epsilon)\left(2 - (x-i\epsilon)^2\right)^{1/2} - \frac{1}{2}\log\left[ i(x-i\epsilon) + \left(2-(x-i\epsilon)^2\right)^{1/2}\right]\label{eq:psi_U_minus}.
\end{align}
  Let us consider $x$ still restricted to the real line. We are free to restrict to $\epsilon>0$ and then $x\pm i\epsilon$ lies just above (below) the real line. For $x<-\sqrtsign{2}$ the angle from all four branch points is $\pi$ and so we obtain \begin{align}
      \Phi_{(4)}(x) \defeq \lim_{\epsilon\rightarrow 0} \Phi\left(z_L^{(+)}, z_U^{(+)}, z_L^{(-)}, z_U^{(-)}, x; \epsilon\right) &=              \frac{3}{2}\left(1+x^2+\log 2\right) + \frac{3}{2}\log 2 - \frac{1}{2}x\sqrtsign{x^2 -2}-2\log\left[-ix + i\sqrtsign{x^2 - 2}\right] \notag\\
      &- \log\left[ix + i\sqrtsign{x^2 - 2}\right] - \frac{H+1}{H}x^2 \notag\\
      &=\frac{3}{2}\left(1+\log 2\right) + \frac{H-2}{2H}x^2 + \frac{3}{2}\log 2 - \frac{1}{2}x\sqrtsign{x^2 -2}-\log\left[-ix + i\sqrtsign{x^2 - 2}\right]\notag\\
      & ~~~- \log 2\notag\\
      &=\frac{3}{2}\left(1+\log 2\right)+ \frac{H-2}{2H}x^2 + \frac{1}{2}\log 2 - \frac{1}{2}x\sqrtsign{x^2 -2}-\log\left[-x + \sqrtsign{x^2 - 2}\right]\notag\\ 
      & ~~~ - \log i\notag\\
      &= \frac{3}{2}\left(1+\log 2\right) +\frac{H-2}{2H}x^2+ I_1(x; \sqrtsign{2}) - \log i\label{eq:phi_4_below_bulk}
  \end{align}
 However for $-\sqrtsign{x} < x < \sqrtsign{2}$ the angles about the branch points are $\pi, \pi, 2\pi, 0$ in the order of (\ref{eq:branch1}-\ref{eq:branch4}). It follows that the square root terms in both of $\psi^{(\pm)}_L(z^{(\pm)}_L)$ and both of $\psi^{(\pm)}_U(z^{(\pm)}_U)$ have opposite signs and so \begin{align}
     \Phi_{(4)}(x) &= \frac{3}{2}\left( 1+ \log2\right) + \frac{H-2}{2H}x^2- \frac{3}{2}\log(-2) + \frac{3}{2}\log 2\notag\\
     &= \frac{3}{2}\left( 1 + \log2\right) + \frac{H-2}{2H}x^2
     - \frac{3}{2}\log(-1)\label{eq:phi_4_in_bulk}.
 \end{align}
 Finally, the above reasoning can be trivially extended to $x>\sqrtsign{2}$ to obtain \begin{align}\label{eq:phi_4_above_bulk}
     \Phi_{(4)}(x) = \frac{3}{2}\left(1 + \log{2}\right) + \frac{H-2}{2H}{x^2} + I_1(-x; \sqrtsign{2}) - \log{i}.
 \end{align}
  It is apparent from (\ref{eq:phi_4_below_bulk})\footnote{Note that $I_1(x;\sqrtsign{2})$ is monotonically decreasing on $(-\infty, -\sqrtsign{2}]$.}, (\ref{eq:phi_4_in_bulk}) and (\ref{eq:phi_4_above_bulk}) that the branch choice (\ref{eq:branch1}-\ref{eq:branch4}) and deforming through each of the saddles of in $(r_1, r_2, p_1, p_2)$ gives a contour of steepest descent in $x$ with the critical point being at $x=0$.

We are thus able to write down the leading order asymptotics for (\ref{eq:thm28_mid}) for all real $u$ coming either from the end-point $x=\sqrtsign{2}u/E_{\infty}$ or the critical point $x=0$. We begin with $u< -E_{\infty}$ by using (\ref{eq:phi_4_below_bulk}):
 \begin{align}
    \frac{1}{N}\log\mathbb{E}C^h_{N}(\sqrtsign{N}u) &\sim -\frac{3}{2}\log{2}  -\frac{3}{2} -\frac{H-2}{2H}\frac{Hu^2}{2(H-1)} - I_1(u; E_{\infty}) + \log{i}+ \frac{1}{N}\
    \log c_{N,H}\notag\\
    &\sim \frac{1}{2}\log(H-1) - \frac{H-2}{4(H-1)}u^2 - I_1(u; E_{\infty})
\end{align}
since by (\ref{eq:c_NH_asymp}) \begin{align}
    \log{c_{N,H}} \sim \frac{1}{2}N\log(H-1) + \frac{3}{2}(N-1)(1 + \log{2}) + (N-1)\log(-i).
\end{align}
For $-E_{\infty} \leq u < 0$ we use (\ref{eq:phi_4_in_bulk}):
\begin{align}
    \frac{1}{N}\log\mathbb{E}C^h_{N}(\sqrtsign{N}u) &\sim -\frac{3}{2}\log{2}  -\frac{3}{2} -\frac{H-2}{2H}\frac{Hu^2}{2(H-1)} + \frac{3}{2}\log(-1)+ \frac{1}{N}\
    \log c_{N,H}\notag\\
    &\sim \frac{1}{2}\log(H-1) - \frac{H-2}{4(H-1)}u^2  
\end{align}
since $\frac{3}{2}\log(-1) = \log\left((-1)^{1/2}\right) =\log{i}$. Finally, for $u\geq 0$ the leading contribution comes from the critical point, so 
\begin{align}
    \frac{1}{N}\log\mathbb{E}C^h_{N}(\sqrtsign{N}u) &\sim -\frac{3}{2}\log{2}  -\frac{3}{2} + \frac{3}{2}\log(-1)+ \frac{1}{N}\
    \log c_{N,H}\notag\\
    &\sim \frac{1}{2}\log(H-1).
\end{align}
\end{proof}

We are in-fact able to obtain the exact leading order term in the expansion of $\mathbb{E}C^h_{N}(\sqrtsign{N}u)$ in the case $u<-E_{\infty}$, namely Theorem \ref{thm:exact_term}.
\auffexact*

\begin{proof}
We begin by deriving an alternative form for $h$. For $v>\sqrtsign{2}$ \begin{align}
    h(v)^2 &= \frac{|v - \sqrtsign{2}| + | v + \sqrtsign{2}| + 2|v^2 - 2|^{\frac{1}{2}}}{|v^2 -2|^{\frac{1}{2}}}\notag\\
    &= 2\left( v + |v^2 - 2|^{\frac{1}{2}}\right)|v^2 - 2|^{-\frac{1}{2}}\notag\\
    \implies h(v) &= \sqrtsign{2} \left( v + |v^2 - 2|^{\frac{1}{2}}\right)^{\frac{1}{2}}|v^2 - 2|^{-\frac{1}{4}}\notag\\
   &=2|-v + |v^2 - 2|^{\frac{1}{2}}|^{-\frac{1}{2}}|v^2 - 2|^{-\frac{1}{4}}. \label{eq:alternate_h_form}
\end{align}
This proof now proceeds like that of Theorem \ref{thm:auff2.8} except that we are required to keep track of the exact factors in (\ref{eq:thm28_mid}) and evaluate the $\mathcal{O}(1)$ integrals arising from the saddle point approximation. First note that (using primes to denote $z$ derivatives) \begin{equation}
   { \psi^{(\pm)}_{U,L}}''(z ;x; \epsilon) = 1 + \frac{1}{2z^2}
\end{equation}
and so we abbreviate  ${\psi^{(\pm)}_{U,L}}''= \psi''$. We get the following useful relation (now letting $\epsilon \rightarrow 0$ implicitly for simplicity of exposition) \begin{align}
    \psi''(z^{(\pm)}_{U,L}) &= (z^{(\pm)}_{U,L})^{-2}\left(1 \mp ix z^{(\pm)}_{U,L}\right)\notag \\
    &=\frac{1}{2}(z^{(\pm)}_{U, L})^{-2}\left(2 - x^2  \pm x\sqrtsign{x^2 - 2}\right)\notag \\
    &= i\sqrtsign{x^2 - 2}(z^{(\pm)}_{U, L})^{-1}\label{eq:handy_psi_pp}
\end{align}
where, using our branch choice shown in Figure \ref{fig:branch_cut}, for $x<-\sqrtsign{2}$ the saddle points are \begin{align}
    z_{U,L}^{(\pm)} = \frac{\mp ix + i\sqrtsign{x^2 - 2}}{2}.
\end{align}
We recall the central expression (\ref{eq:thm28_mid}) from the proof of Theorem \ref{thm:auff2.8}:\begin{align}
    \expect C_{N}^{h}(\sqrtsign{N}u) = c_{N,H}\int_{-\infty}^{u_N}dx ~   \lim_{\epsilon\searrow 0}\iiint_0^{\pi/2}& d\theta d\hat{\theta}d\theta'\iint_0^{\infty}dp_1dp_2 \iint_{\Gamma} dr_1dr_2 \notag\\
                  &J_1(p_1, p_2, \theta'; \{s_j\}_{j=1}^r, N-1)J_2(r_1,r_2, p_1, p_2)\cos^22\theta \sin2\theta\sin2\hat{\theta}\notag\\
& \exp\Bigg\{-(N-1)\Bigg(2\psi^{(+)}_L(r_1; x; \epsilon\cos2\theta\cos2\hat{\theta}) +2\psi^{(+)}_U(r_2; x; \epsilon\cos2\theta\cos2\hat{\theta})\notag\\& +\psi^{(-)}_L(p_1; x; \epsilon\cos2\theta')+\psi^{(-)}_U(p_2; x; \epsilon\cos2\theta') - \frac{H+1}{H}x^2\Bigg)\Bigg\}\notag
\end{align}
and we recall the expressions for $J_1, J_2$ from Lemma \ref{lemma:nock_deformed}:
\begin{align}
    J_1(p_1, p_2, \theta'; \{s_j\}_{j=1}^r, N) &=\jstat{\left(1 + iN^{-1/2}s_2(p_1 + p_2) - N^{-1}s_2^2\left[\frac{1}{4}\sin^22\theta' (p_1^2 + p_2^2) + \frac{1}{4}\left(3 + 4\cos4\theta'\right)p_1p_2 \right]\right)^{-1/2}}\notag\\
    & ~~~~~~ \cdot\jstat{ \left(1 + is_1(p_1 + p_2) - s_1^2\left[\frac{1}{4}\sin^22\theta' (p_1^2 + p_2^2) + \frac{1}{4}\left(3 + 4\cos4\theta'\right)p_1p_2 \right]\right)^{-1/2},}\notag\\
        J_2(r_1, r_2, p_1, p_2) &= (r_1 + p_1)(r_2 + p_1)(r_1 + p_2)(r_2 + p_2)|r_1 - r_2|^4 |p_1-p_2| (r_1r_2)^{-2} (p_1p_2)^{-3/2}.\notag
    \end{align} 

We begin by evaluating $J_1$ to leading order at the saddle points:\begin{align}
    \frac{1}{2} \sin^2 2\theta' (z^{(-)})^2 + \frac{1}{4}\left(3+4\cos 4\theta'\right) (z^{(-)})^2 &\equiv q(\theta') (z^{(-)})^2\notag \\
    % \implies J_1(z^{(-)}, z^{(-)}, \theta'; \{s_j\}_{j=1}^r, N) &\sim |\det S_r|^{-1} (-1)^{\frac{r}{2}} q(\theta')^{-\frac{r}{2}} (z^{(-)})^{-r}\label{eq:J1_at_saddle}.
    \implies J_1(z^{(-)}, z^{(-)}, \theta'; \{s_j\}_{j=1}^r, N) &\sim  \jstat{\left(1 + 2iz^{(-)}s_1 - q(\theta')\left(z^{(-)}\right)^2s_1^2\right)^{-1/2}\label{eq:J1_at_saddle}.}
\end{align}
\jstat{Recalling \begin{align}
    x + \sqrtsign{x^2 - 2} = \frac{-2}{-x + \sqrtsign{x^2 - 2}} = -\frac{h(x)^2}{2}\sqrtsign{x^2 - 2}, ~~~~~ (z^{(-)})^2 = -\frac{1}{2}\sqrtsign{x^2 - 2}\left( x + \sqrtsign{x^2 - 2}\right)
\end{align}
we obtain \begin{align}
    J_1 \sim 1 + \frac{1}{4}s_1\sqrtsign{x^2 - 2}h(x)^2 - \frac{1}{4}s_1^2 q(\theta')|x^2 - 2|h(x)^2 \equiv j(x, s_1, \theta').
\end{align}}

We see that $J_2(z^{(+)}, z^{(+)}, z^{(-)}, z^{(-)}) = 0$ and so we are required to expand $J_2$ in the region of $$(r_1, r_2, p_1, p_2) = (z^{(+)}, z^{(+)}, z^{(-)}, z^{(-)}).$$
Following standard steepest descents practice, the integration variables $r_1, r_2, p_1, p_2$ are replaced by scaled variables in the region of the saddle point, i.e. \begin{align}
    r_i &=  z^{(+)} + (N-1)^{-\frac{1}{2}}|{\psi^{(+)}}''(z^{(+)})|^{-\frac{1}{2}}\rho_i\label{eq:exact_term_scaling1}\\
    p_i &=  z^{(-)} + (N-1)^{-\frac{1}{2}}|{\psi^{(-)}}''(z^{(-)})|^{-\frac{1}{2}}\pi_i\label{eq:exact_term_scaling2}
\end{align}
and so \begin{align}
    J_2(r_1, r_2, p_1, p_2) &= (N-1)^{-\frac{5}{2}}|x^2 - 2|^2 (z^{(+)})^{-4}(z^{(-)})^{-3} |{\psi^{(-)}}''(z^{(-)})|^{-\frac{1}{2}}|{\psi^{(+)}}''(z^{(+)})|^{-2}|\rho_1 - \rho_2|^4 |\pi_1 -\pi_2| + o(N^{-\frac{5}{2}}).
\end{align}
Piecing these components together gives \begin{align}
    J_2 J_1 dr_1 dr_2 dp_1dp_2 &= (N-1)^{\jstat{-\frac{9}{2}}} \jstat{j(x, s_1, \theta')} |x^2 - 2|^2 \notag \\
    & ~~~~~~~~~~ |{\psi^{(-)}}''(z^{(-)})|^{-\frac{3}{2}}
    |{\psi^{(+)}}''(z^{(+)})|^{-3}(z^{(+)})^{-4}(z^{(-)})^{-3}\notag \\ & ~~~~~~~~~~ |\rho_1 - \rho_2|^4 |\pi_1 -\pi_2| d\rho_1 d\rho_2 d\pi_1 d\pi_2\notag \\
    &= (N-1)^{-\jstat{\frac{9}{2}}} \jstat{j(x, s_1, \theta')} |x^2 - 2|^{-\frac{1}{4}}  (z^{(+)})^{-1}(z^{(-)})^{-\frac{3}{2}}\notag \\ & ~~~~~~~~~~ |\rho_1 - \rho_2|^4 |\pi_1 -\pi_2| d\rho_1 d\rho_2 d\pi_1 d\pi_2\notag \\
        &= 2(N-1)^{-\jstat{\frac{9}{2}}} \jstat{j(x, s_1, \theta')} |x^2 - 2|^{-\frac{1}{4}}  (z^{(-)})^{-\frac{1}{2}}\notag \\ & ~~~~~~~~~~ |\rho_1 - \rho_2|^4 |\pi_1 -\pi_2| d\rho_1 d\rho_2 d\pi_1 d\pi_2\notag \\
                &= 2^{\jstat{ \frac{3}{2}}}(N-1)^{-\jstat{\frac{9}{2}}} \jstat{j(x, s_1, \theta')} |x^2 - 2|^{-\frac{1}{4}} \left(x + \sqrtsign{x^2 - 2}\right)^{-\jstat{\frac{1}{2}} }\notag \\ & ~~~~~~~~~~ |\rho_1 - \rho_2|^4 |\pi_1 -\pi_2| d\rho_1 d\rho_2 d\pi_1 d\pi_2\label{eq:exact_first_term_jacobian_JJ}.
    \end{align}
Recalling the expression (\ref{eq:alternate_h_form}), we can then write \begin{align}
     J_2 J_1 dr_1 dr_2 dp_1dp_2 &= 2^{\jstat{\frac{3}{2}}}(N-1)^{-\jstat{\frac{9}{2}}}  \jstat{j(x, s_1, \theta') h(-x)} 2^{- \jstat{1}}|\rho_1 - \rho_2|^4 |\pi_1 -\pi_2| d\rho_1 d\rho_2 d\pi_1 d\pi_2\notag \\
     &= 2^{\jstat{\jstat{\frac{1}{2}}}}(N-1)^{-\jstat{\frac{9}{2}}}  \jstat{j(x, s_1, \theta') h(-x)} |\rho_1 - \rho_2|^4 |\pi_1 -\pi_2| d\rho_1 d\rho_2 d\pi_1 d\pi_2
\end{align}
and so using (\ref{eq:c_NH_asymp}), we obtain \begin{align}
      \expect C_{N}^{h}(\sqrtsign{N}u) &\sim \frac{2^{-\jstat{\frac{3}{2}} }N^{\frac{\jstat{1}}{2}}}{\pi^3\sqrtsign{H}}\jstat{e^{-\frac{\vec{v}^2}{2H}}} \frac{Y_{2}^{(4)}}{8} Y_{2}^{(1)} \iint_0^{\pi/2}d\theta d\hat{\theta} ~ \cos^2 2\theta \sin 2\theta\sin2\hat{\theta}\notag\\
      & ~~~\sqrtsign{H-1} \int_{0}^{\pi/2} d\theta' \int_{-\infty}^{\frac{\sqrtsign{2}u}{E_{\infty}}\sqrtsign{\frac{N}{N-1}}} dx ~ h(-x)\jstat{j(x, s_1, \theta')}e^{(N-1)\Theta_H(2^{-\frac{1}{2}}E_{\infty}x)}\label{eq:exact_first_term_midway}
\end{align}
where we have defined the integrals \begin{align}
    Y_{n}^{(\beta)} = \int_{\mathbb{R}^n} d\vec{y} ~ e^{-\frac{1}{2}\vec{y}^2} |\Delta(\vec{y})|^{\beta}
\end{align}
and $\Delta$ is the Vandermonde determinant. Recall that, as in Theorem \ref{thm:auff2.8}, the $x$ integration contour in (\ref{eq:exact_first_term_midway}) is a steepest descent contour and so the leading order term comes from the end point.
Now \begin{align}
   (N-1) \Theta_H\left(\sqrtsign{\frac{N}{N-1}} u\right) &= (N-1)\frac{1}{2}\log(H-1) - N\frac{H-2}{4(H-1)}u^2 - (N-1)I_1\left(\sqrtsign{\frac{N}{N-1}}u; E_{\infty}\right) \notag\\
   &= (N-1)\frac{1}{2}\log(H-1) - N\frac{H-2}{4(H-1)}u^2 - (N-1)I_1\left(u; E_{\infty}\right) - \frac{N-1}{2N}uI_1'(u; E_{\infty}) + \mathcal{O}(N^{-1}) \notag\\
   &= N\Theta_H(u) - \frac{1}{2}\log(H-1) + I_1(u; E_{\infty}) - \frac{1}{2}uI_1'(u; E_{\infty}) + \mathcal{O}(N^{-1})
\end{align}
and so \begin{align}
     \expect C_{N}^{h}(\sqrtsign{N}u) &\sim \frac{2^{-\jstat{\frac{3}{2}}}N^{\frac{-\jstat{1}}{2}}}{24\pi^3\sqrtsign{H}} \jstat{e^{-\frac{\vec{v}^2}{2H}}} Y_{2}^{(4)} Y_{2}^{(1)} \left(\int_{0}^{\pi/2} d\theta'\jstat{j(-v, s_1, \theta')}\right) h(v) e^{N\Theta_H(u)} \frac{e^{I_1(u; E_{\infty}) - \frac{1}{2}u I_1'(u; E_{\infty})}}{\frac{H-2}{2(H-1)}u + I_1'(u; E_{\infty})}\label{eq:exact_first_term_almost}
\end{align}
where we have defined (c.f. \cite{auffinger2013random} Theorem 2.17) $v = -\sqrtsign{2}uE_{\infty}^{-1}.$
It now remains only to evaluate the various constants in (\ref{eq:exact_first_term_almost}) where possible. Firstly observe \begin{align}
    Y_2^{(1)} &= 2\pi \mathbb{E}_{X_1, X_2\overset{i.i.d.}{\sim}\mathcal{N}(0,1)} |X_1 - X_2| = 2\pi \mathbb{E}_{X\sim \mathcal{N}(0, 2)} |X| = 2\sqrtsign{\pi} \int_0^{\infty} xe^{-\frac{x^2}{4}} = 4\sqrtsign{\pi} 
\end{align}
and similarly \begin{align}
    Y_2^{(4)} &= 2\pi \mathbb{E}_{X_1, X_2\overset{i.i.d.}{\sim}\mathcal{N}(0,1)} (X_1 - X_2)^4 = 2\pi \mathbb{E}_{X\sim \mathcal{N}(0, 2)} X^4 = 24 \pi.
\end{align}
For convenience  we define 
\jstat{\begin{align}
    T(v, s_1) = \frac{2}{\pi}\int_{0}^{\pi/2}j(-v, s_1, \theta')d\theta',
\end{align}}
% however we are not currently able to evaluate $T_r$ analytically, but it is clearly well-suited to numerical quadrature. We introduce the re-scaled $T_r = \frac{\pi}{2} \alpha_r$;  some values of $\alpha_r$ are given in Table \ref{tab:alpha_r}.
% \begin{table}
%     \centering
%     \begin{tabular}{|c|c|}
%         \hline
%       $r$  & $\alpha_r$ \\
%       \hline
%       0 & 1 \\
%       1  & 1.1605\\
%       2  & 1.5119 \\
%      5  & 5.7926\\
%     10  & 115.8004 \\
%     \hline
%     \end{tabular}
%     \caption{Numerical values to 4 decimal places of the constant $\alpha_r$ for various values of the rank $r$.}
%     \label{tab:alpha_r}
% \end{table}

and then collating our results:
\begin{align}
     \expect C_{N}^{h}(\sqrtsign{N}u) &\sim \frac{N^{-\frac{\jstat{1}}{2}}}{\sqrtsign{2\pi H}} \jstat{e^{-\frac{\vec{v}^2}{2H}}}\jstat{T(v, s_1)} h(v) e^{N\Theta_H(u)} \frac{e^{I_1(u; E_{\infty}) - \frac{1}{2}u I_1'(u; E_{\infty})}}{\frac{H-2}{2(H-1)}u + I_1'(u; E_{\infty})}.\label{eq:exact_first_term_end}
\end{align}

\end{proof}

\begin{remark}\label{rem:exact_k_ind}
Having completed the proof of Theorem \ref{thm:exact_term}, we can now explain why this result generalises only part (a) of the analogous Theorem (2.17) from \cite{auffinger2013random}, namely only the case $u<-E_{\infty}$. Recall that, following standard steepest descent practice, we introduced scaled integration variables in the region of the saddle point (\ref{eq:exact_term_scaling1})-(\ref{eq:exact_term_scaling2}) and so arrived at (\ref{eq:exact_first_term_midway}) with the constant factors $Y_2^{(1)}, Y_2^{(4)}$ resulting from the Laplace approximation integrals over the scaled variables. If we take $-E_{\infty} < u < 0$, say, then $z^{(+)}_U + z^{(-)}_L = 0$ and $z^{(+)}_L + z^{(-)}_U = 0$ and so it is the terms $(r_1 + p_2), (r_2 + p_1)$ that vanish at the saddle point rather than $|r_1 - r_2|^4$ and $|p_1 - p_2|$. It follows that the terms $Y_2^{(1)}, Y_2^{(4)}$ are replaced by the integrals \begin{align}
    \int_{\mathbb{R}} d\pi_1 d\pi_2d\rho_1d\rho_2 ~ e^{-\frac{1}{2}(\pi_1^2 + \pi_2^2 + \rho_1^2 + \rho_2^2)} (\rho_1 + \pi_2)(\rho_2 + \pi_1) = 0.
\end{align} 
It is therefore necessary to keep terms to at least the first sub-leading order in the expansion of $J_1J_2$ around the saddle point, however we cannot do this owing the presence of the $o(1)$ term in the constant $c_{N,H}$ as defined in (\ref{eq:constant_final_defn}) which we cannot evaluate.\\
\end{remark}

\begin{remark}
\jstat{Note that setting all the  $\rho_{\ell}^{(N)}=0$ gives $\vec{v} = 0$, $S=0$, hence $s_1=0$ and so $T = 1$. Consequently (\ref{eq:exact_first_term_end}) recovers the exact spherical $H$-spin glass expression in part (a) of Theorem 2.17 in \cite{auffinger2013random}}. \\
\end{remark}

\begin{remark}
 The function $h(v)$ shows up in \cite{auffinger2013random} in the asymptotic evaluation of Hermite polynomials but arises here by an entirely different route.
\end{remark}

\subsection{Complexity results with  prescribed Hessian signature}
The next theorem again builds on Lemma \ref{lemma:nock_deformed} to prove a generalisation of Theorem 2.5 from \cite{auffinger2013random}. In fact, we will need a modified version of Lemma \ref{lemma:nock_deformed} which we now prove.

\begin{lemma}\label{lemma:nock_deformed_cond_k}
    Let $S$ be a rank $2$ $N\times N$ symmetric matrix with non-zero eigenvalues \jstat{$\{s_j\}_{j=1}^2$}, where and $s_j = \mathcal{O}(1)$.  Let $x<-\sqrtsign{2}$ and let $M$ denote an $N\times N$ GOE matrix with respect to whose law expectations are understood to be taken. Then
    \begin{align}
         &\expectGOE \left[|\det(M - xI + S)|\indic [\ind{x} (M+S)\in\{k-1, k, k+1\}]\right]\notag\\ \leq ~~  &\upsilon_U K_N e^{2Nx^2}\left(1 + o(1)\right)e^{-N(k-1)I_1(x;\sqrtsign{2})}\lim_{\epsilon\searrow 0}\iiint_0^{\pi/2} d\theta d\hat{\theta}d\theta'\iint_0^{\infty}dp_1dp_2 \iint_{\Gamma} dr_1dr_2\notag\\
                  &~~~~~~~~ J_1(p_1, p_2, \theta'; \{s_j\}_{j=1}^r, N)J_2(r_1,r_2, p_1, p_2)\cos^22\theta \sin2\theta\sin2\hat{\theta}\notag\\
& ~~~~~~~~ \exp\Bigg\{-N\Bigg(2\psi^{(+)}_L(r_1; x; \epsilon\cos2\theta\cos2\hat{\theta}) +2\psi^{(+)}_U(r_2; x; \epsilon\cos2\theta\cos2\hat{\theta})\notag\\& ~~~~~~~~~~~~~ +\psi^{(-)}_L(p_1; x; \epsilon\cos2\theta')+\psi^{(-)}_U(p_2; x; \epsilon\cos2\theta')\Bigg)\Bigg\}\end{align}
and 
    \begin{align}
         &\expectGOE \left[|\det(M - xI + S)|\indic [\ind{x} (M+S)\in\{k-1, k, k+1\}]\right]\notag\\ \geq ~~  &\upsilon_L K_N e^{2Nx^2}\left(1 + o(1)\right)e^{-N(k+1)I_1(x;\sqrtsign{2})}\lim_{\epsilon\searrow 0}\iiint_0^{\pi/2} d\theta d\hat{\theta}d\theta'\iint_0^{\infty}dp_1dp_2 \iint_{\Gamma} dr_1dr_2\notag\\
                  &~~~~~~~~ J_1(p_1, p_2, \theta'; \{s_j\}_{j=1}^r, N)J_2(r_1,r_2, p_1, p_2)\cos^22\theta \sin2\theta\sin2\hat{\theta}\notag\\
& ~~~~~~~~ \exp\Bigg\{-N\Bigg(2\psi^{(+)}_L(r_1; x; \epsilon\cos2\theta\cos\hat{\theta}) +2\psi^{(+)}_U(r_2; x; \epsilon\cos2\theta\cos\hat{\theta})\notag\\& ~~~~~~~~~~~~~ +\psi^{(-)}_L(p_1; x; \epsilon\cos2\theta')+\psi^{(-)}_U(p_2; x; \epsilon\cos2\theta')\Bigg)\Bigg\}\end{align}
where the functions $J_1, J_2$, the constant $K_N$ and the functions $\psi^{(\pm)}_{U,L}$ are defined as in Lemma \ref{lemma:nock_deformed}, and the $\upsilon_L, \upsilon_U$ are some constants independent of $N$.
\end{lemma}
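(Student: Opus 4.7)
The proof parallels that of Lemma \ref{lemma:nock_deformed}, the only new ingredient being the incorporation of the index indicator $\indic[\ind{x}(M+S) \in \{k-1, k, k+1\}]$. The plan is to split the indicator according to each of the three allowed index values and then to establish a refined version of Lemma \ref{lemma:nock_deformed} in which an index constraint on $M+S$ is included.

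For each fixed $j$, the goal is to show
\begin{equation*}
\expectGOE\left[|\det(M-xI+S)|\, \indic[\ind{x}(M+S) = j]\right] = (1+o(1)) \, e^{-N j I_1(x;\sqrtsign{2})} \cdot \mathcal{I}_N(x, S),
\end{equation*}
where $\mathcal{I}_N(x, S)$ denotes the multiple integral appearing on the right-hand side of Lemma \ref{lemma:nock_deformed}. This factorisation is the key technical step. It rests on the standard Coulomb-gas large-deviation estimate for the number of eigenvalues of a GOE matrix below $x<-\sqrtsign{2}$: the cost of placing $j$ eigenvalues below $x$ is $e^{-N j I_1(x;\sqrtsign{2})}$ to leading exponential order, while the remaining $N-j$ eigenvalues continue to obey the semicircle law and produce the same saddle-point structure as in the unconstrained case. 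The finite-rank perturbation $S$ does not affect the exponential rate of this LDP (it merely shifts finitely many eigenvalues, which contributes only sub-exponential corrections absorbed into the $1+o(1)$), a fact consistent with Weyl's interlacing inequality $|\ind{x}(M+S)-\ind{x}(M)| \leq \mathrm{rank}(S) \leq 2$.

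Once the displayed factorisation is in hand, summing over $j \in \{k-1, k, k+1\}$ yields
\begin{equation*}
\sum_{j=k-1}^{k+1} e^{-N j I_1(x;\sqrtsign{2})} \mathcal{I}_N(x, S),
\end{equation*}
which is bounded above by $3 \, e^{-N(k-1)I_1(x;\sqrtsign 2)} \mathcal{I}_N(x,S)$ and below by $e^{-N(k+1)I_1(x;\sqrtsign 2)} \mathcal{I}_N(x,S)$. These are precisely the two inequalities in the statement of the lemma, with $\upsilon_U = 3 + o(1)$ and $\upsilon_L = 1 + o(1)$.

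The main obstacle is in establishing the factorisation of the refined expectation. The supersymmetric formulation of Lemma \ref{lemma:nock_deformed} does not expose individual eigenvalues directly, so inserting the counting indicator $\indic[\ind{x}(M+S) = j]$ into the existing Gaussian integral identities is awkward. The cleanest route is to work instead at the level of the joint eigenvalue density: write $|\det(M-xI+S)| = \prod_i |\lambda_i(M+S) - x|$, split the spectrum into a group of $j$ eigenvalues confined below $x$ and $N-j$ eigenvalues above, and carry out a constrained Coulomb-gas saddle-point analysis. The Vandermonde interaction decouples the two groups at leading order; the below-$x$ eigenvalues contribute the factor $e^{-NjI_1}$ via the standard external-field argument of \cite{auffinger2013random}, and the above-$x$ eigenvalues reproduce the asymptotics of Lemma \ref{lemma:nock_deformed} with $S$ incorporated exactly as before. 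The finite-rank structure of $S$ is handled via the determinantal identity $\det(M-xI+S) = \det(M-xI)\det(I + (M-xI)^{-1}S)$, the second factor being uniformly $\mathcal{O}(1)$ since $x$ remains bounded away from the constrained bulk, so that all $S$-dependence is confined to the $\mathcal{I}_N(x, S)$ factor, exactly as in the unconstrained case.
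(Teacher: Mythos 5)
There is a genuine gap: your entire argument rests on the asserted factorisation $\expectGOE\left[|\det(M-xI+S)|\,\indic[\ind{x}(M+S)=j]\right] = (1+o(1))\,e^{-NjI_1(x;\sqrtsign{2})}\,\mathcal{I}_N(x,S)$ with the \emph{exact} index of $M+S$, and this is precisely the step the paper states it cannot carry out (see the remark about the lack of a generating function for $\indic[\ind{x}(M+S)=k]$); indeed, if it could be proved, it would give a strictly stronger conclusion (exponent $k$ on both sides) than the lemma, which is a warning sign that the step is being assumed rather than established. The mechanisms you propose do not deliver it. First, the "standard external-field / constrained Coulomb-gas argument" of Auffinger et al.\ applies to the GOE joint eigenvalue density, but $M+S$ is a rank-2 deformed GOE: its eigenvalue density involves a $\beta=1$ Itzykson--Zuber-type integral with no closed form, so you cannot simply split the spectrum of $M+S$ and quote the GOE large-deviation cost. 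You never reduce the index constraint on $M+S$ to one on $M$; the paper does exactly this via eigenvalue interlacing, and that reduction is the \emph{reason} the statement involves the relaxed set $\{k-1,k,k+1\}$ and the exponents $k\mp 1$ rather than $k$. Second, your claim that $\det\left(I + (M-xI)^{-1}S\right)$ is uniformly $\mathcal{O}(1)$ is false: conditioning on $\ind{x}=j$ does not create a spectral gap at $x$, eigenvalues may approach $x$ arbitrarily closely, and $(M-xI)^{-1}$ is unbounded on the constrained event; even a genuine $\mathcal{O}(1)$ bound would not confine the $S$-dependence to $\mathcal{I}_N(x,S)$ with the required $(1+o(1))$ precision.

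There is also a structural mismatch: the right-hand side of the lemma is the supersymmetric integral of Lemma \ref{lemma:nock_deformed} (including the $J_1$ factor carrying the $S$-dependence), and you cannot obtain it by multiplying a Coulomb-gas estimate for the "below-$x$" eigenvalues with the unconstrained supersymmetric asymptotics for the rest — the two computations live in different representations of the determinant and do not compose. The paper's route keeps the determinant in its Gaussian/Berezin representation throughout, so the matrix $M$ enters only via $\expect_M\left[e^{-i\Tr MA}\,\indic[\cdot]\right]$; it evaluates this constrained Fourier-transform expectation for the index of $M$ (splitting off the $k$ low eigenvalues, approximating the Haar integral by Gaussians, running the Coulomb gas on the remaining $N-k$, and tracking the Selberg normalisation to get two-sided bounds of the form $\mathrm{const}\cdot e^{-kN(1+o(1))I_1(x;\sqrtsign{2})}e^{-\frac{1}{2N}\Tr A^2}$), then passes from $\ind{x}(M)$ to $\ind{x}(M+S)$ by interlacing one rank-one piece at a time, after which the remaining steps of Lemma \ref{lemma:nock_deformed} apply verbatim to the upper and lower bounds. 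To repair your proposal you would need either such a reduction-by-interlacing before invoking any GOE eigenvalue estimates, or a genuinely new way to impose the index of $M+S$ inside the averaged representation — the latter being exactly the open point the paper flags.
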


\jstat{\begin{remark}
A more general version of this lemma holds with $S$ having any fixed rank $r$. In that case, one considers \begin{equation}
    \expectGOE \left[|\det(M - xI + S)|\indic [\ind{x} (M+S)\in\{k-(r-1),\ldots, k, \ldots, k+(r-1)\}]\right]
\end{equation}
and the statement and proof of the result are immediate extensions of what is given here. We omit this generality, since it is not required here.
\end{remark}}

\begin{proof}
This proof is largely the same as that of Lemma \ref{lemma:nock_deformed}. The first difference arises at (\ref{eq:goe_fourier}), where we are required to compute \begin{equation}\label{eq:goe_fourier_deform}
      \expectGOE \left[e^{-i\Tr MA}\indic[\ind{x}(M+S)=k]\right].
\end{equation} As will become apparent towards the end of this proof, we do not know how to maintain the exact equality constraint\footnote{See Remark \ref{rem:generating_func} below.} on index when $S\neq 0$, hence the slightly relaxed results that we are proving, however we will proceed by performing the calculation for $S=0$ and then show that $S$ can be reintroduced one eigendirection at a time. As in the proof of Theorem A.1 in \cite{auffinger2013random}, we split this expectation by fixing a bound, $R$, for the largest eigenvalue, i.e. \begin{align}
    &\expectGOE \left[e^{-i\Tr MA}\indic[\ind{x}(M)=k]\right] \notag\\ =& \expectGOE \left[e^{-i\Tr MA}\indic[\ind{x}(M)=k, \max\{|\lambda_i(M)|\}_{i=1}^N \leq R]\right]\notag\\ +& \expectGOE \left[e^{-i\Tr MA}\indic[\ind{x}(M)=k, \max\{|\lambda_i(M)|\}_{i=1}^N > R]\right] \label{eq:expect_cond_lemma_split}
\end{align}
We will focus initially on the first expectation on the RHS of (\ref{eq:expect_cond_lemma_split}) and deal with the second term later. Let us abbreviate the notation using $$\mathcal{I}_R(M) = \{\max\{|\lambda_i(M)|\}_{i=1}^N \leq R\}.$$ Recall that $A$ has finite rank and note that $A$ is symmetric without loss of generality, since \begin{equation}
 \Tr M\frac{A + A^T}{2} = \frac{1}{2}\left(\Tr MA + \Tr MA^T \right)= \frac{1}{2}\left( \Tr MA + \Tr AM^T\right) = \Tr MA
\end{equation} and hence $A=\text{diag}(a_1, \ldots, a_{r_A}, 0 \ldots, 0) $ without loss of generality.
We begin by factorising the symmetric matrix $M$ in the GOE integral: \begin{align}\label{eq:expect_cond_lem_1}
   & \expect_M\left[ e^{-i\Tr MA}\indic[\ind{x}(M)=k, \mathcal{I}_R(M)]\right]\notag\\ = &\int \frac{d\mu_E(\Lambda)}{Z_N} \indic[-R \leq \lambda_1 \ldots \leq \lambda_k \leq x\leq \lambda_{k+1} \leq \ldots \lambda_N\leq R] \int d \mu_{Haar}(O) e^{-i\sum_{j=1}^{r_A} a_j \vec{o}_j^T\Lambda\vec{o}_j}
\end{align}
where $\mu_E$ is the un-normalised joint density of ordered GOE eigenvalues, $\mu_{Haar}$ is the Haar measure on the orthogonal group $O(N)$, $\vec{o}_j$ are the rows of the orthogonal matrix $O$ and $Z_N$ is normalisation for the ordered GOE eigenvalues given by the Selberg integral: \begin{align}
    Z_N = \frac{1}{N!} (2\sqrtsign{2})^N N^{-N(N+1)/4} \prod_{i=1}^{N} \Gamma\left(1 + \frac{i}{2}\right).
\end{align}
Much like the proof of Theorem A.1 in \cite{auffinger2013random}, we proceed by
splitting the eigenvalues in (\ref{eq:expect_cond_lem_1}) to enforce the  constraint given by the indicator function:
\begin{align}
      &\expect_M\left[ e^{-i\Tr MA}\indic[\ind{x}(M)=k,\mathcal{I}_R(M)]\right]\notag\\ = &\int d\mu_{Haar}(O)\frac{1}{Z_N}\int_{[-R, x]^k} \prod_{i=1}^k\left( d\lambda_i e^{-N\lambda^2_i/2}\right)\Delta\left(\{\lambda_i\}_{i=1}^k\right)\indic\left[\lambda_1 \leq \ldots \leq \lambda_k\right]  \notag \\
      & \int_{(x,R]^{N-k}} \prod_{i=k+1}^N\left( d\lambda_i e^{-N\lambda^2_i/2}\right)\Delta\left(\{\lambda_i\}_{i=k+1}^N\right)\indic\left[\lambda_{k+1} \leq \ldots \leq \lambda_N\right] \notag\\& e^{-i\sum_{j=1}^{r_A} a_j \vec{o}_j^T\Lambda\vec{o}_j}
       \exp\left(\sum_{j=1}^k\sum_{\ell=k+1}^N \log|\lambda_j - \lambda_{\ell}|\right)\notag\\
        = &\int d\mu_{Haar}(O)\int_{[-R, x]^k} \prod_{i=1}^k\left( d\lambda_i e^{-N\lambda^2_i/2}\right)\Delta\left(\{\lambda_i\}_{i=1}^k\right) \frac{Z_{N-k}}{k!Z_N}  \notag \\
      &\frac{1}{Z_{N-k}(N-k)!} \int_{(x,R]^{N-k}} \prod_{i=k+1}^N\left( d\lambda_i e^{-N\lambda^2_i/2}\right)\Delta\left(\{\lambda_i\}_{i=k+1}^N\right)\notag\\& e^{-i\sum_{j=1}^{r_A} a_j \vec{o}_j^T\Lambda\vec{o}_j}
       \exp\left(\sum_{j=1}^k\sum_{\ell=k+1}^N \log|\lambda_j - \lambda_{\ell}|\right)\notag\\
      =&\int_{[-R_N, x_N]^k}\prod_{i=1}^k\left( d\lambda_i e^{-(N-k)\lambda^2_i/2}\right)\Delta\left(\{\lambda_i\}_{i=1}^k\right) \notag\\
    & \int_{(x_N,R_N]^{N-k}} d\bar{\mu}_E(\Lambda_{N-k}) \int d\mu_{Haar}(O)
    e^{-i\sum_{j=1}^{r_A}\sqrtsign{\frac{N-k}{N}} a_j \vec{o}_j^T\Lambda\vec{o}_j}
    \notag\\
     &\exp\left(\sum_{j=1}^k\sum_{\ell=k+1}^N \log|\lambda_j - \lambda_{\ell}|\right)
        \frac{Z_{N-k}}{k!Z_N} \left(\sqrtsign{\frac{N-k}{N}}\right)^{N + N(N+1)/2}\label{eq:expect_cond_lem_2}
\end{align}
where $x_N\defeq \sqrtsign{\frac{N}{N-k}}x$, $R_N\defeq \sqrtsign{\frac{N}{N-k}}R$ and $\bar{\mu}_E$ is the normalised joint density of un-ordered GOE eigenvalues.

We will first need to deal with the Itzykson-Zuber integral in (\ref{eq:expect_cond_lem_2}) before dealing with the eigenvalue integrals. We follow \cite{guionnet2005fourier}, in particular the proof of Theorem 7 therein. We have the well-known result (Fact 8 in \cite{guionnet2005fourier}) that in the sense of distributions\begin{equation}\label{eq:gram_s}
    (\vec{o}_1, \ldots, \vec{o}_{r_A}) \sim \left(\frac{\tilde{\vec{g}}_1}{||\tilde{\vec{g}}_1||},\ldots, \frac{\tilde{\vec{g}}_{r_A}}{||\tilde{\vec{g}}_{r_A}||}\right)
\end{equation}
where the $(\tilde{\vec{g}}_j)_{j=1}^{r_A}$ are constructed via the Gram-Schmidt process from $(\vec{g}_j)_{j=1}^{r_A}\overset{\text{i.i.d.}}{\sim} \mathcal{N}(\bm{0}, 1)$. (\ref{eq:gram_s}) exactly gives \begin{align}
    \int d\mu_{Haar}(O)
    e^{-i\sum_{j=1}^{r_A}\sqrtsign{\frac{N-k}{N}} a_j \vec{o}_j^T\Lambda\vec{o}_j} = \int \prod_{j=1}^{r_A} \frac{d\vec{g}_j}{\sqrtsign{2\pi}^N} e^{-\frac{\vec{g}_j^2}{2}} \exp\left(-i\sqrtsign{\frac{N-k}{N}}\sum_{j=1}^{r_A} a_j
    \frac{ \tilde{\vec{g}}_j^T\Lambda\tilde{\vec{g}}_j}{ ||\tilde{\vec{g}}_j||^2}\right)
\end{align}
and we will now seek to replace the $\tilde{\vec{g}}_j$ with $\vec{g}_j$ via appropriate approximations. Introduce the event \begin{equation}
    B_N(\upsilon) \defeq\left\{| N^{-1}\langle \vec{g}_i, \vec{g}_j\rangle - \delta_{ij}| \leq N^{-\upsilon}, ~~~ 1\leq i, j \leq r_A\right\}
\end{equation}
and then from \cite{guionnet2005fourier} we immediately conclude that under the i.i.d Gaussian law of the $(\vec{g}_j)_{j=1}^{r_A}$ the complementary event has low probability: \begin{equation}\label{eq:gram_s_bnk}
    \mathbb{P}(B_N(\upsilon)^c) =\mathcal{O}( C(\upsilon) e^{-\alpha N^{1-2\upsilon}})
\end{equation}
where $\alpha, C(\upsilon) > 0$ and we take $0<\upsilon < \frac{1}{2}$ to make this statement meaningful. This enables us to write \begin{equation}\label{eq:gram_s_practical}
\int d\mu_{Haar}(O)
    e^{-i\sum_{j=1}^{r_A}\sqrtsign{\frac{N-k}{N}} a_j \vec{o}_j^T\Lambda\vec{o}_j} = \left(1 + \mathcal{O}(e^{-\alpha N^{1-2\upsilon}})\right)\int \prod_{j=1}^{r_A} \frac{d\vec{g}_j}{\sqrtsign{2\pi}^N} e^{-\frac{\vec{g}_j^2}{2}} \exp\left(-i\sqrtsign{\frac{N-k}{N}}\sum_{j=1}^{r_A} a_j
    \frac{ \tilde{\vec{g}}_j^T\Lambda\tilde{\vec{g}}_j}{ ||\tilde{\vec{g}}_j||^2}\right)\indic\{B_N(\upsilon)\}.
\end{equation}

Again, directly from \cite{guionnet2005fourier}, given $B_N(\upsilon)$ we have \begin{align}
   ||\tilde{\vec{g}}_j - \vec{g}_j|| \leq N^{\frac{1}{2} - \frac{\upsilon}{2}}
\end{align}
and therefore \begin{align}
    ||\tilde{\vec{g}}_j||^2 = N\left[ 1 + N^{-1}\left(||\tilde{\vec{g}}_j||^2 - ||\vec{g}_j||^2\right) + \left(N^{-1}||\vec{g}_j||^2 - 1\right)\right] = N( 1 + \mathcal{O}(N^{-\upsilon}) )\label{eq:g_tilde_bound_1}
\end{align}
and \begin{align}\label{eq:g_tilde_bound_2}
    \tilde{\vec{g}}_j^T\Lambda \tilde{\vec{g}_j} &=\vec{g}^T\Lambda \vec{g}  + \sum_{i=1}^N (\tilde{g}_i - g_i)^2\lambda_i + 2\sum_{i=1}^N g_i(\tilde{g}_i - g_i)\lambda_i\notag\\
 \implies ~   \Bigg|\frac{\tilde{\vec{g}}_j^T\Lambda \tilde{\vec{g}}_j}{||\tilde{\vec{g}}_j||^2} - \frac{\vec{g}_j^T\Lambda \vec{g}_j}{||\vec{g}_j||^2}\Bigg|  &\lesssim N^{-\frac{\upsilon}{2}}||\Lambda||_{\infty}.
   \end{align}
   We see therefore that, in approximating the $\{\tilde{\vec{g}}_j\}_j$ by $\{\vec{g}_j\}_j$ in (\ref{eq:gram_s_practical}) we introduce an error term in the exponential that is uniformly small in the integration variables $\{\vec{g}_j\}_j.$
Combining (\ref{eq:gram_s_practical}), (\ref{eq:g_tilde_bound_1}) and (\ref{eq:g_tilde_bound_2}) and noting that $||\Lambda||_{\infty} = R_N \sim R$ under the eigenvalue integral in (\ref{eq:expect_cond_lem_2}) gives \begin{align}
        \int d\mu_{Haar}(O)
    e^{-i\sum_{j=1}^{r_A}\sqrtsign{\frac{N-k}{N}} a_j \vec{o}_j^T\Lambda\vec{o}_j} =  &\left(1 + 
      \mathcal{O}(N^{-\frac{\upsilon}{2}})\right)\int \prod_{j=1}^{r_A} \frac{d\vec{g}_j}{\sqrtsign{2\pi}^N} e^{-\frac{\vec{g}_j^2}{2}} \exp\left(-i\sqrtsign{\frac{N-k}{N}}\sum_{j=1}^{r_A} a_j
    \frac{ \vec{g}_j^T\Lambda\vec{g}_j}{N ( 1 + \mathcal{O}(N^{-\upsilon}))}\right)\notag\\
    =  &\prod_{j=1}^{r_A}\prod_{i=1}^{N}\left(1 + 2iN^{-1}a_j\lambda_i\right)^{-\frac{1}{2}}\left(1 + \mathcal{O}(N^{-\frac{\upsilon}{2}})\right)\notag\\
    =   &\exp\left\{-\frac{N-k}{2}\sum_{j=1}^{r_A} \int d\hat{\mu}_{N-k}(z) \log(1 + 2iN^{-1}a_j z)\right\}\notag\\
    &\exp\left\{-\frac{1}{2} \sum_{j=1}^{r_A}\sum_{i=1}^k\log(1+2iN^{-1}a_j\lambda_i)   \right\}\left(1 + \mathcal{O}(N^{-\frac{\upsilon}{2}})\right) \label{eq:it_zub_split}
\end{align}
where we have defined \begin{equation}
    \hat{\mu}_{N-k} = \frac{1}{N-k}\sum_{i=k+1}^N \delta_{\lambda_i}.
\end{equation}

Following \cite{auffinger2013random}, we now introduce the following function \begin{align}
    \Phi(z, \mu) = -\frac{z^2}{2} + \int d\mu(z') \log|z-z'|
\end{align}
and so and then (\ref{eq:expect_cond_lem_2}) and (\ref{eq:it_zub_split}) can be rewritten as 
\begin{align}
 &\expect_M\left[ e^{-i\Tr MA}\indic[\ind{x}(M)=k,\mathcal{I}_R(M)]\right]\notag\\ = &\int_{[-R_N, x_N]^k}\prod_{i=1}^k d\lambda_i ~ \Delta\left(\{\lambda_j\}_{j=1}^k\right)\exp\left\{-\frac{1}{2} \sum_{j=1}^{r_A}\sum_{i=1}^k\log(1+2iN^{-1}a_j\lambda_i)   \right\}\left(1 + \mathcal{O}(N^{-\frac{\upsilon}{2}})\right) \notag\\
 &\int_{(x_N, R_N]^{N-k}} d\bar{\mu}_E(\Lambda_{N-k}) \notag\exp\left\{-\frac{N-k}{2}\sum_{j=1}^{r_A} \int d\hat{\mu}_{N-k}(z) \log(1 + 2iN^{-1}a_j z)\right\}\\
 &\exp\left((N-k)\sum_{j=1}^k \Phi(\lambda_j, \hat{\mu}_{N-k})\right)    \frac{Z_{N-k}}{k!Z_N} \left(\sqrtsign{\frac{N-k}{N}}\right)^{N + N(N+1)/2}.\label{eq:expect_cond_lem_3}
 \end{align}

We now appeal to the Coulomb gas method \cite{cunden2016shortcut} and in particular the formulation found in \cite{majumdar2011many}. We replace the joint integral of $N-k$ eigenvalues in (\ref{eq:expect_cond_lem_3}) with a functional integral over the continuum eigenvalues density: 
\begin{align}
    &\int_{(x_N, R_N]^{N-k}} d\bar{\mu}_E(\Lambda_{N-K})\exp\left((N-k)\sum_{j=1}^k \Phi(\lambda_j, \hat{\mu}_{N-k})\right)\exp\left\{-\frac{N-k}{2}\sum_{j=1}^{r_A} \int d\hat{\mu}_{N-k}(z) \log(1 + 2iN^{-1}a_j z)\right\} \notag\\
    = & \int \mathcal{D}[\mu]e ^{-N^2 \mathcal{S}_x[\mu]} \exp\left((N-k)\sum_{j=1}^k \Phi(\lambda_j, \mu)\right)\exp\left\{-\frac{N-k}{2}\sum_{j=1}^{r_A} \int d\mu(z) \log(1 + 2iN^{-1}a_j z)\right\}\label{eq:expect_cond_lem_7}
\end{align}
where the action is defined as \begin{align}
    \mathcal{S}_x[\mu] = &\frac{1}{2}\int dz \mu(z) z^2 - \iint_{z\neq z} dzdz'\mu(z)\mu(z') \log|z-z'| \notag \\
    + &A_1\left(\int dz\theta(R_N - z)\mu(z) - 1\right) + A_2\left(\int dz \mu(z)\theta(z-x) - 1\right) - \Omega
\end{align}
where $\theta$ is the Heaviside step function, $\Omega$ is the constant resulting from the normalisation of the eigenvalue joint density and $A_1,A_2$ are Lagrange multipliers. 

Owing to the $N^2$ rate in (\ref{eq:expect_cond_lem_7}), the integral concentrates around the minimiser of the action. Since $x< -\sqrtsign{2}$ and we have chosen $R>|x|$, it is clear following \cite{majumdar2011many} that the semi-circle law $\mu_{SC}(z) = \pi^{-1}\sqrtsign{2 - z^2}$ minimises this action and further that $\mathcal{S}_x[\mu_{SC}] = 0$, so we have \begin{align}
   & \int \mathcal{D}[\mu]e ^{-N^2 \mathcal{S}_x[\mu]} \exp\left((N-k)\sum_{j=1}^k \Phi(\lambda_j, \mu)\right)\exp\left\{-\frac{N-k}{2}\sum_{j=1}^{r_A} \int d\mu(z) \log(1 + 2iN^{-1}a_j z)\right\}\notag\\
   = &\int_{B_{\delta}(\mu_{SC})} \mathcal{D}[\mu]e ^{-N^2 \mathcal{S}_x[\mu]}\exp\left((N-k)\sum_{j=1}^k \Phi(\lambda_j, \mu)\right)\exp\left\{-\frac{N-k}{2}\sum_{j=1}^{r_A} \int d\mu(z) \log(1 + 2iN^{-1}a_j z)\right\}
 \notag \\ & ~~~~ + e^{-N^2 c_{\delta}}\mathcal{O}(1) \label{eq:expect_cond_lem_8}
\end{align}
where $\delta=\mathcal{O}(N^{-1})$ and $c_{\delta}>0$ is some constant. Performing the usual Laplace method expansion of the action in (\ref{eq:expect_cond_lem_8}) and re-scaling the first non-vanishing derivative to be $\mathcal{O}(1)$, it is clear that the action only contributes a real factor of $\mathcal{O}(1)$ that is independent of the dummy integration variables $\vec{x}_1, \vec{x}_2, \zeta_1, \zeta_1^{\dagger}, \zeta_2, \zeta_2^{\dagger}$ and the other eigenvalues $\lambda_1,\ldots, \lambda_k$ and can therefore be safely summarised as $\mathcal{O}(1)$. Whence   \begin{align}
   & \int \mathcal{D}[\mu]e ^{-N^2 \mathcal{S}_x[\mu]} \exp\left((N-k)\sum_{j=1}^k \Phi(\lambda_j, \mu)\right)\exp\left\{-\frac{N-k}{2}\sum_{j=1}^{r_A} \int d\mu(z) \log(1 + 2iN^{-1}a_j z)\right\}\notag\\
   = &\mathcal{O}(1)\exp\left((N-k)\sum_{j=1}^k \Phi(\lambda_j, \mu_{SC})\right)\exp\left\{-\frac{N-k}{2}\sum_{j=1}^{r_A} \int d\mu_{SC}(z) \log(1 + 2iN^{-1}a_j z)\right\}
 \notag \\ & ~~~~ + e^{-N^2 c_{\delta}}\mathcal{O}(1). \label{eq:expect_cond_lem_9}\end{align}
Now elementary calculations give, noting that the integrand is uniformly convergent in $N$ owing to the compact support of $\mu_{SC}, $\begin{align}
     \int d\mu_{SC}(z) \log(1 + 2iN^{-1}a_j z) &= -\frac{2ia_j}{N}\int d\mu_{SC}(z) z + \frac{2a^2_j}{N^2}\int d\mu_{SC}(z) z^2 + \mathcal{O}(a_j^3N^{-3})\notag\\
     &= \frac{a^2_j}{N^2} ( 1 + \mathcal{O}(a_jN^{-1}))\notag\\
     \implies  \frac{N-k}{2}\sum_{j=1}^{r_A} \int d\mu_{SC}(z) \log(1 + 2iN^{-1}a_j z) &= \frac{\Tr A^2}{2N}  ( 1 + ||A||_{\infty}\mathcal{O}(N^{-1}))\label{eq:expect_cond_lem_10}
\end{align}
where we have implicitly assumed that the spectral radius $||A||_{\infty} \ll N$. This constraint can be introduced by restricting the domains of integration for $\vec{x}_1$ and $\vec{x}_2$ in the anaologue of  (\ref{eq:nock2}) from all of $\mathbb{R}^N$ to balls of radius $o(\sqrtsign{N})$. It is a standard result for Gaussian integrals that this can be achieved at the cost of an exponentially smaller term.
Summarising (\ref{eq:expect_cond_lem_3}), (\ref{eq:expect_cond_lem_7}), (\ref{eq:expect_cond_lem_9}) and (\ref{eq:expect_cond_lem_10}):\begin{align}
 &\expect_M\left[ e^{-i\Tr MA}\indic[\ind{x}(M)=k,\mathcal{I}_R(M)]\right]\notag\\ = &\int_{[-R_N, x_N]^k}\prod_{i=1}^k d\lambda_i ~ \Delta\left(\{\lambda_j\}_{j=1}^k\right)\exp\left\{-\frac{1}{2} \sum_{j=1}^{r_A}\sum_{i=1}^k\log(1+2iN^{-1}a_j\lambda_i)   \right\}\exp\left((N-k)\sum_{j=1}^k \Phi(\lambda_j, \mu_{SC})\right) \notag\\
 &e^{-\frac{\Tr A^2}{2N}}\left(\mathcal{O}(1) + \mathcal{O}(N^{-\frac{\upsilon}{2}}) + \mathcal{O}(N^{-1})||A||_{\infty}\right)   \frac{Z_{N-k}}{k!Z_N} \left(\sqrtsign{\frac{N-k}{N}}\right)^{N + N(N+1)/2}\notag\\
 = &\int_{[-R_N, x_N]^k}\prod_{i=1}^k d\lambda_i ~ \Delta\left(\{\lambda_j\}_{j=1}^k\right)\exp\left((N-k)\sum_{j=1}^k \Phi(\lambda_j, \mu_{SC})\right) \notag\\
 &e^{-\frac{\Tr A^2}{2N}}\left(\mathcal{O}(1) + \mathcal{O}(N^{-\frac{\upsilon}{2}}) + \mathcal{O}(N^{-1})||A||_{\infty}\right)   \frac{Z_{N-k}}{k!Z_N} \left(\sqrtsign{\frac{N-k}{N}}\right)^{N + N(N+1)/2}\notag\\
 = &\int_{[-R_N, x_N]^k}\prod_{i=1}^k d\lambda_i ~ \Delta\left(\{\lambda_j\}_{j=1}^k\right)\exp\left((N-k)\sum_{j=1}^k \Phi(\lambda_j, \mu_{SC})\right) \notag\\
 &e^{-\frac{\Tr A^2}{2N}}\mathcal{O}(1)   \frac{Z_{N-k}}{k!Z_N} \left(\sqrtsign{\frac{N-k}{N}}\right)^{N + N(N+1)/2}\label{eq:expect_cond_lem_11}
 \end{align}
where in the second equality we have Taylor expanded the remaining logarithm and summarised the result with another factor of $(1 + \mathcal{O}(N^{-1})||A||_{\infty})$.

We now wish to follow the proof of Theorem A.1 in \cite{auffinger2013random} and use $\Delta(\{\lambda_j\}_{j=1}^{k}) \leq (2R_N)^k \leq (3R)^k$ for $\lambda_j \in [-R_N, R_N]$ with bound (\ref{eq:expect_cond_lem_11}), however the expectation on the left hand side of (\ref{eq:expect_cond_lem_11}) is not necessarily real. We do however know that the $\mathcal{O}(1)$ term in (\ref{eq:expect_cond_lem_11}) is real to leading order and so we can write  \begin{equation}\label{eq:expect_cond_real_imag}
    \expect_M\left[ e^{-i\Tr MA}\indic[\ind{x}(M)=k,\mathcal{I}_R(M)]\right] = \Re \expect_M\left[ e^{-i\Tr MA}\indic[\ind{x}(M)=k,\mathcal{I}_R(M)]\right] \left( 1 + io(1)\right)
\end{equation}

and thence focus on bounding the real part of the expectation to obtain \begin{align}
     & \Re \expect_M\left[ e^{-i\Tr MA}\indic[\ind{x}(M)=k, \mathcal{I}_R(M)]\right] \notag\\
     \leq &K(3R)^k \frac{Z_{N-k}}{k!Z_N} \left(\sqrtsign{\frac{N-k}{N}}\right)^{N + N(N+1)/2}
    e^{-\frac{\Tr A^2}{2N}} \left(\int_{-R_N}^{x_N} dz e^{(N-k)\Phi(z, \mu)}\right)^k\label{eq:expect_cond_lem_12}
\end{align}
where we have exchanged $\mathcal{O}(1)$ terms for some appropriate constant $K$. Continuing to bound (\ref{eq:expect_cond_lem_12}): \begin{align}
 &\Re \expect_M\left[ e^{-i\Tr MA}\indic[\ind{x}(M)=k, \mathcal{I}_R(M)]\right] \notag\\
     \leq &K(3R)^{2k}\frac{Z_{N-k}}{k!Z_N} \left(\sqrtsign{\frac{N-k}{N}}\right)^{N + N(N+1)/2}
    e^{-\frac{\Tr A^2}{2N}} \exp\left(k(N-k)\sup\limits_{\substack{z\in[-2R, x] \\ \nu\in B_{\delta}(\mu_{SC})}}\Phi(z, \nu)\right)\notag\\
      \leq &K(3R)^{2k} \frac{Z_{N-k}}{k!Z_N} \left(\sqrtsign{\frac{N-k}{N}}\right)^{N + N(N+1)/2}
    e^{-\frac{\Tr A^2}{2N}} e^{-k(N-k)(1/2 + I_1(x; \sqrtsign{2})} \label{eq:expect_cond_lem_upper_bound}
\end{align}
where we have used the same result as used around (A.18) in \cite{auffinger2013random} to take the supremum.

Recalling (\ref{eq:expect_cond_lemma_split}), we can now use (\ref{eq:expect_cond_lem_upper_bound}) and the GOE large deviations principle \cite{arous2001aging} as in \cite{auffinger2013random} to obtain \begin{align}
    \Re\expect_M\left[ e^{-i\Tr MA}\indic[\ind{x}(M)=k]\right] 
      \leq &K''(3R)^k  \frac{Z_{N-k}}{k!Z_N} \left(\sqrtsign{\frac{N-k}{N}}\right)^{N + N(N+1)/2} e^{-k(N-k)(1/2 + I_1(x; \sqrtsign{2}))}
     e^{-\frac{1}{2N}\Tr A^2} + e^{-NR^2}\label{eq:expect_cond_lem_upper_bound_full}
\end{align}
We now seek to obtain a complementary lower bound and again follow \cite{auffinger2013random} in choosing some $y$ and $R'$ such that $y < x < R' < -\sqrtsign{2}$. We then, following a similar procedure as above, find \begin{align}
       \Re\expect_M\left[ e^{-i\Tr MA}\indic[\ind{x}(M)=k]\right] \geq
 & \tilde{K} \frac{Z_{N-k}}{k!Z_N} \left(\sqrtsign{\frac{N-k}{N}}\right)^{N + N(N+1)/2}
     e^{-\frac{1}{2N}\Tr A^2} \exp\left(k(N-k)\sup\limits_{\substack{z\in[y, x] \\ \nu\in B_{\delta}(\mu_{SC})}}\Phi(z, \nu)\right)\end{align}
and taking $y\nearrow x$ we obtain the complement to (\ref{eq:expect_cond_lem_upper_bound_full}): \begin{align}
      \Re\expect_M\left[ e^{-i\Tr MA}\indic[\ind{x}(M)=k]\right] \geq
 & \tilde{K}\frac{Z_{N-k}}{k!Z_N} \left(\sqrtsign{\frac{N-k}{N}}\right)^{N + N(N+1)/2}
    e^{-k(N-k)(1/2 + I_1(x; \sqrtsign{2}))} e^{-\frac{1}{2N}\Tr A^2}.\label{eq:expect_cond_lem_lower_bound}
\end{align}

Next we need the asymptotic beahviour of the Selberg term in  (\ref{eq:expect_cond_lem_upper_bound_full}) and (\ref{eq:expect_cond_lem_lower_bound})

\begin{align}
 T_{N,k} \defeq \frac{Z_{N-k}}{k!Z_N} \left(\sqrtsign{\frac{N-k}{N}}\right)^{N + N(N+1)/2} =   \underbrace{ \frac{Z_{N-k}(N-k)!}{Z_N N!}\left(\frac{N-k}{N}\right)^{ \frac{(N-k)(N-k+1)}{4} }}_{T_{N,k}'} \frac{N!}{(N-k)!k!}\left(\frac{N-k}{N}\right)^{\frac{N}{2} + \frac{N(N+1)- (N-k)(N-k+1)}{4} }\label{eq:k_theorem_selberg}.
\end{align}
The term $T_{N,k}'$ appears in \cite{auffinger2013random} (defined in A.13) and it is shown there that \begin{equation}
 \lim_{N\rightarrow\infty}   N^{-1}\log T_{N,k}' = \frac{k}{2}.
\end{equation}
Clearly \begin{equation}
   \lim_{N\rightarrow\infty}N^{-1} \log \frac{N!}{(N-k)!k!} = 0
\end{equation}
and it is simple to show that \begin{align}
    \lim_{N\rightarrow\infty} \left(\frac{N-k}{N}\right)^{\frac{N}{2} + \frac{N(N+1)- (N-k)(N-k+1)}{4} } = e^{-\frac{k(k+1)}{2}}
\end{align}
and so we have overall \begin{equation}
    \lim_{N\rightarrow\infty}N^{-1}\log T_{N,k} = \frac{k}{2}.
\end{equation}

So absorbing any $\mathcal{O}(1)$ terms into constants $K_L$ and $K_U$ we have \begin{align}
   K_L e^{-kN(1 + o(1)) I_1(x; \sqrtsign{2})}
     e^{-\frac{1}{2N}\Tr A^2}  \leq  \Re\expect_M\left[ e^{-i\Tr MA}\indic[\ind{x}(M)=k]\right] 
      \leq K_U e^{-kN(1+o(1)) I_1(x; \sqrtsign{2})}
     e^{-\frac{1}{2N}\Tr A^2} \label{eq:expect_cond_lemma_bounds}
\end{align}

Set $S=s_1\vec{e}_1\vec{e}_1^T + s_2\vec{e}_2\vec{e}_2^T$ and $S_1 = s_1\vec{e}_1\vec{e}_1^T$. Suppose $s_1>0$ and $s_2>0$.  By the interlacing property of eigenvalues, we have \begin{equation}\label{eq:interlacing}
    \lambda^{(M)}_1 \leq  \lambda^{(M+S_1)}_1 \leq  \lambda^{(M)}_2 \leq \ldots \leq  \lambda^{(M)}_k \leq  \lambda^{(M+S_1)}_{k} \leq  \lambda^{(M)}_{k+1} \leq \lambda^{(M+S_1)}_{k+1} \leq \ldots \leq \lambda^{(M)}_{N} \leq \lambda^{(M+S_1)}_{N}
\end{equation}

Therefore we have \begin{align}\label{eq:interlace_set}
  \begin{cases}  \{ \ind{x}(M) =k \} \subset \{\ind{x}(M+S_1) \in\{k-1, k\}\} \subset \{\ind{x}(M) \in\{k-1,k, k+1\} \}=\bigsqcup\limits_{j=-1}^1\{\ind{x}(M) = k + j \} ~ &\text{for } k>0,\\
    \{ \ind{x}(M) =k \} \subset \{\ind{x}(M+S_1) =k \} \subset \{\ind{x}(M) \in\{k, k+1\} \}=\bigsqcup\limits_{j=0}^1\{\ind{x}(M) = k + j \} &\text{for } k=0,\\
  \end{cases}
\end{align}
and so (\ref{eq:expect_cond_lemma_bounds}) gives \begin{equation}
\begin{aligned}
      K_L e^{-kN (1+o(1))I_1(x; \sqrtsign{2})}
     e^{-\frac{1}{2N}\Tr A^2}\leq&\Re\expect_M\left[ e^{-i\Tr MA}\indic[\ind{x}(M+S_1)\in\{k-1,k\}]\right] \leq  3K_U e^{-(k-1)N (1+o(1))I_1(x; \sqrtsign{2})}
     e^{-\frac{1}{2N}\Tr A^2},\\
     e^{-\frac{1}{2N}\Tr A^2}\leq&\Re\expect_M\left[ e^{-i\Tr MA}\indic[\ind{x}(M+S_1)=0]\right] \leq  2K_U
     e^{-\frac{1}{2N}\Tr A^2}.
\end{aligned}    
\end{equation}
We can then extend to $S$ likewise by observing that interlacing gives \begin{align}
       \{ \ind{x}(M+S_1) \in\{k, k+1\} \} &\subset \{\ind{x}(M+S) \in\{k-1, k, k+1\}\} \subset \{\ind{x}(M+S_1) \in\{k-1,k, k+1, k+2\} \}
\end{align} and iterating using (\ref{eq:interlace_set}) yields 
\begin{align}\label{eq:M_S_interlace}
\begin{cases}
       \{ \ind{x}(M) = k+1 \} \subset \{\ind{x}(M+S) \in\{k-1, k, k+1\}\} \subset \bigsqcup\limits_{j=-1}^3\{\ind{x}(M) = k + j \}, ~ &\text{for } k>0\\
      \{ \ind{x}(M) = k+1 \} \subset \{\ind{x}(M+S) \in\{ k, k+1\}\} \subset \bigsqcup\limits_{j=0}^3\{\ind{x}(M) = k + j \}, &\text{for } k=0
\end{cases}
\end{align}
and (\ref{eq:expect_cond_lemma_bounds}) then gives
\begin{equation}\label{eq:expect_cond_lemma_deform_bounds}
\begin{aligned}
    K_L e^{-(k+1)N(1+o(1)) I_1(x; \sqrtsign{2})}
     e^{-\frac{1}{2N}\Tr A^2}&\leq\Re\expect_M\left[ e^{-i\Tr MA}\indic[\ind{x}(M+S)\in\{k-1,k, k+1\}]\right] \\
     &\leq  5K_U e^{-(k-1)N(1+o(1)) I_1(x; \sqrtsign{2})}
     e^{-\frac{1}{2N}\Tr A^2}\\
      K_L e^{-N(1+o(1)) I_1(x; \sqrtsign{2})}
     e^{-\frac{1}{2N}\Tr A^2}&\leq\Re\expect_M\left[ e^{-i\Tr MA}\indic[\ind{x}(M+S)\in\{0, 1\}]\right] 
     \leq  4K_U
     e^{-\frac{1}{2N}\Tr A^2}.
\end{aligned}     
\end{equation}
If instead the \jstat{signs of $s_1, s_2$ are different, then the interlacing will be in the reverse orders}, but the conclusion of (\ref{eq:expect_cond_lemma_deform_bounds}) will be unchanged.
Finally using (\ref{eq:expect_cond_real_imag}) in the analogue of (\ref{eq:nock_initial})

\begin{align}
 &\expect_M[|\det(M - xI + S)|\indic[\ind{x}(M+S)\in\{k-1,k, k+1\}] \notag \\ = &\Re\expect_M[|\det(M - xI + S)|\indic[\ind{x}(M+S)\in\{k-1,k, k+1\}] \notag \\ 
 =&\Re \Bigg\{  K^{(1)}_N\lim_{\epsilon\searrow 0}\int d\vec{x}_1 d\vec{x}_2 d\zeta_1 d\zeta_1^{\dagger} d\zeta_2 d\zeta_2^{\dagger} \exp\left\{-i\vec{x}_1^T(M-(x + i\epsilon)I+S)\vec{x}_1 - i\vec{x}_2^T(M-(x-i\epsilon)I + S)\vec{x}_2\right\}\notag \\
 & ~~~~~~\exp\left\{ i \zeta_1^{\dagger}(M-(x+i\epsilon) I+S)\zeta_1
      + i \zeta_2^{\dagger}(M-(x - i\epsilon)I+S)\zeta_2\right\} \\
  &~~~~~~\expect_M\left[e^{-i\Tr MA}\indic[\ind{x}(M+S)\in\{k-1,k, k+1\}]\right]\Bigg\}\notag \\
 =&\Re \Bigg\{  K^{(1)}_N\lim_{\epsilon\searrow 0}\int d\vec{x}_1 d\vec{x}_2 d\zeta_1 d\zeta_1^{\dagger} d\zeta_2 d\zeta_2^{\dagger} \exp\left\{-i\vec{x}_1^T(M-(x + i\epsilon)I+S)\vec{x}_1 - i\vec{x}_2^T(M-(x-i\epsilon)I + S)\vec{x}_2\right\}\notag \\
 & ~~~~~~\exp\left\{ i \zeta_1^{\dagger}(M-(x+i\epsilon) I+S)\zeta_1
      + i \zeta_2^{\dagger}(M-(x - i\epsilon)I+S)\zeta_2\right\} \\
  &~~~~~~\Re\expect_M\left[e^{-i\Tr MA}\indic[\ind{x}(M+S)\in\{k-1,k, k+1\}]\right](1+io(1))\Bigg\}\notag \\
  =&\Re \Bigg\{  K^{(1)}_N\lim_{\epsilon\searrow 0}\int d\vec{x}_1 d\vec{x}_2 d\zeta_1 d\zeta_1^{\dagger} d\zeta_2 d\zeta_2^{\dagger} \exp\left\{-i\vec{x}_1^T(M-(x + i\epsilon)I+S)\vec{x}_1 - i\vec{x}_2^T(M-(x-i\epsilon)I + S)\vec{x}_2\right\}\notag \\
 & ~~~~~~\exp\left\{ i \zeta_1^{\dagger}(M-(x+i\epsilon) I+S)\zeta_1
      + i \zeta_2^{\dagger}(M-(x - i\epsilon)I+S)\zeta_2\right\} \\
  &~~~~~~\Re\expect_M\left[e^{-i\Tr MA}\indic[\ind{x}(M+S)\in\{k-1,k, k+1\}]\right]\Bigg\}(1+io(1))
\label{eq:expect_cond_lemma_real_bound_expr}
\end{align}

From this point on, the proof proceeds, \emph{mutatis mutandis}, as that for Lemma \ref{lemma:nock_deformed} but applied to the upper and lower bounds on (\ref{eq:expect_cond_lemma_real_bound_expr}) obtained from (\ref{eq:expect_cond_lemma_deform_bounds}). The final range of integration for $p_1$ and $p_2$ will be some intervals $(0, o(1))$ owing to the change of variables used around (\ref{eq:nock7_prime}), but this does not affect the ensuing asymptotics in which the $p_1,p_2$ integration contours are deformed through the saddle point at $z^{(-)}_{U,L}$.
\end{proof}

\begin{remark}\label{rem:generating_func}
We note that if an appropriate generating function for $\indic\left[\ind{x}(M + S)=k\right]$ could be found, that would allow for a straightforward taking of the expectation in (\ref{eq:goe_fourier_deform}), then the calculations of Lemma \ref{lemma:nock_deformed} could be modified to include this extra term and then the desired expectation $\expectGOE \left[|\det(M - xI + S)|\indic [\ind{x} (M+S)=k]\right]$ could be read-off in comparison with the result of Lemma \ref{lemma:nock_deformed}.
\end{remark}

We have established all we need to prove Theorem \ref{thm:auff2.5}.
\auffdepk*
\begin{proof}

First consider $u < -E_{\infty}$. The proof proceeds just as that of Theorem \ref{thm:auff2.8} but applying Lemma \ref{lemma:nock_deformed_cond_k} instead of Lemma \ref{lemma:nock_deformed} and working identically on the upper and lower bounds from Lemma \ref{lemma:nock_deformed_cond_k}.

Now consider $u> -E_{\infty}$. By the interlacing property as used around (\ref{eq:interlacing}), $\ind{x}(M)$ and $\ind{x}(M+S)$ differ by no more than 2. Hence \begin{equation}
    \ind{x}(M+S) \in \mathcal{K} \implies \ind{x}(M) = \mathcal{O}(1)
\end{equation} but for $0 > x > -\sqrtsign{2}$, and $M\sim GOE_N$, the large deviations principle for the GOE \cite{arous1997large} gives \begin{equation}
    \mathbb{P}(\ind{x}(M) = \mathcal{O}(1)) \leq e^{-cN^2}
\end{equation} 
for some constant $c$, hence the $x$ integral analogous to (\ref{eq:thm28_mid}) is exponentially suppressed with quadratic speed in $N$ for $x>-\sqrtsign{2}$. But we have already seen that the integral is only suppressed with linear speed in $N$ for $x < -\sqrtsign{2}$, and further that $\Theta_{H,k}(u)$ is increasing on $(-\infty, -E_{\infty})$ and so, by the Laplace principle, the leading order contribution is from around $x=-\sqrtsign{2}$ and so \begin{equation}
    \lim_{N\rightarrow\infty} \frac{1}{N}\log\expect C_{N,\mathcal{K}}^{h}(\sqrtsign{N}u) = \lim_{N\rightarrow\infty} \frac{1}{N}\log\expect C_{N,\mathcal{K}}^{h}(-\sqrtsign{N}E_{\infty})
\end{equation} for $u > - E_{\infty}$, which completes the proof.

\end{proof}

\begin{remark}
We are clearly unable to provide an exact leading term for $C^h_{N, \mathcal{K}}(\sqrtsign{N}u)$ for any value of $u$ as we did for $C^h_N(\sqrtsign{N}u)$ for $u< -E_{\infty}$ in Theorem \ref{thm:exact_term} because  the presence of $S$ in $\ind{x}(M+S)$ has forced us in Lemma \ref{lemma:nock_deformed_cond_k} to resort to upper and lower bounds on the leading order term. We note that in \cite{auffinger2013random} the authors are also not able to obtain the exact leading term in this case by their rather different methods. Recalling Remark \ref{rem:generating_func}, we conjecture that this term could be obtained by variants of our methods if only a suitable (perhaps approximate) generating function for $\indic[\ind{x}(M+S) = k] $ could be discovered.
\end{remark}

\section{Conclusions and future work}
The interpretation of the results we have presented here is largely the same as that first given in \cite{choromanska2015loss}. \jstat{Under the chosen modeling assumptions}, the local optima of the the neural network loss surface are arranged so that, above a critical value $-\sqrtsign{N}E_{\infty}$, it is overwhelmingly likely that gradient descent will encounter high-index optima and so `escape' and descend to lower loss.  Below $-\sqrtsign{N}E_{\infty}$, the low-index optima are arranged in a `banded' structure, however, due to the imprecision of Theorem \ref{thm:auff2.5}, the bands are slightly blurred when compared with \cite{choromanska2015loss}. We display the differences in Table \ref{tab:banded}.
\begin{table}[]
    \centering
    \begin{tabular}{c|c|c}
       band  & possible indices \cite{choromanska2015loss} & \textbf{possible indices}  \\
       \hline
        $(-\sqrtsign{N}E_{0},-\sqrtsign{N}E_{1})$  &0&0,1,2\\
        $(-\sqrtsign{N}E_{1},-\sqrtsign{N}E_{2})$  &0,1 &0,1,2,3\\
        $(-\sqrtsign{N}E_{2},-\sqrtsign{N}E_{3})$ & 0,1,2 & 0,1,2,3,4\\
        $(-\sqrtsign{N}E_{3},-\sqrtsign{N}E_{4})$ & 0,1,2,3 & 0,1,2,3,4,5\\
    \end{tabular}
        \caption{Illustration of the banded low-index local optima structure obtained here for neural networks with general activation functions and compared to the analogous results in \cite{choromanska2015loss}.}
    \label{tab:banded}
\end{table}

% While the modelling assumptions first introduced in \cite{choromanska2015loss} and used again here are certainly not without fault, we believe that this work further supports the case that spin glass like objects are good proxies for studying the behaviour of gradient descent in large neural networks. Indeed we have demonstrated that the results of \cite{choromanska2015loss} can be carried over almost unchanged (and certainly unchanged in any important respect) from the specific \texttt{ReLU} network case considered there to much more general activation functions. If this were not the case, it would strongly suggest that the similarity between the spin glass results of \cite{choromanska2015loss} and neural network phenomenology are coincidental, since in practice \texttt{ReLU} networks are not unique in this respect.\\

\jstat{Our results have plugged a gap in the analysis of \cite{choromanska2015loss} by demonstrating that the specific \texttt{ReLU} activation function required by the technicalities of their derivation is not, in fact, a requirement of the results themselves, which we have shown to hold for any reasonable choice of activation function. At the same time, experimental results imply that a sufficiently precise model for deep neural network loss surfaces should display some non-trivial dependence on the choice of activation function, but we have shown that no dependence at all is seen at the relevant level of logarithmic asymptotic complexity, but is visible in the sharp leading order complexity. In defense of \cite{choromanska2015loss}, we have reduced the scope for their results to be some spurious apparition of an intersection of several unrealistic simplifications. However, with the same result, we have demonstrated an important aspect of neural network architectural design to which the multi-spin glass correspondence is entirely insensitive, so limiting the precision of any statements about real neural networks that can be made using this analysis.}

In the pursuit of our aims, we have been forced to approximately reproduce the work of \cite{auffinger2013random} by means of the supersymmetric method of Random Matrix Theory, which we believe is quite novel and have also demonstrated how various steps in these supersymmetric calculations can be adapated to the setting of a GOE matrix deformed by some low-rank fixed matrix including utilising Gaussian approximations to orthogonal matrices in ways we have not previously seen in the literature. \jstat{We believe some of our intermediate results and methods may be of use in other contexts in Random Matrix Theory.}

As highlighted in the main text, there are a few areas for future work that stem immediately from our calculations. We list them here along with other possibilities.
\begin{enumerate}
    \item Constructing an appropriate indicator function (or approximate indicator function) for the index of a matrix so that Theorem \ref{thm:auff2.5} can be precised and to obtain exact leading order terms for $C^h_{N,k}$ that could not be obtained in \cite{auffinger2013random} (see Remark \ref{rem:exact_k_ind}).
    \item The `path-independence' assumption (Section \ref{subsec:modelling_assumptions}, assumption \ref{item: assumption_bernoulli}) is the weakest link in this work (and that of \cite{choromanska2015loss}) and we have shed further light on its validity through experimentation (Section \ref{subsec:discussion_assumptions}). The supersymmetric calculations used here have shown themselves to be powerful and quite adaptable. We therefore suggest that it may be possible to somehow encapsulate the failure of assumption \ref{item: assumption_bernoulli} as a first-order correlation term and repeat the presented analysis in an expansion when this term is small. 
    \item Further, this work and others mentioned in the introduction have shown that studying spin glass like objects in this context is a fruitful area of research and so we would like to study more exotic glassy objects inspired by different neural network architectures and applications and hope to be able to adapt the calculations presented here to such new scenarios. 
\end{enumerate}

\section{Acknowledgements}
FM is grateful for support from the University Research Fellowship
of the University of Bristol. JPK is pleased to acknowledge support from a Royal Society
Wolfson Research Merit Award and ERC Advanced Grant 740900 (LogCorRM). \jstat{We are grateful to two anonymous referees for their most helpful comments and suggestions.}
\printbibliography

\appendix
\section{Specific expression for the low-rank perturbation matrix}\label{ap:S_specific}
The the rank-2  $N-1\times N-1$ matrix $S$ arises throughout the course of Sections \ref{sec:nns_random_funcs} and \ref{sec:statement_results} and Lemma \ref{lemma:conditional_dist}. The specific value of $S$ is not required at any point during our calculations and, even though its eigenvalues appear in the result of Theorem \ref{thm:exact_term}, it is not apparent that explicit expressions for its eigenvalues would affect the practical implications of the theorem. These considerations notwithstanding, in this supplementary section we collate all the expressions involved in the development of $S$ from the modeling of the activation function in Section \ref{sec:nns_random_funcs} through to Lemma \ref{lemma:conditional_dist}. Beginning at the final expression for $S$ in Lemma \ref{lemma:conditional_dist} 
\begin{align}
            S_{ij} = \frac{1}{ \sqrtsign{2(N-1)H(H-1)} }\left(\xi_3 + \xi_2(\delta_{i1} + \delta_{j1}) + \xi_1\delta_{i1}\delta_{j1}\right),\end{align}
    where, recalling the re-scaling (\ref{eq:rho_N_redef}), 
 \begin{align}
          \xi_0 &= \sum_{\ell=1}^HN^{-\ell/2}\rho_{\ell}^{(N)}\\
                    \xi_1 &=  \sum_{\ell=1}^{H-2}N^{-\ell/2}\rho_{\ell}^{(N)} \left[(H-\ell)(H-\ell -1) +1 \right]\\
          \xi_2 & =\sum_{\ell=1}^{H-2}N^{-\ell/2}\rho_{\ell}^{(N)}(H-\ell - 2) \\
          \xi_3 &= \sum_{\ell=1}^{H-2}N^{-\ell/2}\rho_{\ell}^{(N)}
          %\zeta &= \sum_{\ell =1}^{H-1}\rho_{\ell}^{(N)}(H-\ell).
      \end{align}
The $\rho_{\ell}$ were defined originally in (\ref{eq:rho_def}) and re-scaled around (\ref{eq:g_def}) so that  \begin{align}
    \rho_{\ell} = \frac{ \expect A_{i,j}^{(\ell)}}{\expect A_{i,j}}
\end{align}
where $A_{i,j}$ are discrete random variables taking values in \begin{equation}
   \mathcal{A} \defeq \left\{\prod_{i=1}^H \alpha_{j_i}\ ~:~ j_1,\ldots, j_H \in \{1,\ldots, L\}\right\}
\end{equation} and $ A^{(\ell)}_{i,j}$ take values in \begin{equation}
    \mathcal{A}^{(\ell)} \defeq\left\{\beta_k\prod_{r=1}^{H-\ell} \alpha_{j_r} ~:~ j_1,\ldots, j_{H-\ell}, k \in \{1,\ldots, L\}\right\}\end{equation}
but we have not prescribed the mass function of the $A_{i,j}$ or $A_{i,j}^{(\ell)}.$
Lastly recall that the $\alpha_j, \beta_j$ are respectively the slopes and intercepts of the piece-wise linear function chosen to approximate the activation function $f$.

\section{Low rank perturbation of a matrix identity}\label{ap:low_rank_fyod}
\jstat{In this section we establish a modified version of Theorem I from \cite{fyodorov2002characteristic} required in the proof in Lemma \ref{lemma:nock_deformed}. In that Lemma, we are faced with an integral of the form \begin{equation}\label{eq:ap_fyod_deformed}
    \mathcal{I}_N(F; S) = \iint_{\mathbb{R}^N} d\vec{x}_1 d\vec{x}_2 F(Q_B) e^{-iN\Tr SB}
\end{equation}
where the $N\times N$ matrix $B$ is defined as $B=\vec{x}_1\vec{x}_1^T + \vec{x}_2\vec{x}_2^T$, the $2\times 2$ matrix $Q_B$ is given by \begin{equation}
    Q_B = \left(\begin{array}{cc} \vec{x}_1^T\vec{x}_1 & \vec{x}_1^T\vec{x}_2 \\ \vec{x}_2^T\vec{x}_1 & \vec{x}_2^T\vec{x}_2\end{array}\right),
\end{equation}
 $F$ is some suitably nice function and $S$ is some real symmetric matrix of rank $r=\mathcal{O}(1)$ as $N\rightarrow \infty$ and with non-zero eigenvalues $\{N^{-1/2}s_i\}_{i=1}^r$ for $s_i=\mathcal{O}(1)$.
It is sufficient to be able to evaluate a leading order term of $\mathcal{I}_N$ in an expansion for large $N$. \cite{fyodorov2002characteristic} proves the following related result:
\begin{lemma}[\cite{fyodorov2002characteristic}\label{lemma:fyod_lemma} Theorem I]
Given $m$ vectors in $\mathbb{R}^N$ $\vec{x}_1, \ldots, \vec{x}_m$, denote by $Q(\vec{x}_1, \ldots, \vec{x}_m)$ the $m\times m$ matrix whose entries are given by $Q_{ij} = \vec{x}_i^T\vec{x}_j$. Let $F$ be any function of an $m\times m$ matrix such that the integral \begin{equation}
     \int_{\mathbb{R}^N}\ldots\int_{\mathbb{R}^N}d\vec{x}_1\ldots d\vec{x}_m |F(Q)|
\end{equation}
exists and define the integral \begin{equation}
     \mathcal{J}_{N, m}(F) \defeq \int_{\mathbb{R}^N}\ldots\int_{\mathbb{R}^N}d\vec{x}_1\ldots d\vec{x}_m F(Q).
\end{equation} Then we have \begin{equation}
    \mathcal{J}_{N,m}(F) = \frac{\pi^{\frac{m}{2}\left(N - \frac{m-1}{2}\right)}}{\prod_{k=0}^{m-1}\Gamma\left(\frac{N-k}{2}\right)}\int_{\text{Sym}_{\geq 0}(m)}d\hat{Q} \left(\det \hat{Q}\right)^{\frac{N-m-1}{2}}F(\hat{Q}).
\end{equation}
 \end{lemma}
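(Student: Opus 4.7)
The plan is to reduce the $Nm$-dimensional vector integral to an integral over the $m \times m$ Gram matrix $\hat{Q}$ via a polar-decomposition change of variables, and then to pin down the overall constant by a Gaussian test-function calculation. First I would stack the vectors as the columns of an $N \times m$ matrix $X = [\vec{x}_1 \mid \ldots \mid \vec{x}_m]$, so that $Q(\vec{x}_1,\ldots,\vec{x}_m) = X^T X$ and the Lebesgue measure factorises as $\prod_{i=1}^m d\vec{x}_i = dX$. In the regime of interest ($N \geq m$, otherwise the Gamma factors on the right-hand side diverge) the polar decomposition $X = UH$ with $U$ on the Stiefel manifold $V_{m,N} = \{U \in \mathbb{R}^{N \times m} : U^T U = I_m\}$ and $H = \hat{Q}^{1/2}$ the unique symmetric positive-definite square root is a bijection off a Lebesgue-null set, so the change of variables $X \mapsto (U, \hat{Q})$ is well-defined.

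The classical Jacobian of this polar decomposition reads
\begin{equation*}
    dX = 2^{-m}(\det \hat{Q})^{(N-m-1)/2} \, d\hat{Q} \, d\mu_U,
\end{equation*}
where $d\hat{Q}$ is Lebesgue measure on the space of $m\times m$ real symmetric matrices and $d\mu_U$ is the measure on $V_{m,N}$ induced by its embedding into $\mathbb{R}^{Nm}$. Because $F$ depends on $X$ only through $\hat{Q}$, the $U$-integration decouples and contributes a factor of $\mathrm{Vol}(V_{m,N})$, yielding the claimed identity up to an explicit constant $c_{N,m} = 2^{-m}\mathrm{Vol}(V_{m,N})$ in front of the $\hat{Q}$-integral.

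To pin down $c_{N,m}$ without rederiving the Stiefel volume from scratch, I would evaluate both sides of the asserted identity on the Gaussian family $F(Q) = e^{-\Tr(AQ)/2}$ with $A$ real symmetric positive definite. The left-hand side is a product of $N$ independent $m$-dimensional Gaussian integrals (one for each row of $X$) and equals $(2\pi)^{Nm/2}(\det A)^{-N/2}$; the right-hand side is a Wishart normalisation integral, equal to $2^{mN/2}\Gamma_m(N/2)(\det A)^{-N/2}$, where $\Gamma_m(\alpha) = \pi^{m(m-1)/4}\prod_{k=0}^{m-1}\Gamma(\alpha - k/2)$ is the multivariate Gamma. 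Taking their ratio gives exactly
\begin{equation*}
    c_{N,m} = \frac{\pi^{Nm/2}}{\Gamma_m(N/2)} = \frac{\pi^{\frac{m}{2}\left(N - \frac{m-1}{2}\right)}}{\prod_{k=0}^{m-1}\Gamma\left(\frac{N-k}{2}\right)},
\end{equation*}
matching the constant in the statement. Since any absolutely integrable $F$ (as in the hypothesis) is uniquely determined by its Laplace transform $A \mapsto \int F(\hat{Q}) e^{-\Tr(A\hat{Q})/2} \, d\hat{Q}$ on the positive-definite cone (apply the injectivity of the multivariate Laplace transform to the signed measure $F(\hat{Q})\,d\hat{Q}$ after decomposing $F$ into positive and negative parts), verifying the identity on this Gaussian family is enough to extend it to all admissible $F$.

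The main obstacle is bookkeeping rather than substance. One must verify that (i) when $N > m$ the singular locus $\{\det\hat{Q} = 0\}$ is negligible on both sides thanks to the $(\det\hat{Q})^{(N-m-1)/2}$ prefactor, so the integration domain may be taken as the open cone $\mathrm{Sym}_{>0}(m)$ or its closure $\mathrm{Sym}_{\geq 0}(m)$ interchangeably; (ii) the Jacobian stated above is correctly normalised relative to the chosen $d\mu_U$, which is the step most easily sidestepped precisely by the Gaussian test-function argument, since any misnormalisation would show up as a multiplicative error in the ratio computed above; and (iii) the Laplace uniqueness step applies despite $F$ being only assumed absolutely integrable (and not non-negative), which is handled by Jordan decomposition.
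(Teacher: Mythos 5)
Your proof is correct, but it follows a genuinely different route from the one the paper leans on. The paper does not reprove this lemma: it cites Appendix D of \cite{fyodorov2002characteristic}, whose argument is an induction on $m$ that peels off one vector at a time, writing $\vec{x}_m = \rho_m O_m\vec{e}_N$ and integrating out the orthogonal matrix $O_m$ against Haar measure to collect a sphere-volume factor $2\pi^{N/2}/\Gamma(N/2)$ at each step. You instead perform the full polar decomposition $X = U\hat{Q}^{1/2}$ in one shot over the Stiefel manifold $V_{m,N}$ and fix the overall constant by testing against the Wishart family $F(Q)=e^{-\Tr(AQ)/2}$; your arithmetic checks out, since $(2\pi)^{Nm/2}(\det A)^{-N/2}$ against $2^{mN/2}\Gamma_m(N/2)(\det A)^{-N/2}$ with $\Gamma_m(N/2)=\pi^{m(m-1)/4}\prod_{k=0}^{m-1}\Gamma\left(\frac{N-k}{2}\right)$ yields exactly the stated prefactor. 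Two remarks. First, your Laplace-uniqueness step is phrased as if applied to $F$ itself; what is actually needed is that the pushforward of Lebesgue measure under $X\mapsto X^TX$ and the candidate measure $c_{N,m}(\det\hat{Q})^{(N-m-1)/2}d\hat{Q}$ share a Laplace transform on the open cone and hence coincide --- but since the polar change of variables already gives the identity for all integrable $F$ with \emph{some} constant, this step is redundant rather than wrong. Second, the reason the paper recalls the inductive, one-vector-at-a-time proof is that this is precisely the structure it perturbs in Appendix \ref{ap:low_rank_fyod} to establish Lemma \ref{lem:fyod_general}: there the extra factor $e^{-iN\Tr SB}$ breaks the $O(N)$-invariance, and the residual Haar integral must be treated via the Gaussian approximation to orthogonal columns. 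Your one-shot Stiefel argument is cleaner for the unperturbed statement but would not adapt as directly to that generalisation.
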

We will prove the following perturbed version of this result and in greater generality than is required in the present work.
\fyodgeneral*}
% \begin{restatable}{lemma}{fyodgeneral}
% \label{lem:fyod_general}
% Given $m$ vectors in $\mathbb{R}^N$ $\vec{x}_1, \ldots, \vec{x}_m$, denote by $Q(\vec{x}_1, \ldots, \vec{x}_m)$ the $m\times m$ matrix whose entries are given by $Q_{ij} = \vec{x}_i^T\vec{x}_j$. Let $F$ be any function of an $m\times m$ matrix such that the integral \begin{equation}
%      \int_{\mathbb{R}^N}\ldots\int_{\mathbb{R}^N}d\vec{x}_1\ldots d\vec{x}_m |F(Q)|
% \end{equation}
% exists, and let $S$ be a real symmetric $N\times N$ matrix of fixed rank $r$ and with non-zero eigenvalues $\{N^{\alpha}s_i\}_{i=1}^r$ for some $\alpha < 1/2$. Define the integral \begin{equation}
%      \mathcal{J}_{N, m}(F; S) \defeq \int_{\mathbb{R}^N}\ldots\int_{\mathbb{R}^N}d\vec{x}_1\ldots d\vec{x}_m F(Q) e^{-iN\sum_{i=1}^N \vec{x}_i^T S\vec{x}_i}.
% \end{equation} Then as $N\rightarrow\infty$ we have \begin{equation}
%     \mathcal{J}_{N,m}(F; S) =\left( 1 + o(1))\right) \frac{\pi^{\frac{m}{2}\left(N - \frac{m-1}{2}\right)}}{\prod_{k=0}^{m-1}\Gamma\left(\frac{N-k}{2}\right)}\int_{\text{Sym}_{\geq 0}(m)}d\hat{Q} \left(\det \hat{Q}\right)^{\frac{N-m-1}{2}}F(\hat{Q})\prod_{i=1}^N\prod_{j=1}^r\left( 1+ iN^{\alpha}\hat{Q}_{ii}s_j\right).
% \end{equation}
% \end{restatable}}

\jstat{The proof of Lemma \ref{lemma:fyod_lemma} presented in Appendix D of \cite{fyodorov2002characteristic} proceeds by induction on $m$ and relies on writing the integration vector $\vec{x}_m$ as $\vec{x}_m = \rho_m O_m\vec{e}_N$ where $\vec{e}_N$ is the $N$-th basis vector in the chosen orthonormal basis, $\rho_m>0$ is a scalar variable and $O_m$ is an orthogonal matrix. The proof proceeds by making a change of variables for the first $m-1$ integration vectors and then finding that the integrand does not depend on $O_m$ and so the integral over $O_m$ with respect to the Haar measure just contributes a volume factor of \begin{equation}
    \frac{2\pi^{N/2}}{\Gamma(N/2)}.
\end{equation}
It is at this point where the $e^{-iN\Tr SB}$ term in (\ref{eq:ap_fyod_deformed}) causes problems because a dependence on $O_m$ remains. Indeed, we have \begin{align}
  \vec{x}_m^T S\vec{x}_m = \rho_m \vec{e}_N^TO^T_m S O_m \vec{e}_N.
\end{align}
Since $S$ is real symmetric we may take, wlog, $S = N^{\alpha}\text{diag}(s_1, \ldots, s_r, 0, \ldots, 0).$ Then \begin{align}
     e^{-iN\vec{x}_m^T S\vec{x}_m} = e^{-iN^{1 + \alpha}\rho_m \sum_{j=1}^r s_j (o_{Nj})^2}
\end{align}
where $o_{Nj}$ is the $j$-th component of the $N$-th column of $O$. Proceeding with an evaluation of an integral like (\ref{eq:ap_fyod_deformed}) then requires the evaluation of the integral \begin{equation}\label{eq:ap_hciz}
    \int_{O(N)} d\mu_{\text{Haar}}(O_m) e^{-iN^{1 + \alpha}\rho_m \sum_{j=1}^r s_j (o_{Nj})^2}.
\end{equation}
We can now follow \cite{guionnet2005fourier}, in particular the proof of Theorem 7 therein.  We have the well-known result (Fact 8 in \cite{guionnet2005fourier}) that in the sense of distributions\begin{equation}
    (\vec{o}_1, \ldots, \vec{o}_{p}) \sim \left(\frac{\tilde{\vec{g}}_1}{||\tilde{\vec{g}}_1||},\ldots, \frac{\tilde{\vec{g}}_{p}}{||\tilde{\vec{g}}_{p}||}\right)
\end{equation}
for any $p=\mathcal{O}(1)$ and where the $(\tilde{\vec{g}}_j)_{j=1}^{p}$ are constructed via the Gram-Schmidt process from $(\vec{g}_j)_{j=1}^{r_A}\overset{\text{i.i.d.}}{\sim} \mathcal{N}(\bm{0}, 1)$. So in particular \begin{align}\label{eq:ap_gram_s}
    \vec{o}_N \sim \frac{\vec{g}}{||\vec{g}||}, ~~ \vec{g}\sim \mathcal{N}(0,1).
\end{align}}

\jstat{(\ref{eq:ap_gram_s}) then exactly gives 
\begin{align}
    \int_{O(N)} d\mu_{\text{Haar}}(O_m) e^{-iN^{1 + \alpha}\rho_m \sum_{j=1}^r s_j (o_{Nj})^2} &= \int_{\mathbb{R}^N} \frac{d\vec{g}}{(2\pi)^{N/2}} e^{-\frac{\vec{g}^2}{2}} \exp\left(-iN^{1+\alpha}\rho_m \sum_{j=1}^r s_j \frac{g_j^2}{||\vec{g}||^2}\right)
\end{align}}

 \jstat{Introduce the event \begin{equation}
    B_N(\upsilon) \defeq\left\{| N^{-1}\langle \vec{g}, \vec{g}\rangle - 1| \leq N^{-\upsilon} \right\}
\end{equation}
and then from \cite{guionnet2005fourier} we immediately conclude that under the i.i.d Gaussian law of  $\vec{g}$ the complementary event has low probability: \begin{equation}\label{eq:gram_s_bnk2}
    \mathbb{P}(B_N(\upsilon)^c) =\mathcal{O}( C(\upsilon) e^{-\beta N^{1-2\upsilon}})
\end{equation}
where $\beta, C(\upsilon) > 0$ and we take $0<\upsilon < \frac{1}{2}$ to make this statement meaningful. This enables us to write \begin{align}
  \int_{O(N)} d\mu_{\text{Haar}}(O_m) e^{-iN^{1 + \alpha}\rho_m \sum\limits_{j=1}^r s_j (o_{Nj})^2} =  \left(1 + \mathcal{O}(e^{-\beta N^{1-2\upsilon}})\right)\int_{\mathbb{R}^N} &\frac{d\vec{g}}{(2\pi)^{N/2}} e^{-\frac{\vec{g}^2}{2}} \exp\left(-iN^{1+\alpha}\rho_m \sum_{j=1}^r s_j \frac{g_j^2}{||\vec{g}||^2}\right)\indic\{B_N(\upsilon)\}\notag \\
  = \left(1 + \mathcal{O}(e^{-\beta N^{1-2\upsilon}})\right)\int_{\mathbb{R}^N}& \frac{d\vec{g}}{(2\pi)^{N/2}}\indic\{B_N(\upsilon)\}\notag \\ &  e^{-\frac{\vec{g}^2}{2}} \exp\left(-iN^{\alpha}(1+\mathcal{O}(N^{-\upsilon}))\rho_m \sum_{j=1}^r s_j g_j^2\right)\label{eq:ap_gs_practical_fail}
\end{align}
but given $B_N(\upsilon)$ we have $g_j^2 \lesssim N$ for all $j=1,\ldots, N$ and so we do not, as it stands, have uniformly small error terms. We can circumvent this by introducing the following event for $0<\eta<\frac{1}{2}$: \begin{equation}
   E_N^{(r)}(\eta) = \{|g_j|\leq N^{\frac{1}{2} - \eta} ~ \text{ for } j=1,\ldots, r\}.
\end{equation} 
Let us use $\hat{\vec{g}}$ to denote the $N-r$ dimensional vector with components $\left(g_{r+1}, \ldots, g_N\right)$. }

\jstat{Then we have \begin{align}
 \Bigg| |N^{-1}||\hat{\vec{g}}||^2 - 1 | - N^{-1}\sum_{i=1}^r g_j^2  \Bigg| \leq  |N^{-1}||\vec{g}||^2 - 1 | \leq |N^{-1}||\hat{\vec{g}}||^2 - 1 | + N^{-1}\sum_{i=1}^r g_j^2
\end{align}
so if $\eta > \frac{\upsilon}{2}$ then it follows that \begin{equation}
    B_N(\upsilon) ~|~ E_N^{(r)}(\eta) = B_{N-r}(\upsilon).
\end{equation}
But we also have (e.g. \cite{andrews_askey_roy_1999} Appendix C) \begin{align}
    \mathbb{P}( E_N^{(r)}(\eta)) = \left[\text{erf}\left(N^{\frac{1}{2}-\eta}\right)\right]^r = \left[1 - \mathcal{O}(N^{\frac{1}{2} - \eta} e^{-N^{1-2\eta}})\right]^r = 1 - \mathcal{O}(N^{\frac{1}{2} - \eta} e^{-N^{1-2\eta}})
\end{align}
and so  (taking $\eta> \upsilon$, say)\begin{equation}
    \mathbb{P}\left(B_N(\upsilon)\cap E^{(r)}_N(\eta)\right) =     \mathbb{P}\left(B_N(\upsilon) ~|~ E^{(r)}_N(\eta)\right)    \mathbb{P}\left(E^{(r)}_N(\eta)\right) =1 - \mathcal{O}(e^{-\alpha N^{1-2\upsilon}})
\end{equation}
and thus we can replace (\ref{eq:ap_gs_practical_fail}) with 
\begin{align}
  \int_{O(N)} d\mu_{\text{Haar}}(O_m) e^{-iN^{1 + \alpha}\rho_m \sum\limits_{j=1}^r s_j (o_{Nj})^2} = \left(1 + \mathcal{O}(e^{-\beta N^{1-2\upsilon}})\right)\int_{\mathbb{R}^N}& \frac{d\vec{g}}{(2\pi)^{N/2}}\indic\{B_N(\upsilon)\cap E^{(r)}_N(\eta)\}\notag \\ &  e^{-\frac{\vec{g}^2}{2}} \exp\left(-iN^{\alpha}(1+\mathcal{O}(N^{-\upsilon}))\rho_m \sum_{j=1}^r s_j g_j^2\right)\notag\\
  \label{eq:ap_gs_practical_success}
\end{align}
but now $N^{\alpha-\upsilon} g_j^2 \leq N^{\alpha + 1 -\upsilon - 2\eta} \leq N^{\alpha + 1 -3\upsilon} \rightarrow 0 $ as $N\rightarrow \infty$ so long as we choose $\upsilon > \frac{\alpha + 1}{3}$. Given that $\alpha< 1/2$, this choice is always possible for $0< \upsilon < 1/2$. Thus the error term in the exponent of (\ref{eq:ap_gs_practical_success}) is in fact uniformly small in $\vec{g}$ and so we obtain 
\begin{align}
  \int_{O(N)} d\mu_{\text{Haar}}(O_m) e^{-iN^{1 + \alpha}\rho_m \sum\limits_{j=1}^r s_j (o_{Nj})^2} &= \left(1 + o(1)\right)\int_{\mathbb{R}^N} \frac{d\vec{g}}{(2\pi)^{N/2}}\indic\{B_N(\upsilon)\cap E^{(r)}_N(\eta)\}  \exp\left(-\frac{\vec{g}^2}{2}-iN^{\alpha}\rho_m \sum_{j=1}^r s_j g_j^2\right)\notag\\
   &= \left(1 + o(1)\right)\int_{\mathbb{R}^r} \frac{dg_1\ldots dg_r}{(2\pi)^{r/2}} \exp\left(-\frac{1}{2}\sum_{j=1}^r\left\{1+iN^{\alpha}\rho_m  s_j\right\} g_j^2\right)\notag \\
  &= (1 + o(1)) \prod_{j=1}^r \left( 1+ iN^{\alpha} \rho_m s_j\right)^{-\frac{1}{2}}.\label{eq:ap_haar_finished}
\end{align}
In the induction step in the proof of \cite{fyodorov2002characteristic}, $\rho_m$ becomes the new diagonal entry of the expanded $\hat{Q}$ matrix. Combining (\ref{eq:ap_haar_finished}) with that proof gives the result
 \begin{align}\label{eq:fyod_lemma_result}
    \mathcal{I}_N(F; S) =(1+ o(1))\frac{\pi^{N - \frac{1}{2}}(1+ o(1))}{\Gamma\left(\frac{N}{2}\right)\Gamma\left(\frac{N-1}{2}\right)}\int_{\text{Sym}_{\geq 0}(m)}d\hat{Q} \left(\det \hat{Q}\right)^{\frac{N-3}{2}}F(\hat{Q}) \prod_{j=1}^r\prod_{i=1}^N \left( 1+ iN^{\alpha}\hat{Q}_{ii}s_j\right).
\end{align}}

\section{Experimental details}\label{ap:experiments}
In this section we give further details of the experiments presented in Section \ref{subsec:discussion_assumptions}.\\

The MLP architecture used consists of hidden layers of sizes $1000, 1000, 500, 250$. The CNN architecture used is a standard LeNet style architecture: \begin{enumerate}
    \item 6 filters of size $4\times 4$.
    \item Activation.
    \item Max pooling of size $2\times 2$ and stride $2$.
    \item 16 filters of size $4\times 4$.
    \item Activation.
    \item Max pooling of size $2\times 2$ and stride $2$.
        \item 120 filters of size $4\times 4$.
    \item Activation.
    \item Dropout.
    \item Fully connected to size 84.
        \item Activation
            \item Dropout.
    \item Fully connected to size 10.
\end{enumerate} 
The activation functions used were the ubiquitous $\texttt{ReLU}$ defined by \begin{equation}
    \texttt{ReLU}(x) = \max(0, x),
\end{equation}
and \texttt{HardTanh} defined by \begin{equation}
    \texttt{HardTanh}(x) = \begin{cases}
    x ~~ &\text{for } x\in(-1,1),\\
     -1 ~~ &\text{for } x\leq -1,\\  
     1 ~~ &\text{for } x\geq 1,\\   
    \end{cases}
\end{equation}
and a custom 5 piece function $f_5$ with gradients $0.01,0.1, 1, 0.3, 0.03$ on $(-\infty, -2), (-2,-1), (-1,1), (1,2), (2, \infty)$ respectively, and $f_5(0) = 0 $. We implemented all the networks and experiments in PyTorch \cite{paszke2017automatic} and our code is made available in the form of a Python notebook capable of easily reproducing all plots\footnote{\url{https://github.com/npbaskerville/loss-surfaces-general-activation-functions}.}.

\end{document}